\renewcommand{\div}{\operatorname{div}}
\newcommand{\Rr}{{\mathbb{R}}}
\newcommand{\Nn}{{\mathbb{N}}}
\newcommand{\Zz}{{\mathbb{Z}}}
\newcommand{\Tt}{{\mathbb{T}}}
\newcommand{\Yd}{{\mathcal{Y}^d}}
\newcommand{\epsi}{\varepsilon}
\def\leq{\leqslant}
\def\geq{\geqslant}
\numberwithin{equation}{section}
\newtheoremstyle{thmlemcorr}{10pt}{10pt}{\itshape}{}{\bfseries}{.}{10pt}{{\thmname{#1}\thmnumber{
#2}\thmnote{ (#3)}}}
\newtheoremstyle{thmlemcorr*}{10pt}{10pt}{\itshape}{}{\bfseries}{.}\newline{{\thmname{#1}\thmnumber{
\newtheoremstyle{defi}{10pt}{10pt}{\itshape}{}{\bfseries}{.}{10pt}{{\thmname{#1}\thmnumber{
#2}\thmnote{ (#3)}}}
\newtheoremstyle{remexample}{10pt}{10pt}{}{}{\bfseries}{.}{10pt}{{\thmname{#1}\thmnumber{
#2}\thmnote{ (#3)}}}
\newtheoremstyle{ass}{10pt}{10pt}{}{}{\bfseries}{.}{10pt}{{\thmname{#1}\thmnumber{
A#2}\thmnote{ (#3)}}}
\theoremstyle{thmlemcorr}
\newtheorem{theorem}{Theorem}
\numberwithin{theorem}{section}
\newtheorem{lemma}[theorem]{Lemma}
\newtheorem{corollary}[theorem]{Corollary}
\newtheorem{proposition}[theorem]{Proposition}
\theoremstyle{thmlemcorr*}
\newtheorem{theorem*}{Theorem}
\newtheorem{lemma*}[theorem]{Lemma}
\newtheorem{corollary*}[theorem]{Corollary}
\newtheorem{proposition*}[theorem]{Proposition}
\newtheorem{problem*}[theorem]{Problem}
\newtheorem{conjecture*}[theorem]{Conjecture}
\theoremstyle{defi}
\newtheorem{definition}[theorem]{Definition}
\newtheorem{problem}{Problem}
\theoremstyle{remexample}
\newtheorem{remark}[theorem]{Remark}
\newtheorem{pro}[theorem]{Proposition}
\theoremstyle{ass}
\newtheorem*{notations*}{Notation}
\begin{document}

\title[Two-scale Homogenization of a stationary mean-field game]{Two-scale Homogenization of a stationary\\ mean-field game}

\author{Rita  Ferreira}
\address[R.  Ferreira]{
        King Abdullah University of Science and Technology (KAUST), CEMSE Division, Thuwal 23955-6900, Saudi Arabia.}
\email{rita.ferreira@kaust.edu.sa}
\author{Diogo  Gomes}
\address[D.  Gomes]{
        King Abdullah University of Science and Technology (KAUST), CEMSE Division, Thuwal 23955-6900, Saudi Arabia.}
\email{diogo.gomes@kaust.edu.sa}
\author{Xianjin Yang}
\address[X. Yang]{
        King Abdullah University of Science and Technology (KAUST), CEMSE Division, Thuwal 23955-6900, Saudi Arabia.}
\email{xianjin.yang@kaust.edu.sa}

\keywords{Mean-Field Game; Homogenization; Two-scale Convergence}
\subjclass[2010]{
        65M22, 
        35F21,
        35B27} 

\thanks{The authors were supported by King Abdullah University of Science and Technology (KAUST) baseline funds and KAUST OSR-CRG2017-3452.
}
\date{\today}

\begin{abstract}
In this paper, we characterize the asymptotic behavior of a first-order stationary mean-field game (MFG) with a logarithm coupling, a quadratic Hamiltonian, and a periodically oscillating potential. This study falls into the realm of the homogenization theory,
and our main tool is the two-scale convergence. Using this convergence,
we rigourously derive the two-scale homogenized and the homogenized
MFG problems, which encode the so-called macroscopic or effective
behavior of the original oscillating MFG. Moreover, we   prove existence and uniqueness of the solution to these limit  problems.  
\end{abstract}

\maketitle

\section{Introduction}

Mean-field games (MFGs), introduced by Lasry and Lions \cite{ll1, ll2} and by  Huang, Caines, and Malham\'{e} \cite{Caines1, Caines2}, model the behavior of rational and indistinguishable agents in a large population. When the number of elements in the population goes to infinity, the Nash equilibrium is characterized by a system of two partial differential equations (PDEs), a Hamilton--Jacobi (HJ) equation and a Fokker--Plank (FP) equation. The HJ equation determines the cost of a typical agent  and the FP  equation gives the evolution of the agents' distribution. 

Here, we characterize the asymptotic behavior of a stationary first-order MFG that has rapidly periodically oscillating terms. More precisely,
the MFG whose (homogenization) limit we
study is stated in the following problem.

\begin{problem}
\label{POCMFG}
 Let $\mathbb{T}^d$ be the  $d$-dimensional torus and let   $\mathcal{Y}^d$
be the set $ [0,1 )^d$. Let $P\in\mathbb{R}^d$, let $V: \mathbb{T}^d \times \Rr^d \rightarrow \mathbb{R}$ be a smooth function,  \(\mathcal{Y}^d\)-periodic in the second variable, and let $\epsilon>0$. 
 Find $ (u_\epsilon, m_\epsilon, \overline{H}_\epsilon )\in C^\infty (\mathbb{T}^d )\times C^\infty (\mathbb{T}^d )\times\mathbb{R}$,  
 with $m_\epsilon>0$,  solving
\begin{align}
\label{POCMFGEq}
\begin{cases} \frac{ |P+\nabla u_\epsilon (x ) |^2}{2}+V (x,\frac{x}{\epsilon} )=\ln (m_\epsilon (x ) )+\overline{H}_\epsilon (P ), \ &\text{in} \ \mathbb{T}^d, \\
-\div (m_\epsilon (x ) (P+\nabla u_\epsilon (x ) ) )=0,  \ &\text{in} \ \mathbb{T}^d,\\
\int_{\mathbb{T}^d} u_\epsilon (x )dx=0,\,  \int_{\mathbb{T}^d}m_\epsilon (x )dx=1.
\end{cases}
\end{align}
\end{problem}

Stationary MFGs as in Problem~\ref{POCMFG} describe a MFG  where agents seek to minimize an infinite horizon cost \cite{BFl, ll1, ll4} and  appear in the large-time behavior of time-dependent MFG  \cite{CLLP, cllp13, cardaliaguet2017long}. Here, with a rapidly periodically oscillating potential, $V$, Problem~\ref{POCMFG} models the behaviour of agents in the world with $\epsilon$-periodic heterogeneities. This potential determines the spatial preferences of each agent, whose parameters, $(x,\frac{x}{\epsilon})$, provide
the spacial information  of an agent at the macroscopic and the microscopic scales separately. For instance, in the traffic-flow problem with periodically, fast changing road conditions, $V$ may give the cost of a typical car according to the location, given by $x$, of the car on the road and to current road conditions, indexed by $\frac{x}{\epsilon}$. Another example would be agents moving on a grid of regularly spaced obstacles such as a forest or a minefield. 

We further mention that, in Problem~\ref{POCMFG},  \(P\) determines the preferred direction of motion,   $u_\epsilon: \mathbb{T}^d\rightarrow \mathbb{R}$ gives the cost of a typical agent at the state  $x\in\mathbb{T}^d$, $m_\epsilon: \mathbb{T}^d\rightarrow \mathbb{R}$ is a probability measure determining the agents' distribution, and $\overline{H}_\epsilon$ is a real number depending on $P$ that is required to ensure
that the normalization condition   $  \int_{\mathbb{T}^d}m_\epsilon (x )dx=1$ is satisfied.

Our goal is to study the asymptotic behavior of the solutions to Problem~\ref{POCMFG} as $\epsilon\rightarrow 0$, where, we
recall, \(\epsilon\) represents the length-scale of the heterogeneities
characterizing the MFG under study.
  This analysis falls into the realm of homogenization theory, aimed at describing  the macroscopic or effective behavior of a microscopically heterogeneous system.
A typical homogenization problem involves two scales; a microscale
associated  with  the size of the heterogeneities of the system, and a macroscale associated with the size  of the state-space of the system.
The goal is to replace  equations with microscales, which are hard to resolve numerically, by {\em averaged}  macroscopic equations that are easier to solve and whose overall properties approximate
well those of the initial oscillating  equations. The problem comprising the  macroscopic
equations is the {\em homogenized problem} and encodes the
macroscopic or effective behavior of the initial microscopically heterogeneous
problem.  We refer to
\cite{CiDo99} for
 a comprehensive introduction to the theory of homogenization and for an overview of the different homogenization methods to
derive the homogenized problem. 

To the best
of our knowledge, apart from the works \cite{cacace2018ergodic,
MR3537880}, little is known on the characterization of the effective
behavior of MFGs with rapidly periodically oscillating terms.
In \cite{cacace2018ergodic}, the authors consider a second-order,
time-dependent  MFG  with a local coupling and a quadratic Hamiltonian. Using (formal) asymptotic expansion techniques, they derive and
study  the associated 
homogenized  problem. Moreover, for a particular class
of  initial-terminal conditions, they rigourously justify their asymptotic expansion procedure. In \cite{MR3537880}, the authors provide
some qualitative descriptions and some numerical results 
regarding the MFG introduced in \cite{cacace2018ergodic}.
 
Here, we consider a first-order,
stationary  MFG  with a logarithmic coupling and a quadratic Hamiltonian with rapidly periodically oscillating terms. We study the effective behavior of this MFG using the two-scale convergence method. The notion of
two-scale convergence was first introduced by Nguetseng \cite{nguetseng1989general}, and further developed by Allaire \cite{allaire1992homogenization},  to provide a  rigorous justification of formal asymptotic expansions used in periodic homogenization problems. 

Besides providing a rigorous derivation of the effective behavior,   the
two-scale convergence method takes full advantage of the periodic microscopic properties of the system, enabling the explicit characterization of its local behavior encoded in
the so-called two-scale homogenized problem. This problem accounts
for the asymptotic behavior of the original problem at both macroscopic and microscopic levels, through the two space variables $x$ (the macroscopic one) and $y$ (the microscopic one), and through  two unknowns $u$ and $u_1$ (see Problems~\ref{TwoScaleHomogenized}
and \ref{TwoScaleMinimization} below). Then,
 using an average process with respect to the microscopic variable $y$ of the two-scale homogenized problem, one obtains the homogenized problem (see Problems~\ref{Homogenized} and \ref{TheLimitProblem} below). Typically, this problem  only
involves the macroscopic space variable $x$,   has \(u_0=\int_{\Yd}
u(\cdot,y) dy\) as solution, and its coefficients are determined
by an auxiliary problem, called the cell problem (see Problems~\ref{CellProbEulerLagrangeProb}
and \ref{TheCellProblem} below). 

As  pointed out in
\cite[Remark~2.4]{allaire1992homogenization}, in spite of doubling the variables ($x$ and $y$)
and unknowns ($u$ and $u_1$), in most cases, the two-scale
homogenized problem is of the same form of the original one, sharing the same existence and uniqueness properties. In contrast,
 in several cases, the homogenized problem  has
complicated forms by involving, for instance,  integro-differential operators and non-explicit equations.
Consequently, the homogenized problem may belong to a class for which an existence and uniqueness theory is not  available. Hence,
both problems, the two-scale homogenized and the homogenized, have their own advantages and one should not be discarded in favor of the other.

To use the two-scale
convergence method 
to study the asymptotic behavior of Problem~\ref{POCMFG} as \(\epsi\to0\),
we take advantage of its variational structure, revealed in the
next problem.  
\begin{problem}
\label{VariationalProblem}
Under the same assumptions of Problem~\ref{POCMFG}, find $u_\epsilon\in C^\infty (\mathbb{T}^d )$ satisfying \(\int_{\mathbb{T}^d} u_\epsilon (x )dx=0\) and  
\begin{equation}
\label{VariationalFormula}
 I_\epsilon [u_\epsilon ]=\inf_{\overset{u\in {C}^1 (\mathbb{T}^d )}{\int_{\mathbb{T}^d}u (x )dx=0}}I_\epsilon [u ], 
\end{equation} 
where 
\begin{equation}
\label{DefIepsilon}
I_\epsilon [u ]=\int_{\mathbb{T}^d}e^{ \frac{ |P+\nabla u (x ) |^2}{2}+V (x,\frac{x}{\epsilon} )}dx.
\end{equation}
\end{problem}

We stress that Problems~\ref{POCMFG} and \ref{VariationalProblem} are equivalent and admit a unique smooth solution. In fact, 
by  \cite[Theorem 5.2]{evans2003some}, \eqref{POCMFGEq} is the Euler-Lagrange equation of \eqref{DefIepsilon} and there exists a unique smooth solution, $ (u_\epsilon, m_\epsilon, \overline{H}_\epsilon )$, to Problem~\ref{POCMFG}. Thus, $u_\epsilon$ is the smooth minimizer for Problem~\ref{VariationalProblem}. Conversely, if $u_\epsilon$ is the smooth solution to Problem~\ref{VariationalProblem}, defining
\begin{equation}
\label{DefHEpsilon}
\overline{H}_\epsilon (P )=\ln I_\epsilon [u_\epsilon ]
\end{equation}
and 
\begin{equation}
\label{eq:defmepsi}
\begin{aligned}
m_\epsilon=e^{\frac{ |P+\nabla u_\epsilon (x ) |^2}{2}+V (x,\frac{x}{\epsilon} )-\overline{H}_\epsilon (P )},
\end{aligned}
\end{equation}
we conclude that $ (u_\epsilon, m_\epsilon, \overline{H}_\epsilon )$ solves Problem~\ref{POCMFG}.

We observe that in contrast with the majority of the variational
homogenization problems in the literature, the integrand in \eqref{DefIepsilon} does not admit a polynomial upper bound with respect to the gradient
variable. This fact prevents us from mimic the arguments in,
for instance, \cite{allaire1992homogenization} to derive the two-scale homogenized
functional. Problems with non-standard growth
conditions were studied in
                \cite{CaDe02} using a \(\Gamma\)-convergence
                approach. However, here we use a distinct
approach based on two-scale convergence that, as we mentioned
before, is known to take
full advantage of the periodic structure of the problem, enabling
us to provide an explicit characterization of both the 
two-scale homogenized and the \textit{usual} homogenized
problems.

In what follows, we use the subscript \(\#\) to refer to functions
defined on \(\Rr^d\) that are \(\Yd\)-periodic; for instance,
 \(C^\infty_\#(\Yd) = \{u\in C^\infty(\Rr^d):\, u \text{ is \(\Yd\)-periodic}\)\}. Moreover, given \(p\in(1,+\infty)\), \(W^{1,p}_\#(\Yd)\) denotes
the closure of \(C^\infty_\#(\Yd)\) with respect to the \(W^{1,p}(\Yd)\)-norm.

Next, we introduce  the two-scale homogenized problem that,
as stated in Theorem~\ref{MainTheorem},   provides a characterization
of the
effective behavior, at both macroscopic
and microscopic levels, of Problem~\ref{POCMFG} 
as $\epsilon\rightarrow 0$.

\begin{problem}[Two-scale homogenized problem]
        \label{TwoScaleHomogenized}
        Under the same assumptions of Problem~\ref{POCMFG} and
for some  $\alpha\in(0,1)$, find
${u}_0\in  C^\infty (\mathbb{T}^d )$ with \(\int_{\Tt^d}
 u_0dx=0\),  ${u}_1 \in C^\infty (\mathbb{T}^d;C^{2,\alpha}_\#
(\mathcal{Y}^d )/\mathbb{R} )$,    $m\in  C^\infty (\mathbb{T}^d;{{C^{1,\alpha}_\#
(
\mathcal{Y}^d )}} )$ with \(\int_{\mathbb{T}^d}
\int_\Yd m(x,y )dydx=1\), and \(\overline{H}\in\Rr\) satisfying
\begin{align}
        \label{TheoremTwoScaleSystemEq}
        \begin{cases}
        \frac{ |P+\nabla u_0 (x )+\nabla_y u_1 (x,y ) |^2}{2}+V
(x,y )=\ln (m (x,y ) )+\overline{H}(P),\ &\text{in} \ \mathbb{T}^d\times\mathcal{Y}^d,\\
        -\div_x (\int_{\mathcal{Y}^d}m (x,y ) (P+\nabla u_0 (x
)+\nabla_yu_1 (x,y ) )dy )=0,\ &\text{in} \ \mathbb{T}^d\times\mathcal{Y}^d,\\
        -\div_y (m (x,y ) (P+\nabla u_0 (x )+\nabla_yu_1 (x,y
) ) )=0, \ &\text{in} \ \mathbb{T}^d\times\mathcal{Y}^d.
        \end{cases}
        \end{align}
        \end{problem}

Next, we introduce  
the \textit{usual}
homogenized problem, together with the associated cell problem
that, as stated in Theorem~\ref{MainTheorem},  characterize
the
effective behavior of Problem~\ref{POCMFG} 
as $\epsilon\rightarrow 0$. 

\begin{problem}[Cell problem]
\label{CellProbEulerLagrangeProb}
Suppose that the assumptions in Problem~\ref{POCMFG} hold. For
some  $\alpha\in(0,1)$ and for each $x\in\mathbb{T}^d$ and $\Lambda\in\mathbb{R}^d$, find  $\widetilde{w}
\in C^{2,\alpha} _\#(\mathcal{Y}^d )/\mathbb{R}$, $\widetilde{m}
\in C^{1,\alpha} _\#(\mathcal{Y}^d )$, 
and $\widetilde{H}\in\mathbb{R}$, depending on $x$ and $\Lambda$,
such that  $ (\widetilde{w},\widetilde{m},\widetilde{H} )$ solves
\begin{align}
\label{CellProbEulerLagrangeEq}
\begin{split}
\begin{cases}
\frac{ |\Lambda+\nabla_y \widetilde{w} (x,\Lambda,y ) |^2}{2}+V (x,y
)=\ln \widetilde{m} (x,\Lambda,y )+\widetilde{H} (x,\Lambda ), \ &\text{in}\
\mathcal{Y}^d, \\
-\div_y \big(\widetilde{m} (x,\Lambda,y ) (\Lambda+\nabla_y \widetilde{w}
(x,\Lambda,y ) )\big )=0, \ &\text{in}\ \mathcal{Y}^d, \\
\int_{\mathcal{Y}^d} \widetilde{m} (x,\Lambda,y )dy=1.
\end{cases}
\end{split}
\end{align} 
\end{problem}

\begin{problem}[Homogenized problem]
\label{Homogenized}
Suppose that the assumptions in Problem~\ref{POCMFG} hold and
that   $ (\widetilde{w},\widetilde{m},\widetilde{H} )$ solves Problem~\ref{CellProbEulerLagrangeProb}. Find  
${u}_0\in  C^\infty (\mathbb{T}^d )$ with \(\int_{\Tt^d}
 u_0dx=0\),      $m_0\in  C^\infty (\mathbb{T}^d )$ with \(m_0>0\), and \(\overline{H}\in\Rr\) satisfying
\begin{align}
        \label{HomogenizedProblemEq}
        \begin{cases}
        \widetilde{H} (x,P+\nabla u_0 (x ) )=\ln (m_0 (x) )+\overline{H}(P),\ &\text{in} \ \mathbb{T}^d,\\
        -\div\big(m_0 (x )D_\Lambda\widetilde{H} (x, P+\nabla u_0 (x
)\big)=0,\ &\text{in} \ \mathbb{T}^d,\\
         \int_{\mathbb{T}^d}
m_0 dx=1. 
        \end{cases}
        \end{align} 
\end{problem}

The proof of Theorem~\ref{MainTheorem} is strongly hinged on
a variational analysis based on the equivalence between Problems~\ref{POCMFG} and \ref{VariationalProblem}. For this reason, we  now introduce the variational
formulation of Problems~\ref{TwoScaleHomogenized}, \ref{CellProbEulerLagrangeProb},
and
\ref{Homogenized}.
We start with the variational formulation of the two-scale homogenized  problem, Problem~\ref{TwoScaleHomogenized}.

\begin{problem}[Variational two-scale homogenized  problem]
        \label{TwoScaleMinimization}
        Fix  \(p\in(1,+\infty)\). Under the same assumptions of Problem~\ref{POCMFG} and
for some  $\alpha\in(0,1)$, find
${u}_0\in  C^\infty (\mathbb{T}^d )$ with \(\int_{\Tt^d}
 u_0dx=0\) and ${u}_1 \in C^\infty (\mathbb{T}^d;C^{2,\alpha}_\#
(\mathcal{Y}^d )/\mathbb{R} )$ satisfying
        \begin{align*}
        \overline{I} [{u}_0,{u}_1 ]=\inf_{\overset{u\in W^{1,p}
(\mathbb{T}^d ), \int_{\Tt^d}
 udx=0}{w\in L^{p} (\mathbb{T}^d;W^{1,p}_\# (\mathcal{Y}^d )/\mathbb{R}
)}}\overline{I} [u,w ],
        \end{align*}
        where $\overline{I}: {W}^{1,p}  (\mathbb{T}^d )\times{L}^p
(\mathbb{T}^d;{W}^{1,p}_\#(\mathcal{Y}^d )/\mathbb{R} )\rightarrow
\mathbb{R}$ is defined, for all $ (u,w )\in {W}^{1,p}  (\mathbb{T}^d
)\times{L}^p (\mathbb{T}^d;{W}^{1,p}_\# (\mathcal{Y}^d )/\mathbb{R}
)$, by
        \begin{align*}
        \overline{I} [u,w ]=\int_{\mathbb{T}^d}\int_{\mathcal{Y}^d}e^{
\frac{ |P+\nabla u (x )+\nabla_y w (x,y ) |^2}{2}+V (x,y )}dydx.
        \end{align*}
\end{problem} 

Finally, we introduce the variational
formulation of the homogenized  and its    associated cell problems, Problems~\ref{CellProbEulerLagrangeProb}
and
\ref{Homogenized}.

\begin{problem}[Variational cell problem]
        \label{TheCellProblem}
        Fix  \(p\in(1,+\infty)\) and suppose that the assumptions in Problem~\ref{POCMFG}
hold. For
some  $\alpha\in(0,1)$ and for each $x\in\mathbb{T}^d$
and $\Lambda\in\mathbb{R}^d$,
find  $\widetilde{w}
\in C^{2,\alpha} _\#(\mathcal{Y}^d )/\mathbb{R}$, depending on
\(x\) and \(\Lambda\), 
satisfying
        \begin{equation}
        \label{MinCellProblem}
        \widetilde{I} [x,\Lambda;\widetilde{w} ]=\inf_{{w}\in{W}^{1,p}_\#
(\mathcal{Y}^d )/\mathbb{R}  } \widetilde{I} [x,\Lambda;{w} ],
        \end{equation} 
        where $\widetilde{I}[x,\Lambda;\cdot]: {W}^{1,p}_\# (\mathcal{Y}^d
)/\mathbb{R} \rightarrow \mathbb{R}$ is defined, for all ${w}\in
{W}^{1,p}_\# (\mathcal{Y}^d
)/\mathbb{R}$,
by
        \begin{equation}
        \label{DefwidetildeI}
        \widetilde{I} [x,\Lambda;{w} ]=\int_{\mathcal{Y}^d}e^{\frac{
|\Lambda+\nabla{w}
(y ) |^2}{2}+V (x,y )}dy.
        \end{equation}        
\end{problem}

\begin{problem}[Variational homogenized problem]
        \label{TheLimitProblem}
        Fix  \(p\in(1,+\infty)\) and assume that the assumptions in Problem~\ref{POCMFG} hold.
Let $\widetilde{H}:\Tt^d\times \Rr^d\to\Rr$ be  defined,
for each $x\in\mathbb{T}^d$
and $\Lambda\in\mathbb{R}^d$,  by
\begin{equation}
\label{WidetildeH}
\widetilde{H} (x,\Lambda )=\ln \widetilde{I} [x,\Lambda;\widetilde{w}
], 
\end{equation} 
where  $\widetilde{w}$ solves Problem~\ref{TheCellProblem}. 
 Find $u_0\in
C^\infty (\mathbb{T}^d )$ satisfying \(\int_{\Tt^d}
 u_0dx=0\) and 
        \begin{equation}
        \label{MinLimitProblem}
        \widehat{I} [u_0 ]=\inf_{{u}\in {W}^{1,p}  (\mathbb{T}^d
), \int_{\Tt^d}
 udx=0}\widehat{I} [{u} ],
        \end{equation}
        where $\widehat{I}: {W}^{1,p} (\mathbb{T}^d )\rightarrow
\mathbb{R}$ is defined, for all ${u}\in {W}^{1,p} (\mathbb{T}^d
)$, by
        \begin{equation*}
        \widehat{I} [{u} ]=\int_{\mathbb{T}^d}e^{\widetilde{H}
(x,P+\nabla {u} (x ) )}dx.
        \end{equation*}
\end{problem}

\begin{remark}
\label{rmk:p7p4qui}
As what we stated for the relation between Problem~\ref{POCMFG} and Problem~\ref{VariationalProblem}, Problem~\ref{CellProbEulerLagrangeProb} is equivalent to Problem~\ref{TheCellProblem}. More precisely, \eqref{TheoremTwoScaleSystemEq} is the Euler-Lagrange equation to \eqref{DefwidetildeI}. Moreover, if \((\widetilde{w}, \widetilde{m}, \widetilde{H})\) solves Problem~\ref{CellProbEulerLagrangeProb}, then \(\widetilde{w}\) is the minimizer for Problem~\ref{TheCellProblem}. Conversely, if $\widetilde{w}$ is a solution to Problem~\ref{TheCellProblem}, defining $\widetilde{H}$ as in \eqref{WidetildeH} and $\widetilde{m}=e^{\frac{\left|\Lambda+\nabla_y\widetilde{w}\right|^2}{2}+V-\widetilde{H}}$, we see that \((\widetilde{w},\widetilde{m},\widetilde{H})\) solve Problem~\ref{CellProbEulerLagrangeProb}. Similar arguments hold for Problem~\ref{Homogenized} and Problem~\ref{TheLimitProblem} and for Problem~\ref{TwoScaleHomogenized} and Problem~\ref{TwoScaleMinimization}. 
\end{remark}

Our main result is stated in the following theorem. We refer to Section \ref{TwoScaleConvergence} for the definition and some properties of the notion of two-scale convergence. 
\begin{theorem}
        \label{MainTheorem}
        Let $ (u_\epsilon, m_\epsilon, \overline{H}_\epsilon )\in C^\infty (\mathbb{T}^d )\times C^\infty (\mathbb{T}^d )\times\mathbb{R}$ solve Problem~\ref{POCMFG}. If $d>1$, we assume further that $V$ is separable in $y$; that is, there exist smooth functions,  $V_i: \mathbb{T}^d\times\Rr\rightarrow\mathbb{R}$, where $1\leq i\leq d$, such that for all $x\in\mathbb{T}^d$ and $y\in\Rr^d$,  $y= (y_1,\dots,y_i,\dots,y_d )$, we have 
        \begin{equation}
        \label{SeparableV}
        V (x,y )=\sum_{i=1}^{d}V_i (x, y_i ).
        \end{equation}
        Then, there exists   $\alpha\in(0,1)$  and    there exist  $u_0\in C^\infty (\mathbb{T}^d )$ with \(\int_{\mathbb{T}^d} u_0 (x )dx=0\), $u_1\in C^\infty (\mathbb{T}^d;{{C^{2,\alpha}_\# (\mathcal{Y}^d )}}/\mathbb{R} )$,  $m\in  C^\infty (\mathbb{T}^d;{{C^{1,\alpha}_\# (\mathcal{Y}^d )}} )$ with \(\int_{\mathbb{T}^d}
\int_\Yd m(x,y )dxdy=1\),  and $\overline{H}(P)\in\Rr$  such that
$(\nabla u_\epsilon)_\epsilon$ weakly two-scale converges to $\nabla u_0+\nabla_y u_1$ in \( {L}^p (\mathbb{T}^d\times\mathcal{Y}^d )\) for all
\(p\in[1,\infty)\), $(m_\epsilon)_\epsilon$ weakly two-scale converges to
$m$
in \( {L}^1 (\mathbb{T}^d\times\mathcal{Y}^d )\), and   $(\overline{H}_\epsilon(P))_\epsilon$ converges to $\overline{H}$(P)
in \(\Rr\). Moreover,   $ (u_0, u_1, m, \overline{H} )$ is the {(unique)} solution to Problem~\ref{TwoScaleHomogenized}, and   $
(u_0, u_1 )$ is the {(unique)}
solution to Problem~\ref{TwoScaleMinimization}. 

Furthermore, setting \(m_0=\int_\Yd m(\cdot,y )dy\), we have that
\((u_\epsilon)_\epsilon$ weakly converges to \(u_0 \) in \(W^{1,p}(\Tt^d)\) for all
\(p\in[1,\infty)\), \((m_\epsilon
 )_\epsilon$ weakly converges to \( m_0 \) in \(L^1(\Tt^d)\),  $(u_0,  m_0,
\overline{H} )$ is the {(unique)}
solution to Problem~\ref{Homogenized}, and  $ u_0$ is the
{(unique)}
solution to Problem~\ref{TheLimitProblem}.
In addition, %
        \begin{equation}
        \label{OneHomoToTwoHomo}
        \widehat{I} [u_0 ]=\overline{I} [u_0,u_1 ]= \lim\limits_{\epsilon\rightarrow
0}I_\epsilon [u_\epsilon ]
        \end{equation}
and \(\overline{H}(P)=\ln \overline{I} [u_0,u_1 ]=\ln\widehat{I} [u_0 ]\).          
\end{theorem}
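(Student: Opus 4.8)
The plan is to prove Theorem~\ref{MainTheorem} by exploiting throughout the equivalence (summarized in Remark~\ref{rmk:p7p4qui}) between the MFG systems and their variational counterparts, so that compactness and $\Gamma$-convergence-type arguments can be run on the functionals $I_\epsilon$, $\widetilde I$, $\overline I$, $\widehat I$, and the regularity/uniqueness for the PDE systems then upgrades the variational information. Concretely, I would proceed in the following order.

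\textbf{Step 1: a priori bounds.} Starting from the minimality $I_\epsilon[u_\epsilon]=\inf I_\epsilon$, compare with the competitor $u\equiv 0$ to get $I_\epsilon[u_\epsilon]\le \int_{\Tt^d} e^{|P|^2/2 + V(x,x/\epsilon)}\,dx \le C$ uniformly in $\epsilon$ (using that $V$ is bounded on $\Tt^d\times\Yd$ by periodicity and smoothness). Since $e^{|P+\nabla u_\epsilon|^2/2}\le e^{\|V\|_\infty} m_\epsilon e^{\overline H_\epsilon}$ and, from \eqref{DefHEpsilon}, $\overline H_\epsilon(P)=\ln I_\epsilon[u_\epsilon]$ is bounded above; for the lower bound on $\overline H_\epsilon$ use Jensen on $I_\epsilon[u_\epsilon]\ge e^{\int (\ldots)}\ge e^{-\|V\|_\infty}$. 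This gives $\int_{\Tt^d} e^{|P+\nabla u_\epsilon|^2/2}\,dx\le C$, hence $(\nabla u_\epsilon)_\epsilon$ is bounded in every $L^p(\Tt^d)$, $1\le p<\infty$, and $(m_\epsilon)_\epsilon$ is bounded in $L^1$. Together with $\int u_\epsilon=0$ and Poincar\'e, $(u_\epsilon)_\epsilon$ is bounded in $W^{1,p}(\Tt^d)$.

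\textbf{Step 2: extract two-scale limits and identify the structure of the gradient.} By the compactness theorem for two-scale convergence (Section~\ref{TwoScaleConvergence}), along a subsequence $u_\epsilon \rightharpoonup u_0$ in $W^{1,p}$, $m_\epsilon$ two-scale converges to some $m(x,y)\ge 0$ with $\int\int m=1$, and $\overline H_\epsilon\to\overline H$; moreover $\nabla u_\epsilon$ two-scale converges to $\nabla u_0(x)+\nabla_y u_1(x,y)$ for some $u_1\in L^p(\Tt^d;W^{1,p}_\#(\Yd)/\Rr)$ — this is the standard structural lemma for two-scale limits of gradients. The heart of Step 2, and what I expect to be \textbf{the main obstacle}, is passing to the limit in the \emph{minimality} of $u_\epsilon$ to show $(u_0,u_1)$ minimizes $\overline I$: because the integrand $e^{|\xi|^2/2+V}$ has no polynomial growth, the usual lower-semicontinuity/recovery-sequence machinery of \cite{allaire1992homogenization} does not apply directly. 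I would handle the liminf inequality $\overline I[u_0,u_1]\le \liminf I_\epsilon[u_\epsilon]$ by a convexity/Jensen argument against smooth oscillating test functions $\psi(x,x/\epsilon)$ (using that $\xi\mapsto e^{|\xi|^2/2}$ is convex, write the tangent-plane inequality and two-scale test against $\psi$), and the limsup inequality by constructing, for any smooth competitor $(u,w)$, the oscillating recovery sequence $u_\epsilon(x)=u(x)+\epsilon w(x,x/\epsilon)$ and controlling $\int e^{|P+\nabla u+\nabla_y w + O(\epsilon)|^2/2+V(x,x/\epsilon)}\,dx \to \overline I[u,w]$ by dominated convergence (the $O(\epsilon)$ term is uniformly small, so the exponential is dominated); density of smooth $(u,w)$ in the admissible class closes the argument. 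This yields $\overline I[u_0,u_1]=\lim I_\epsilon[u_\epsilon]$ and that $(u_0,u_1)$ solves Problem~\ref{TwoScaleMinimization}.

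\textbf{Step 3: from the variational limit to the PDE systems, regularity, and uniqueness.} Writing the Euler--Lagrange equations of $\overline I$ gives \eqref{TheoremTwoScaleSystemEq} with $m(x,y)=e^{|P+\nabla u_0+\nabla_y u_1|^2/2+V-\overline H}$ and $\overline H=\ln\overline I[u_0,u_1]$; one checks this $m$ coincides with the two-scale limit from Step 2 (both satisfy the same continuity equations and the log-relation, and uniqueness of the limiting system forces equality — alternatively identify $m$ as the weak-$*$ limit of $m_\epsilon$ directly from the pointwise formula \eqref{eq:defmepsi} via two-scale convergence of $V(x,x/\epsilon)$ and of $|P+\nabla u_\epsilon|^2/2$ against the density $m_\epsilon$). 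Elliptic regularity applied iteratively to the coupled system — first the cell-direction equation in $y$ for fixed $x$ gives $u_1(x,\cdot)\in C^{2,\alpha}_\#$ and $m(x,\cdot)\in C^{1,\alpha}_\#$ with the stated Schauder estimates, then bootstrapping in $x$ via the $x$-averaged equation gives $C^\infty$ dependence on $x$ — upgrades $(u_0,u_1,m)$ to the claimed smoothness; here the separability hypothesis \eqref{SeparableV} for $d>1$ is what makes the cell problem tractable (it decouples into one-dimensional problems, for which explicit solvability/regularity is available). Uniqueness for Problems~\ref{TwoScaleHomogenized} and \ref{TwoScaleMinimization} follows from strict convexity of $\overline I$ in $(\nabla u,\nabla_y w)$ (the exponential of a strictly convex function is strictly convex) modulo the quotient, which forces $\nabla u_0$ and $\nabla_y u_1$ to be unique, hence $u_0$ (by the zero-average normalization) and $u_1$ (by the quotient normalization) unique, hence $m$ and $\overline H$ unique; since the limit is independent of the subsequence, the whole family converges. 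Finally, for the homogenized problem: averaging over $y$ — i.e., for fixed $x$ minimizing $\overline I$ first over $w$ with $\nabla u_0(x)+\nabla_y w$ and recognizing the inner minimization as the cell problem with $\Lambda=P+\nabla u_0(x)$ — gives $\overline I[u_0,u_1]=\int_{\Tt^d} e^{\widetilde H(x,P+\nabla u_0(x))}\,dx=\widehat I[u_0]$, so $u_0$ solves Problem~\ref{TheLimitProblem}; its Euler--Lagrange equation is \eqref{HomogenizedProblemEq} with $m_0=e^{\widetilde H(x,P+\nabla u_0)-\overline H}$, and one verifies $m_0=\int_\Yd m(\cdot,y)\,dy$ from the cell-problem relation $\widetilde m(x,\Lambda,y)=e^{|\Lambda+\nabla_y\widetilde w|^2/2+V-\widetilde H}$ together with $\int_\Yd \widetilde m=1$. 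Weak convergence $m_\epsilon\rightharpoonup m_0$ in $L^1(\Tt^d)$ is the $y$-average (test with functions of $x$ only) of the two-scale convergence $m_\epsilon\to m$. Uniqueness for Problems~\ref{Homogenized} and \ref{TheLimitProblem} again follows from (strict) convexity of $\widehat I$, which in turn reduces to convexity of $\Lambda\mapsto\widetilde H(x,\Lambda)$ — a standard fact for the effective Hamiltonian obtained as a value of a convex cell problem. This establishes \eqref{OneHomoToTwoHomo} and $\overline H(P)=\ln\overline I[u_0,u_1]=\ln\widehat I[u_0]$, completing the proof.
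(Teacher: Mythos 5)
Your scaffold (a priori bounds $\to$ two-scale compactness $\to$ liminf/limsup $\to$ regularity/uniqueness $\to$ averaging to the homogenized problem) matches the paper's, and you correctly identify the lack of polynomial growth as the obstacle and convex duality as the way past it; this is essentially what the paper does via its extension of Zhikov's two-scale lower-semicontinuity result (Proposition~\ref{TwoScaleConvLipshitz} and Remark~\ref{rmk:onlscwtsc}). However, there are three genuine gaps. First, $L^1$-boundedness of $(m_\epsilon)$ (which follows trivially from $\int m_\epsilon=1$) does \emph{not} give $L^1$ two-scale compactness; you need equi-integrability (Proposition~\ref{CompactnessL1}). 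Your Step~1 does not produce this. The paper gets it from the uniform bound $\int m_\epsilon\ln m_\epsilon\le C$ in \eqref{bddsmlnm}, obtained by the Lasry--Lions duality test (multiply the HJ equation by $m_\epsilon$, the Fokker--Planck equation by $u_\epsilon$, subtract and integrate), combined with the de la Vall\'ee Poussin criterion; the bound $\int e^{|P+\nabla u_\epsilon|^2/2}\,dx\le C$ that you derive is not by itself strong enough to force equi-integrability of $m_\epsilon=e^{|P+\nabla u_\epsilon|^2/2+V-\overline H_\epsilon}$, since boundedness in $L^1$ does not imply equi-integrability.

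Second, your identification of the two-scale limit $m$ of $(m_\epsilon)$ with $\widetilde m:=e^{|P+\nabla u_0+\nabla_y u_1|^2/2+V-\overline H}$ does not close as written. You suggest either passing to the limit in the continuity equation (which requires a weak$\times$weak product $m_\epsilon(P+\nabla u_\epsilon)\rightharpoonup m(P+\nabla u_0+\nabla_y u_1)$, not available without strong convergence of a factor) or passing the pointwise formula \eqref{eq:defmepsi} to the two-scale limit (which requires the nonlinear function $e^{|P+\nabla u_\epsilon|^2/2}$ to two-scale converge to $e^{|P+\nabla u_0+\nabla_y u_1|^2/2}$, again not available from mere weak two-scale convergence of $\nabla u_\epsilon$). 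The paper circumvents both problems with a one-sided argument: apply Proposition~\ref{TwoScaleConvLipshitz} with a nonnegative test function to get $\int\!\int m\phi\ge\int\!\int\widetilde m\phi$, hence $m\ge\widetilde m$ a.e., and then use $\int\!\int m=\int\!\int\widetilde m=1$ to force equality. Third, you misattribute the role of the separability hypothesis \eqref{SeparableV}: it is not needed to solve or regularize the cell problem (the continuation argument in Section~\ref{CellProblemSection} works for general smooth $V$); it is used \emph{only} to obtain the uniform lower bound on $\widetilde m$ in Proposition~\ref{UniformlyLowerboundnessofwidetildem}, which in turn yields uniform convexity of $\widetilde H$ (Proposition~\ref{UniformConvixity}) as required by Evans' framework for Problem~\ref{TheLimitProblem}. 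Two smaller points: your recovery sequence $u(x)+\epsilon w(x,x/\epsilon)$ must be normalized to have zero average, and your justification of its convergence should invoke the Carath\'eodory two-scale facts (Propositions~\ref{CaratheodoryFuncConv}, \ref{LipschitzStronglyTwoScaleConv}) rather than plain dominated convergence.
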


\begin{remark}\label{rmk:ongrady}
The term \(\nabla_y u_1\) in the two-scale limit of \((\nabla u_\epsilon)_\epsilon\) in the previous theorem may be regarded
as the gradient limit at the microscale \(y\). This \textit{extra information} on the oscillatory behavior of a bounded sequence
in \(W^{1,p}\) is one of the key features of the two-scale convergence
(see Proposition~\ref{GradientConvergence}).
\end{remark}

Theorem~\ref{MainTheorem} shows that 
Problems~\ref{TwoScaleHomogenized}--\ref{TheLimitProblem} provide the effective behavior of Problems~\ref{POCMFG} and \ref{VariationalProblem}.
Before proving Theorem~\ref{MainTheorem}, we illustrate
how the asymptotic expansion method heuristically leads to
the two-scale homogenized and the homogenized problems. Then,
in Section \ref{TwoScaleConvergence}, we recall the definition and some
properties of the notion of two-scale convergence, which is our main tool to prove Theorem~\ref{MainTheorem}. In 
Section~\ref{SectionOriginalMFGAnalysis}, we establish uniform
bounds in \(\epsilon\) of the solutions to Problems~\ref{POCMFG} and \ref{VariationalProblem} that yield the compactness properties
stated in Theorem~\ref{MainTheorem}. Next, in  Section~\ref{TwoScaleHomogeInOneDim},
we derive and explicitly solve the two-scale homogenized problem
as stated in Theorem~\ref{MainTheorem}  by  explicitly solving  Problem~\ref{POCMFG}
using the \textit{current method} introduced in \cite{Gomes2016b}.
In the one-dimensional case, the arguments in 
Section~\ref{TwoScaleHomogeInOneDim} constitute an alternative
to those in Section~\ref{TwoScaleHomogeInHigherDim}, where, using
the lower semi-continuity of convex functionals with respect to
two-scale convergence and the regularity of the minimizer to
Problem~\ref{TwoScaleMinimization}, we prove Theorem~\ref{MainTheorem} in any  dimension. We establish the existence, uniqueness, and regularity
of solutions to   
Problem~\ref{TwoScaleMinimization}  in 
Section~\ref{TheHomogenizedProb}. To this end, we first use the continuation method to prove
the existence, uniqueness, and regularity for the solution to
Problem~\ref{CellProbEulerLagrangeProb}. Thus, equivalently, Problem~\ref{TheCellProblem} admits a unique  solution. For Problems~\ref{Homogenized}\  and \ref{TheLimitProblem}, the well-posedness follows directly by Evans'  work \cite{evans2003some} after checking that $\widetilde{H}$ satisfies the assumptions in \cite{evans2003some}. We then conclude that Problems~\ref{TwoScaleHomogenized} and \ref{TwoScaleMinimization} admit a unique solution.

\begin{notations*}
        Throughout this manuscript,  $\epsilon$ stands for a small parameter taking values on a sequence of positive numbers converging to zero. Besides, given \(p\in(1,+\infty)\),  $p'$  represents the real number satisfying $1/p+1/{p'}=1$. We denote the transpose of a vector $v$ by $v^T$. For simplicity, we use the Einstein notation; that is, when an index  appears twice in a single term, it means that we sum that term over all the values of the index. For example, we write $\sum_{i=1}^d a_{ij}v_i$ as $a_{ij}v_i$ for short, where $a_{ij}$ and $v_i$ are real values indexed by $1\leq i,j\leq d$. For two Banach spaces, $X$ and $Y$, the set $L^p (X;Y )$ is the $L^p$-space on $X$ with values in $Y$. Similarly, $C^\infty (X;Y )$ denotes the space of smooth functions on $X$ with values in $Y$. We denote $X/\mathbb{R}$ the quotient space consisting of equivalent classes and each class contains elements in $X$, which only differ from each other by a real number. We denote by
 \(C^\infty_\#(\Yd)\) the space of \(C^\infty\) functions on \( \Rr^d\) that are \(\Yd\)-periodic. The spaces \(C^{k,\alpha}_\#(\Yd)\),
with \(k\in\Nn\) and \(\alpha\in(0,1)\),
are defined analogously.
Moreover, given \(p\in(1,+\infty)\), \(W^{1,p}_\#(\Yd)\) denotes
the closure of \(C^\infty_\#(\Yd)\) with respect to the \(W^{1,p}(\Yd)\)-norm.
 Finally,  $ |A |$ stands for the Lebesgue measure of the set $A$. 
\end{notations*}

\section{Asymptotic Expansions}
\label{HeuristicMethods}
In this section, we review the asymptotic expansion method and find its relation with two-scale convergence. 
The key step of the asymptotic expansion method is to introduce an ansatz for the solution of \eqref{POCMFGEq} and expand \eqref{POCMFGEq} in Taylor series. Then, 
by matching asymptotic terms in the resulting equations, we find the homogenized system. 
 
Here, we postulate the following forms for $u_\epsilon$ and $m_\epsilon$:
\begin{equation}
\label{Heuristicumep}
\begin{cases}
u_\epsilon (x )=\widetilde{u}_0 (x )+\epsilon \widetilde{u}_1 (x, \frac{x}{\epsilon} ),\\
m_\epsilon (x )=\widetilde{m}_0 (x ) (\widetilde{m}_1 (x,\frac{x}{\epsilon} )+\epsilon m_2 (x,\frac{x}{\epsilon} ) )
\end{cases}
\end{equation}
and use \eqref{Heuristicumep} in \eqref{POCMFGEq}.

At order $\epsilon^0$ in the first equation, we get
\begin{equation}
\label{uorderep0}
\frac{ |P+\nabla \widetilde{u}_0 (x )+\nabla_y\widetilde{u}_1 (x,\frac{x}{\epsilon} ) |^2}{2}+V\Big (x,\frac{x}{\epsilon} \Big)=\ln \widetilde{m}_0 (x )+\ln \widetilde{m}_1 \Big(x,\frac{x}{\epsilon} \Big)+\overline{H}.
\end{equation}
Denote $\widetilde{\Lambda}=P+\nabla \widetilde{u}_0 (x )$, $y=\frac{x}{\epsilon}$, and 
\begin{equation}
\label{Htideeq}
\widehat{H} (x,\widetilde{\Lambda} )=\ln \widetilde{m}_0 (x )+\overline{H}. 
\end{equation}
Then, \eqref{uorderep0} becomes
\begin{equation}
\label{CellProbFirsteq}
\frac{ |\widetilde{\Lambda}+\nabla_y \widetilde{u}_1 (x,y ) |^2}{2}+V (x,y )=\ln \widetilde{m}_1 (x,y )+\widehat{H} (x,\widetilde{\Lambda} ).
\end{equation}
The terms of order $\epsilon^0$ in the expansion of the second equation of \eqref{POCMFGEq} give
\begin{equation}
\label{morderep0}
-\div_x \big(\widetilde{m}_0 (x )\widetilde{m}_1 (x,y ) (\widetilde{\Lambda}+\nabla_y\widetilde{u}_1 (x,y ) )\big )=0.
\end{equation}
Integrating \eqref{morderep0} over $y$, we obtain 
\begin{equation}
\label{LimitSystemSecondEq}
-\div_x \bigg(\widetilde{m}_0 (x )\int_{\mathcal{Y}^d} (\widetilde{m}_1 (x,y ) (\widetilde{\Lambda}+\nabla_y\widetilde{u}_1 (x,y ) ) )dy \bigg)=0.
\end{equation}
Meanwhile, at order $\epsilon^{-1}$ in the expansion of \eqref{POCMFGEq}, we get
\begin{equation*}
\label{mordereminus1}
-\div_y \big(\widetilde{m}_0 (x )\widetilde{m}_1 (x,y ) (\widetilde{\Lambda}+\nabla_y\widetilde{u}_1 (x,y ) )\big )=0.
\end{equation*}
Since $\widetilde{m}_0>0$, we have 
\begin{equation}
\label{CellProblemeq2}
-\div_y \big(\widetilde{m}_1 (x,y ) (\widetilde{\Lambda}+\nabla_y\widetilde{u}_1 (x,y ) )\big )=0.
\end{equation}
Thus, considering \eqref{Htideeq} and \eqref{LimitSystemSecondEq}, the expected homogenized system of \eqref{POCMFGEq} is 
\begin{align}
\label{HeuristicLimitSystem}
\begin{cases}
\widehat{H} (x,P+\nabla \widetilde{u}_0 (x ) )=\ln \widetilde{m}_0 (x ) + \overline{H},\\
-\div\big (\widetilde{m}_0 (x )\widetilde{b} (x,P+\nabla \widetilde{u}_0 (x ) )\big )=0,
\end{cases}
\end{align}
where 
\begin{equation}
\label{defwidetideb}
\widetilde{b} (x,\widetilde{\Lambda} )=\int_{\mathcal{Y}^d}\widetilde{m}_1 (x,y ) (\widetilde{\Lambda}+\nabla_y\widetilde{u}_1 (x,y ) )dy
\end{equation}
and
$ (\widehat{H}, \widetilde{u}_1, \widetilde{m}_1 )$ solves \eqref{CellProbFirsteq} and \eqref{CellProblemeq2}, called the cell system; that is, for fixed $x\in\mathbb{T}^d$ and $\Lambda\in\mathbb{R}^d$, $ (\widehat{H}, \widetilde{u}_1, \widetilde{m}_1 )$ solves 
\begin{align}
\label{HeuristicCellProblem}
\begin{cases}
\frac{ |\widetilde{\Lambda}+\nabla_y \widetilde{u}_1 (x,y ) |^2}{2}+V (x,y )=\ln \widetilde{m}_1 (x,y )+\widehat{H} (x,\widetilde{\Lambda} ),\\
-\div_y \big(\widetilde{m}_1 (x,y ) (\widetilde{\Lambda}+\nabla_y \widetilde{u}_1 (x,y ) )\big )=0.
\end{cases}
\end{align}
Finally, we differentiate the first equation of \eqref{HeuristicCellProblem} with respect to $\widetilde{\Lambda}$ and get
\begin{equation*}
\left(\widetilde{\Lambda}+\nabla_y\widetilde{u}_1\right)+\left(\widetilde{\Lambda}+\nabla_y\widetilde{u}_1\right)^T\nabla_y\left(\nabla_{\widetilde{\Lambda}}\widetilde{u}_1\right)=\frac{\nabla_{\widetilde{\Lambda}}\widetilde{m}_1}{\widetilde{m}_1}+\nabla_{\widetilde{\Lambda}}\widehat{H}.
\end{equation*}
Multiplying both sides of the prior equation by $\widetilde{m}_1$ and integrating the resulting equation over $\mathcal{Y}^d$, we obtain
\begin{equation}
\label{HeuristicGradHhat}
\begin{aligned}
&\int_{\mathcal{Y}^d}\widetilde{m}_1\left(\widetilde{\Lambda}+\nabla_y\widetilde{u}_1\right) dy+\int_{\mathcal{Y}^d}\widetilde{m}_1\left(\widetilde{\Lambda}+\nabla_y\widetilde{u}_1\right)^T\nabla_y\left(\nabla_{\widetilde{\Lambda}}\widetilde{u}_1\right) dy\\
&=\int_{\mathcal{Y}^d}\nabla_{\widetilde{\Lambda}}\widetilde{m}_1 dy + \int_{\mathcal{Y}^d}\widetilde{m}_1 \nabla_{\widetilde{\Lambda}}\widehat{H}dy. 
\end{aligned}
\end{equation}
Using integration by parts and the second equation of \eqref{HeuristicCellProblem}, we get
\begin{equation}
\label{HeuristicGradHhatItn1}
\int_{\mathcal{Y}^d}\widetilde{m}_1\left(\widetilde{\Lambda}+\nabla_y\widetilde{u}_1\right)^T\nabla_y\left(\nabla_{\widetilde{\Lambda}}\widetilde{u}_1\right) dy=0.
\end{equation}
Besides, assuming that $\int_{\mathcal{Y}^d}\widetilde{m}_1 dy=1$, we have
\begin{equation}
\label{HeuristicGradHhatItn2}
\int_{\mathcal{Y}^d}\nabla_{\widetilde{\Lambda}}\widetilde{m}_1 dy = 0 \ \text{and} \ \int_{\mathcal{Y}^d}\widetilde{m}_1\nabla_{\widetilde{\Lambda}}\widehat{H}dy=\nabla_{\widetilde{\Lambda}}\widehat{H}. 
\end{equation}
Combining \eqref{HeuristicGradHhat}, \eqref{HeuristicGradHhatItn1}, and \eqref{HeuristicGradHhatItn2}, we conclude that
\begin{equation*}
\nabla_{\widetilde{\Lambda}}\widehat{H}=\int_{\mathcal{Y}^d}\widetilde{m}_1\left(\widetilde{\Lambda}+\nabla_y\widetilde{u}_1\right)dy,
\end{equation*}
which implies that $\nabla_{\widetilde{\Lambda}}\widehat{H}=\widetilde{b}$ according to the definition of $\widetilde{b}$ in \eqref{defwidetideb}. 

Therefore, the homogenized system in \eqref{HeuristicLimitSystem} found by the asymptotic method is consistent with  \eqref{HomogenizedProblemEq}  in  Problem~\ref{Homogenized} and the cell system \eqref{HeuristicCellProblem} corresponds to \eqref{CellProbEulerLagrangeEq} in Problem~\ref{CellProbEulerLagrangeProb}.

\section{Two-scale convergence}
\label{TwoScaleConvergence}
Because functions on $\mathbb{T}^d$ can be viewed as  $\Zz^d$-periodic function on $\mathbb{R}^d$, 
 the results on two-scale convergence for functions on bounded domains can be easily adapted to those on $\mathbb{T}^d$. Here, we review some essential results from  \cite{
 zhikov2004two, visintin2006towards,
 LuNgWa02}. Throughout this section, \(p\in (1,+\infty)\) and
\(p'=\frac{p}{p-1}\). 
\begin{definition}
        \label{TwoScaleConvDef}
     Let \(q\in[1,+\infty]\).   We say that a sequence, $ (w_\epsilon )_\epsilon$, in ${L}^q (\mathbb{T}^d )$ weakly two-scale converges to a function  $w\in {L}^q (\mathbb{T}^d\times\mathcal{Y}^d )$,  written as $w_\epsilon \overset{2}{\rightharpoonup} w$ in $ {L}^q (\mathbb{T}^d\times\mathcal{Y}^d )$, if for all $\psi\in {C}
        ^\infty (\mathbb{T}^d; C^\infty_\#(\mathcal{Y}^d) )$, we have 
        \begin{equation}
        \label{TwoScaleWeaklyConvDefEq}
        \lim\limits_{\epsilon\rightarrow 0}\int_{\mathbb{T}^d} w_\epsilon (x )\psi \Big(x,\frac{x}{\epsilon} \Big) dx=\int_{\mathbb{T}^d}\int_{\mathcal{Y}^d} w (x,y ) \psi (x,y ) dydx.
        \end{equation}
        Furthermore, we say that $ (w_\epsilon )_\epsilon$ strongly two-scale converges to $w$, denoted by $w_\epsilon \overset{2}{\rightarrow} w$ in $
{L}^q (\mathbb{T}^d\times\mathcal{Y}^d )$, if $w_\epsilon\overset{2}{\rightharpoonup}w$ in $
{L}^q (\mathbb{T}^d\times\mathcal{Y}^d )$ and 
        \begin{equation*}
        \lim\limits_{\epsilon\rightarrow 0} \|w_\epsilon \|_{{L}^q (\mathbb{T}^d )}= \|w \|_{{L}^q (\mathbb{T}^d\times\mathcal{Y}^d )}.
        \end{equation*}
\end{definition}

\begin{remark}\label{rmk:uniq}
If it exists, the  two-scale limit is
unique.\end{remark}

\begin{remark}\label{rmk:testfcts}
Assume that $ (w_\epsilon )_\epsilon $ is a bounded sequence in \(L^p(\mathbb{T}^d )\). Then, a density argument shows that
\eqref{TwoScaleWeaklyConvDefEq} holds for all  $\psi\in
{C}
        ^\infty (\mathbb{T}^d; C^\infty_\#(\mathcal{Y}^d) )$ if and only
        if it holds for all $\psi\in L^p(\mathbb{T}^d; C_\#(\mathcal{Y}^d) )$.
\end{remark}

The next proposition relates the usual strong and weak convergence with the two-scale counterpart. In particular, it shows that the two-scale weak limit contains more information on the periodic
oscillations of a sequence than the usual weak limit in $L^p$.
This is because the  usual weak limit  equals the average over the periodicity cell $\mathcal{Y}^d$ of the two-scale weak limit.  
\begin{pro}[cf. {\cite[Theorem 1.3]{visintin2006towards}}]
        \label{TwoScaleConvProduct}
        Let $ (w_\epsilon )_\epsilon$ be a bounded sequence in ${L}^p (\mathbb{T}^d )$ and  $w\in{L}^p (\mathbb{T}^d\times\mathcal{Y}^d )$. Then, 
        $$w_\epsilon \overset{2}{\rightharpoonup}  w \text{ in
        } {L}^p (\mathbb{T}^d\times\mathcal{Y}^d )  \Rightarrow \ w_\epsilon \rightharpoonup\int_{\mathcal{Y}^d}w (\cdot,y )dy \ \text{in}\ L^p (\mathbb{T}^d ).$$  
        Moreover,  if $w$ does not depend on $y$ or, in other words,          $w\in{L}^p (\mathbb{T}^d )$, then 
        $$\ w_\epsilon\overset{2}{\rightarrow} w \text{ in }
        {L}^p  (\mathbb{T}^d\times\mathcal{Y}^d )\Leftrightarrow w_\epsilon\rightarrow w \ \text{in}\ L^p (\mathbb{T}^d ).$$
\end{pro}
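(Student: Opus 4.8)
The plan is to establish the two assertions separately, in each case reducing matters to the definition of weak two-scale convergence, the classical weak-$*$ averaging lemma for rapidly oscillating periodic functions, and --- for one implication --- the uniform convexity of $L^p$ with $p\in(1,+\infty)$. Throughout I shall use that $|\Yd|=1$, so that $\|v\|_{L^p(\Tt^d\times\Yd)}=\|v\|_{L^p(\Tt^d)}$ whenever $v$ does not depend on $y$.

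For the first assertion I would test \eqref{TwoScaleWeaklyConvDefEq} against functions independent of the fast variable. Given $\varphi\in C^\infty(\Tt^d)$, the choice $\psi(x,y)=\varphi(x)$, for which $\psi(x,x/\epsilon)=\varphi(x)$, turns the definition of $w_\epsilon\overset{2}{\rightharpoonup}w$ into $\int_{\Tt^d}w_\epsilon\varphi\,dx\to\int_{\Tt^d}\big(\int_{\Yd}w(\cdot,y)\,dy\big)\varphi\,dx$. The function $x\mapsto\int_{\Yd}w(x,y)\,dy$ belongs to $L^p(\Tt^d)$ by H\"older's inequality (using $|\Yd|=1$) and Fubini's theorem, and since $(w_\epsilon)_\epsilon$ is bounded in $L^p(\Tt^d)$ and $C^\infty(\Tt^d)$ is dense in $L^{p'}(\Tt^d)$, a routine density/uniform-boundedness argument upgrades this convergence to all test functions in $L^{p'}(\Tt^d)$; this is precisely $w_\epsilon\rightharpoonup\int_{\Yd}w(\cdot,y)\,dy$ in $L^p(\Tt^d)$.

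For the second assertion, assume $w\in L^p(\Tt^d)$, regarded as independent of $y$. For ``$\Leftarrow$'', suppose $w_\epsilon\to w$ in $L^p(\Tt^d)$ and, for $\psi\in C^\infty(\Tt^d;C^\infty_\#(\Yd))$, split $\int_{\Tt^d}w_\epsilon(x)\psi(x,x/\epsilon)\,dx$ as $\int_{\Tt^d}(w_\epsilon-w)(x)\psi(x,x/\epsilon)\,dx+\int_{\Tt^d}w(x)\psi(x,x/\epsilon)\,dx$; the first term is bounded by $\|w_\epsilon-w\|_{L^p(\Tt^d)}\,\|\psi\|_{L^\infty}\to0$ by H\"older, while the second converges to $\int_{\Tt^d}\int_{\Yd}w\,\psi\,dy\,dx$ because $\psi(\cdot,\cdot/\epsilon)$ converges weak-$*$ in $L^\infty(\Tt^d)$ to its $y$-average and $w\in L^1(\Tt^d)$. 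Thus $w_\epsilon\overset{2}{\rightharpoonup}w$, and since $\|w_\epsilon\|_{L^p(\Tt^d)}\to\|w\|_{L^p(\Tt^d)}=\|w\|_{L^p(\Tt^d\times\Yd)}$, the convergence is strong two-scale. For ``$\Rightarrow$'', strong two-scale convergence gives both $w_\epsilon\overset{2}{\rightharpoonup}w$ --- whence, since $w$ is independent of $y$, $w_\epsilon\rightharpoonup w$ in $L^p(\Tt^d)$ by the first assertion --- and $\|w_\epsilon\|_{L^p(\Tt^d)}\to\|w\|_{L^p(\Tt^d\times\Yd)}=\|w\|_{L^p(\Tt^d)}$; weak convergence together with convergence of the norms in the uniformly convex space $L^p(\Tt^d)$, $p\in(1,+\infty)$, forces $w_\epsilon\to w$ in $L^p(\Tt^d)$ by the Radon--Riesz property.

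The step I expect to require the most care is the weak-$*$ convergence in $L^\infty(\Tt^d)$ of the oscillating function $x\mapsto\psi(x,x/\epsilon)$ to $\int_{\Yd}\psi(\cdot,y)\,dy$; on the torus this is the adaptation, alluded to at the start of this section, of the classical bounded-domain averaging lemma, obtained by partitioning $\Tt^d$ (up to a null set) into $\epsilon$-cells and using the uniform continuity of $\psi$ in $x$, or alternatively by first treating separated test functions $\psi(x,y)=a(x)b(y)$ and passing to the general case by density. Everything else reduces to H\"older's inequality, Fubini's theorem, density of smooth functions, and the uniform convexity of $L^p$.
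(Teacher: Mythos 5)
Your proof is correct. Note that the paper does not prove this proposition at all; it is stated as a citation of Visintin's Theorem 1.3, so there is no in-paper argument to compare against. What you give is the standard elementary proof, and it is sound: testing the definition of weak two-scale convergence against $y$-independent $\psi(x,y)=\varphi(x)$ immediately produces the $y$-averaged weak limit (with a routine density/uniform-boundedness upgrade from $C^\infty(\Tt^d)$ to $L^{p'}(\Tt^d)$); the ``$\Leftarrow$'' direction of the second assertion follows by splitting off $w_\epsilon - w$ and invoking the classical weak-$*$ averaging lemma for $\psi(\cdot,\cdot/\epsilon)$; and the ``$\Rightarrow$'' direction combines the first assertion with norm convergence and the Radon--Riesz property of $L^p$ for $p\in(1,\infty)$, which is exactly the role of the hypothesis $p\ne1,\infty$. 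You also correctly flag the one genuinely technical ingredient, the $L^\infty$-weak-$*$ averaging lemma on the torus, and sketch two standard routes to it ($\epsilon$-cell partition with uniform continuity, or separated test functions plus density). One small point worth making explicit if you write this up: in the ``$\Leftarrow$'' step you need only $w\in L^1(\Tt^d)$ to pair with the $L^\infty$-weak-$*$ limit, and on the finite-measure torus $L^p\hookrightarrow L^1$, so no extra hypothesis is hidden there; likewise, using $|\Tt^d|=|\Yd|=1$ is what makes $\|\psi(\cdot,\cdot/\epsilon)\|_{L^{p'}(\Tt^d)}\le\|\psi\|_{L^\infty}$ and $\|w\|_{L^p(\Tt^d)}=\|w\|_{L^p(\Tt^d\times\Yd)}$ immediate, exactly as you noted.
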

Next, we give a necessary and sufficient condition for two-scale strong convergence. 
\begin{pro}[cf. {\cite[Definition~4.3 and Lemma 4.4]{zhikov2004two}}]
        \label{StrongWeakProduct}
        Let $ (w_\epsilon )_\epsilon$ be a bounded sequence in ${L}^p (\mathbb{T}^d )$. Then, $w_\epsilon \overset{2}{\rightarrow} w$ in $
{L}^p (\mathbb{T}^d\times\mathcal{Y}^d )$ for some  $w\in {L}^p (\mathbb{T}^d\times\mathcal{Y}^d )$ if and only if  
        \begin{equation*}
        \lim_{\epsilon\rightarrow 0}\int_{\mathbb{T}^d}w_\epsilon (x )
        \phi_\epsilon (x )dx=\int_{\mathbb{T}^d}\int_{\mathcal{Y}^d}w
         (x,y )\phi (x,y )dydx
        \end{equation*}
        for any bounded sequence $ (\phi_\epsilon )_\epsilon\subset {L}^{p'} (\mathbb{T}^d )$ and any function $\phi\in L^{p'} (\mathbb{T}^d\times\mathcal{Y}^d )$ such that $\phi_\epsilon \overset{2}{\rightharpoonup} \phi$ in $
{L}^{p'} (\mathbb{T}^d\times\mathcal{Y}^d )$.
\end{pro}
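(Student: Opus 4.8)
The plan is to prove the two implications separately, using three standard facts about admissible oscillating test functions together with the uniform convexity of \(L^p\). Throughout, for \(\psi\in C^\infty(\mathbb{T}^d;C^\infty_\#(\mathcal{Y}^d))\) write \(\psi_\epsilon(x):=\psi(x,x/\epsilon)\); I will use that (i) \(\psi_\epsilon\) is bounded in every \(L^q(\mathbb{T}^d)\), \(\|\psi_\epsilon\|_{L^q(\mathbb{T}^d)}\to\|\psi\|_{L^q(\mathbb{T}^d\times\mathcal{Y}^d)}\), and \(\psi_\epsilon\overset{2}{\rightharpoonup}\psi\) — all consequences of the classical averaging property \(\int_{\mathbb{T}^d}g(x,x/\epsilon)\,dx\to\int_{\mathbb{T}^d}\int_{\mathcal{Y}^d}g(x,y)\,dy\,dx\) (applied to \(g=|\psi|^q\) and to \(g=\psi\chi\)); (ii) \(C^\infty(\mathbb{T}^d;C^\infty_\#(\mathcal{Y}^d))\) is dense in \(L^q(\mathbb{T}^d\times\mathcal{Y}^d)\) for \(q\in[1,\infty)\); and (iii) bounded sequences in \(L^q(\mathbb{T}^d)\), \(q\in(1,\infty)\), have weakly two-scale convergent subsequences (the fundamental two-scale compactness theorem). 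Testing \eqref{TwoScaleWeaklyConvDefEq} against \(\psi_\epsilon\), using H\"older's inequality and (i), and taking the supremum over \(\psi\) with \(\|\psi\|_{L^{q'}(\mathbb{T}^d\times\mathcal{Y}^d)}\leq1\) (dense by (ii)), one obtains the two-scale lower semicontinuity of the norm: if \(w_\epsilon\overset{2}{\rightharpoonup}w\) in \(L^q(\mathbb{T}^d\times\mathcal{Y}^d)\), then \(\|w\|_{L^q(\mathbb{T}^d\times\mathcal{Y}^d)}\leq\liminf_{\epsilon\to0}\|w_\epsilon\|_{L^q(\mathbb{T}^d)}\).

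\emph{Sufficiency.} Assume the displayed product convergence. Taking \(\phi_\epsilon=\psi_\epsilon\) gives \(w_\epsilon\overset{2}{\rightharpoonup}w\), hence \(\|w\|_{L^p(\mathbb{T}^d\times\mathcal{Y}^d)}\leq\liminf_\epsilon\|w_\epsilon\|_{L^p(\mathbb{T}^d)}\); it remains to bound the \(\limsup\) from above. Given \(\delta>0\), use (ii) to pick \(\psi\) with \(\|w-\psi\|_{L^p(\mathbb{T}^d\times\mathcal{Y}^d)}<\delta\), and apply the hypothesis to \(\phi_\epsilon:=|w_\epsilon-\psi_\epsilon|^{p-2}(w_\epsilon-\psi_\epsilon)\), which is bounded in \(L^{p'}(\mathbb{T}^d)\) (its norm equals \(\|w_\epsilon-\psi_\epsilon\|_{L^p(\mathbb{T}^d)}^{p-1}\)) and, along a subsequence, satisfies \(\phi_\epsilon\overset{2}{\rightharpoonup}\phi\) by (iii). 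Since \(\int_{\mathbb{T}^d}(w_\epsilon-\psi_\epsilon)\phi_\epsilon\,dx=\|w_\epsilon-\psi_\epsilon\|_{L^p(\mathbb{T}^d)}^p\) while \(\int_{\mathbb{T}^d}\psi_\epsilon\phi_\epsilon\,dx\to\int_{\mathbb{T}^d}\int_{\mathcal{Y}^d}\psi\phi\,dy\,dx\) by \eqref{TwoScaleWeaklyConvDefEq}, the hypothesis yields \(\|w_\epsilon-\psi_\epsilon\|_{L^p(\mathbb{T}^d)}^p\to\int_{\mathbb{T}^d}\int_{\mathcal{Y}^d}(w-\psi)\phi\,dy\,dx\); estimating this limit by H\"older and the lower semicontinuity of \(\|\phi_\epsilon\|_{L^{p'}(\mathbb{T}^d)}\) gives \(\lim_\epsilon\|w_\epsilon-\psi_\epsilon\|_{L^p(\mathbb{T}^d)}\leq\|w-\psi\|_{L^p(\mathbb{T}^d\times\mathcal{Y}^d)}<\delta\) along the subsequence, hence \(\limsup_\epsilon\|w_\epsilon-\psi_\epsilon\|_{L^p(\mathbb{T}^d)}\leq\delta\) by a routine subsequence argument. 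Therefore \(\limsup_\epsilon\|w_\epsilon\|_{L^p(\mathbb{T}^d)}\leq\delta+\|\psi\|_{L^p(\mathbb{T}^d\times\mathcal{Y}^d)}\leq2\delta+\|w\|_{L^p(\mathbb{T}^d\times\mathcal{Y}^d)}\), and letting \(\delta\to0\) shows \(\|w_\epsilon\|_{L^p(\mathbb{T}^d)}\to\|w\|_{L^p(\mathbb{T}^d\times\mathcal{Y}^d)}\), i.e. \(w_\epsilon\overset{2}{\rightarrow}w\).

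\emph{Necessity.} Assume \(w_\epsilon\overset{2}{\rightarrow}w\). The crucial step is the approximation estimate: for every \(\delta>0\) there is \(\psi\in C^\infty(\mathbb{T}^d;C^\infty_\#(\mathcal{Y}^d))\) with \(\|w-\psi\|_{L^p(\mathbb{T}^d\times\mathcal{Y}^d)}<\delta\) and \(\limsup_\epsilon\|w_\epsilon-\psi_\epsilon\|_{L^p(\mathbb{T}^d)}<\delta\). Granting this, for any bounded \((\phi_\epsilon)_\epsilon\) with \(\phi_\epsilon\overset{2}{\rightharpoonup}\phi\) the splitting \(\int_{\mathbb{T}^d}w_\epsilon\phi_\epsilon=\int_{\mathbb{T}^d}(w_\epsilon-\psi_\epsilon)\phi_\epsilon+\int_{\mathbb{T}^d}\psi_\epsilon\phi_\epsilon\) — where the second term tends to \(\int_{\mathbb{T}^d}\int_{\mathcal{Y}^d}\psi\phi\,dy\,dx\) and the first is \(O(\|w_\epsilon-\psi_\epsilon\|_{L^p(\mathbb{T}^d)})\) — combined with \(\int_{\mathbb{T}^d}\int_{\mathcal{Y}^d}\psi\phi\to\int_{\mathbb{T}^d}\int_{\mathcal{Y}^d}w\phi\) as \(\psi\to w\) in \(L^p(\mathbb{T}^d\times\mathcal{Y}^d)\), yields \(\limsup_\epsilon|\int_{\mathbb{T}^d}w_\epsilon\phi_\epsilon-\int_{\mathbb{T}^d}\int_{\mathcal{Y}^d}w\phi\,dy\,dx|\leq C\delta\) with \(C\) independent of \(\delta\); letting \(\delta\to0\) proves the assertion. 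To establish the approximation estimate, pick \(\psi\) with \(\|w-\psi\|_{L^p(\mathbb{T}^d\times\mathcal{Y}^d)}\) small and apply Clarkson's inequality to \(f=w_\epsilon\), \(g=\psi_\epsilon\): for \(p\geq2\),
\[ \left\|\tfrac{w_\epsilon-\psi_\epsilon}{2}\right\|_{L^p(\mathbb{T}^d)}^p\leq\tfrac12\bigl(\|w_\epsilon\|_{L^p(\mathbb{T}^d)}^p+\|\psi_\epsilon\|_{L^p(\mathbb{T}^d)}^p\bigr)-\left\|\tfrac{w_\epsilon+\psi_\epsilon}{2}\right\|_{L^p(\mathbb{T}^d)}^p, \]
so, using \(\|w_\epsilon\|_{L^p(\mathbb{T}^d)}\to\|w\|\), \(\|\psi_\epsilon\|_{L^p(\mathbb{T}^d)}\to\|\psi\|\), \(\tfrac{w_\epsilon+\psi_\epsilon}{2}\overset{2}{\rightharpoonup}\tfrac{w+\psi}{2}\), and the lower semicontinuity of the first paragraph,
\[ \limsup_{\epsilon\to0}\|w_\epsilon-\psi_\epsilon\|_{L^p(\mathbb{T}^d)}^p\leq2^p\Bigl(\tfrac12\|w\|^p+\tfrac12\|\psi\|^p-\bigl\|\tfrac{w+\psi}{2}\bigr\|^p\Bigr), \]
with norms over \(\mathbb{T}^d\times\mathcal{Y}^d\); the right-hand side tends to \(0\) as \(\psi\to w\) in \(L^p(\mathbb{T}^d\times\mathcal{Y}^d)\), so a sufficiently close \(\psi\) works. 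The case \(1<p<2\) is identical, using the Clarkson inequality \(\|f+g\|_{L^p}^{p'}+\|f-g\|_{L^p}^{p'}\leq2\bigl(\|f\|_{L^p}^p+\|g\|_{L^p}^p\bigr)^{p'-1}\) and the identity \(p(p'-1)=p'\).

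\emph{Main obstacle.} The delicate point is the approximation estimate in the Necessity part: extracting, from two-scale weak convergence together with the single scalar relation \(\|w_\epsilon\|_{L^p(\mathbb{T}^d)}\to\|w\|_{L^p(\mathbb{T}^d\times\mathcal{Y}^d)}\), the strong \(L^p(\mathbb{T}^d)\)-closeness of \(w_\epsilon\) to the oscillating functions \(\psi_\epsilon\). This is a two-scale analogue of the Radon--Riesz property, and it is here that the uniform convexity of \(L^p\) — hence the hypothesis \(p\in(1,\infty)\) — is needed; the remaining bookkeeping (subsequence extraction for the auxiliary sequences \(\phi_\epsilon\), and the stability of the product-convergence hypothesis under it) is routine. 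An alternative that avoids Clarkson's inequalities is to work through the periodic unfolding operator \(\mathcal{T}_\epsilon\), which on \(\mathbb{T}^d\) is an \(L^q\)-isometry with \(\int_{\mathbb{T}^d}w_\epsilon\phi_\epsilon=\int_{\mathbb{T}^d\times\mathcal{Y}^d}\mathcal{T}_\epsilon w_\epsilon\,\mathcal{T}_\epsilon\phi_\epsilon\) and which turns (weak) two-scale convergence into (weak) convergence in \(L^q(\mathbb{T}^d\times\mathcal{Y}^d)\); both implications then follow at once from the Radon--Riesz property in the uniformly convex space \(L^p(\mathbb{T}^d\times\mathcal{Y}^d)\).
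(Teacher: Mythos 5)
The paper does not prove this proposition at all---it merely cites \cite[Definition~4.3 and Lemma~4.4]{zhikov2004two} (an $L^2$ statement) and records the $L^p$ version without argument, so there is no in-paper proof to compare against. Your proof is a complete, essentially correct proof of the $L^p$ statement, and the Clarkson-inequality mechanism is precisely the tool the paper invokes elsewhere via \cite[(33) in the proof of Theorem~18]{LuNgWa02} (see the proof of Proposition~\ref{LipschitzStronglyTwoScaleConv}); you are reconstructing the expected argument. One point deserves more than a wave of the hand: in the sufficiency direction you apply the hypothesis to $\phi_\epsilon=|w_\epsilon-\psi_\epsilon|^{p-2}(w_\epsilon-\psi_\epsilon)$ along a subsequence on which it two-scale converges, but the hypothesis is stated for the full $\epsilon$-sequence. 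Making it pass to subsequences is true but not quite ``routine'': given a two-scale convergent subsequence $(\phi_{\epsilon_k})$, one must fill in the remaining indices---say with $\psi^{(n(\epsilon))}_\epsilon$ where $\psi^{(n)}\to\phi$ in $L^{p'}(\mathbb{T}^d\times\mathcal{Y}^d)$ are smooth and $n(\epsilon)\to\infty$ slowly enough (diagonalizing over a countable dense family of test functions, cf. Remark~\ref{rmk:testfcts})---so that the extended sequence remains bounded and still weakly two-scale converges to $\phi$; only then does the hypothesis yield the limit along the original subsequence. Zhikov's $L^2$ proof dodges this entirely because there one can take $\phi_\epsilon=w_\epsilon$, which already weakly two-scale converges on the full sequence by the hypothesis tested against smooth $\psi_\epsilon$; in $L^p$ the nonlinearity $t\mapsto|t|^{p-2}t$ destroys this shortcut, which is exactly why the subsequence device appears. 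Your closing observation about the periodic unfolding operator is the cleanest fix: since $\mathcal{T}_\epsilon$ is an $L^q$-isometry turning weak/strong two-scale convergence into ordinary weak/strong convergence in $L^q(\mathbb{T}^d\times\mathcal{Y}^d)$, both implications reduce to the Radon--Riesz property of the uniformly convex space $L^p$, and the subsequence subtlety disappears.
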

Below, we state a compactness result for the two-scale convergence. This result asserts that bounded sequences in $L^p (\mathbb{T}^d )$ are pre-compact with respect to the weak two-scale convergence in $L^p (\mathbb{T}^d )$.
\begin{pro}[cf. {\cite[Theorem~14]{LuNgWa02}}]
        \label{Compactness}
        Let $ (w_\epsilon )_\epsilon$ be a bounded sequence in ${L}^p (\mathbb{T}^d )$. Then, there exists a function, $w\in{L}^p (\mathbb{T}^d\times\mathcal{Y}^d )$, such that, up to a subsequence,  $w_\epsilon \overset{2}{\rightharpoonup} w$ in $
{L}^p (\mathbb{T}^d\times\mathcal{Y}^d )$.
\end{pro}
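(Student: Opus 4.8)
The plan is to reproduce the classical Nguetseng--Allaire argument, adapted to the torus as indicated at the beginning of this section. Write $E:=C^\infty(\Tt^d;C^\infty_\#(\Yd))$ and recall that $L^{p'}(\Tt^d\times\Yd)$ is separable and that $E$ is dense in it. The first, and main, step is to establish the \emph{mean-value (admissibility) property}: for every $\psi\in E$, the map $x\mapsto\psi(x,\tfrac{x}{\epsilon})$ is measurable on $\Tt^d$ and
\[
\lim_{\epsilon\to0}\int_{\Tt^d}\bigl|\psi(x,\tfrac{x}{\epsilon})\bigr|^{p'}\,dx=\int_{\Tt^d}\int_{\Yd}|\psi(x,y)|^{p'}\,dy\,dx .
\]
To prove this one approximates $\psi$ uniformly by finite sums $\sum_j a_j(x)b_j(y)$ with $a_j\in C(\Tt^d)$, $b_j\in C_\#(\Yd)$, uses that $b(\cdot/\epsilon)\rightharpoonup\int_{\Yd}b$ for each continuous $\Yd$-periodic $b$ (the Riemann--Lebesgue lemma for periodic functions), and passes to the limit; the same computation applied to $|\psi|^{p'}$, which is again continuous and $\Yd$-periodic in $y$, gives the displayed identity. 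In particular $\limsup_{\epsilon\to0}\|\psi(\cdot,\tfrac{\cdot}{\epsilon})\|_{L^{p'}(\Tt^d)}=\|\psi\|_{L^{p'}(\Tt^d\times\Yd)}$.

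Next, set $M:=\sup_\epsilon\|w_\epsilon\|_{L^p(\Tt^d)}<\infty$. For $\psi\in E$, Hölder's inequality together with the previous step yields
\[
\limsup_{\epsilon\to0}\Bigl|\int_{\Tt^d}w_\epsilon(x)\,\psi(x,\tfrac{x}{\epsilon})\,dx\Bigr|\le M\,\|\psi\|_{L^{p'}(\Tt^d\times\Yd)} .
\]
Fix a countable subset $\{\psi_k\}_{k\in\Nn}\subset E$ that is dense for the $L^{p'}$-norm. For each $k$ the sequence $\bigl(\int_{\Tt^d}w_\epsilon\,\psi_k(\cdot,\tfrac{\cdot}{\epsilon})\,dx\bigr)_\epsilon$ is bounded in $\Rr$, so a diagonal extraction produces a subsequence, not relabeled, along which $\int_{\Tt^d}w_\epsilon(x)\psi_k(x,\tfrac{x}{\epsilon})\,dx\to L(\psi_k)$ for every $k$. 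The uniform bound above and the density of $\{\psi_k\}$ then show that $\int_{\Tt^d}w_\epsilon(x)\psi(x,\tfrac{x}{\epsilon})\,dx$ converges along this subsequence for \emph{every} $\psi\in E$, to a limit $L(\psi)$, and that $L\colon E\to\Rr$ is linear with $|L(\psi)|\le M\,\|\psi\|_{L^{p'}(\Tt^d\times\Yd)}$.

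To conclude, note that since $E$ is dense in $L^{p'}(\Tt^d\times\Yd)$ and $1<p'<\infty$, the functional $L$ extends uniquely to a bounded linear functional on $L^{p'}(\Tt^d\times\Yd)$, whence by the Riesz representation of the dual of $L^{p'}$ there exists $w\in L^p(\Tt^d\times\Yd)$ with $L(\varphi)=\int_{\Tt^d}\int_{\Yd}w\varphi\,dy\,dx$ for all $\varphi\in L^{p'}(\Tt^d\times\Yd)$, in particular for all $\psi\in E$. Unwinding Definition~\ref{TwoScaleConvDef}, this is precisely the statement that $w_\epsilon\overset{2}{\rightharpoonup}w$ in $L^p(\Tt^d\times\Yd)$ along the extracted subsequence. (Alternatively, as noted at the start of the section, one may simply invoke \cite[Theorem~14]{LuNgWa02} after identifying functions on $\Tt^d$ with $\Zz^d$-periodic functions on $\Rr^d$.)

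The step I expect to be the main obstacle is the mean-value property of the first paragraph: besides the (minor) measurability issue for $x\mapsto\psi(x,\tfrac{x}{\epsilon})$, the essential point is to obtain the \emph{sharp} constant $\|\psi\|_{L^{p'}(\Tt^d\times\Yd)}$ in the limit, since it is exactly this sharpness that makes $L$ bounded for the $L^{p'}$-norm and hence representable by an $L^p$ function on $\Tt^d\times\Yd$; a cruder estimate would only place $w$ in a larger space and would not give weak two-scale convergence in $L^p$.
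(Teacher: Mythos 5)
The paper does not actually prove Proposition~\ref{Compactness}; it merely cites \cite[Theorem~14]{LuNgWa02} and relies on the remark at the start of Section~\ref{TwoScaleConvergence} that results for bounded domains transfer to $\Tt^d$. Your argument correctly reconstructs the standard Nguetseng--Allaire proof underlying that citation (mean-value property for admissible test functions in $C^\infty(\Tt^d;C^\infty_\#(\Yd))$, H\"older with the sharp $L^{p'}(\Tt^d\times\Yd)$ constant, diagonal extraction over a countable dense family, extension by density and Riesz representation of $(L^{p'})^*$), so it is consistent with the reference the paper invokes.
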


The next result asserts that Proposition~\ref{Compactness} holds
for \(p=1\) under an equi-integrability additional assumption.

\begin{pro}[cf. {\cite[Theorem~1.1]{BuFo15}}]
        \label{CompactnessL1}
        Let $ (w_\epsilon )_\epsilon$ be a bounded sequence in
${L}^1 (\mathbb{T}^d )$. Assume further that $ (w_\epsilon )_\epsilon$
is equi-integrable; that is, for all \(\delta>0\), there exists
\(\tau>0\) such that 
\begin{equation*}
\begin{aligned}
\sup_{\epsilon} \int_E |w_\epsilon| dx\leq \delta
\end{aligned}
\end{equation*}
whenever \(E\subset \Tt^d\) is a measurable set with \(|E|\leq
\tau\).    Then, there exists a function, $w\in{L}^1
(\mathbb{T}^d\times\mathcal{Y}^d )$, such that, up to a subsequence,
 $w_\epsilon \overset{2}{\rightharpoonup} w$ in $
{L}^1 (\mathbb{T}^d\times\mathcal{Y}^d )$.
In particular, \(w_\epsilon \rightharpoonup\int_{\mathcal{Y}^d}w (\cdot,y )dy\)
in \( L^1 (\mathbb{T}^d )\).
\end{pro}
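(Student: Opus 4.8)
The result coincides with \cite[Theorem~1.1]{BuFo15}; the plan is to run the compactness scheme behind Proposition~\ref{Compactness} and then use the equi-integrability hypothesis, via the Dunford--Pettis theorem, precisely at the point where for \(p>1\) one used the reflexivity of \(L^p\). Since \(C^\infty(\Tt^d;C^\infty_\#(\Yd))\) is separable for the uniform norm, I would first fix a countable dense subset \(\{\psi_k\}_{k\in\Nn}\) and set \(L_\epsilon(\psi):=\int_{\Tt^d}w_\epsilon(x)\,\psi(x,\tfrac{x}{\epsilon})\,dx\). The boundedness of \((w_\epsilon)_\epsilon\) in \(L^1(\Tt^d)\) gives \(|L_\epsilon(\psi)|\leq\|w_\epsilon\|_{L^1(\Tt^d)}\|\psi\|_{L^\infty}\leq C\|\psi\|_{L^\infty}\), so a diagonal extraction produces a subsequence (not relabeled) along which \(L_\epsilon(\psi_k)\) converges for every \(k\); by density and the uniform bound, \(L_\epsilon(\psi)\to L(\psi)\) for every \(\psi\in C^\infty(\Tt^d;C^\infty_\#(\Yd))\), and \(L\) extends to a bounded linear functional on the uniform closure \(C(\Tt^d\times\Yd)\) (continuous functions, \(\Yd\)-periodic in the second variable), hence is represented by a finite signed Radon measure \(\mu\) on \(\Tt^d\times\Yd\) via the Riesz representation theorem.

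The heart of the matter is to show that \(\mu\) is absolutely continuous with respect to Lebesgue measure, and this is exactly where equi-integrability enters. I would first record the standard Riemann--Lebesgue-type averaging fact that \(|A_\epsilon|\to|A|\) as \(\epsilon\to0\) for every measurable \(A\subseteq\Tt^d\times\Yd\), where \(A_\epsilon:=\{x\in\Tt^d:(x,\tfrac{x}{\epsilon})\in A\}\) (check it for boxes \(B\times Q\) by periodicity, then pass to general \(A\) by outer regularity). Now fix \(\delta>0\) and let \(\tau>0\) be as in the equi-integrability hypothesis. For any open \(A\subseteq\Tt^d\times\Yd\) with \(|A|<\tau\) we have \(|A_\epsilon|\leq\tau\) for all small \(\epsilon\) in the subsequence, so that for any continuous \(\psi\) supported in \(A\) with \(\|\psi\|_{L^\infty}\leq1\) one gets \(|L_\epsilon(\psi)|\leq\int_{A_\epsilon}|w_\epsilon|\,dx\leq\delta\); passing to the limit, \(|\mu|(A)=\sup\{|\mu(\psi)|:\psi\in C_c(A),\,\|\psi\|_{L^\infty}\leq1\}\leq\delta\). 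As \(\delta\) (and the associated \(\tau\)) were arbitrary, \(|\mu|\) vanishes on Lebesgue-null sets, so by Radon--Nikodym \(\mu=w\,dx\,dy\) for some \(w\in L^1(\Tt^d\times\Yd)\). By construction \(w_\epsilon\overset{2}{\rightharpoonup}w\) in \(L^1(\Tt^d\times\Yd)\) in the sense of Definition~\ref{TwoScaleConvDef}, along the chosen subsequence.

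For the final assertion, testing with \(\psi=\psi(x)\) independent of \(y\) yields \(\int_{\Tt^d}w_\epsilon\psi\,dx\to\int_{\Tt^d}\big(\int_{\Yd}w(x,y)\,dy\big)\psi(x)\,dx\) for every \(\psi\in C^\infty(\Tt^d)\), i.e.\ against a dense subclass of \(L^\infty(\Tt^d)\). Since \((w_\epsilon)_\epsilon\) is bounded and equi-integrable, it is relatively weakly compact in \(L^1(\Tt^d)\) by the Dunford--Pettis theorem, and every weak-\(L^1\) cluster point must agree with \(\int_{\Yd}w(\cdot,y)\,dy\) by the previous identity; hence the whole subsequence satisfies \(w_\epsilon\rightharpoonup\int_{\Yd}w(\cdot,y)\,dy\) in \(L^1(\Tt^d)\).

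The main obstacle is the absolute-continuity step: boundedness in \(L^1\) alone only produces the weak-\(\ast\) limit \(\mu\) in the space of Radon measures, which in general carries a singular part (this is precisely the concentration phenomenon that obstructs \(L^1\) compactness), and one must exploit equi-integrability together with the equidistribution of \(x\mapsto(x,x/\epsilon)\) to exclude it. An equivalent and slightly lighter bookkeeping replaces Steps~1--2 by transporting \((w_\epsilon)_\epsilon\) through the periodic unfolding operator \(\mathcal{T}_\epsilon\), which is an \(L^1\)-isometry onto its image and preserves equi-integrability, so that the Dunford--Pettis theorem applies directly in \(L^1(\Tt^d\times\Yd)\); I would choose whichever presentation keeps the technicalities shortest.
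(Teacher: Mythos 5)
The paper cites this result from \cite[Theorem~1.1]{BuFo15} without proof, so the only question is whether your argument is correct. Your plan — extract $\mu$ as a weak-$*$ limit of the functionals $L_\epsilon$ via Riesz representation and then show $\mu$ is absolutely continuous using equi-integrability — is sound in outline, but the absolute-continuity step contains a genuine gap. You assert that for every open $A\subseteq\Tt^d\times\Yd$ with $|A|<\tau$ one has $|A_\epsilon|\leq\tau$ for all small $\epsilon$, where $A_\epsilon=\{x:(x,\tfrac{x}{\epsilon})\in A\}$; this is false. The averaging identity $\lim_\epsilon|A_\epsilon|=|A|$ holds for sets whose indicator is Riemann integrable (e.g.\ finite unions of rectangles $B\times Q$), but for a general open $A$ the Portmanteau lemma only gives $\liminf_\epsilon|A_\epsilon|\geq|A|$, which is the \emph{wrong} direction. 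Concretely, given the sequence $(\epsilon_n)$, let $A_n$ be an open tube of measure less than $\tau\,2^{-n}$ around the graph $\{(x,\tfrac{x}{\epsilon_n}\bmod 1):x\in\Tt^d\}\subset\Tt^d\times\Yd$, and set $A=\bigcup_n A_n$; then $|A|<\tau$ is as small as you like while $(A_n)_{\epsilon_n}=\Tt^d$ and hence $|A_{\epsilon_n}|=1$ for every $n$. On such $A$ the equi-integrability hypothesis gives you no control over $\int_{A_\epsilon}|w_\epsilon|\,dx$.

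The gap is repairable without abandoning your plan: test only against open sets $U$ that are finite unions of open rectangles, for which $\lim_\epsilon|U_\epsilon|=|U|$ \emph{does} hold; given a compact $K$ with $|K|<\tau$, pick such a $U\supset K$ with $|U|<\tau$, run your estimate to get $|\mu|(U)\leq\delta$ and hence $|\mu|(K)\leq\delta$, and conclude absolute continuity via inner regularity of $|\mu|$. Your closing Dunford--Pettis argument for $w_\epsilon\rightharpoonup\int_\Yd w(\cdot,y)\,dy$ in $L^1(\Tt^d)$ is fine. As for the alternative you only sketch (periodic unfolding plus Dunford--Pettis in $L^1(\Tt^d\times\Yd)$), that is indeed the cleaner route and closer in spirit to \cite{BuFo15}, but the claim that $\mathcal{T}_\epsilon$ ``preserves equi-integrability'' is not automatic: it rests on a rearrangement-type estimate $\int|w|\,g\,dx\leq\sup_{|E|\leq\tau}\int_E|w|\,dx$ for measurable $0\leq g\leq1$ with $\int g\leq\tau$, applied to the slicewise densities of the unfolded test set, and you also need to pass from weak $L^1$ convergence of $\mathcal{T}_\epsilon w_\epsilon$ to two-scale convergence of $w_\epsilon$. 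Either route can be made to work, but as written your main argument is incomplete at the step indicated.
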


The $L^p$-norm is lower semi-continuous with respect to the weak topology in $L^p$. The next proposition shows that a  similar result holds with respect to weak two-scale convergence.
\begin{pro}[cf. {\cite[Theorem~17]{LuNgWa02}}]
        \label{LowerSemicontinuous}
        Let $ (w_\epsilon )_\epsilon$ be a bounded sequence in ${L}^p (\mathbb{T}^d )$ such that  $w_\epsilon\overset{2}{\rightharpoonup}w$ in $
{L}^p (\mathbb{T}^d\times\mathcal{Y}^d )$ for some $w\in {L}^p (\mathbb{T}^d\times\mathcal{Y}^d )$. Then,
        \begin{equation*}
        \liminf_{\epsilon\rightarrow 0} \|w_\epsilon \|_{{L}^p (\mathbb{T}^d )}\geq  \|w \|_{{L}^p (\mathbb{T}^d\times\mathcal{Y}^d )}\geq\bigg  \|\int_{\mathcal{Y}^d}w (\cdot,y )dy\bigg \|_{{L}^p (\mathbb{T}^d )}.
        \end{equation*} 
\end{pro}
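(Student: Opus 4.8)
The plan is to prove the two inequalities in turn; the first is the substantive one. To obtain $\liminf_{\epsilon\to0}\|w_\epsilon\|_{L^p(\Tt^d)}\geq\|w\|_{L^p(\Tt^d\times\Yd)}$, I would exploit the duality characterization of the $L^p$-norm on $\Tt^d\times\Yd$ together with the definition of weak two-scale convergence. Fixing an arbitrary $\psi\in C^\infty(\Tt^d;C^\infty_\#(\Yd))$, Hölder's inequality gives, for each $\epsilon$,
\[
\int_{\Tt^d}w_\epsilon(x)\,\psi\big(x,\tfrac{x}{\epsilon}\big)\,dx\;\leq\;\|w_\epsilon\|_{L^p(\Tt^d)}\,\|\psi(\cdot,\tfrac{\cdot}{\epsilon})\|_{L^{p'}(\Tt^d)}.
\]
As $\epsilon\to0$, the left-hand side tends to $\int_{\Tt^d}\int_{\Yd}w\psi\,dy\,dx$ by \eqref{TwoScaleWeaklyConvDefEq}, and $\|\psi(\cdot,\tfrac{\cdot}{\epsilon})\|_{L^{p'}(\Tt^d)}\to\|\psi\|_{L^{p'}(\Tt^d\times\Yd)}$ by the elementary averaging property of periodically oscillating functions applied to the continuous, $\Yd$-periodic map $|\psi|^{p'}$ (recall $|\Yd|=1$). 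Passing to the $\liminf$, then taking the supremum over all $\psi$ with $\|\psi\|_{L^{p'}(\Tt^d\times\Yd)}\leq1$, and invoking the density of $C^\infty(\Tt^d;C^\infty_\#(\Yd))$ in $L^{p'}(\Tt^d\times\Yd)$ together with $(L^{p'}(\Tt^d\times\Yd))^\ast=L^p(\Tt^d\times\Yd)$, yields the desired bound.

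For the second inequality, $\|w\|_{L^p(\Tt^d\times\Yd)}\geq\|\int_{\Yd}w(\cdot,y)\,dy\|_{L^p(\Tt^d)}$, I would simply apply Jensen's inequality: since $dy$ is a probability measure on $\Yd$ and $t\mapsto|t|^p$ is convex, $|\int_{\Yd}w(x,y)\,dy|^p\leq\int_{\Yd}|w(x,y)|^p\,dy$ for a.e.\ $x\in\Tt^d$; integrating over $\Tt^d$ and taking $p$-th roots finishes the argument. (Equivalently, this is Minkowski's integral inequality, so one could instead cite that directly.)

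The only point that requires any care is the convergence $\|\psi(\cdot,\tfrac{\cdot}{\epsilon})\|_{L^{p'}(\Tt^d)}\to\|\psi\|_{L^{p'}(\Tt^d\times\Yd)}$ and the attendant density/duality step; both reduce to the standard fact — underlying Definition~\ref{TwoScaleConvDef} itself — that $\int_{\Tt^d}g(x,\tfrac{x}{\epsilon})\,dx\to\int_{\Tt^d}\int_{\Yd}g(x,y)\,dy\,dx$ for every $g\in C(\Tt^d;C_\#(\Yd))$, here with $g=|\psi|^{p'}$. No genuine obstacle arises beyond this; the rest is routine.
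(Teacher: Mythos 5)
Your argument is correct and is the standard proof of this lower semicontinuity result; the paper itself does not prove the proposition but cites it from \cite[Theorem~17]{LuNgWa02}, whose proof proceeds by exactly this duality argument. Both steps are sound: for the first inequality, H\"older plus the definition of two-scale convergence plus the averaging property $\int_{\Tt^d}|\psi(x,\tfrac{x}{\epsilon})|^{p'}dx\to\int_{\Tt^d}\int_{\Yd}|\psi(x,y)|^{p'}dy\,dx$ (a special case of Proposition~\ref{CaratheodoryFuncConv}), followed by the supremum over test functions and density of $C^\infty(\Tt^d;C^\infty_\#(\Yd))$ in $L^{p'}(\Tt^d\times\Yd)$; for the second, Jensen's inequality using that $\Yd$ has unit measure. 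No gaps.
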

Next, we recall the notion of Carath\'{e}odory functions. These functions are used to provide an important example of sequences that two-scale converge, as stated in the subsequent proposition. 
\begin{definition}
        \label{CaratheodoryFunc}
        A function $\Phi: \mathbb{T}^d\times\Rr^d\rightarrow\mathbb{R}$ is  a {\em Carath\'{e}odory function} if $\Phi (\cdot,y )$ is continuous for a.e. $y\in\Rr^d$ and $\Phi (x,\cdot )$ is measurable
and \(\Yd\)-periodic for every $x\in\mathbb{T}^d$.
\end{definition}

\begin{pro}[cf. {\cite[Lemma 4.5]{zhikov2004two}}]
\label{CaratheodoryFuncConv}
        Let $\Phi: \mathbb{T}^d\times\Rr^d\rightarrow\mathbb{R}$  be a Carath\'{e}odory function such that  $ |\Phi (x,y ) |\leq \Phi_0 (y )$ for all $x\in\mathbb{T}^d$, for a.e. $y\in\mathcal{Y}^d$, and for some $\Phi_0\in{L}^{p}_\# (\mathcal{Y}^d )$. Let $\epsilon>0$ and $x\in\mathbb{T}^d$, and set $\Phi_\epsilon (x )=\Phi (x,\frac{x}{\epsilon} )$. Then, 
        $$\Phi_\epsilon\overset{2}{\rightarrow} \Phi \text{ in $
{L}^{p} (\mathbb{T}^d\times\mathcal{Y}^d )$}.$$
\end{pro}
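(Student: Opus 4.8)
The plan is to check directly the two defining requirements of strong two-scale convergence in Definition~\ref{TwoScaleConvDef}: first that $\Phi_\epsilon\overset{2}{\rightharpoonup}\Phi$, and then that $\|\Phi_\epsilon\|_{L^p(\Tt^d)}\to\|\Phi\|_{L^p(\Tt^d\times\Yd)}$. The preliminary observation is that the Carath\'eodory hypothesis together with the bound $|\Phi(x,y)|\leq\Phi_0(y)$ let us regard $\Phi$ as an element of $L^p_\#(\Yd;C(\Tt^d))$: for a.e.\ $y$ the slice $\Phi(\cdot,y)$ belongs to $C(\Tt^d)$ with $\|\Phi(\cdot,y)\|_{C(\Tt^d)}\leq\Phi_0(y)$, the assignment $y\mapsto\Phi(\cdot,y)$ is strongly (Bochner) measurable (separability of $C(\Tt^d)$ and measurability of the evaluations $y\mapsto\Phi(x,y)$), and $\int_{\Yd}\|\Phi(\cdot,y)\|_{C(\Tt^d)}^p\,dy\leq\|\Phi_0\|_{L^p(\Yd)}^p<\infty$. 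Since a Carath\'eodory function is jointly measurable, $\Phi\in L^p(\Tt^d\times\Yd)$ as well, and because $|\Phi_\epsilon(x)|\leq\Phi_0(x/\epsilon)$ the sequence $(\Phi_\epsilon)_\epsilon$ is bounded in $L^p(\Tt^d)$ (cf.\ Remark~\ref{rmk:testfcts}).

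The analytic core is the classical averaging lemma (see, e.g., \cite{CiDo99}): if $G\in C(\Tt^d\times\Yd)$ is $\Yd$-periodic in its second variable, then $\int_{\Tt^d}G(x,x/\epsilon)\,dx\to\int_{\Tt^d}\int_{\Yd}G(x,y)\,dy\,dx$, the proof being uniform continuity of $G$ in $x$ combined with the mean-value property of periodic functions on small cubes. By density of $C_\#(\Yd)$ in $L^1_\#(\Yd)$, this gives in particular $\int_{\Tt^d}h(x/\epsilon)\,dx\to\int_{\Yd}h(y)\,dy$ for every $h\in L^1_\#(\Yd)$. This lemma settles the \emph{continuous case}: if $\Phi\in C(\Tt^d\times\Yd)$, $\Yd$-periodic in $y$, then applying it to $G=\Phi\psi$ for each smooth test function $\psi$ yields $\Phi_\epsilon\overset{2}{\rightharpoonup}\Phi$, while applying it to $G=|\Phi|^p$ yields $\|\Phi_\epsilon\|_{L^p(\Tt^d)}\to\|\Phi\|_{L^p(\Tt^d\times\Yd)}$; hence $\Phi_\epsilon\overset{2}{\rightarrow}\Phi$ in this case.

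For a general $\Phi$, I would approximate it in $L^p_\#(\Yd;C(\Tt^d))$ by a sequence $(\Phi^k)_k$ with each $\Phi^k\in C(\Tt^d\times\Yd)$, $\Yd$-periodic in $y$ (standard density of continuous Banach-valued functions on the torus in $L^p$, or Stone--Weierstrass for the span of products $a(x)b(y)$); this also gives $\Phi^k\to\Phi$ in $L^p(\Tt^d\times\Yd)$. The crucial uniform-in-$\epsilon$ estimate then comes from setting $g_k(y):=\|\Phi(\cdot,y)-\Phi^k(\cdot,y)\|_{C(\Tt^d)}$, so that $g_k\in L^p_\#(\Yd)$, $\|g_k\|_{L^p(\Yd)}=\|\Phi-\Phi^k\|_{L^p_\#(\Yd;C(\Tt^d))}$, and $|\Phi_\epsilon(x)-\Phi^k_\epsilon(x)|\leq g_k(x/\epsilon)$ for a.e.\ $x$; the averaging consequence applied to $h=g_k^p$ then gives $\limsup_{\epsilon\to0}\|\Phi_\epsilon-\Phi^k_\epsilon\|_{L^p(\Tt^d)}\leq\|\Phi-\Phi^k\|_{L^p_\#(\Yd;C(\Tt^d))}\to0$ as $k\to\infty$. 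With this, weak two-scale convergence of $(\Phi_\epsilon)_\epsilon$ to $\Phi$ follows by a three-term splitting of $\int_{\Tt^d}\Phi_\epsilon\psi(\cdot,\cdot/\epsilon)\,dx-\int_{\Tt^d}\int_{\Yd}\Phi\psi\,dy\,dx$ through $\Phi^k$ (the continuous case for the middle term, the uniform estimate and boundedness of $\psi$ for the first, $\Phi^k\to\Phi$ in $L^p(\Tt^d\times\Yd)$ for the last), and the norm convergence follows by the triangle inequality, comparing $\|\Phi_\epsilon\|_{L^p(\Tt^d)}$ successively with $\|\Phi^k_\epsilon\|_{L^p(\Tt^d)}$ (uniform estimate), with $\|\Phi^k\|_{L^p(\Tt^d\times\Yd)}$ (continuous case), and with $\|\Phi\|_{L^p(\Tt^d\times\Yd)}$ (approximation).

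The only genuine difficulty — past the routine bookkeeping of identifying the Carath\'eodory function with an element of $L^p_\#(\Yd;C(\Tt^d))$ and invoking the averaging lemma — is securing a bound on $\|\Phi_\epsilon-\Phi^k_\epsilon\|_{L^p(\Tt^d)}$ that is uniform in $\epsilon$; this is precisely where the domination hypothesis $|\Phi(x,y)|\leq\Phi_0(y)$ with $\Phi_0\in L^p_\#(\Yd)$ is indispensable, since it is what allows the $x$-dependence of $\Phi-\Phi^k$ to be absorbed into a single $\Yd$-periodic $L^p$ profile $g_k(x/\epsilon)$ whose $\epsilon$-averages are controlled. An alternative, shorter route would avoid the density step by using the periodic unfolding operator: unfolding $\Phi_\epsilon$ produces, up to a translation of size $O(\epsilon)$ in $x$, the function $\Phi$ itself, and one concludes by dominated convergence with majorant $\Phi_0^p$.
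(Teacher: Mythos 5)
The paper does not itself prove Proposition~\ref{CaratheodoryFuncConv}; it quotes it from \cite[Lemma~4.5]{zhikov2004two}, so there is no in-paper argument to compare against. Your proof is correct, and it follows the route that is standard for ``admissible test function'' lemmas in the two-scale literature (cf.\ \cite{allaire1992homogenization,LuNgWa02,zhikov2004two}): use the domination $|\Phi(x,y)|\le\Phi_0(y)$ to regard $\Phi$ as an element of $L^p_\#(\Yd;C(\Tt^d))$; dispose of the case $\Phi\in C(\Tt^d\times\Yd)$ by the Riemann--Lebesgue averaging lemma applied to $\Phi\psi$ (for the weak two-scale limit) and to $|\Phi|^p$ (for the norms); and pass to the general case by a three-term splitting whose uniform-in-$\epsilon$ control
\[
\limsup_{\epsilon\to 0}\|\Phi_\epsilon-\Phi^k_\epsilon\|_{L^p(\Tt^d)}\le\|\Phi-\Phi^k\|_{L^p_\#(\Yd;C(\Tt^d))}
\]
you obtain by dominating $|\Phi_\epsilon-\Phi^k_\epsilon|$ by the scalar periodic profile $g_k(\cdot/\epsilon)$ and averaging $g_k^p$. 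You are also right that this domination step is exactly where $\Phi_0\in L^p_\#(\Yd)$, as opposed to mere joint $L^p$ integrability of $\Phi$, is indispensable, and the periodic-unfolding shortcut you sketch at the end is a legitimate alternative. Two small points you gloss over and should make explicit in a written-out version: the measurability of $\Phi_\epsilon$ relies on joint measurability of $\Phi$, which follows from the Carath\'eodory hypothesis but is worth stating; and the a.e.\ bounds in $y$ transfer to a.e.\ bounds in $x$ only because the $\Zz^d$-periodization of a null set $N\subset\Yd$, once dilated by $\epsilon$, remains null in $\Rr^d$, which is what licenses the substitution $y=x/\epsilon$ inside almost-everywhere statements. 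Neither affects the validity of the argument.
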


The next proposition allows us to extend the class of test functions in the definition of two-scale convergence. 
\begin{proposition}[cf. {\cite[Lemma 4.6]{zhikov2004two}}]
        \label{CaratheodoryTestFunc}
        Suppose that  $ (w_\epsilon )_\epsilon\subset{L}^p (\mathbb{T}^d )$ is  such that  $w_\epsilon \overset{2}{\rightharpoonup} w$ in $
{L}^p (\mathbb{T}^d\times\mathcal{Y}^d )$ for some   $w\in{L}^p (\mathbb{T}^d\times\mathcal{Y}^d )$. Let $\Phi$ be as in Proposition~\ref{CaratheodoryFuncConv} with \(p'\) in place of \(p\). Then, 
        \begin{equation*}
        \lim_{\epsilon\rightarrow 0}\int_{\mathbb{T}^d}w_\epsilon (x )\Phi\Big (x,\frac{x}{\epsilon} \Big)dx=\int_{\mathbb{T}^d}\int_{\mathcal{Y}^d} w (x,y )\Phi (x,y )dydx.
        \end{equation*}
\end{proposition}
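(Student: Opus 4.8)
The plan is to read the claimed convergence as a special case of the weak--strong two-scale pairing that is already contained in Proposition~\ref{StrongWeakProduct}, after matching $(w_\epsilon)_\epsilon$ with the weakly two-scale convergent factor and the sequence of oscillating coefficients $\Phi_\epsilon(x):=\Phi(x,\tfrac{x}{\epsilon})$ with a \emph{strongly} two-scale convergent factor. First I would observe that, by hypothesis, $\Phi$ is a Carath\'eodory function with $|\Phi(x,y)|\le\Phi_0(y)$ for some $\Phi_0\in L^{p'}_\#(\mathcal{Y}^d)$; hence Proposition~\ref{CaratheodoryFuncConv}, applied with $p'$ in place of $p$, yields $\Phi_\epsilon\overset{2}{\rightarrow}\Phi$ in $L^{p'}(\mathbb{T}^d\times\mathcal{Y}^d)$. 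In particular $\|\Phi_\epsilon\|_{L^{p'}(\mathbb{T}^d)}\to\|\Phi\|_{L^{p'}(\mathbb{T}^d\times\mathcal{Y}^d)}$, so $(\Phi_\epsilon)_\epsilon$ is bounded in $L^{p'}(\mathbb{T}^d)$; and, as is standing in this section, $(w_\epsilon)_\epsilon$ is bounded in $L^{p}(\mathbb{T}^d)$.

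Next I would invoke Proposition~\ref{StrongWeakProduct} with the pair of conjugate exponents interchanged: there, take the bounded, strongly two-scale convergent sequence to be $(\Phi_\epsilon)_\epsilon\subset L^{p'}(\mathbb{T}^d)$ with two-scale limit $\Phi$, so that the role of ``$p$'' is played by $p'$ and the role of ``$p'$'' by $(p')'=p$. The ``only if'' implication of that proposition then states exactly that $\int_{\mathbb{T}^d}\Phi_\epsilon(x)\,v_\epsilon(x)\,dx\to\int_{\mathbb{T}^d}\int_{\mathcal{Y}^d}\Phi(x,y)\,v(x,y)\,dy\,dx$ for every bounded $(v_\epsilon)_\epsilon\subset L^{p}(\mathbb{T}^d)$ with $v_\epsilon\overset{2}{\rightharpoonup}v$ in $L^{p}(\mathbb{T}^d\times\mathcal{Y}^d)$. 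Taking $v_\epsilon=w_\epsilon$ and $v=w$ gives the assertion.

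If one prefers an argument that avoids citing Proposition~\ref{StrongWeakProduct} with swapped exponents, the same conclusion follows by a direct approximation. Since $|\Phi|\le\Phi_0\in L^{p'}_\#(\mathcal{Y}^d)$ we have $\Phi\in L^{p'}(\mathbb{T}^d\times\mathcal{Y}^d)$, so for any $\sigma>0$ there is $\psi\in C^\infty(\mathbb{T}^d;C^\infty_\#(\mathcal{Y}^d))$ with $\|\Phi-\psi\|_{L^{p'}(\mathbb{T}^d\times\mathcal{Y}^d)}<\sigma$. Writing $\int_{\mathbb{T}^d}w_\epsilon\Phi_\epsilon\,dx=\int_{\mathbb{T}^d}w_\epsilon(\Phi_\epsilon-\psi_\epsilon)\,dx+\int_{\mathbb{T}^d}w_\epsilon\psi_\epsilon\,dx$ with $\psi_\epsilon(x)=\psi(x,\tfrac{x}{\epsilon})$, the second term converges to $\int_{\mathbb{T}^d}\int_{\mathcal{Y}^d}w\psi$ by the very definition of weak two-scale convergence, while the first term is bounded by $\|w_\epsilon\|_{L^p(\mathbb{T}^d)}\|\Phi_\epsilon-\psi_\epsilon\|_{L^{p'}(\mathbb{T}^d)}$ via H\"older; since $\Phi-\psi$ is again Carath\'eodory and dominated by $\Phi_0+\|\psi\|_{C^0(\mathbb{T}^d\times\mathcal{Y}^d)}\in L^{p'}_\#(\mathcal{Y}^d)$, Proposition~\ref{CaratheodoryFuncConv} gives $\|\Phi_\epsilon-\psi_\epsilon\|_{L^{p'}(\mathbb{T}^d)}\to\|\Phi-\psi\|_{L^{p'}(\mathbb{T}^d\times\mathcal{Y}^d)}<\sigma$. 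Letting $\epsilon\to0$ and then $\sigma\to0$ (also using $\|\Phi-\psi\|_{L^{p'}}<\sigma$ once more to replace $\int\int w\psi$ by $\int\int w\Phi$) finishes the proof.

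There is no serious analytic obstacle here: the statement is bookkeeping on top of Propositions~\ref{CaratheodoryFuncConv} and \ref{StrongWeakProduct}. The points that need a little care are (i) interchanging the conjugate exponents correctly when citing Proposition~\ref{StrongWeakProduct} (or, in the second approach, keeping track of which exponent plays which role); (ii) the boundedness of $(w_\epsilon)_\epsilon$ in $L^p(\mathbb{T}^d)$, which is what makes those propositions applicable and which is part of the two-scale convergence setting; and (iii), in the approximation variant, verifying that the error $\Phi-\psi$ still meets the hypotheses of Proposition~\ref{CaratheodoryFuncConv}, so that the $L^{p'}$-norm of $\Phi_\epsilon-\psi_\epsilon$ converges to the ``right'' limit rather than merely staying bounded.
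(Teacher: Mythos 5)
Your proof is correct on both routes. The paper does not actually prove Proposition~\ref{CaratheodoryTestFunc}; it cites it from \cite[Lemma~4.6]{zhikov2004two}. Your first argument (Proposition~\ref{CaratheodoryFuncConv} to get \(\Phi_\epsilon \overset{2}{\rightarrow}\Phi\) in \(L^{p'}\), followed by the ``only if'' direction of Proposition~\ref{StrongWeakProduct} with the conjugate exponents interchanged) is precisely the deduction that the cited reference has in mind, and your density/approximation variant is a sound, self-contained alternative that essentially reproves what Proposition~\ref{StrongWeakProduct} packages. One point you flagged that is worth making explicit: the statement as written does not include the hypothesis that \((w_\epsilon)_\epsilon\) be bounded in \(L^p(\mathbb{T}^d)\), yet this is needed in either route (to invoke Proposition~\ref{StrongWeakProduct}, or to control the H\"older error term); it is the standing assumption throughout this section (cf.\ Remark~\ref{rmk:testfcts}) and should be read as implicit.
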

Next, we recall the characterization of the two-scale limit of bounded sequences in $W^{1,p}$.
\begin{pro}[cf. {\cite[Theorem~20]{LuNgWa02}}]
        \label{GradientConvergence}
        Let $ (w_\epsilon )_\epsilon$ be a bounded sequence in ${W}^{1,p} (\mathbb{T}^d )$ such that $w_\epsilon \rightharpoonup w$ for some $w\in {W}^{1,p} (\mathbb{T}^d )$. Then, $w_\epsilon  \overset{2}{\rightharpoonup} w$ in $
{L}^p (\mathbb{T}^d\times\mathcal{Y}^d )$ and there exists a function $w_1\in {L}^p (\mathbb{T}^d;{W}^{1,p}_\# (\mathcal{Y}^d )/\mathbb{R } )$ such that, up to a subsequence, $\nabla w_\epsilon \overset{2}{\rightharpoonup} \nabla w +\nabla_y w_1$ in $
{[L}^p (\mathbb{T}^d\times\mathcal{Y}^d)]^d$.
\end{pro}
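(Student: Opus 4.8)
The plan is to reproduce on the torus the classical argument of Nguetseng and Allaire, using the two-scale compactness and identification properties recalled in this section. First, since $(w_\epsilon)_\epsilon$ and $(\nabla w_\epsilon)_\epsilon$ are bounded in $L^p(\mathbb{T}^d)$, Proposition~\ref{Compactness} provides, along a subsequence (not relabeled), $w_0\in L^p(\mathbb{T}^d\times\mathcal{Y}^d)$ and $\xi\in[L^p(\mathbb{T}^d\times\mathcal{Y}^d)]^d$ with $w_\epsilon\overset{2}{\rightharpoonup}w_0$ and $\nabla w_\epsilon\overset{2}{\rightharpoonup}\xi$. To identify $w_0$, I would fix $\psi\in C^\infty(\mathbb{T}^d;[C^\infty_\#(\mathcal{Y}^d)]^d)$, integrate by parts, and multiply by $\epsilon$ to get
\begin{multline*}
\epsilon\!\int_{\mathbb{T}^d}\!\nabla w_\epsilon(x)\cdot\psi\!\left(x,\tfrac{x}{\epsilon}\right)dx\\
=-\epsilon\!\int_{\mathbb{T}^d}\!w_\epsilon(x)\,(\div_x\psi)\!\left(x,\tfrac{x}{\epsilon}\right)dx-\int_{\mathbb{T}^d}\!w_\epsilon(x)\,(\div_y\psi)\!\left(x,\tfrac{x}{\epsilon}\right)dx.
\end{multline*}
Letting $\epsilon\to0$, the left-hand side and the first term on the right vanish by the $L^p$-bounds, while the last term converges to $\int_{\mathbb{T}^d}\int_{\mathcal{Y}^d}w_0\,\div_y\psi\,dy\,dx$; hence $\nabla_y w_0=0$, i.e.\ $w_0=w_0(x)$. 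Proposition~\ref{TwoScaleConvProduct} then gives $w_\epsilon\rightharpoonup\int_{\mathcal{Y}^d}w_0(\cdot,y)\,dy=w_0$ in $L^p(\mathbb{T}^d)$, which together with $w_\epsilon\rightharpoonup w$ in $W^{1,p}(\mathbb{T}^d)$ forces $w_0=w$; by uniqueness of the two-scale limit (Remark~\ref{rmk:uniq}) and a standard subsequence argument, $w_\epsilon\overset{2}{\rightharpoonup}w$ for the whole sequence.

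Next I would identify $\xi$. Testing $\nabla w_\epsilon$ against $\psi\in C^\infty(\mathbb{T}^d;[C^\infty_\#(\mathcal{Y}^d)]^d)$ with $\div_y\psi\equiv0$ and integrating by parts kills the $1/\epsilon$ term, leaving $\int_{\mathbb{T}^d}\nabla w_\epsilon(x)\cdot\psi(x,\tfrac{x}{\epsilon})\,dx=-\int_{\mathbb{T}^d}w_\epsilon(x)\,(\div_x\psi)(x,\tfrac{x}{\epsilon})\,dx$. Passing to the limit with the previous step and then integrating by parts in $x$ on $\mathbb{T}^d$ (using that $w$ is $y$-independent) yields
\[
\int_{\mathbb{T}^d}\int_{\mathcal{Y}^d}\big(\xi(x,y)-\nabla w(x)\big)\cdot\psi(x,y)\,dy\,dx=0
\]
for all such $\psi$. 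Choosing $\psi(x,y)=\varphi(x)\eta(y)$ with $\varphi\in C^\infty(\mathbb{T}^d)$ and $\eta\in[C^\infty_\#(\mathcal{Y}^d)]^d$ divergence-free (the constants $\eta\equiv e_i$ included) shows that, for a.e.\ $x\in\mathbb{T}^d$, the field $g_x:=\xi(x,\cdot)-\nabla w(x)\in[L^p_\#(\mathcal{Y}^d)]^d$ has zero mean over $\mathcal{Y}^d$ and annihilates every smooth divergence-free $\mathcal{Y}^d$-periodic test field.

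The recovery of $w_1$ then rests on the classical de~Rham/Helmholtz characterization of periodic vector fields (which on $\mathcal{Y}^d$ can be obtained from Fourier series together with $L^p$-multiplier estimates): a zero-mean field $g\in[L^p_\#(\mathcal{Y}^d)]^d$ annihilating all divergence-free periodic test fields is of the form $g=\nabla_y\phi$ for a unique $\phi\in W^{1,p}_\#(\mathcal{Y}^d)/\mathbb{R}$, and $g\mapsto\phi$ is linear and bounded (by the Poincaré--Wirtinger inequality, $\|\phi\|_{W^{1,p}(\mathcal{Y}^d)/\mathbb{R}}\leq C\|\nabla_y\phi\|_{L^p(\mathcal{Y}^d)}$). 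Applying this pointwise in $x$ to $g_x$ defines $w_1(x,\cdot)$ with $\nabla_y w_1(x,\cdot)=\xi(x,\cdot)-\nabla w(x)$; since $x\mapsto g_x$ is Bochner measurable into $[L^p(\mathcal{Y}^d)]^d$ with $\|g_x\|_{L^p(\mathcal{Y}^d)}\in L^p(\mathbb{T}^d)$, composing with the bounded linear map $g\mapsto\phi$ yields $w_1\in L^p(\mathbb{T}^d;W^{1,p}_\#(\mathcal{Y}^d)/\mathbb{R})$, and then $\nabla w_\epsilon\overset{2}{\rightharpoonup}\nabla w+\nabla_y w_1$, as claimed.

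The main obstacle is the de~Rham/Helmholtz lemma used in the last step — i.e.\ that an $L^p$-periodic, zero-mean field orthogonal to every divergence-free test field is a gradient of a periodic function — together with the verification that the associated potential $w_1$ inherits measurability and $L^p$-integrability in the macroscopic variable $x$. Everything else reduces to integration by parts and the two-scale compactness and product facts already at our disposal.
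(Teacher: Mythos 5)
The paper itself gives no proof of Proposition~\ref{GradientConvergence}; it is quoted with the citation \cite[Theorem~20]{LuNgWa02}, so there is no in-paper argument to compare against. Your reconstruction is precisely the classical Nguetseng--Allaire argument used in that reference: two-scale compactness for $w_\epsilon$ and $\nabla w_\epsilon$ (Proposition~\ref{Compactness}); multiplication by $\epsilon$ and integration by parts to show the two-scale limit of $w_\epsilon$ is $y$-independent; identification with $w$ via Proposition~\ref{TwoScaleConvProduct} and uniqueness (Remark~\ref{rmk:uniq}); testing $\nabla w_\epsilon$ against $y$-divergence-free fields to localize the corrector; and finally the periodic de~Rham/Helmholtz decomposition to produce $w_1\in L^p(\mathbb{T}^d;W^{1,p}_\#(\mathcal{Y}^d)/\mathbb{R})$. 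The steps, the order, and the use of the auxiliary two-scale results all match the cited source, so this is the same proof.

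One small point worth making precise, since you flag the de~Rham step as the main obstacle: passing from ``for every divergence-free $\eta$, $\int_{\mathcal{Y}^d}g_x\cdot\eta\,dy=0$ for a.e.\ $x$'' to ``for a.e.\ $x$, $g_x$ annihilates every divergence-free $\eta$'' needs the usual countable-dense-family argument (or, equivalently, one applies the Helmholtz decomposition of $[L^p_\#(\mathcal{Y}^d)]^d$ directly at the level of the Bochner space $L^p(\mathbb{T}^d;[L^p_\#(\mathcal{Y}^d)]^d)$, which sidesteps the pointwise-in-$x$ step altogether). You correctly reduce the measurability and $L^p(\mathbb{T}^d)$-integrability of $w_1$ to the boundedness of $g\mapsto\phi$ and Poincar\'e--Wirtinger, which is exactly how the cited reference handles it.
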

The next proposition, which provides a simple generalization
of \cite[ Theorem~7.1]{zhikov2004two},  entails the lower semi-continuity of certain convex functionals with respect to the two-scale convergence
(also see  \cite[Proposition~3.1(iii)]{Vi07}).
\begin{pro}
        \label{TwoScaleConvLipshitz}
        Let $ (w_\epsilon )_\epsilon$ be a bounded sequence in $ [{L}^p (\mathbb{T}^d ) ]^d$ such that $w_\epsilon  \overset{2}{\rightharpoonup} w$  in $
{[L}^p (\mathbb{T}^d\times\mathcal{Y}^d)]^d$ for some $w\in  [{L}^p (\mathbb{T}^d\times\mathcal{Y}^d ) ]^d$. Suppose that $f: \mathbb{R}^d\times\mathbb{R}^d \rightarrow  [0,+\infty ]$ is a Borel function, \(\Yd\)-periodic in the first variable and such that for each fixed $y$, $f (y,\cdot )$ is convex and lower semi-continuous on $\mathbb{R}^d$ and $f (y,0 )=0$. Then, for all $\phi\in {C}^\infty (\mathbb{T}^d;C^\infty_\#(\mathcal{Y}^d) )$ with $\phi\geq 0$, we have
        \begin{equation}
        \label{TwoScaleConvexIneq}
        \liminf_{\epsilon\rightarrow 0}\int_{\mathbb{T}^d} f \Big(\frac{x}{\epsilon}, w_\epsilon (x )\Big )\phi\Big (x,\frac{x}{\epsilon} \Big)dx\geq \int_{\mathbb{T}^d}\int_{\mathcal{Y}^d}f (y, w (x,y ) )\phi (x,y )dydx.
        \end{equation}
\end{pro}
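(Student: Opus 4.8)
The plan is to reduce the inequality to the well-known fact that convex, lower semi-continuous, nonnegative integrands with $f(y,0)=0$ are weakly lower semi-continuous when localized by a nonnegative weight, and then to transfer this from usual weak convergence to two-scale convergence by the standard ``unfolding/test against oscillating functions'' device already available in this section. First I would fix $\phi\in C^\infty(\mathbb{T}^d;C^\infty_\#(\mathcal{Y}^d))$ with $\phi\geq 0$; since $\phi$ is bounded and the integrand is nonnegative, there is no loss in assuming the left-hand liminf is finite, passing to a subsequence realizing it, and relabeling. The key reduction is to introduce the auxiliary sequence $v_\epsilon(x):=f\bigl(\tfrac{x}{\epsilon},w_\epsilon(x)\bigr)$. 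Because $f\geq 0$, $(v_\epsilon)_\epsilon$ is a bounded sequence of nonnegative functions in $L^1(\mathbb{T}^d)$ along the chosen subsequence, so by passing to a further subsequence we may assume $v_\epsilon\rightharpoonup \mu$ weakly-$*$ as measures, or — if we also arrange equi-integrability by a truncation argument — weakly in $L^1$; in either case $\int_{\mathbb{T}^d}v_\epsilon\phi(x,\tfrac{x}{\epsilon})\,dx$ converges (using Proposition~\ref{CaratheodoryTestFunc} to handle the oscillating weight $\phi(\cdot,\tfrac{\cdot}{\epsilon})$) and the task becomes to bound the limit from below by $\int_{\mathbb{T}^d}\int_{\mathcal{Y}^d}f(y,w(x,y))\phi(x,y)\,dy\,dx$.

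The heart of the argument is then a localization in both variables. I would use the convexity of $f(y,\cdot)$ together with the subgradient inequality: for a.e.\ $(x,y)$ pick $\xi(x,y)$ in the subdifferential of $f(y,\cdot)$ at $w(x,y)$, so that $f(y,z)\geq f(y,w(x,y))+\xi(x,y)\cdot(z-w(x,y))$ for all $z$. The subtlety is that $\xi$ need not be smooth, so one first proves the inequality with $f$ replaced by its Yosida-type regularization $f_\lambda(y,z):=\inf_{z'}\bigl(f(y,z')+\tfrac{1}{2\lambda}|z-z'|^2\bigr)$, which is convex, $C^1$ in $z$, still nonnegative, still vanishes at $z=0$, increases to $f$ as $\lambda\to0$, and — crucially — has its derivative $D_zf_\lambda$ bounded by $\tfrac1\lambda|z|$, hence admits $L^{p'}$ bounds that let us use $\xi_\lambda(x,y):=D_zf_\lambda(y,w(x,y))$ as a legitimate two-scale test function after a density/mollification step. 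Plugging $z=w_\epsilon(x)$, multiplying by $\phi(x,\tfrac{x}{\epsilon})\geq 0$, integrating, and passing to the limit: the term $\int f_\lambda(\tfrac{x}{\epsilon},w(x,y))$-type pieces converge by Proposition~\ref{CaratheodoryFuncConv} applied to the Carath\'eodory function $(x,y)\mapsto f_\lambda(y,w(x,y))\phi(x,y)$ (after checking the domination hypothesis, which follows from the growth bound on $f_\lambda$ and boundedness of $\phi$), while the linear term $\int \xi_\lambda(x,\tfrac{x}{\epsilon})\cdot(w_\epsilon(x)-w(x,\tfrac{x}{\epsilon}))\phi(x,\tfrac{x}{\epsilon})\,dx\to 0$ by the definition of two-scale convergence $w_\epsilon\overset{2}{\rightharpoonup}w$ tested against $\xi_\lambda\phi\in L^{p'}(\mathbb{T}^d;C_\#(\mathcal{Y}^d))$, using Remark~\ref{rmk:testfcts} and Proposition~\ref{CaratheodoryTestFunc}. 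This yields
\begin{equation*}
\liminf_{\epsilon\to0}\int_{\mathbb{T}^d}f\Bigl(\tfrac{x}{\epsilon},w_\epsilon(x)\Bigr)\phi\Bigl(x,\tfrac{x}{\epsilon}\Bigr)dx\geq\int_{\mathbb{T}^d}\int_{\mathcal{Y}^d}f_\lambda(y,w(x,y))\phi(x,y)\,dy\,dx,
\end{equation*}
and then letting $\lambda\to0$ and invoking monotone convergence on the right-hand side gives the claim.

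The main obstacle I anticipate is the regularity/measurability bookkeeping for the subgradient $\xi$ (equivalently, the need for the Yosida regularization and the verification that $D_zf_\lambda(y,w(x,y))$ is an admissible two-scale test function): $f$ is only assumed Borel and periodic in $y$, convex and lower semi-continuous in $z$, so $\xi(x,y)$ is not canonically a Carath\'eodory function, and one must be careful that after composing with $w(x,y)$ — which is merely $L^p$ — and truncating/mollifying, one lands in $L^{p'}(\mathbb{T}^d;C_\#(\mathcal{Y}^d))$ or at least in a class to which Proposition~\ref{CaratheodoryTestFunc} applies. A secondary technical point is the equi-integrability of $(v_\epsilon)_\epsilon$, handled by replacing $f$ with $f\wedge N$ (still convex, l.s.c., nonnegative, vanishing at $0$), proving the estimate for each $N$, and sending $N\to\infty$ by monotone convergence; alternatively one works directly with weak-$*$ convergence of measures and identifies the absolutely continuous part from below. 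Everything else is a routine assembly of the propositions recalled above.
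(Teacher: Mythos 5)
Your route (Moreau--Yosida envelope $f_\lambda$ plus the subgradient inequality) is genuinely different from the paper's (Fenchel--Young inequality with the convex conjugate $f^*$, plus the coercivity perturbation $g=f+\delta|\cdot|^p$), but as written it has two real gaps.

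First, and most seriously, the subgradient field $\xi_\lambda(x,y)=D_zf_\lambda(y,w(x,y))$ is not an admissible two-scale test function. The growth bound you cite, $|D_zf_\lambda(y,z)|\le\lambda^{-1}|z|$, gives $|\xi_\lambda|\le\lambda^{-1}|w|$, so $\xi_\lambda\in L^p(\mathbb{T}^d\times\Yd)$, whereas a test function against the $L^p$-bounded sequence $(w_\epsilon)_\epsilon$ must lie in $L^{p'}$; for $p<2$ those are different spaces and you have the wrong exponent. You acknowledge that a truncation/mollification of $w$ is needed, but this step is not innocuous: if you replace $w$ by smooth $w_n$ and take $\xi_{\lambda,n}=D_zf_\lambda(y,w_n(x,y))$, the subgradient inequality is anchored at $w_n$, and the ``linear term vanishes'' cancellation becomes an error term of the form $\int\!\!\int \xi_{\lambda,n}\,(w-w_n)\,\phi$; since $\|\xi_{\lambda,n}\|_\infty\sim\lambda^{-1}\|w_n\|_\infty$ may blow up as $n\to\infty$ while $\|w-w_n\|_{L^1}\to0$, the product need not go to zero, and you have no uniform control. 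The paper's Fenchel--Young device avoids this entirely: one tests against an \emph{arbitrary} smooth $\psi\in C^\infty(\mathbb{T}^d;C^\infty_\#(\Yd))$, passes to the limit (Proposition~\ref{CaratheodoryFuncConv} handles $f^*(y,\psi(x,y))\phi(x,y)$ because $\psi$ is smooth, not because it is a subgradient of anything), and only afterwards takes a supremum over $\psi\in[L^{p'}]^d$ and invokes $(f^*)^*=f$. No specific subgradient is ever needed, so the $L^p$--vs.--$L^{p'}$ mismatch never arises. The coercivity step $g=f+\delta|\cdot|^p$ in the paper is exactly what guarantees continuity of $g^*$ in $\eta$, which is the one point where regularization of $f$ is genuinely needed.

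Second, a smaller but still incorrect point: the truncation $f\wedge N$ you propose for equi-integrability is \emph{not} convex in $z$ (e.g.\ $\min(|z|^2,1)$), so it does not satisfy the hypotheses of the statement, and the ``prove it for $f\wedge N$, then let $N\to\infty$'' scheme collapses. The paper's perturbation $f+\delta|\cdot|^p$ preserves convexity, lower semicontinuity, nonnegativity, and vanishing at the origin, which is why it works. If you want to rescue your plan you would need a convexity-preserving truncation from above of a convex function -- such a thing does not exist in general -- or, more realistically, you should abandon the measure-theoretic side argument and instead follow the paper's pattern of proving the inequality under a lower growth bound and then perturbing $f$.

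In short: the Moreau--Yosida idea is natural, but to make it rigorous here you would essentially have to rebuild the Fenchel--Young machinery to control the test functions, at which point you have rederived the paper's proof.
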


\begin{proof}
The proof is a simple adaptation of the proof of Theorem 7.1 in \cite{zhikov2004two}, which corresponds to \eqref{TwoScaleConvexIneq} with $\phi\equiv 1$. We first recall the main arguments in \cite{zhikov2004two},
after which we describe how to adapt these arguments to the present
setting. 

In what follows,  $f^*$ is the convex conjugate of $f$; that
is,   for all $ (y,\eta )\in \mathbb{R}^d\times\mathbb{R}^d$, %
\begin{equation*}
f^* (y,\eta )=\sup_{\xi\in\mathbb{R}^d} \{\eta^T\xi-f (y,\xi ) \}.
\end{equation*}
The proof in \cite{zhikov2004two} is done in two steps. In the first step, one assumes that for all $ (y, \xi )\in\mathbb{R}^d\times\mathbb{R}^d$ and for some $c_0>0$, $f$ satisfies 
\begin{equation}
\label{flowerbound}
f (y,\xi )\geq c_0 |\xi |^p.
\end{equation}
Due to the preceding condition, $f^*$ is continuous in $\eta$. Let $w_\epsilon$ and $w$ be as in the claim. A key estimate in \cite{zhikov2004two} follows from the definition of $f^*$; more precisely, for any $\psi\in [{C}^\infty (\mathbb{T}^d;C^\infty_\#(\mathcal{Y}^d)
)]^d$, we have 
\begin{equation}
\label{YoungIneq}
f \Big(\frac{x}{\epsilon},w_\epsilon (x )\Big )\geq \bigg\langle w_\epsilon (x ), \psi\Big (x,\frac{x}{\epsilon} \Big)\bigg\rangle-f^* \Big(\frac{x}{\epsilon}, \psi\Big (x,\frac{x}{\epsilon} \Big) \Big).
\end{equation} 
Then, \eqref{TwoScaleConvexIneq} with $\phi\equiv 1$ is obtained by integrating \eqref{YoungIneq} over $\mathbb{T}^d$ and letting $\epsilon\rightarrow 0$ as follows. For the first term on the right-hand
side of \eqref{YoungIneq}, it suffices to use the 
 definition of two-scale convergence. Regarding the second term, we first  set  $\Phi (x,y )=f^* (y,\psi (x,y ) )$ for $ (x,y )\in\mathbb{T}^d\times\Rr^d$. Then, the continuity of $f^*$ in $\eta$, which is implied by \eqref{flowerbound}, gives that $\Phi$ is a Carath\'{e}odory function. Thus,  it suffices  to use Proposition~\ref{CaratheodoryFuncConv} to pass the (integral over $\mathbb{T}^d$ of the)  second term on the right-hand
side of \eqref{YoungIneq} to the limit
as $\epsilon\rightarrow 0$. 
As in \cite{zhikov2004two}, the  conclusion then follows by taking the supremum of the right-hand side of the resulting equation over $\psi\in [L^{p'}(\mathbb{T}^d\times\mathcal{Y}^d)]^d$, the definition of the convex conjugate of $f^*$, denoted by $ (f^* )^*$, and $ (f^* )^*=f$ implied by the lower semi-continuity of $f$ and $f(y,0)=0$. 

The  second step in  \cite{zhikov2004two}
consists in proving \eqref{TwoScaleConvexIneq} with $\phi\equiv 1$ without assuming \eqref{flowerbound} on $f$. To do that, the author uses the function $g$ defined, for all $ (y,\xi )\in\mathbb{R}^d\times\mathbb{R}^d$ and for a positive real number $\delta>0$, by
\begin{equation}
\label{DefAuxg}
g (y,\xi )=f (y,\xi )+\delta |\xi |^p.
\end{equation}
Since $g$ satisfies the conditions in the first step, \eqref{TwoScaleConvexIneq} with $\phi\equiv 1$ holds for $g$. Then, it suffices to let $\delta\rightarrow 0$ to conclude. 

To obtain \eqref{TwoScaleConvexIneq} for all $\phi\in {C}^\infty (\mathbb{T}^d;C^\infty_\#(\mathcal{Y}^d)
)$ with $\phi\geq 0$, we can proceed exactly as in \cite{zhikov2004two}.  More precisely,  we  assume that $f$ satisfies \eqref{flowerbound} first. 
From \eqref{YoungIneq}, we get 
\begin{equation*}
f \Big(\frac{x}{\epsilon},w_\epsilon (x )\Big )\phi\Big (x,\frac{x}{\epsilon} \Big)\geq \bigg\langle w_\epsilon (x ), \psi\Big (x,\frac{x}{\epsilon} \Big)\bigg\rangle\phi\Big (x,\frac{x}{\epsilon} \Big)-f^* \Big(\frac{x}{\epsilon}, \psi \Big(x,\frac{x}{\epsilon} \Big) \Big)\phi\Big (x,\frac{x}{\epsilon} \Big)
\end{equation*}
for $\phi\in {C}^\infty (\mathbb{T}^d\times\mathcal{Y}^d )$ with $\phi\geq 0$. 
Thus,
\begin{align}
\label{YoungInEqMultiPosi}
\begin{split}
&\int_{\mathbb{T}^d}f\Big (\frac{x}{\epsilon},w_\epsilon (x )\Big )\phi\Big (x,\frac{x}{\epsilon} \Big)dx\\
&\quad\geq  \int_{\mathbb{T}^d}\bigg\langle w_\epsilon (x ), \psi \Big(x,\frac{x}{\epsilon} \Big)\bigg\rangle\phi\Big (x,\frac{x}{\epsilon} \Big)dx-\int_{\mathbb{T}^d}f^* \Big(\frac{x}{\epsilon}, \psi\Big (x,\frac{x}{\epsilon} \Big) \Big)\phi\Big (x,\frac{x}{\epsilon} \Big)dx.
\end{split}
\end{align}
Because $\phi$ is smooth and does not depend on $\xi$, setting
  $\Phi (x,y )=f^* (y,\psi (x,y ) )\phi (x,y )$ for $ (x,y
)\in\mathbb{T}^d\times\mathcal{Y}^d$,  we can use the definition of two-scale convergence and properties of $f^*$ to pass \eqref{YoungInEqMultiPosi} to the limit as $\epsilon\rightarrow 0$. Then, as we did when $\phi\equiv 1$, using the definition and properties  of $ (f^* )^*$, we  conclude that \eqref{TwoScaleConvexIneq} holds under the assumption \eqref{flowerbound} on $f$. To remove this assumption, we use the previous case applied to $g$ in \eqref{DefAuxg} as in \cite{zhikov2004two}. Letting $\delta\rightarrow 0$, we obtain  \eqref{TwoScaleConvexIneq}.  \end{proof}

\begin{remark}\label{rmk:onlscwtsc} In  \cite{zhikov2004two},
the condition \(f(y,0)=0\) is used only to guarantee that \((f^* )^*=f\). We observe that this identity holds if $f (y,\cdot
)$ is a real-valued and  convex function,    bounded
from below on $\mathbb{R}^d$. Hence, it can be checked that  Proposition~\ref{TwoScaleConvLipshitz} holds  for Borel functions 
$f: \mathbb{R}^d\times\mathbb{R}^d \rightarrow  \Rr$
bounded from below by some constant, \(\Yd\)-periodic in the
first variable, and  convex in the second
variable. \end{remark}

Finally, we discuss two-scale convergence for the composition of a strongly two-scale convergent function with a Lipschitz function (also see  \cite[Proposition~3.1(ii)]{Vi07}).

\begin{pro}
\label{LipschitzStronglyTwoScaleConv}
        Let $f: \mathbb{R}\rightarrow \mathbb{R}$ be a Lipschitz function and $ (w_\epsilon )_\epsilon$ be a bounded sequence in $L^p (\mathbb{T}^d )$ such that    $w_\epsilon\overset{2}{\rightarrow} w$ in $
{L}^p (\mathbb{T}^d\times\mathcal{Y}^d)$ for some  $w\in L^p (\mathbb{T}^d\times\mathcal{Y}^d )$. Then,  $f (w_\epsilon )\overset{2}{\rightarrow} f (w )$ in $
{L}^p (\mathbb{T}^d\times\mathcal{Y}^d)$.
\end{pro}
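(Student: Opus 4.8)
The plan is to prove the statement in two stages: first establish weak two-scale convergence of $f(w_\epsilon)$ to $f(w)$, and then upgrade it to strong two-scale convergence by verifying the norm condition in Definition~\ref{TwoScaleConvDef}. Throughout, let $L$ denote the Lipschitz constant of $f$, so that $|f(a)-f(b)|\le L|a-b|$ and in particular $|f(w_\epsilon)|\le |f(0)|+L|w_\epsilon|$; since $(w_\epsilon)_\epsilon$ is bounded in $L^p(\Tt^d)$ and $\Tt^d$ has finite measure, $(f(w_\epsilon))_\epsilon$ is bounded in $L^p(\Tt^d)$. Hence, by Proposition~\ref{Compactness}, every subsequence of $(f(w_\epsilon))_\epsilon$ has a further subsequence weakly two-scale converging to some limit in $L^p(\Tt^d\times\Yd)$, and by Remark~\ref{rmk:uniq} it suffices to identify this limit as $f(w)$ along the whole sequence.

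The heart of the argument is to identify the two-scale limit of $f(w_\epsilon)$. The strong two-scale convergence $w_\epsilon\overset{2}{\rightarrow}w$ provides, via Proposition~\ref{StrongWeakProduct}, the crucial product-passage property: for any bounded sequence $(\phi_\epsilon)_\epsilon$ in $L^{p'}(\Tt^d)$ with $\phi_\epsilon\overset{2}{\rightharpoonup}\phi$ in $L^{p'}(\Tt^d\times\Yd)$, one has $\int_{\Tt^d}w_\epsilon\phi_\epsilon\,dx\to\int_{\Tt^d}\int_{\Yd}w\phi\,dydx$. The idea is to approximate $f$ uniformly by a nicer function. One natural route: fix a test function $\psi\in C^\infty(\Tt^d;C^\infty_\#(\Yd))$, write
\begin{equation*}
\int_{\Tt^d}f(w_\epsilon(x))\psi\Bigl(x,\tfrac{x}{\epsilon}\Bigr)dx - \int_{\Tt^d}f(w(\tfrac{x}{\epsilon}\text{-lift}))\cdots
\end{equation*}
and instead compare against a truncation. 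Concretely, for $k\in\Nn$ let $T_k$ be truncation at level $k$ and observe $f(w_\epsilon)=f(T_k w_\epsilon)+\bigl(f(w_\epsilon)-f(T_k w_\epsilon)\bigr)$, where the second term is bounded in $L^1$ by $L\int_{|w_\epsilon|>k}|w_\epsilon|\,dx$, which is uniformly small for $k$ large by the $L^p$-boundedness (hence equi-integrability) of $(w_\epsilon)_\epsilon$; similarly $f(w)-f(T_k w)$ is small in $L^1(\Tt^d\times\Yd)$. Thus it suffices to treat bounded sequences, where one can then pass the Lipschitz nonlinearity through using a combination of: (i) the strong two-scale convergence of $T_k w_\epsilon$ to $T_k w$ (which follows from Proposition~\ref{StrongWeakProduct} or directly, since $T_k$ is itself $1$-Lipschitz and bounded, giving an $L^2$-type argument), and (ii) a uniform polynomial approximation of $f$ on $[-k,k]$ together with the fact that strong two-scale convergence is preserved under products of strongly two-scale convergent bounded sequences. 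An alternative and perhaps cleaner route is to mimic directly the proof of Proposition~\ref{StrongWeakProduct}/Lemma 4.4 in \cite{zhikov2004two}: test $f(w_\epsilon)$ against an arbitrary bounded sequence $\phi_\epsilon\overset{2}{\rightharpoonup}\phi$ in $L^{p'}$, and show $\int f(w_\epsilon)\phi_\epsilon\,dx\to\int\int f(w)\phi\,dydx$; splitting $f(w_\epsilon)\phi_\epsilon = f(w)_\epsilon\phi_\epsilon + (f(w_\epsilon)-f(w)_\epsilon)\phi_\epsilon$, where $f(w)_\epsilon$ denotes an approximation constructed from $w$, the first term converges by Proposition~\ref{CaratheodoryTestFunc} (since $f(w(x,\cdot))$ inherits a Carath\'eodory-type structure, modulo an approximation of $w$ by Carath\'eodory functions), and the second term is controlled in $L^1$ by $L\|w_\epsilon - w(\cdot,\tfrac{\cdot}{\epsilon})\|$-type quantities, which is exactly where strong (not merely weak) two-scale convergence of $(w_\epsilon)_\epsilon$ is essential.

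For the norm condition, I would argue as follows. Strong two-scale convergence $w_\epsilon\overset{2}{\rightarrow}w$ means in particular $\|w_\epsilon\|_{L^p(\Tt^d)}\to\|w\|_{L^p(\Tt^d\times\Yd)}$, and combined with Proposition~\ref{CaratheodoryFuncConv}-type density this upgrades (as in the $p$-uniformly integrable setting) to $\|w_\epsilon\|_{L^q}\to\|w\|_{L^q}$ for the relevant exponents, or more directly: once $T_k w_\epsilon\overset{2}{\rightarrow}T_k w$ strongly for each $k$ and the tails are uniformly negligible, one deduces $w_\epsilon\to w$ in an appropriate sense allowing passage of the continuous nonlinearity $|f(\cdot)|^p$. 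Since $|f|^p$ is continuous with at most $p$-th power growth and the sequence is $p$-equi-integrable, Vitali's theorem gives $\int_{\Tt^d}|f(w_\epsilon)|^p\,dx\to\int_{\Tt^d}\int_{\Yd}|f(w)|^p\,dydx$ once one knows $f(w_\epsilon)(x)$ converges in measure in the two-scale sense — which again reduces, via truncation, to the bounded case where strong two-scale convergence plus the Lipschitz bound on $f$ directly yields $\|f(w_\epsilon)\|_{L^p(\Tt^d)}^p\to\|f(w)\|_{L^p(\Tt^d\times\Yd)}^p$.

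The main obstacle I anticipate is making rigorous the ``convergence in measure in the two-scale sense'' heuristic — there is no literal pointwise or in-measure statement underlying strong two-scale convergence, so every manipulation that would be routine for ordinary strong $L^p$ convergence must be rerouted through the product-test-function characterization of Proposition~\ref{StrongWeakProduct} and the Carath\'eodory test-function extension of Proposition~\ref{CaratheodoryTestFunc}. In practice the cleanest execution is: (1) reduce to truncated (bounded) sequences by the equi-integrability tail estimate; (2) for bounded sequences, approximate $f$ uniformly by polynomials on the truncation interval and use that strong two-scale convergence is stable under products of uniformly bounded strongly two-scale convergent sequences (a fact that itself follows from Proposition~\ref{StrongWeakProduct} applied iteratively, since a product $\phi_\epsilon\psi_\epsilon$ of bounded strongly two-scale convergent sequences is again bounded and one checks the defining limit against smooth test functions using the Carath\'eodory convergence of $\psi(\cdot,\tfrac{\cdot}{\epsilon})$); (3) assemble the pieces and send the truncation level to infinity, controlling errors uniformly in $\epsilon$. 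Keeping the error estimates uniform in $\epsilon$ when sending $k\to\infty$ is the delicate bookkeeping step, but it goes through precisely because the $L^p$-bound on $(w_\epsilon)_\epsilon$ gives equi-integrability with rate independent of $\epsilon$.
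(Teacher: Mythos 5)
Your ``alternative route'' is essentially the paper's argument: by Proposition~\ref{StrongWeakProduct}, reduce to showing $\int_{\Tt^d} f(w_\epsilon)\phi_\epsilon\,dx \to \int_{\Tt^d}\int_{\Yd} f(w)\phi\,dydx$ for arbitrary bounded $\phi_\epsilon\overset{2}{\rightharpoonup}\phi$ in $L^{p'}$, then split the difference using the Lipschitz bound on $f$ and Proposition~\ref{CaratheodoryTestFunc}. The one piece you never pin down, however, is the central quantitative estimate. You invoke ``$\|w_\epsilon - w(\cdot,\cdot/\epsilon)\|$-type quantities,'' but $w(\cdot,\cdot/\epsilon)$ is not even well-defined for a generic $w\in L^p(\Tt^d\times\Yd)$. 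The paper first replaces $w$ by smooth approximants $w_n\in C^\infty(\Tt^d;C^\infty_\#(\Yd))$ converging to $w$ in $L^p(\Tt^d\times\Yd)$, sets $w_{n,\epsilon}(x)=w_n(x,x/\epsilon)$ (now a genuine measurable function on $\Tt^d$), and uses the Clarkson-inequality consequence of strong two-scale convergence,
\begin{equation*}
\limsup_{n\to\infty}\limsup_{\epsilon\to0}\|w_\epsilon-w_{n,\epsilon}\|_{L^p(\Tt^d)}=0,
\end{equation*}
cited from \cite[eq.~(33) in Thm.~18]{LuNgWa02}. This is where strong (not merely weak) two-scale convergence of $(w_\epsilon)_\epsilon$ actually enters, and your proof cannot close without stating it. Conversely, the truncation-plus-polynomial first route and the ``norm condition'' / ``convergence in measure'' discussion are unnecessary detours: once you test against weakly two-scale convergent sequences via Proposition~\ref{StrongWeakProduct}, strong two-scale convergence is already fully encoded, so there is no separate norm step to verify and no notion of ``two-scale convergence in measure'' to manufacture.
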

\begin{proof}
        Let $ (\phi_\epsilon )_\epsilon$ be a sequence in  $L^{p'} (\mathbb{T}^d )$ such that $\phi_\epsilon\overset{2}{\rightharpoonup} \phi$ in $
{L}^{p'} (\mathbb{T}^d\times\mathcal{Y}^d)$ for some  $\phi \in L^{p'} (\mathbb{T}^d\times\mathcal{Y}^d )$. We prove that  
$\int_{\mathbb{T}^d}f (w_\epsilon(x) )\phi_\epsilon (x )dx \rightarrow\int_{\mathbb{T}^d}\int_{\mathcal{Y}^d}f (w (x,y ) )\phi (x,y )dydx$ and, hence, by Proposition~\ref{StrongWeakProduct}, we establish the claim. 
        
        Let $ (w_n )_n\subset {C}^\infty (\mathbb{T}^d;C^\infty_\#(\mathcal{Y}^d)
)$
be a sequence that strongly converges to $w$
in \(L^p (\mathbb{T}^d\times\mathcal{Y}^d
)\). For \(x\in\Tt^d\), define \(w_{n,\epsilon} (x )=w_n (x,\frac{x}{\epsilon}
)\). We have 
        \begin{align}
        \label{LipschitzStronglyTwoScaleConvIneq1}
        \begin{split}
        & \bigg|\int_{\mathbb{T}^d}f (w_\epsilon (x))\phi_\epsilon (x )dx-\int_{\mathbb{T}^d}\int_{\mathcal{Y}^d}f (w (x,y ) )\phi (x,y )dydx\bigg | \\
        \leq &  \bigg|\int_{\mathbb{T}^d}f (w_\epsilon (x))\phi_\epsilon (x )dx-\int_{\mathbb{T}^d}f(w_{n,\epsilon}
(x ))\phi_\epsilon (x )dx\bigg | \\
        &+\bigg |\int_{\mathbb{T}^d}f(w_{n,\epsilon}
(x ))\phi_\epsilon (x )dx-\int_{\mathbb{T}^d}\int_{\mathcal{Y}^d}f (w_n (x,y ) )\phi (x,y )dydx\bigg |\\
        &+\bigg |\int_{\mathbb{T}^d}\int_{\mathcal{Y}^d}f (w_n (x,y ) )\phi (x,y )dydx-\int_{\mathbb{T}^d}\int_{\mathcal{Y}^d}f (w (x,y ) )\phi (x,y )dydx\bigg |.
        \end{split}
        \end{align}
        For \((x,y)\in\Tt^d\times \Rr^d\),
let $\Phi_n (x,y )=f (w_n (x,y ) )$. From the Lipschitz continuity of  $f$, we have that $\Phi_n$ is continuous
and there exists a constant $C$ such that  $$ |\Phi_n (x,y ) |=  |f (w_n (x,y ) ) |\leq C(1+ |w_n (x,y ) |) \leq C\Big(1+\sup_{x,y} |w_n (x,y ) |\Big)\in\Rr.$$
        Thus, we use Proposition~\ref{CaratheodoryTestFunc}, with \(\Phi_n\)
in place of \(\Phi\), to conclude that
        \begin{equation}
        \label{LipschitzStronglyTwoScaleConvIneq2}
        \lim_{\epsilon\rightarrow 0}\int_{\mathbb{T}^d}f(w_{n,\epsilon}
(x )) \phi_\epsilon (x )dx=\int_{\mathbb{T}^d}\int_{\mathcal{Y}^d}f (w_n (x,y ) )\phi (x,y )dydx.
        \end{equation}
        Because $w_n$ strongly converges to $w$ in ${L}^{p} (\mathbb{T}^d\times\mathcal{Y}^d )$,  the Lipschitz continuity of $f$
and  H\"older's inequality yield 
        \begin{equation}
        \label{LipschitzStronglyTwoScaleConvIneq3}
        \lim_{n\rightarrow \infty}\int_{\mathbb{T}^d}\int_{\mathcal{Y}^d}f (w_n (x,y ) )\phi (x,y )dydx=\int_{\mathbb{T}^d}\int_{\mathcal{Y}^d}f (w (x,y ) )\phi (x,y )dydx.
        \end{equation}
        Similarly, we have
        \begin{align}
        \label{LipschitzStronglyTwoScaleConvIneq4}
        & \bigg|\int_{\mathbb{T}^d}f (w_\epsilon (x))\phi_\epsilon (x )dx-\int_{\mathbb{T}^d}f(w_{n,\epsilon}
(x ))\phi_\epsilon (x )dx\bigg | \leq C \|w_\epsilon -w_{n,\epsilon}
\|_{{L}^p (\mathbb{T}^d )} \|\phi_\epsilon  \|_{{L}^{p'} (\mathbb{T}^d )}. 
        \end{align}
As shown in \cite[(33) in the proof
of Theorem~18]{LuNgWa02}, using  Clarkson's inequalities, we have %
\begin{equation*}
\begin{aligned}
\limsup_{n\to\infty} \limsup_{\epsilon\to0}
 \|w_\epsilon -w_{n,\epsilon}
\|_{{L}^p (\mathbb{T}^d )} =0. 
\end{aligned}
\end{equation*}
Thus,
recalling that \((\phi_\epsilon )_\epsilon\)
is bounded in \(L^{p'}(\Tt^d)\), 
from \eqref{LipschitzStronglyTwoScaleConvIneq4},
we obtain

        \begin{equation}
        \label{LipschitzStronglyTwoScaleConvIneq5}
        \limsup_{n\to\infty}\limsup_{\epsilon\rightarrow 0} \bigg|\int_{\mathbb{T}^d}f (w_\epsilon (x))\phi_\epsilon (x )dx-\int_{\mathbb{T}^d}f (w_{n,\epsilon}
(x ) )\phi_\epsilon (x )dx\bigg |=0.
        \end{equation}
        Therefore, letting $\epsilon
\to 0$ first and then \(n\to\infty\)
in \eqref{LipschitzStronglyTwoScaleConvIneq1},   we conclude that
        \begin{equation*}
         \lim_{\epsilon\rightarrow
0}\bigg|\int_{\mathbb{T}^d}f (w_\epsilon (x))\phi_\epsilon (x )dx-\int_{\mathbb{T}^d}\int_{\mathcal{Y}^d}f (w (x,y ) )\phi (x,y )dydx\bigg |=0
        \end{equation*}
from \eqref{LipschitzStronglyTwoScaleConvIneq2},
\eqref{LipschitzStronglyTwoScaleConvIneq3},
and \eqref{LipschitzStronglyTwoScaleConvIneq5}.
Hence,          $f (w_\epsilon )\overset{2}{\rightarrow}f (w )$ in $
{L}^p (\mathbb{T}^d\times\mathcal{Y}^d)$ by Proposition~\ref{StrongWeakProduct}.
\end{proof}

\begin{remark}\label{rmk:onconttwosc}
A simple modification of the arguments above show  that Proposition~\ref{LipschitzStronglyTwoScaleConv} still holds if \(f\) is locally
Lipschitz and \(w_\epsilon\) and \(w\) take values on a compact
subset of \(\Rr\).
\end{remark}

\section{Bounds for solutions to Problems~\ref{POCMFG} and \ref{VariationalProblem}}
\label{SectionOriginalMFGAnalysis}
In this section, we examine uniform bounds in $\epsilon$ for the solution to \eqref{POCMFGEq}. We recall that, by the results in \cite{evans2003some},  there exists a unique smooth solution to  Problems~\ref{POCMFG} and \ref{VariationalProblem}.

\begin{pro}
        \label{HigherDimBoundednessofHepandUep}  
        Let $u_\epsilon\in C^\infty (\mathbb{T}^d )$ solve Problem~\ref{VariationalProblem} and $\overline{H}_\epsilon$ be as in \eqref{DefHEpsilon}. Then, 
        \begin{equation}
        \label{HigherDimHepBounds}
        \inf\limits_{(x,y)\in\Tt^d \times \Yd}V (x,y )\leq \overline{H}_\epsilon (P )\leq \frac{ |P |^2}{2}+\sup\limits_{(x,y)\in\Tt^d \times \Yd}V (x,y ).
        \end{equation}
        Moreover, 
        \begin{equation}
        \label{UniformBoundGradientuepsilon}
        \int_{\mathbb{T}^d}  |\nabla u_\epsilon (x ) |^2dx \leq 2\bigg(\sup_{(x,y)\in\Tt^d \times \Yd}
V (x,y )-\inf_{(x,y)\in\Tt^d \times \Yd} V (x,y )\bigg).
        \end{equation}
\end{pro}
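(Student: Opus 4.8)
The plan is to extract both inequalities from the variational characterization in Problem~\ref{VariationalProblem} together with the formula $\overline{H}_\epsilon(P)=\ln I_\epsilon[u_\epsilon]$ in \eqref{DefHEpsilon}. First I would obtain the upper bound in \eqref{HigherDimHepBounds} by using $u\equiv 0$ as a competitor in the minimization \eqref{VariationalFormula}: since $u_\epsilon$ is a minimizer, $I_\epsilon[u_\epsilon]\le I_\epsilon[0]=\int_{\Tt^d}e^{|P|^2/2+V(x,x/\epsilon)}dx\le e^{|P|^2/2+\sup V}$, because $|\Tt^d|=1$. Taking logarithms gives $\overline{H}_\epsilon(P)\le |P|^2/2+\sup_{(x,y)}V(x,y)$. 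For the lower bound, I would drop the nonnegative term $|P+\nabla u_\epsilon|^2/2$ in the exponent of \eqref{DefIepsilon} to get $I_\epsilon[u_\epsilon]\ge\int_{\Tt^d}e^{V(x,x/\epsilon)}dx\ge e^{\inf V}$, so that $\overline{H}_\epsilon(P)\ge\inf_{(x,y)}V(x,y)$. Both steps only use monotonicity of the exponential, Jensen's inequality (or simply the crude sup/inf bounds) and the fact that $\Tt^d$ has unit measure; no regularity beyond continuity of $V$ is needed.

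For the gradient bound \eqref{UniformBoundGradientuepsilon}, the idea is to combine the two halves of \eqref{HigherDimHepBounds} with a lower bound for $I_\epsilon[u_\epsilon]$ that this time retains the gradient term. By Jensen's inequality applied to the convex function $e^{(\cdot)}$ and the probability measure $dx$ on $\Tt^d$,
\begin{equation*}
I_\epsilon[u_\epsilon]=\int_{\Tt^d}e^{\frac{|P+\nabla u_\epsilon(x)|^2}{2}+V(x,x/\epsilon)}dx\ge \exp\!\left(\int_{\Tt^d}\Big(\tfrac{|P+\nabla u_\epsilon(x)|^2}{2}+V(x,x/\epsilon)\Big)dx\right).
\end{equation*}
Taking logarithms and using $\overline{H}_\epsilon(P)=\ln I_\epsilon[u_\epsilon]$ gives
\begin{equation*}
\overline{H}_\epsilon(P)\ge \int_{\Tt^d}\tfrac{|P+\nabla u_\epsilon(x)|^2}{2}dx+\int_{\Tt^d}V(x,x/\epsilon)dx\ge \int_{\Tt^d}\tfrac{|P+\nabla u_\epsilon(x)|^2}{2}dx+\inf_{(x,y)}V(x,y).
\end{equation*}
Since $\int_{\Tt^d}\nabla u_\epsilon\,dx=0$, we have $\int_{\Tt^d}|P+\nabla u_\epsilon|^2dx=|P|^2+\int_{\Tt^d}|\nabla u_\epsilon|^2dx\ge \int_{\Tt^d}|\nabla u_\epsilon|^2dx$, whence $\int_{\Tt^d}\tfrac12|\nabla u_\epsilon|^2dx\le \overline{H}_\epsilon(P)-\inf V$. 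Inserting the already-established upper bound $\overline{H}_\epsilon(P)\le |P|^2/2+\sup V$ would give $\int_{\Tt^d}\tfrac12|\nabla u_\epsilon|^2\le |P|^2/2+\sup V-\inf V$, which is slightly weaker than \eqref{UniformBoundGradientuepsilon}; to get the sharp constant I would instead keep the $|P|^2$ term: from $\int_{\Tt^d}\tfrac12|P+\nabla u_\epsilon|^2dx\le\overline{H}_\epsilon(P)-\inf V$ and the upper bound, $\tfrac12(|P|^2+\int|\nabla u_\epsilon|^2)\le \tfrac{|P|^2}{2}+\sup V-\inf V$, which cancels $|P|^2/2$ and yields exactly \eqref{UniformBoundGradientuepsilon}.

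The only mild subtlety — and the step I would be most careful about — is the application of Jensen's inequality, which requires that the integrand $\tfrac{|P+\nabla u_\epsilon|^2}{2}+V(x,x/\epsilon)$ be integrable; this is immediate here since $u_\epsilon\in C^\infty(\Tt^d)$ and $V$ is smooth, so everything is bounded on the compact torus. Beyond that, the argument is entirely elementary: competitor testing for one direction, Jensen plus the zero-average constraint on $\nabla u_\epsilon$ for the other. I would present it in the order (i) upper bound in \eqref{HigherDimHepBounds} via the competitor $u\equiv0$, (ii) lower bound in \eqref{HigherDimHepBounds} by discarding the gradient term, (iii) refined lower bound for $\overline{H}_\epsilon$ via Jensen, (iv) combine with (i) and $\int\nabla u_\epsilon=0$ to conclude \eqref{UniformBoundGradientuepsilon}.
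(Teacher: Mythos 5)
Your proposal is correct and follows essentially the same route as the paper: upper bound on $\overline H_\epsilon$ by testing with $u\equiv 0$, lower bound by monotonicity of the exponential, Jensen's inequality to bring the gradient term into the lower bound, and finally cancellation of the $|P|^2/2$ term via $\int_{\Tt^d}\nabla u_\epsilon\,dx=0$. The only cosmetic difference is that you derive the crude lower bound $\overline H_\epsilon\ge\inf V$ by dropping the gradient term outright and then invoke Jensen separately for \eqref{UniformBoundGradientuepsilon}, whereas the paper obtains both from a single application of Jensen; this does not change the substance of the argument.
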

\begin{proof}
        Choosing $u=0$ in \eqref{VariationalFormula} and using the definition of $\overline{H}_\epsilon$ in \eqref{DefHEpsilon}, we have 
        \begin{equation}
        \label{HigherDimUpperBoundHbarep}
        \overline{H}_\epsilon(P)\leq \ln \int_{\mathbb{T}^d} e^{\frac{ |P |^2}{2}+V (x,\frac{x}{\epsilon} )}dx\leq \frac{ |P |^2}{2}+\sup_{x,y}V (x,y ).
        \end{equation}
        By Jensen's inequality, we get
        \begin{align}
        \label{HigherDimBoundednessofHepandUepIneq1}
        \begin{split}
        \overline{H}_\epsilon (P )= \ln I_\epsilon [u_\epsilon ] &= \ln  \int_{\mathbb{T}^d}e^{ \frac{ |P+ \nabla u_\epsilon (x ) |^2}{2}+V (x,\frac{x}{\epsilon}  )}dx\\
        &\geq \int_{\mathbb{T}^d}  \bigg(\frac{ |P+ \nabla u_\epsilon (x ) |^2}{2}+V \Big(x,\frac{x}{\epsilon} \Big) \bigg) dx \geq \inf_{x,y}V (x,y ).
        \end{split}
        \end{align}
        Thus, \eqref{HigherDimUpperBoundHbarep} and \eqref{HigherDimBoundednessofHepandUepIneq1} yield \eqref{HigherDimHepBounds}. 
        Besides, combining the first inequality in \eqref{HigherDimBoundednessofHepandUepIneq1} with \eqref{HigherDimUpperBoundHbarep}, we obtain 
        \begin{equation*}
        \int_{\mathbb{T}^d} \frac{ |P+ \nabla u_\epsilon (x ) |^2}{2} dx \leq  \overline{H}_\epsilon (P )-\inf_{x,y} V (x,y )\leq \frac{ |P |^2}{2}+\sup_{x,y} V (x,y )-\inf_{x,y} V (x,y ),
        \end{equation*}
        which gives
        \begin{equation*}
        \frac{ |P |^2}{2}+P^T\int_{\mathbb{T}^d}\nabla u_\epsilon (x )dx+
        \int_{\mathbb{T}^d} \frac{ |\nabla u_\epsilon (x ) |^2}{2} dx \leq \frac{ |P |^2}{2}+\sup_{x,y} V (x,y )-\inf_{x,y} V (x,y ).
        \end{equation*}
        Since $\int_{\mathbb{T}^d} \nabla u_\epsilon (x )dx=0$, we have 
        \begin{equation*}
        \int_{\mathbb{T}^d}\frac{ |\nabla u_\epsilon (x ) |^2}{2} dx \leq \sup_{x,y} V (x,y )-\inf_{x,y} V (x,y ).
        \end{equation*}
        Therefore, we conclude that  \eqref{UniformBoundGradientuepsilon} holds.
\end{proof}

\begin{pro}
        \label{HigherDimmbounds}
        Let $ (u_\epsilon, m_\epsilon, \overline{H}_\epsilon )$ solve Problem~\ref{POCMFG} and \(q\in[1,\infty)\). 
        Then, there exist  positive constants, $C=C(P)$, $C_q=C(q,P)$, and \(C_\epsilon=C(\epsilon, P)\), such that          
        \begin{equation}
        \label{UniformBoundSupGradientu}
        \sup_\epsilon \| u_\epsilon  \|_{W^{1,q}(\Tt^d)}\leq C_q, 
        \end{equation}
        \begin{equation}
        \label{LowerUpperBoundmepsilon}
        \frac{1}{C}\leq \inf_{\Tt^d} m_\epsilon \leq \sup_{\Tt^d} m_\epsilon \leq C_\epsi,
        \end{equation}
and
\begin{equation}
\begin{aligned}\label{bddsmlnm}
\sup_\epsilon\int_{\Tt^d} m_\epsilon(x) \ln(m_\epsilon(x)) dx \leq \frac{|P|^2}{2}+ \sup\limits_{(x,y)\in\Tt^d \times \Yd}V (x,y )-\inf\limits_{(x,y)\in\Tt^d
\times \Yd}V (x,y ).
\end{aligned}
\end{equation}
\end{pro}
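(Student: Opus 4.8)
The plan is to prove the three bounds separately, since they are of rather different nature. The lower bound in \eqref{LowerUpperBoundmepsilon} is immediate from the pointwise identity \eqref{eq:defmepsi}: taking logarithms, $\ln m_\epsilon(x)=\frac{|P+\nabla u_\epsilon(x)|^2}{2}+V(x,\frac{x}{\epsilon})-\overline{H}_\epsilon(P)$, and since the first term on the right-hand side is nonnegative, $m_\epsilon\geq e^{\inf_{\Tt^d\times\Yd}V-\overline{H}_\epsilon(P)}$; the upper bound for $\overline{H}_\epsilon(P)$ in \eqref{HigherDimHepBounds} then yields the lower bound in \eqref{LowerUpperBoundmepsilon} with a constant $C=C(P)$. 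For the $\epsilon$-dependent upper bound, since $u_\epsilon\in C^\infty(\Tt^d)$ the number $\|\nabla u_\epsilon\|_{L^\infty(\Tt^d)}$ is finite for the fixed $\epsilon$ under consideration, and using $-\overline{H}_\epsilon(P)\leq-\inf_{\Tt^d\times\Yd}V$ from \eqref{HigherDimHepBounds} one gets $\sup_{\Tt^d}m_\epsilon\leq e^{(|P|+\|\nabla u_\epsilon\|_{L^\infty(\Tt^d)})^2/2+\sup_{\Tt^d\times\Yd}V-\inf_{\Tt^d\times\Yd}V}=:C_\epsilon$, which need not be uniform in $\epsilon$.

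The estimate \eqref{UniformBoundSupGradientu} is the heart of the matter and is where the exponential growth of the Lagrangian is exploited. Integrating \eqref{eq:defmepsi} over $\Tt^d$, using the normalization $\int_{\Tt^d}m_\epsilon=1$ and the lower bound $V(x,\frac{x}{\epsilon})-\overline{H}_\epsilon(P)\geq\inf_{\Tt^d\times\Yd}V-\frac{|P|^2}{2}-\sup_{\Tt^d\times\Yd}V$ coming from \eqref{HigherDimHepBounds}, one obtains the uniform bound $\int_{\Tt^d}e^{|P+\nabla u_\epsilon(x)|^2/2}\,dx\leq e^{|P|^2/2+\sup V-\inf V}$. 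Combining this with the elementary inequality $\frac{|P+\nabla u_\epsilon|^2}{2}\geq\frac14|\nabla u_\epsilon|^2-\frac{|P|^2}{2}$ (itself a consequence of $|\nabla u_\epsilon|^2\leq 2|P+\nabla u_\epsilon|^2+2|P|^2$) yields $\sup_\epsilon\int_{\Tt^d}e^{|\nabla u_\epsilon(x)|^2/4}\,dx<\infty$. Since for each $q\in[1,\infty)$ there is $c_q>0$ with $t^q\leq c_q e^{t^2/4}$ for all $t\geq0$, this gives $\sup_\epsilon\|\nabla u_\epsilon\|_{L^q(\Tt^d)}\leq C_q$; because $\int_{\Tt^d}u_\epsilon=0$, the Poincar\'e--Wirtinger inequality on $\Tt^d$ then upgrades this to $\sup_\epsilon\|u_\epsilon\|_{W^{1,q}(\Tt^d)}\leq C_q$. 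The restriction to finite $q$ is essential: exponential integrability of $\nabla u_\epsilon$ does not produce a uniform $L^\infty$ bound.

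For \eqref{bddsmlnm}, the idea is to multiply the first equation of \eqref{POCMFGEq} by $m_\epsilon$ and integrate over $\Tt^d$, which gives, using $\int_{\Tt^d}m_\epsilon=1$, that $\int_{\Tt^d}m_\epsilon\ln m_\epsilon\,dx=\int_{\Tt^d}m_\epsilon\frac{|P+\nabla u_\epsilon|^2}{2}\,dx+\int_{\Tt^d}m_\epsilon V(x,\frac{x}{\epsilon})\,dx-\overline{H}_\epsilon(P)$. The kinetic term is controlled through the transport structure of the system: testing the second equation of \eqref{POCMFGEq} against $u_\epsilon$ and integrating by parts on $\Tt^d$ (no boundary terms) gives $\int_{\Tt^d}m_\epsilon(P+\nabla u_\epsilon)\cdot\nabla u_\epsilon\,dx=0$, hence $\int_{\Tt^d}m_\epsilon|P+\nabla u_\epsilon|^2\,dx=\int_{\Tt^d}m_\epsilon(P+\nabla u_\epsilon)\cdot P\,dx\leq\frac12\int_{\Tt^d}m_\epsilon|P+\nabla u_\epsilon|^2\,dx+\frac{|P|^2}{2}$ by Young's inequality and $\int m_\epsilon=1$, so that $\int_{\Tt^d}m_\epsilon\frac{|P+\nabla u_\epsilon|^2}{2}\,dx\leq\frac{|P|^2}{2}$. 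Combining this with $\int_{\Tt^d}m_\epsilon V\leq\sup_{\Tt^d\times\Yd}V$ and $-\overline{H}_\epsilon(P)\leq-\inf_{\Tt^d\times\Yd}V$ (again from \eqref{HigherDimHepBounds}) gives \eqref{bddsmlnm}.

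The one genuinely nonroutine point is the exponential-integrability estimate behind \eqref{UniformBoundSupGradientu}: one has to recognize that the mass constraint $\int_{\Tt^d}m_\epsilon=1$, read through \eqref{eq:defmepsi} and the a priori bound on $\overline{H}_\epsilon(P)$ already established in \eqref{HigherDimHepBounds}, forces $\int_{\Tt^d}e^{|\nabla u_\epsilon|^2/4}$ to be bounded uniformly in $\epsilon$ — a gain in integrability well beyond the plain $L^2$-bound \eqref{UniformBoundGradientuepsilon}, made available precisely by the exponential growth of the Lagrangian in \eqref{DefIepsilon}. The remaining arguments are routine.
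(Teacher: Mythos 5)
Your proof is correct, and all three estimates are established along lines that produce the same bounds as the paper. The salient difference is in \eqref{UniformBoundSupGradientu} and the upper bound of \eqref{LowerUpperBoundmepsilon}: the paper simply cites Evans (Lemma~2.1 and Theorem~5.1 of \cite{evans2003some}), whereas you give self-contained arguments. Your exponential-integrability route --- reading the constraint $\int_{\Tt^d}m_\epsilon=1$ through \eqref{eq:defmepsi} and \eqref{HigherDimHepBounds} to get $\sup_\epsilon\int_{\Tt^d}e^{|\nabla u_\epsilon|^2/4}\,dx<\infty$, then using $t^q\leq c_qe^{t^2/4}$ and Poincar\'e--Wirtinger --- is essentially what underlies Evans' lemma, and writing it out makes the logical dependence on the normalization and on \eqref{HigherDimHepBounds} transparent; the $\epsilon$-dependent upper bound on $m_\epsilon$ via smoothness of $u_\epsilon$ on the compact torus is more elementary than the reference to Evans' Theorem~5.1 but is all the statement requires. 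For \eqref{bddsmlnm} you use the same Lasry--Lions duality trick as the paper (HJ tested against $m_\epsilon$, FP tested against $u_\epsilon$); the paper subtracts the two integrated equations to get $\int m_\epsilon\frac{|\nabla u_\epsilon|^2}{2}\,dx+\int m_\epsilon\ln m_\epsilon\,dx\leq\frac{|P|^2}{2}+\sup V-\inf V$ and drops the nonnegative kinetic term, while you keep the kinetic term in the form $\int m_\epsilon\frac{|P+\nabla u_\epsilon|^2}{2}\,dx$ and bound it by $\frac{|P|^2}{2}$ via the FP identity and Young; the two computations are algebraically equivalent and yield the identical right-hand side.
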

\begin{proof}
        The estimate in  \eqref{UniformBoundSupGradientu} follows by Lemma 2.1 in \cite{evans2003some}.    Regarding the estimates in
\eqref{LowerUpperBoundmepsilon}, we first observe that from  the first equation in \eqref{POCMFGEq} and \eqref{HigherDimHepBounds}, we get, for all \(x\in\Tt^d\),
        $$\ln m_\epsilon (x )\geq V (x,\frac{x}{\epsilon} )-\overline{H}_\epsilon (P )\geq \inf\limits_{x,y}V(x,y)-\sup\limits_{x,y}V (x,y )-\frac{ |P |^2}{2}.$$
        Thus, for all \(x\in\Tt^d\), $$m_\epsilon (x )\geq e^{\inf\limits_{x,y}V(x,y)-\sup\limits_{x,y}V (x,y )-\frac{ |P |^2}{2}}.$$ 
        Besides, by Theorem 5.1 in \cite{evans2003some},  $\sup_{\Tt^d}m_\epsilon\leq C_\epsilon$ for some positive
constant $C_\epsilon$, depending on \(\epsilon\) and \(P\).  Hence, we conclude that \eqref{LowerUpperBoundmepsilon} holds.
 
Finally, multiplying the first equation
in \eqref{POCMFGEq}  by \(m_\epsilon\) and the second equation by $u_\epsilon$, and then integrating over \(\Tt^d\) the difference between the resulting equations, we obtain
\begin{equation*}
\begin{aligned}
\int_{\Tt^d}m_\epsilon(x)\frac{|\nabla u_\epsilon(x)|^2}{2}dx+ \int_{\Tt^d} m_\epsilon(x) \ln(m_\epsilon(x)) dx =-\overline H_\epsilon(P)+ \frac{\left|P\right|^2}{2} + \int_{\Tt^d} m_\epsilon(x) V\Big(x, \frac x\epsilon \Big)dx,
\end{aligned}
\end{equation*}
where we used the condition \(\int_{\Tt^d} m_\epsilon(x) dx = 1\). Using this last condition once more and Proposition~\ref{HigherDimBoundednessofHepandUep}, we conclude that
\begin{equation}
\label{eq:onbddsmlnm}
\begin{aligned}
&\int_{\Tt^d} m_\epsilon(x)\frac{ | \nabla u_\epsilon (x
) |^2}{2} dx +\int_{\Tt^d} m_\epsilon(x) \ln(m_\epsilon(x)) dx \\
&\quad \leq \frac{|P|^2}{2}+
\sup\limits_{(x,y)\in\Tt^d \times \Yd}V (x,y )-\inf\limits_{(x,y)\in\Tt^d \times \Yd}V (x,y ).
\end{aligned}
\end{equation}
Because the first term on the left-hand side of \eqref{eq:onbddsmlnm} is nonnegative, we obtain \eqref{bddsmlnm}.           
\end{proof}

\begin{pro}
        \label{UniformlyConvergence}
        Let $ (u_\epsilon, m_\epsilon, \overline{H}_\epsilon )$ solve Problem
\ref{POCMFG} and \(q\in[1,\infty)\).
        Then, there exists \(\alpha\in(0,1)\) and there  exist $u_0\in C^{0,\alpha}\cap{W}^{1,q} (\mathbb{T}^d )$   with \(\int_{\Tt^d} u_0dx =0\),  $u_1\in {L}^q (\mathbb{T}^d;{W}^{1,q}_\# (\mathcal{Y}^d
)/\mathbb{R } )$, 
  $m\in {L}^1
(\mathbb{T}^d\times\mathcal{Y}^d )$
with \(\int_{\mathbb{T}^d}\int_{\mathcal{Y}^d}
m(x,y) dydx=1\),
and \(\overline H(P)\in\Rr\) such that, up to a subsequence, 
\begin{align}
&u_\epsilon \rightarrow u_0 \
\text{in } L^\infty(\Tt^d),
        \quad u_\epsilon \rightharpoonup
 u_0 \ \text{in} \ {W}^{1,q}
(\mathbb{T}^d ), \label{eq:convweakuepsi}\\
& \nabla u_\epsilon
\overset{2}{\rightharpoonup} \nabla
u_0 +\nabla_y u_1 \ \text{in} \ {[L}^q
(\mathbb{T}^d\times\mathcal{Y}^d)]^d,\label{eq:conv2uepsi}\\
&m_\epsilon \overset{2}{\rightharpoonup}
 m \text{ in }  {L}^1
(\mathbb{T}^d\times\mathcal{Y}^d ),
\quad m_\epsilon \rightharpoonup  m_0
=\int_{\mathcal{Y}^d}m
(\cdot,y )dy \text{ in } {L}^1
(\mathbb{T}^d ),\label{eq:conv2mepsi}\\
&\overline H_\epsilon(P) \to \overline
H(P) \text{ in
} \Rr. \label{eq:convH}
\end{align}
\end{pro}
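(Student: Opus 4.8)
The plan is to extract, from the uniform bounds established in Propositions~\ref{HigherDimBoundednessofHepandUep} and \ref{HigherDimmbounds}, the compactness claimed in \eqref{eq:convweakuepsi}--\eqref{eq:convH}. First I would treat $u_\epsilon$: by \eqref{UniformBoundSupGradientu}, $(u_\epsilon)_\epsilon$ is bounded in $W^{1,q}(\Tt^d)$ for every $q\in[1,\infty)$. Fixing some $q>d$ and applying Morrey's embedding, $(u_\epsilon)_\epsilon$ is bounded in $C^{0,\alpha}(\Tt^d)$ for $\alpha=1-d/q\in(0,1)$; by Arzel\`a--Ascoli (and a diagonal argument over a sequence $q_k\uparrow\infty$ together with the normalization $\int_{\Tt^d}u_\epsilon\,dx=0$), there is a subsequence and a limit $u_0\in C^{0,\alpha}\cap W^{1,q}(\Tt^d)$ with $\int_{\Tt^d}u_0\,dx=0$ such that $u_\epsilon\to u_0$ in $L^\infty(\Tt^d)$ and $u_\epsilon\rightharpoonup u_0$ in $W^{1,q}(\Tt^d)$ for all $q\in[1,\infty)$; this is \eqref{eq:convweakuepsi}. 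Then \eqref{eq:conv2uepsi} is exactly Proposition~\ref{GradientConvergence} applied to the weakly convergent sequence $(u_\epsilon)_\epsilon\subset W^{1,q}(\Tt^d)$: it yields $u_1\in L^q(\Tt^d;W^{1,q}_\#(\Yd)/\Rr)$ with $\nabla u_\epsilon\overset{2}{\rightharpoonup}\nabla u_0+\nabla_y u_1$. One subtlety here is that $u_1$ is, a priori, produced by Proposition~\ref{GradientConvergence} for a fixed $q$; to get the same $u_1$ for all $q$ one takes $q$ large (all the relevant two-scale limits coincide a.e.\ by uniqueness of the two-scale limit, Remark~\ref{rmk:uniq}), so $u_1$ lies in the smallest such space and hence in every $L^q(\Tt^d;W^{1,q}_\#(\Yd)/\Rr)$.

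Next I would handle $m_\epsilon$. The natural bound is not in any $L^q$ with $q>1$ uniformly in $\epsilon$ (indeed \eqref{LowerUpperBoundmepsilon} only gives an $\epsilon$-dependent sup bound), so I would invoke the $L^1$ two-scale compactness result, Proposition~\ref{CompactnessL1}. Its hypothesis is equi-integrability of $(m_\epsilon)_\epsilon$, and this is precisely where the entropy bound \eqref{bddsmlnm} enters: $(m_\epsilon)_\epsilon$ is bounded in $L^1$ (it has mass $1$) and has uniformly bounded entropy $\int_{\Tt^d}m_\epsilon\ln m_\epsilon\,dx$, so by the de la Vall\'ee-Poussin criterion (with the superlinear function $t\mapsto t\ln t$, using also that $m_\epsilon>0$ and $m_\epsilon\ln m_\epsilon\geq -1/e$ is bounded below) the family $(m_\epsilon)_\epsilon$ is equi-integrable. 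Proposition~\ref{CompactnessL1} then provides $m\in L^1(\Tt^d\times\Yd)$ with $m_\epsilon\overset{2}{\rightharpoonup}m$ in $L^1(\Tt^d\times\Yd)$ along a further subsequence, and moreover $m_\epsilon\rightharpoonup\int_{\Yd}m(\cdot,y)\,dy=:m_0$ in $L^1(\Tt^d)$; this gives \eqref{eq:conv2mepsi}. The mass constraint $\int_{\Tt^d}\int_{\Yd}m\,dy\,dx=1$ passes to the limit by testing the two-scale convergence against $\psi\equiv1$.

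Finally, \eqref{eq:convH} is immediate: by \eqref{HigherDimHepBounds} the real sequence $(\overline H_\epsilon(P))_\epsilon$ lies in the fixed compact interval $[\inf V,\ \frac{|P|^2}{2}+\sup V]$, so along a subsequence it converges to some $\overline H(P)\in\Rr$. I would organize the proof so that all the subsequence extractions (for $u_\epsilon$ in $W^{1,q}$ and $L^\infty$, for $\nabla u_\epsilon$ in two-scale, for $m_\epsilon$ in two-scale and in $L^1$-weak, and for $\overline H_\epsilon$) are performed successively, each refining the previous one, so that a single subsequence realizes \eqref{eq:convweakuepsi}--\eqref{eq:convH} simultaneously. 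The main obstacle, and the only genuinely non-routine point, is the treatment of $m_\epsilon$: without a uniform $L^q$ bound for $q>1$ one cannot use the standard Proposition~\ref{Compactness}, and the passage must go through equi-integrability supplied by the entropy estimate \eqref{bddsmlnm} and the $L^1$ two-scale compactness of Proposition~\ref{CompactnessL1}; everything else is a direct consequence of Sobolev embedding, Arzel\`a--Ascoli, and Proposition~\ref{GradientConvergence}.
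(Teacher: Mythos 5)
Your proposal is correct and follows essentially the same route as the paper: Morrey's embedding together with the normalization $\int_{\Tt^d}u_\epsilon\,dx=0$ for \eqref{eq:convweakuepsi}, Proposition~\ref{GradientConvergence} for \eqref{eq:conv2uepsi}, the entropy bound \eqref{bddsmlnm} plus the de la Vall\'ee Poussin criterion and Proposition~\ref{CompactnessL1} for \eqref{eq:conv2mepsi}, and the uniform bound \eqref{HigherDimHepBounds} for \eqref{eq:convH}. You spell out a few details the paper leaves implicit (the diagonal argument over $q$, the independence of $u_1$ from $q$ via uniqueness of the two-scale limit), but the substance is identical.
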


\begin{proof}
The existence  of \(u_0\in C^{0,\alpha}\cap{W}^{1,q} (\mathbb{T}^d
)$, for some \(\alpha\in(0,1)\), with \(\int_{\Tt^d} u_0dx =0\) and  satisfying \eqref{eq:convweakuepsi} follows from the uniform estimate in \eqref{UniformBoundSupGradientu} together with
Morrey's embedding theorem and the condition  \(\int_{\Tt^d} u_\epsilon dx =0\). Then, from  Proposition~\ref{GradientConvergence}, we conclude that there
exists   $u_1\in {L}^q (\mathbb{T}^d;{W}^{1,q}
_\#(\mathcal{Y}^d
)/\mathbb{R } )$ for which \eqref{eq:conv2uepsi} holds. 
Next, we observe that the map  \(t\in\Rr^+\mapsto t \ln
t\) is bounded from below and $\lim_{t\to\infty} \frac{t\ln t}{
t} =\infty$.  Hence, the existence  of \(m\in {L}^1
(\mathbb{T}^d\times\mathcal{Y}^d )\) satisfying \eqref{eq:conv2mepsi} follows from Proposition~\ref{CompactnessL1}
and  the uniform estimate in
\eqref{bddsmlnm} together with the de la Vall\'ee Poussin criterion for equi-integrability. We observe further that the condition \(\int_{\Tt^d} m_\epsilon(x) dx
= 1\)
yields \(1=\int_{\Tt^d} m_0(x) dx =\int_{\mathbb{T}^d}\int_{\mathcal{Y}^d}
m(x,y) dydx\). Finally, \eqref{eq:convH} follows from
the uniform estimates in \eqref{HigherDimHepBounds}, which conclude the proof.
\end{proof}

\section{Two-scale homogenization in one dimension}
\label{TwoScaleHomogeInOneDim}
In this section, we consider the two-scale homogenization of \eqref{POCMFGEq} in one dimension, for which we have an explicit solution. Here, we denote $\mathbb{T}^1=\mathbb{T}$ and $\mathcal{Y}^1=\mathcal{Y}$, and we  identify \(\Tt\) with $ [0,1 ]$. If $d=1$, \eqref{POCMFGEq} becomes
\begin{align}
\label{OneDPOCMFG}
\begin{cases}
\frac{ (P+  (u_\epsilon (x ) )_x )^2}{2}+V (x,\frac{x}{\epsilon} )=\ln m_\epsilon (x )+\overline{H}_\epsilon (P ),  \ &\text{in}\ \mathbb{T}, \\
- (m_\epsilon (x ) (P+  (u_\epsilon (x ) )_x ) )_x=0, \ &\text{in}\ \mathbb{T}, \\
\int_0^1 u_\epsilon (x )dx=0, \int_0^1 m_\epsilon (x )dx=1.
\end{cases}
\end{align}

\subsection{The current formulation}
To find the explicit solution of \eqref{OneDPOCMFG}, we use the method introduced in \cite{Gomes2016b}. Let  $ (u_\epsilon, m_\epsilon, \overline{H}_\epsilon )\in C^\infty
(\mathbb{T} )\times C^\infty (\mathbb{T} )\times\mathbb{R}$,
with $m_\epsilon>0$,  solve  \eqref{OneDPOCMFG}, and set
\begin{equation}
\label{DefCurrent}
j_\epsilon
=m_\epsilon (P+ (u_\epsilon )_x ).
\end{equation}
The second equation of \eqref{OneDPOCMFG} gives $ (j_\epsilon )_x=0$. Then, $j_\epsilon$ is a constant independent of $x$. Thus, the first equation in  \eqref{OneDPOCMFG} becomes  
\begin{align}
\label{OneDCurrentFormulation}
\frac{j^2_\epsilon}{2m^2_\epsilon (x )}+V \Big(x,\frac{x}{\epsilon} \Big)=\ln m_\epsilon (x )+\overline{H}_\epsilon (P ), \ \text{in}\ \mathbb{T}.
\end{align}

Next, we find the explicit formulas for $u_\epsilon, m_\epsilon$,  $\overline{H}_\epsilon$,  and their two-scale limits, $u_0$, $m_0$, and $\overline{H}$, whose existence is asserted in Proposition \ref{UniformlyConvergence}. We discuss
the $P=0$ case first, and then the general case. 
\begin{pro}\label{prop:P0}
 Let  $ (u_\epsilon, m_\epsilon,
\overline{H}_\epsilon (0))\in C^\infty
(\mathbb{T} )\times C^\infty (\mathbb{T} )\times\mathbb{R}$,
with $m_\epsilon>0$,  solve  \eqref{OneDPOCMFG}, let    
\(j_\epsilon\) be given by \eqref{DefCurrent}, and
let \(u_0\),  \(m\), and \(\overline
H(0)\) be given by Proposition~\ref{UniformlyConvergence}, with \(P=0\).  Then, for all  $x\in\mathbb{T}$,   $u_\epsilon (x )=0$,  $m_\epsilon
(x )=e^{V (x,\frac{x}{\epsilon} )-\overline{H}_\epsilon(0)}$,   $\overline{H}_\epsilon(0)=\ln\int_0^1e^{V (x,\frac{x}{\epsilon} )}dx$, and $j_\epsilon=0$.  Moreover, for all
\(x\in\Tt\) and \(y\in
\mathcal{Y}\), \(u_0(x) =0\),    \(m(x,y)= e^{V(x,y)-\overline{H}(0)} \), and $\overline{H}(0)=\ln \int_0^1\int_0^1e^{V
(x,y )}dydx$. Furthermore,   $m_\epsilon\overset{2}{\rightarrow}m$ in \(L^p(\Tt\times \mathcal{Y})\)
 for all \(p\in(1,\infty)\). \end{pro}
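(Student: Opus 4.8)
The plan is to first determine the explicit formulas for $u_\epsilon$, $m_\epsilon$, $\overline{H}_\epsilon(0)$ and $j_\epsilon$ via the current formulation, and then to pass to the two-scale limit using Proposition~\ref{CaratheodoryFuncConv}.

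\emph{Step 1 (explicit solution).} With $P=0$, \eqref{DefCurrent} reads $j_\epsilon = m_\epsilon (u_\epsilon)_x$, and by the second equation of \eqref{OneDPOCMFG} the quantity $j_\epsilon$ is a constant. Hence $(u_\epsilon)_x = j_\epsilon/m_\epsilon$ on $\mathbb{T}$, and integrating over $\mathbb{T}=[0,1]$ and using the periodicity of $u_\epsilon$ gives $0 = \int_0^1 (u_\epsilon)_x\,dx = j_\epsilon\int_0^1 m_\epsilon^{-1}\,dx$; since $m_\epsilon$ is smooth and positive on the compact torus, $\int_0^1 m_\epsilon^{-1}\,dx > 0$, so $j_\epsilon = 0$. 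Therefore $(u_\epsilon)_x \equiv 0$, $u_\epsilon$ is constant, and the normalization $\int_0^1 u_\epsilon\,dx = 0$ forces $u_\epsilon \equiv 0$. (Equivalently, via Problem~\ref{VariationalProblem}: when $P=0$, $I_\epsilon[u] = \int_{\mathbb{T}} e^{|\nabla u|^2/2 + V(x,x/\epsilon)}\,dx \geq I_\epsilon[0]$, with equality only if $\nabla u \equiv 0$.) Substituting $u_\epsilon \equiv 0$ into the first equation of \eqref{OneDPOCMFG} gives $m_\epsilon(x) = e^{V(x,x/\epsilon)-\overline{H}_\epsilon(0)}$, and then $\int_0^1 m_\epsilon\,dx = 1$ yields $\overline{H}_\epsilon(0) = \ln\int_0^1 e^{V(x,x/\epsilon)}\,dx$, while $j_\epsilon = 0$ was already shown.

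\emph{Step 2 (limits).} Since $V$ is smooth, $\Phi(x,y):=e^{V(x,y)}$ is a bounded Carath\'eodory function, $\mathcal{Y}$-periodic in $y$; Proposition~\ref{CaratheodoryFuncConv} gives $e^{V(\cdot,\cdot/\epsilon)} \overset{2}{\rightarrow} e^{V(\cdot,\cdot)}$ in $L^p(\mathbb{T}\times\mathcal{Y})$ for all $p\in(1,\infty)$. Testing against $\psi\equiv 1$ (or invoking Proposition~\ref{TwoScaleConvProduct}), $\int_0^1 e^{V(x,x/\epsilon)}\,dx \to \int_0^1\int_0^1 e^{V(x,y)}\,dy\,dx$, so the full sequence $\overline{H}_\epsilon(0)$ converges to $\ln\int_0^1\int_0^1 e^{V(x,y)}\,dy\,dx$, which by \eqref{eq:convH} must be $\overline{H}(0)$. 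Writing $m_\epsilon = e^{-\overline{H}_\epsilon(0)}\,e^{V(\cdot,\cdot/\epsilon)}$ as the product of a convergent sequence of constants and a strongly two-scale convergent sequence, one checks directly from Definition~\ref{TwoScaleConvDef} — both the test-function integrals and the $L^p$-norms pass to the limit — that $m_\epsilon \overset{2}{\rightarrow} e^{V(x,y)-\overline{H}(0)}$ in $L^p(\mathbb{T}\times\mathcal{Y})$ for all $p\in(1,\infty)$. Since strong two-scale convergence in $L^p$ ($p>1$) implies weak two-scale convergence in $L^1$, uniqueness of the two-scale limit (Remark~\ref{rmk:uniq}) identifies this limit with the $m$ of Proposition~\ref{UniformlyConvergence}, i.e.\ $m(x,y)=e^{V(x,y)-\overline{H}(0)}$; likewise $u_\epsilon \to u_0$ in $L^\infty(\mathbb{T})$ together with $u_\epsilon\equiv 0$ gives $u_0\equiv 0$.

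I expect no serious obstacle: once $j_\epsilon = 0$ is established (equivalently, once one notes that $u\equiv 0$ minimizes $I_\epsilon$ at $P=0$), all quantities are explicit and the limits are read off from Proposition~\ref{CaratheodoryFuncConv}. The only point needing a little care is the last step — verifying that multiplying a strongly two-scale convergent sequence by a convergent scalar sequence preserves strong two-scale convergence, and then matching the resulting explicit limit with the (a priori only subsequential) limit objects furnished by Proposition~\ref{UniformlyConvergence} — but this is routine.
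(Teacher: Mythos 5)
Your proof is correct and follows essentially the same route as the paper: use the constancy of the current $j_\epsilon$ to show $j_\epsilon=0$ and hence $u_\epsilon\equiv 0$, read off $m_\epsilon$ and $\overline H_\epsilon(0)$ explicitly, and then pass to the two-scale limit via Proposition~\ref{CaratheodoryFuncConv} and uniqueness of two-scale limits. The only cosmetic difference is in deriving $j_\epsilon=0$: you integrate $(u_\epsilon)_x=j_\epsilon/m_\epsilon$ over $\mathbb{T}$, whereas the paper evaluates at a critical point of $u_\epsilon$ — both one-line arguments resting on periodicity and $m_\epsilon>0$.
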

\begin{proof}
By  \eqref{DefCurrent} with  $P=0$, we have $j_\epsilon = m_\epsilon  (u_\epsilon )_x$. Because \(u_\epsilon\in C^\infty(\Tt)\) and \(\int_0^1 u_\epsilon dx=0\), there exists \(x_\epsi\in\Tt\) such that \((u_\epsilon)_x(x_\epsilon)=0\).
Hence, recalling that \(j_\epsilon\) is independent of \(x\)
and \(m_\epsilon>0\) in \(\Tt\), it follows that \(j_\epsilon
=0\) and \((u_\epsilon)_x=0\) in \(\Tt\).
This last condition together with the fact that \(\int_0^1 u_\epsilon dx =
0\) yields \(u_\epsilon =0\) in \(\Tt\). Consequently, also \(u_0=0\)
in \(\Tt\).
 
 Next, using \eqref{OneDCurrentFormulation}
with  $P=0$ and $\int_0^1 m_\epsilon (x )dx=1$, we get 
\begin{equation*}
\begin{aligned}
m_\epsilon (x )=e^{V (x,\frac{x}{\epsilon} )-\overline{H}_\epsilon(0)} \text{
 for all
\(x\in\Tt\), and }\overline{H}_\epsilon(0)=\ln\int_0^1e^{V (x,\frac{x}{\epsilon} )}dx. 
\end{aligned}
\end{equation*}
Since $V$ is smooth, $\Phi(x,y)=e^{V(x,y)}$ satisfies the conditions of Proposition~\ref{CaratheodoryFuncConv} for any \(p\in(1,\infty)\). Accordingly, setting \(\Phi_\epsilon(x)= \Phi(x,\frac x \epsilon)\), we
have   $\Phi_\epsilon\overset{2}{\rightarrow}\Phi$ in \(L^p(\Tt\times
\mathcal{Y})\)
 for all \(p\in(1,\infty)\); in particular,   
\begin{equation*}
\lim_{\epsilon\rightarrow 0}\int_0^1e^{V (x,\frac{x}{\epsilon} )}dx=\int_0^1\int_0^1e^{V (x,y )}dydx,
\end{equation*}
 which yields $\overline{H}_\epsilon(0)\rightarrow\overline{H}(0)$
 in \(\Rr\).  Moreover, using the uniqueness of two-scale limits,
\(m(x,y)= e^{V(x,y)-\overline{H}(0)}
\) for all \((x,y)\in\Tt\times\mathcal{Y}\) and   $m_\epsilon\overset{2}{\rightarrow}m$ in \(L^p(\Tt\times
\mathcal{Y})\)
 for all \(p\in(1,\infty)\).
\end{proof}

Next, we examine the general case \(P\in\Rr\).

\begin{pro}
\label{umepsilonexplicit}
Let  $ (u_\epsilon, m_\epsilon,
\overline{H}_\epsilon )\in C^\infty
(\mathbb{T} )\times C^\infty (\mathbb{T} )\times\mathbb{R}$,
with $m_\epsilon>0$,  solve  \eqref{OneDPOCMFG}  and  
\(j_\epsilon\) be given by \eqref{DefCurrent}. Let \(F_\epsilon:\Rr^+\to\Rr\)
be the function defined, for  $t>0$,  by 
\begin{equation}
\label{DefFEpsilon}
F_\epsilon (t )=\frac{j_\epsilon^2}{2t^2}-\ln t.
\end{equation}
Then, for all \(x\in\Tt\),
\begin{equation*}
j_\epsilon=\frac{P}{\int_{0}^{1}\frac{1}{m_\epsilon (s )}ds},
\end{equation*}
\begin{align}
m_\epsilon (x )=F^{-1}_\epsilon \Big(\overline{H}_\epsilon (P )-V \Big(x,\frac{x}{\epsilon} \Big) \Big), \label{eq:me=Fe-1}
\end{align}
and 
\begin{align*}
u_\epsilon (x )=\int_0^x\frac{j_\epsilon}{m_\epsilon (s )}ds-P x+P-\int_0^1\int_0^z\frac{j_\epsilon}{m_\epsilon (s )}dsdz.
\end{align*}
Furthermore, there exists $j\in\mathbb{R}$ such that, up to a subsequence, $j_\epsilon\rightarrow j$ as $\epsilon\rightarrow 0$. 
\end{pro}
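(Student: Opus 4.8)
The plan is to solve the reduced one-dimensional system \eqref{OneDCurrentFormulation} together with the current relation \eqref{DefCurrent} essentially by hand. First I would derive the formula for $j_\epsilon$: since \eqref{DefCurrent} reads $(u_\epsilon)_x = j_\epsilon\, m_\epsilon(\cdot)^{-1} - P$ on $\Tt$ and $u_\epsilon\in C^\infty(\Tt)$ is periodic (so $\int_0^1 (u_\epsilon)_x\,dx = 0$), integrating this identity over $[0,1]$ gives $j_\epsilon \int_0^1 m_\epsilon(s)^{-1}\,ds = P$; the integral is finite and strictly positive because $m_\epsilon$ is smooth and positive on the compact set $\Tt$ (indeed uniformly bounded below in $\epsilon$ by \eqref{LowerUpperBoundmepsilon}), which yields the first displayed formula.

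Next, for $m_\epsilon$, I would rewrite \eqref{OneDCurrentFormulation} using the definition of $F_\epsilon$ in \eqref{DefFEpsilon} as $F_\epsilon(m_\epsilon(x)) = \overline{H}_\epsilon(P) - V(x,x/\epsilon)$ for all $x\in\Tt$, and then argue that $F_\epsilon$ is invertible with inverse defined on all of $\Rr$: a direct computation gives $F_\epsilon'(t) = -j_\epsilon^2 t^{-3} - t^{-1} < 0$ for $t>0$, so $F_\epsilon$ is strictly decreasing, while $F_\epsilon(t)\to+\infty$ as $t\to 0^+$ and $F_\epsilon(t)\to-\infty$ as $t\to+\infty$, so $F_\epsilon\colon\Rr^+\to\Rr$ is a bijection; composing with $F_\epsilon^{-1}$ gives \eqref{eq:me=Fe-1}. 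For $u_\epsilon$, I would integrate $(u_\epsilon)_x = j_\epsilon\, m_\epsilon(\cdot)^{-1} - P$ from $0$ to $x$, obtaining $u_\epsilon(x) = u_\epsilon(0) + \int_0^x j_\epsilon\, m_\epsilon(s)^{-1}\,ds - Px$, and fix the constant $u_\epsilon(0)$ by imposing $\int_0^1 u_\epsilon\,dx = 0$; this is a routine computation producing the stated expression.

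Finally, for the convergence of $(j_\epsilon)_\epsilon$ up to a subsequence, it suffices to bound it uniformly in $\epsilon$ and apply Bolzano--Weierstrass. By the Cauchy--Schwarz inequality and the normalization $\int_0^1 m_\epsilon\,dx = 1$,
\begin{equation*}
1 = \Big(\int_0^1 1\,dx\Big)^2 \leq \int_0^1 m_\epsilon(s)\,ds\,\cdot\int_0^1 \frac{1}{m_\epsilon(s)}\,ds = \int_0^1 \frac{1}{m_\epsilon(s)}\,ds ,
\end{equation*}
so the formula for $j_\epsilon$ gives $|j_\epsilon| = |P|\,\big/\int_0^1 m_\epsilon(s)^{-1}\,ds \leq |P|$ for every $\epsilon$, and hence a convergent subsequence $j_\epsilon\to j\in\Rr$ exists. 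The only step that is not a direct algebraic manipulation of \eqref{OneDPOCMFG} is this last uniform bound; I expect it to be the (mild) crux, resting on combining Cauchy--Schwarz with $\int_0^1 m_\epsilon\,dx = 1$, rather than on the $\epsilon$-dependent upper bound in \eqref{LowerUpperBoundmepsilon}.
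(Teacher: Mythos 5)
Your proposal is correct, and the formulas for $j_\epsilon$, $m_\epsilon$, $u_\epsilon$ are derived by the same elementary manipulations the paper uses (though in a slightly different order). The one place where you genuinely diverge from the paper is the uniform bound on $j_\epsilon$. The paper's proof exploits the convexity of $F_\epsilon$ and Jensen's inequality: since $\int_0^1 m_\epsilon\,dx = 1$, one has $\frac{j_\epsilon^2}{2} = F_\epsilon(1) = F_\epsilon\big(\int_0^1 m_\epsilon\big) \leq \int_0^1 F_\epsilon(m_\epsilon) = \int_0^1\big(\overline{H}_\epsilon(P) - V(x,\tfrac{x}{\epsilon})\big)\,dx$, and then invokes the $\epsilon$-uniform bound on $\overline{H}_\epsilon$ from Proposition~\ref{HigherDimBoundednessofHepandUep} together with the boundedness of $V$. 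You instead apply Cauchy--Schwarz to the normalization constraint, obtaining $\int_0^1 m_\epsilon^{-1}\,ds \geq 1$ and hence the explicit bound $|j_\epsilon|\leq |P|$ directly from your formula for $j_\epsilon$. Your version is more self-contained (it does not need Proposition~\ref{HigherDimBoundednessofHepandUep}) and yields a sharper constant, whereas the paper's Jensen argument relies on the already-established $\overline H_\epsilon$ estimates and gives a bound that depends on $\sup V - \inf V$ and $|P|^2$. Both are valid; yours is arguably the cleaner argument for this specific statement. One small point worth keeping: your explicit check that $F_\epsilon$ is a strictly decreasing bijection from $\Rr^+$ onto $\Rr$ (via the limits at $0^+$ and $+\infty$) is a welcome precision, since the paper only notes that $F_\epsilon$ is decreasing and convex and tacitly uses invertibility.
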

 \begin{proof}
We start by observing that $F_\epsilon$ belongs to \(C^\infty(\Rr^+)\)  and is a decreasing and convex function in \(\Rr^+\).  Then,  \eqref{OneDCurrentFormulation} yields  
\begin{align*}
m_\epsilon (x )=F^{-1}_\epsilon \Big(\overline{H}_\epsilon (P )-V \Big(x,\frac{x}{\epsilon} \Big) \Big). 
\end{align*}
Moreover, by Jensen's inequality, 

\begin{equation*}
\begin{aligned}
\frac{j_\epsilon^2}{2} = F_\epsilon(1) = F_\epsilon \bigg( \int_0^1 m_\epsilon (x )dx\bigg) \leq \int_0^1 F_\epsilon(m_\epsilon (x ))dx = \int_0^1 \Big(\overline{H}_\epsilon (P
)-V \Big(x,\frac{x}{\epsilon}\Big) \Big)dx .
\end{aligned}
\end{equation*}
This estimate, Proposition~\ref{HigherDimBoundednessofHepandUep},
and the smoothness of \(V\) imply that \(j_\epsilon\) is uniformly
bounded; thus,  up to a subsequence \(j_\epsilon\to j\) in \(\Rr\)
for some \(j\in\Rr\).

On the other hand, from  \eqref{DefCurrent}, recalling that \(m_\epsilon>0\) and 
 $\int_{0}^{1}u_\epsilon (x )dx=0$,
 we obtain  
\begin{align*}
u_\epsilon (x )=\int_0^x\frac{j_\epsilon}{m_\epsilon (s )}ds-P x+ P-\int_0^1\int_0^z\frac{j_\epsilon}{m_\epsilon (s )}dsdz.
\end{align*}
Moreover, by the periodicity of $u_\epsilon$, we have $u_\epsilon (0 )=u_\epsilon (1 )$, which implies that 
\begin{equation*}
\int_{0}^{1}\frac{j_\epsilon}{m_\epsilon (x )}dx-P=0.
\end{equation*}
Therefore, 
\begin{equation*}
j_\epsilon=\frac{P}{\int_{0}^{1}\frac{1}{m_\epsilon (s )}ds}\cdot\qedhere
\end{equation*}
\end{proof}

Let $j$ be the limit of $j_\epsilon$ given in Proposition~\ref{umepsilonexplicit},
and let \(F:\Rr^+\to\Rr\)
be the function defined, for  $t>0$,  by 
\begin{equation}
\label{DefF}
F (t )=\frac{j^2}{2t^2}-\ln t.
\end{equation}
Note  that $F$ belongs to \(C^\infty(\Rr^+)\)
 and is a decreasing and convex function in \(\Rr^+\).

\begin{lemma}
        \label{InvFLipschitz}
        Let  $F_\epsilon^{-1}$ and $F^{-1}$ be the inverse functions of $F_\epsilon$ and $F$  defined in \eqref{DefFEpsilon} and \eqref{DefF},
respectively. 
        Then, $F_\epsilon$, $F_\epsilon^{-1}$, $F$, and $F^{-1}$ are Lipschitz continuous on any closed and bounded interval, $ [a,b ]$, of their domains and the corresponding Lipschitz constants on $ [a,b ]$ are bounded uniformly as $\epsilon\rightarrow 0$.
\end{lemma}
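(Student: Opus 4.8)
The plan is to reduce the whole statement to elementary one-variable calculus, the only non-routine ingredient being the $\epsilon$-uniform bound on $(j_\epsilon)_\epsilon$ furnished by Proposition~\ref{umepsilonexplicit}. First I would fix $M>0$ with $|j_\epsilon|\le M$ for all $\epsilon$ and $|j|\le M$; this is legitimate since the proof of Proposition~\ref{umepsilonexplicit} shows that $(j_\epsilon)_\epsilon$ is uniformly bounded and $j_\epsilon\to j$ along the working subsequence.

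For the Lipschitz bounds on $F_\epsilon$ and $F$ themselves, I would differentiate \eqref{DefFEpsilon} to get $F_\epsilon'(t)=-\frac{j_\epsilon^2}{t^3}-\frac1t$, so that $|F_\epsilon'(t)|=\frac{j_\epsilon^2}{t^3}+\frac1t\le \frac{M^2}{a^3}+\frac1a$ on any $[a,b]\subset(0,+\infty)$, and then conclude from the mean value theorem that $F_\epsilon$ is Lipschitz on $[a,b]$ with constant $\frac{M^2}{a^3}+\frac1a$, which is independent of $\epsilon$. The identical computation applied to \eqref{DefF} (with $|j|\le M$) handles $F$.

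For the inverses, I would first record that, since $F_\epsilon'<0$ on $(0,+\infty)$ while $F_\epsilon(t)\to+\infty$ as $t\to0^+$ and $F_\epsilon(t)\to-\infty$ as $t\to+\infty$ (because $\frac{j_\epsilon^2}{2t^2}\ge0$ and $-\ln t\to\pm\infty$), the function $F_\epsilon$ is a strictly decreasing $C^\infty$ bijection of $(0,+\infty)$ onto $\Rr$; hence $F_\epsilon^{-1}\colon\Rr\to(0,+\infty)$ is a well-defined, $C^\infty$, strictly decreasing function, and likewise for $F^{-1}$. The crucial step is then to show that for a fixed closed bounded interval $[a,b]\subset\Rr$ the preimage $F_\epsilon^{-1}([a,b])=[F_\epsilon^{-1}(b),F_\epsilon^{-1}(a)]$ lies in a compact subinterval $[c,d]\subset(0,+\infty)$ independent of $\epsilon$: from $F_\epsilon(t)\ge-\ln t$ one gets $F_\epsilon^{-1}(b)\ge e^{-b}=:c$, and from $F_\epsilon(t)\le\frac{M^2}{2t^2}-\ln t$, together with the observation that $s\mapsto\frac{M^2}{2s^2}-\ln s$ is itself a strictly decreasing bijection of $(0,+\infty)$ onto $\Rr$, one gets $F_\epsilon^{-1}(a)\le d$, where $d$ is the unique solution of $\frac{M^2}{2d^2}-\ln d=a$. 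The same $c$ and $d$ serve for $F^{-1}$.

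Finally, on $[c,d]$ one has $|F_\epsilon'(t)|\ge\frac1t\ge\frac1d$, and similarly $|F'(t)|\ge\frac1d$, uniformly in $\epsilon$; so for $s_1,s_2\in[a,b]$, setting $t_i=F_\epsilon^{-1}(s_i)\in[c,d]$ and applying the mean value theorem to $F_\epsilon$ on $[c,d]$, $|s_1-s_2|=|F_\epsilon(t_1)-F_\epsilon(t_2)|\ge\frac1d|t_1-t_2|$, i.e.\ $|F_\epsilon^{-1}(s_1)-F_\epsilon^{-1}(s_2)|\le d\,|s_1-s_2|$, so $F_\epsilon^{-1}$ is Lipschitz on $[a,b]$ with the $\epsilon$-independent constant $d$, and the same argument yields the bound for $F^{-1}$. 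The only point requiring care — and essentially the only non-trivial step — is the $\epsilon$-uniform confinement of $F_\epsilon^{-1}([a,b])$ to $[c,d]$, which is exactly where the uniform bound $|j_\epsilon|\le M$ enters.
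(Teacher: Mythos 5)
Your proof is correct and follows the same approach as the paper: differentiate $F_\epsilon$, bound $|F_\epsilon'|$ on compact subsets of $(0,+\infty)$ using the uniform bound $|j_\epsilon|\le M$ from Proposition~\ref{umepsilonexplicit}, and pass to the inverse via the inverse function theorem. Where the paper disposes of $F_\epsilon^{-1}$ in one line (``by the inverse function theorem, a similar statement holds true''), you carefully establish the $\epsilon$-uniform confinement $F_\epsilon^{-1}([a,b])\subset[c,d]\subset(0,+\infty)$, which is precisely the point needed to make that step rigorous, since $|F_\epsilon'(t)|\ge\frac{1}{t}$ only bounds the derivative away from zero on bounded $t$-intervals.
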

\begin{proof}
        By the definition of \(F_\epsilon\), we have, for all  $0<a\leq
        t \leq b <+\infty$, 
        $$-\frac{j^2_\epsilon}{a^3}-\frac{1}{a}\leq F'_\epsilon (t )=-\frac{j^2_\epsilon}{t^3}-\frac{1}{t}\leq -\frac{j^2_\epsilon}{b^3}-\frac{1}{b}<0.$$
        Since $j_\epsilon$ is convergent by Proposition~\ref{umepsilonexplicit}, we have that $F'_\epsilon$ is  uniformly bounded   on $ [a,b ]$. Hence, $F_\epsilon$ is Lipschitz on \([a,b]\) and the corresponding Lipschitz constant is bounded uniformly as $\epsilon\rightarrow 0$. 
        By the inverse function theorem, a similar statement
holds true $F^{-1}_\epsilon$. Moreover, analogous  arguments hold for $F$ and $F^{-1}$. 
\end{proof}
\begin{lemma}
        \label{FUniformlyConvergence}
        Let  $F_\epsilon^{-1}$ and $F^{-1}$ be the inverse functions
of $F_\epsilon$ and $F$  defined in \eqref{DefFEpsilon} and \eqref{DefF},
respectively, and let $-\infty<c<d<+\infty$. Then, there exists a subsequence of  $ (F^{-1}_\epsilon )_\epsilon$ that converges uniformly  to $F^{-1}$ on $ [c,d ]$ as $\epsilon\rightarrow 0$.
\end{lemma}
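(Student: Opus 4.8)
The plan is to establish the uniform convergence along the very subsequence for which $j_\epsilon\to j$ (furnished by Proposition~\ref{umepsilonexplicit}); no additional extraction will be needed. I would argue in three steps: (i) trap the images $F_\epsilon^{-1}([c,d])$ inside a single compact subinterval of $\Rr^+$ independent of $\epsilon$; (ii) quantify the convergence $F_\epsilon\to F$ on that interval; (iii) transfer (ii) to the inverses using a lower bound on $|F'|$ as in the proof of Lemma~\ref{InvFLipschitz}. As a preliminary, I would note that each $F_\epsilon$ is a continuous, strictly decreasing bijection of $\Rr^+$ onto $\Rr$: indeed $F_\epsilon(t)=\frac{j_\epsilon^2}{2t^2}-\ln t\to+\infty$ as $t\to0^+$ and $\to-\infty$ as $t\to+\infty$, and $F_\epsilon'<0$ on $\Rr^+$; hence $F_\epsilon^{-1}\colon\Rr\to\Rr^+$ is a well-defined continuous decreasing bijection, and likewise for $F$.

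For step (i), since $F$ is a decreasing bijection onto $\Rr$, I would pick $b>0$ with $F(b)<c$ and $a\in(0,b)$ with $F(a)>d$; then $F^{-1}([c,d])\subset[a,b]$. Because $j_\epsilon\to j$ implies $F_\epsilon(a)\to F(a)$ and $F_\epsilon(b)\to F(b)$, for all sufficiently small $\epsilon$ we have $F_\epsilon(b)<c$ and $F_\epsilon(a)>d$, and the monotonicity of $F_\epsilon$ then forces $F_\epsilon^{-1}(s)\in[a,b]$ for every $s\in[c,d]$. For step (ii), from \eqref{DefFEpsilon} and \eqref{DefF},
\begin{equation*}
\delta_\epsilon:=\sup_{t\in[a,b]}\big|F_\epsilon(t)-F(t)\big|=\sup_{t\in[a,b]}\frac{\big|j_\epsilon^2-j^2\big|}{2t^2}\leq\frac{\big|j_\epsilon^2-j^2\big|}{2a^2},
\end{equation*}
which tends to $0$ as $\epsilon\to0$.

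For step (iii), fix $s\in[c,d]$ and set $t_\epsilon=F_\epsilon^{-1}(s)$ and $t=F^{-1}(s)$, both lying in $[a,b]$ by step (i). Then $|F(t)-F(t_\epsilon)|=|s-F(t_\epsilon)|=|F_\epsilon(t_\epsilon)-F(t_\epsilon)|\leq\delta_\epsilon$; since $F'(\tau)=-\frac{j^2}{\tau^3}-\frac1\tau$ satisfies $|F'(\tau)|\geq\frac1b$ on $[a,b]$ (cf.\ the proof of Lemma~\ref{InvFLipschitz}), the mean value theorem gives $|t-t_\epsilon|\leq b\,\delta_\epsilon$. Taking the supremum over $s\in[c,d]$ yields $\sup_{[c,d]}|F_\epsilon^{-1}-F^{-1}|\leq b\,\delta_\epsilon\to0$, which is the claim. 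The main obstacle is step (i): a priori $F_\epsilon^{-1}([c,d])$ could escape toward $0$ or $+\infty$ as $\epsilon\to0$, and this must be excluded uniformly in $\epsilon$; monotonicity of $F_\epsilon$ together with the pointwise convergence $F_\epsilon\to F$ does exactly that, and the possibly degenerate case $j=0$ (with $j_\epsilon$ small or zero) causes no trouble since $F_\epsilon(t)\geq-\ln t$ is still a decreasing bijection onto $\Rr$ with the same behaviour at the two endpoints of $\Rr^+$.
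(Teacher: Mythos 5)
Your proof is correct and follows essentially the same route as the paper: first establish uniform convergence $F_\epsilon\to F$ on a compact interval $[a,b]\subset\Rr^+$ containing the relevant preimages, then transfer this to the inverses via a Lipschitz-type bound. The only cosmetic difference is that you invoke the lower bound on $|F'|$ (i.e.\ the Lipschitz constant of $F^{-1}$), whereas the paper writes $F_\epsilon^{-1}(F(t))-F_\epsilon^{-1}(F_\epsilon(t))$ and applies the $\epsilon$-uniform Lipschitz bound on $F_\epsilon^{-1}$ from Lemma~\ref{InvFLipschitz}; these are dual presentations of the same idea.
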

 \begin{proof}
        By Proposition~\ref{umepsilonexplicit}, we have, up to a subsequence that
we do not relabel, \(j_\epsilon\to j\) in \(\Rr\). Then, using the definitions of $F$ and $F_\epsilon$, we obtain, for any   $0<a\leq
        t \leq b <+\infty$, 
        \begin{align}\label{eq:unifFepsiF}
        \limsup_{\epsilon\to0}\sup_{t\in [a,b ]} |F_\epsilon (t )-F (t ) |&=\limsup_{\epsilon\to0}\sup_{t\in [a,b ]} \bigg|\frac{j_\epsilon^2}{2t^2}-\frac{j^2}{2t^2} \bigg|
\leq\lim_{\epsilon\to0} \frac{ |j_\epsilon^2-j^2 |}{2a^2}=0 .
        \end{align}
        Thus,  $(F_\epsilon)_\epsilon$ converges uniformly to $F$  on every compact subset of \(\Rr^+\).

Fix $-\infty<c<d<+\infty$, and set \(a=F^{-1}(d)\) and \( b=F^{-1}(c)\).
Then, by the uniform convergence just established, there exist  \(c'\leq c\) and \(d'\geq d\) such that \(F_\epsilon([a,b])\subset
[c',d']\)  for all \(\epsilon>0\) sufficiently small. Moreover,
by  
Lemma~\ref{InvFLipschitz},  there exists a constant
$C_{c',d',\epsilon}>0$ depending on $c'$, $d'$, and $\epsilon$, and
uniformly bounded as $\epsilon\rightarrow 0$,  such that
\begin{equation*}
\begin{aligned}
 |F_\epsilon^{-1}(z) - F_\epsilon^{-1}(z')| \leq C_{c',d',\epsilon}|z-z'| \quad \text{for all } z,\, z'\in[c',d']. 
\end{aligned}
\end{equation*}
Hence, 
\begin{equation*}
\begin{aligned}
\sup_{z\in[c,d]} |F_\epsilon^{-1}(z) - F^{-1}(z)| &= \sup_{z\in[c,d]} |F_\epsilon^{-1}(F(F^{-1}(z))) - F^{-1}(z)| = \sup_{t\in[a,b]} |F_\epsilon^{-1}(F(t)) - t|\\
& = \sup_{t\in[a,b]}
|F_\epsilon^{-1}(F(t)) - F_\epsilon^{-1}(F_\epsilon(t))|\leq C_{c',d',\epsilon}
\sup_{t\in[a,b]}|F(t) -F_\epsilon(t) |
 \end{aligned}
\end{equation*}
for all  \(\epsilon>0\) sufficiently small. Thus, by \eqref{eq:unifFepsiF},
we conclude that   $ (F^{-1}_\epsilon )_\epsilon$ that converges
uniformly  to $F^{-1}$ on $ [c,d ]$ as $\epsilon\rightarrow 0$.
 \end{proof}

\begin{pro}
\label{OneDmConv} 
        Let  $ (u_\epsilon, m_\epsilon,
\overline{H}_\epsilon )\in C^\infty
(\mathbb{T} )\times C^\infty (\mathbb{T}
)\times\mathbb{R}$,
with $m_\epsilon>0$,  solve  \eqref{OneDPOCMFG}
and let \(m\) and $\overline{H}
(P )$ be given by Proposition~\ref{UniformlyConvergence}.
Then,
 for all $ (x,y )\in\mathbb{T}\times\mathcal{Y}$, we have 
\begin{equation}
\label{eq:m1d}
\begin{aligned}
 m (x,y )=F^{-1} (\overline{H} (P )-V (x,y ) ).
\end{aligned}
\end{equation}
 Moreover, \((m_\epsilon)_\epsilon\) is uniformly bounded in \(L^\infty(\Tt)\) and   
        $m_\epsilon\overset{2}{\rightarrow} m$ in \(L^p(\Tt\times
\mathcal{Y})\)
 for all \(p\in(1,\infty)\).  \end{pro}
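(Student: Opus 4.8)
The plan is to exploit the explicit representation $m_\epsilon(x) = F_\epsilon^{-1}\bigl(\overline{H}_\epsilon(P) - V(x,\tfrac{x}{\epsilon})\bigr)$ from \eqref{eq:me=Fe-1}, together with the uniform convergence $F_\epsilon^{-1} \to F^{-1}$ on compact intervals from Lemma~\ref{FUniformlyConvergence}. Throughout, one works along a subsequence (not relabeled) on which $j_\epsilon \to j$ as in Proposition~\ref{umepsilonexplicit} and along which the convergences in Proposition~\ref{UniformlyConvergence} hold.

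\emph{A fixed compact interval and the $L^\infty$ bound.} Since $V$ is smooth and $\mathcal{Y}$-periodic in its second variable, it is bounded on $\Tt \times \Rr$; combined with the bounds $\inf_{\Tt\times\mathcal{Y}} V \leq \overline{H}_\epsilon(P) \leq \tfrac{|P|^2}{2} + \sup_{\Tt\times\mathcal{Y}} V$ from Proposition~\ref{HigherDimBoundednessofHepandUep} (the same bounds hold for $\overline{H}(P)$, being a limit of $\overline{H}_\epsilon(P)$), there is a compact interval $[c,d] \subset \Rr$, independent of $\epsilon$, that contains $\overline{H}_\epsilon(P) - V(x,\tfrac{x}{\epsilon})$ for all $x \in \Tt$ and all $\epsilon$, as well as $\overline{H}(P) - V(x,y)$ for all $(x,y) \in \Tt \times \mathcal{Y}$. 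As $F_\epsilon$ and $F$ are smooth, strictly decreasing bijections of $\Rr^+$ onto $\Rr$, their inverses are defined on all of $\Rr$, and monotonicity yields $0 < F_\epsilon^{-1}(d) \leq m_\epsilon(x) \leq F_\epsilon^{-1}(c)$ for every $x$; since $F_\epsilon^{-1}(c) \to F^{-1}(c)$ by Lemma~\ref{FUniformlyConvergence}, it follows that $(m_\epsilon)_\epsilon$ is uniformly bounded in $L^\infty(\Tt)$ (the lower bound is already contained in Proposition~\ref{HigherDimmbounds}).

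\emph{Reduction to a Carath\'eodory sequence.} Set $\Psi(x,y) := F^{-1}\bigl(\overline{H}(P) - V(x,y)\bigr)$, which is continuous, $\mathcal{Y}$-periodic in $y$, and bounded, so that by Proposition~\ref{CaratheodoryFuncConv} the functions $\Phi_\epsilon(x) := F^{-1}\bigl(\overline{H}(P) - V(x,\tfrac{x}{\epsilon})\bigr)$ satisfy $\Phi_\epsilon \overset{2}{\rightarrow} \Psi$ in $L^p(\Tt \times \mathcal{Y})$ for every $p \in (1,\infty)$. From the triangle inequality,
\begin{align*}
\bigl| m_\epsilon(x) - \Phi_\epsilon(x) \bigr| &\leq \Bigl| F_\epsilon^{-1}\bigl(\overline{H}_\epsilon(P) - V(x,\tfrac{x}{\epsilon})\bigr) - F^{-1}\bigl(\overline{H}_\epsilon(P) - V(x,\tfrac{x}{\epsilon})\bigr) \Bigr| \\
&\quad + \Bigl| F^{-1}\bigl(\overline{H}_\epsilon(P) - V(x,\tfrac{x}{\epsilon})\bigr) - F^{-1}\bigl(\overline{H}(P) - V(x,\tfrac{x}{\epsilon})\bigr) \Bigr|,
\end{align*}
where the first term is bounded, uniformly in $x$, by $\sup_{[c,d]} |F_\epsilon^{-1} - F^{-1}| \to 0$ (Lemma~\ref{FUniformlyConvergence}) and the second by the Lipschitz constant of $F^{-1}$ on $[c,d]$ (Lemma~\ref{InvFLipschitz}) times $|\overline{H}_\epsilon(P) - \overline{H}(P)| \to 0$ (by \eqref{eq:convH}). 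Hence $\|m_\epsilon - \Phi_\epsilon\|_{L^\infty(\Tt)} \to 0$, and a fortiori $\|m_\epsilon - \Phi_\epsilon\|_{L^p(\Tt)} \to 0$ for every $p \in (1,\infty)$.

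\emph{Conclusion.} Adding the $L^p$-null perturbation $m_\epsilon - \Phi_\epsilon$ preserves strong two-scale convergence: testing against $\psi \in C^\infty(\Tt; C^\infty_\#(\mathcal{Y}))$ and using H\"older's inequality gives $\int_\Tt (m_\epsilon - \Phi_\epsilon)(x)\,\psi\bigl(x,\tfrac{x}{\epsilon}\bigr)\,dx \to 0$, so $m_\epsilon \overset{2}{\rightharpoonup} \Psi$; moreover $\bigl| \|m_\epsilon\|_{L^p(\Tt)} - \|\Phi_\epsilon\|_{L^p(\Tt)} \bigr| \to 0$ and $\|\Phi_\epsilon\|_{L^p(\Tt)} \to \|\Psi\|_{L^p(\Tt\times\mathcal{Y})}$ yield the norm convergence. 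Thus $m_\epsilon \overset{2}{\rightarrow} \Psi$ in $L^p(\Tt \times \mathcal{Y})$ for every $p \in (1,\infty)$; in particular $m_\epsilon \overset{2}{\rightharpoonup} \Psi$ in $L^1(\Tt \times \mathcal{Y})$, which, compared with $m_\epsilon \overset{2}{\rightharpoonup} m$ from Proposition~\ref{UniformlyConvergence} and the uniqueness of two-scale limits (Remark~\ref{rmk:uniq}), forces $m = \Psi$, i.e.\ \eqref{eq:m1d}. I expect the most delicate point to be the first step, namely ensuring that the arguments of $F_\epsilon^{-1}$ stay in a single fixed compact interval uniformly in $\epsilon$ and $x$, which is exactly what makes Lemmas~\ref{InvFLipschitz} and~\ref{FUniformlyConvergence} applicable uniformly; beyond this bookkeeping the argument is routine.
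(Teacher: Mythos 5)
Your proof is correct. It takes a mildly but genuinely different route from the paper's, and the distinction is worth noting. Both arguments start from the representation $m_\epsilon = F_\epsilon^{-1}(\overline{H}_\epsilon(P) - V(x,\frac{x}{\epsilon}))$ and trap the argument of $F_\epsilon^{-1}$ in a fixed compact interval so that Lemmas~\ref{InvFLipschitz} and~\ref{FUniformlyConvergence} apply uniformly. The divergence is in the final passage to the limit. The paper decomposes as $m_\epsilon = \bigl(F_\epsilon^{-1} - F^{-1}\bigr)(w_\epsilon) + F^{-1}(w_\epsilon)$ with $w_\epsilon = \overline{H}_\epsilon(P) - V(\cdot,\frac{\cdot}{\epsilon})$, kills the first term by the uniform convergence of $F_\epsilon^{-1}$, and treats $F^{-1}(w_\epsilon)$ via Proposition~\ref{LipschitzStronglyTwoScaleConv} (composition of a locally Lipschitz map with a strongly two-scale convergent sequence), testing against weakly two-scale convergent duals as in Proposition~\ref{StrongWeakProduct}. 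You instead insert an extra triangle-inequality step, replacing $\overline{H}_\epsilon(P)$ by its limit $\overline{H}(P)$ via the local Lipschitz continuity of $F^{-1}$, which reduces the whole thing to $\Phi_\epsilon(x) = F^{-1}(\overline{H}(P) - V(x,\frac{x}{\epsilon}))$ plus an $L^\infty$-null error; since $\Phi_\epsilon$ is evaluation of a bona-fide Carath\'eodory function, Proposition~\ref{CaratheodoryFuncConv} gives its strong two-scale limit directly, and the perturbation preserves both the weak two-scale limit and the norm convergence. Net effect: you bypass Proposition~\ref{LipschitzStronglyTwoScaleConv} entirely, trading a Lipschitz-composition lemma for a simple uniform-in-$x$ error estimate, which is the more elementary of the two. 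The paper's route is marginally shorter if one already has Proposition~\ref{LipschitzStronglyTwoScaleConv} on hand (as the authors do, since they use it again in the proof of the main theorem); yours is the one you'd naturally reach for if proving this proposition in isolation. Both yield the identification $m = F^{-1}(\overline{H}(P) - V)$ via uniqueness of the two-scale limit, and your uniform $L^\infty$ bound argument is the same as the paper's in substance.
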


\begin{proof}
        
For \(x\in\Tt\) and \(y\in\mathcal{Y}\), set  $w_\epsilon (x )=\overline{H}_\epsilon
(P )-V (x,\frac{x}{\epsilon} )$ and
\(w(x,y) =\overline{H}
(P )-V (x,y ) \). By  Proposition
\ref{CaratheodoryFuncConv}, \eqref{eq:convH},
and the smoothness of \(V\),   we
have
\begin{equation}
\label{eq:cons2scwe}
\begin{aligned}
w_\epsilon\overset{2}{\rightarrow}w
\text{ in } L^p(\Tt\times
\mathcal{Y}) \text{ for all } p\in(1,\infty)\end{aligned}
\end{equation}
and there exists \(c\in\Rr\),
independent of \(\epsilon\),
such that \(|w_\epsilon(x)|\leq c\)
for all \(x\in\Tt\).
 On the other hand, by Lemma~\ref{FUniformlyConvergence},
there exists  \(\tilde c\in\Rr\),
independent of \(\epsilon\),
such that \(F_\epsilon^{-1}([-
c,  c])\subset [-\tilde
c, \tilde c]\). Recalling \eqref{eq:me=Fe-1} and the lower bound in
\eqref{LowerUpperBoundmepsilon},
we conclude that \(\frac 1C \leq m_\epsi
(x)\leq \tilde c
\) for all \(x\in\Tt\). Then, by Proposition~\ref{StrongWeakProduct}, to show that
       $m_\epsilon\overset{2}{\rightarrow}
F^{-1}(w)$ in \(L^p(\Tt\times
\mathcal{Y})\)
 for all \(p\in(1,\infty)\), it suffices to show that
\begin{equation}
\label{eq:constwme1d}
\lim_{\epsilon\rightarrow 0}\int_{\mathbb{T}^d}m_\epsilon
(x )
        \phi_\epsilon (x )dx=\lim_{\epsilon\rightarrow 0}\int_{\mathbb{T}^d}F^{-1}_\epsilon
(w_\epsilon (x ) )
        \phi_\epsilon (x )dx=\int_{\mathbb{T}^d}\int_{\mathcal{Y}^d}F^{-1}(w
         (x,y ))\phi (x,y )dydx
        \end{equation}
        for any bounded sequence $ (\phi_\epsilon
)_\epsilon\subset {L}^{p'} (\mathbb{T}
)$ and any function $\phi\in L^{p'}
(\mathbb{T}\times\mathcal{Y} )$
such that $\phi_\epsilon \overset{2}{\rightharpoonup}
\phi$ in $
{L}^{p'} (\mathbb{T}^d\times\mathcal{Y}^d
)$.
Fix any such sequence  $ (\phi_\epsilon
)_\epsilon$, and let $c_\phi=\sup_\epsi
\|\phi_\epsi\|_{L^1(\Tt)}$. Then,  we have  
\begin{equation}\label{almconstwme1d}
\begin{aligned}        
        &\bigg  |\int_0^1F^{-1}_\epsilon (w_\epsilon (x ) )\phi_\epsilon (x )dx-\int_0^1\int_0^1F^{-1} (w (x,y ) )\phi (x,y )dydx\bigg |\\
      &\quad  \leq  \bigg|\int_0^1F^{-1}_\epsilon (w_\epsilon (x ) )\phi_\epsilon (x )dx-\int_0^1F^{-1} (w_\epsilon (x ) )\phi_\epsilon (x )dx\bigg |\\
         &\qquad + \bigg|\int_0^1F^{-1} (w_\epsilon (x ) )\phi_\epsilon (x )dx-\int_0^1\int_0^1F^{-1} (w (x,y ) )\phi (x,y )dydx\bigg |\\
&\quad  \leq c_\phi\sup_{z\in [-c,c]} |F^{-1}_\epsilon (z) -F^{-1}
(z)| \\
         &\qquad+ \bigg|\int_0^1F^{-1} (w_\epsilon
(x ) )\phi_\epsilon (x )dx-\int_0^1\int_0^1F^{-1}
(w (x,y ) )\phi (x,y )dydx\bigg |. 
\end{aligned}
\end{equation}
        By Lemma~\ref{FUniformlyConvergence}, $F^{-1}_\epsilon$ converges to $F^{-1}$ uniformly on \([-c,c]\). Moreover,  $F^{-1}$ is locally Lipschitz
by Lemma~\ref{InvFLipschitz}, which
together with \eqref{eq:cons2scwe} and
the boundedness of      $w_\epsilon$ and  $w$, yields  \(F^{-1} (w_\epsilon )\overset{2}{\rightarrow}F^{-1}
(w )\) by Proposition~\ref{LipschitzStronglyTwoScaleConv} (also
see Remark~\ref{rmk:onconttwosc}).  Consequently, letting \(\epsilon\to0\)
 in \eqref{almconstwme1d}, we obtain \eqref{eq:constwme1d}. Finally, we observe
by the uniqueness of two-scale limits,
we have \(m (x,y )=F^{-1}
(w (x,y ) ) = F^{-1} (\overline{H}
(P )-V (x,y ) ).\)           
\end{proof}

\begin{pro}
\label{OneDConvergence}
        Let  $ (u_\epsilon, m_\epsilon,
\overline{H}_\epsilon )\in C^\infty
(\mathbb{T} )\times C^\infty (\mathbb{T}
)\times\mathbb{R}$,
with $m_\epsilon>0$,  solve  \eqref{OneDPOCMFG}
and \(j_\epsilon\) be given by \eqref{DefCurrent}.
Let \(u_0\), \(u_1\), and \(m\) be given
by Proposition~\ref{UniformlyConvergence} and  $j$ by Proposition~\ref{umepsilonexplicit}. Then, for $ (x,y )\in\mathbb{T}\times\mathcal{Y}$, we have 
        \begin{equation}
        \label{JFormula}
        j=\frac{P}{\int_{0}^{1}\int_0^1\frac{1}{m (x,y )}dydx},
        \end{equation}
\begin{equation}
\begin{aligned}\label{eq:u0formula}
u_0 (x )=\int_{0}^{x}\int_{0}^{1}\frac{j}{m (s,y )}dyds-Px+P-\int_0^1\int_{0}^{z}\int_{0}^{1}\frac{j}{m (s,y )}dydsdz,
\end{aligned}
\end{equation}
and
\begin{equation}
\begin{aligned}\label{eq:u1formula}
u_1 (x,y )=u_1 (x,0 )+\int_{0}^{y}\frac{j}{m (x,s )}ds-y\int_0^1\frac{j}{m (x,z )}dz.
\end{aligned}
\end{equation}
\end{pro}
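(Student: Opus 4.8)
The plan is to pass to the limit in the explicit one-dimensional formulas of Proposition~\ref{umepsilonexplicit}, the decisive ingredient being the strong two-scale convergence of the sequence $(1/m_\epsilon)_\epsilon$. To establish the latter, recall from Proposition~\ref{OneDmConv} that $m_\epsilon\overset{2}{\rightarrow}m$ in $L^p(\Tt\times\mathcal{Y})$ for every $p\in(1,\infty)$ and that (as shown in its proof, together with the lower bound in \eqref{LowerUpperBoundmepsilon}) there are constants $0<c_1\le c_2<\infty$, independent of $\epsilon$, with $c_1\le m_\epsilon(x)\le c_2$ for all $x\in\Tt$; moreover, the representation $m(x,y)=F^{-1}(\overline{H}(P)-V(x,y))$ from Proposition~\ref{OneDmConv}, together with the boundedness of $V$ and the continuity and positivity of $F^{-1}$, shows that $m$ takes values in a fixed compact subset of $(0,\infty)$. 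Since $t\mapsto 1/t$ is locally Lipschitz on $(0,\infty)$, Remark~\ref{rmk:onconttwosc} (the locally Lipschitz variant of Proposition~\ref{LipschitzStronglyTwoScaleConv}) yields $\frac{1}{m_\epsilon}\overset{2}{\rightarrow}\frac{1}{m}$ in $L^p(\Tt\times\mathcal{Y})$ for all $p\in(1,\infty)$; in particular, by Proposition~\ref{TwoScaleConvProduct}, $\frac{1}{m_\epsilon}\rightharpoonup\int_0^1\frac{1}{m(\cdot,y)}\,dy$ in $L^p(\Tt)$.

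Next I would derive \eqref{JFormula} and \eqref{eq:u0formula}. Testing the weak convergence of $(1/m_\epsilon)_\epsilon$ against $1\in L^{p'}(\Tt)$ gives $\int_0^1\frac{1}{m_\epsilon(s)}\,ds\to\int_0^1\int_0^1\frac{1}{m(s,y)}\,dy\,ds$, and this limit is strictly positive; since $j_\epsilon=P\big/\int_0^1\frac{1}{m_\epsilon(s)}\,ds$ and $j_\epsilon\to j$ by Proposition~\ref{umepsilonexplicit}, \eqref{JFormula} follows. For \eqref{eq:u0formula} I would pass to the limit in the explicit formula for $u_\epsilon$ in Proposition~\ref{umepsilonexplicit}: for each fixed $x$, $u_\epsilon(x)\to u_0(x)$ by the uniform convergence in \eqref{eq:convweakuepsi}; the term $j_\epsilon\int_0^x\frac{1}{m_\epsilon(s)}\,ds$ converges to $j\int_0^x\int_0^1\frac{1}{m(s,y)}\,dy\,ds$ by testing the weak $L^p(\Tt)$-convergence of $1/m_\epsilon$ against $\chi_{[0,x]}$ and using $j_\epsilon\to j$; and the double-integral term converges by dominated convergence, since $z\mapsto\int_0^z\frac{1}{m_\epsilon(s)}\,ds$ is bounded uniformly (by $c_1^{-1}$) and converges pointwise to $\int_0^z\int_0^1\frac{1}{m(s,y)}\,dy\,ds$. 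Identifying the limit of the right-hand side with the expression in \eqref{eq:u0formula} concludes this step.

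Finally, for \eqref{eq:u1formula} I would argue by uniqueness of the two-scale limit. From \eqref{DefCurrent} and $m_\epsilon>0$ we have $(u_\epsilon)_x=\frac{j_\epsilon}{m_\epsilon}-P$; since $j_\epsilon\to j$ and $\frac{1}{m_\epsilon}\overset{2}{\rightarrow}\frac{1}{m}$, the right-hand side two-scale converges to $\frac{j}{m}-P$, while by \eqref{eq:conv2uepsi} we also have $(u_\epsilon)_x\overset{2}{\rightharpoonup}(u_0)_x+(u_1)_y$. Hence, by Remark~\ref{rmk:uniq}, $(u_0)_x(x)+(u_1)_y(x,y)=\frac{j}{m(x,y)}-P$ for a.e.\ $(x,y)\in\Tt\times\mathcal{Y}$. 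Averaging in $y$ over $\mathcal{Y}$ and using that $u_1(x,\cdot)$ is $\mathcal{Y}$-periodic (so $\int_0^1(u_1)_y(x,y)\,dy=0$) gives $(u_0)_x(x)=\int_0^1\frac{j}{m(x,y)}\,dy-P$, consistent with differentiating \eqref{eq:u0formula}; subtracting this from the previous identity yields $(u_1)_y(x,y)=\frac{j}{m(x,y)}-\int_0^1\frac{j}{m(x,z)}\,dz$ for a.e.\ $(x,y)$, and integrating in $y$ from $0$ to $y$ for a.e.\ fixed $x$ (the integration constant being $u_1(x,0)$) produces \eqref{eq:u1formula}.

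The one genuinely delicate point is the first step: to invoke the composition lemma one must know that the two-scale limit $m$ stays bounded away from $0$ and $\infty$, which is exactly where the explicit representation $m=F^{-1}(\overline{H}(P)-V)$ and the uniform two-sided bounds on $m_\epsilon$ are needed. Once the strong two-scale convergence of $(1/m_\epsilon)_\epsilon$ is in hand, everything that follows is a routine passage to the limit in explicit integral identities.
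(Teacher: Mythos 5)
Your proposal is correct, but it arrives at the result by a genuinely different route than the paper. The paper first proves the pointwise identity $(u_0)_x(x)+(u_1)_y(x,y)=\frac{j}{m(x,y)}-P$ by testing the constancy relation $j_\epsilon=m_\epsilon(P+(u_\epsilon)_x)$ against oscillating test functions $\varphi(x,\tfrac{x}{\epsilon})$ and invoking Proposition~\ref{StrongWeakProduct} to pair the strongly two-scale convergent sequence $m_\epsilon\varphi_\epsilon$ (this requires a short auxiliary argument) against the weakly two-scale convergent sequence $P+(u_\epsilon)_x$; the three formulas \eqref{JFormula}, \eqref{eq:u0formula}, \eqref{eq:u1formula} are then read off by integrating that single identity over $\mathcal{Y}$, $\Tt$, and $[0,y]$. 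You instead work with the reciprocal form $(u_\epsilon)_x=\frac{j_\epsilon}{m_\epsilon}-P$: using the uniform two-sided $L^\infty$ bounds on $m_\epsilon$ and on $m=F^{-1}(\overline{H}(P)-V)$, you apply the locally Lipschitz composition result (Remark~\ref{rmk:onconttwosc}) to get $\frac{1}{m_\epsilon}\overset{2}{\rightarrow}\frac{1}{m}$ strongly, from which \eqref{JFormula} follows by testing against $1$, \eqref{eq:u0formula} follows by a direct term-by-term limit in the explicit formula for $u_\epsilon$ (rather than by integrating the pointwise identity), and the pointwise identity itself — hence \eqref{eq:u1formula} — follows by uniqueness of the weak two-scale limit of $(u_\epsilon)_x$. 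Both routes rest on Proposition~\ref{OneDmConv}; yours trades the paper's use of Proposition~\ref{StrongWeakProduct} (and the auxiliary argument that $m_\epsilon\varphi_\epsilon\overset{2}{\rightarrow}m\varphi$) for a heavier reliance on the locally Lipschitz variant of Proposition~\ref{LipschitzStronglyTwoScaleConv} and the $L^\infty$ bounds, and buys in exchange a cleaner identification of the $u_0$ formula and a shorter argument for the key pointwise identity.
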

\begin{proof}
        
We first prove that for all $ (x,y )\in\mathbb{T}\times\mathcal{Y}$, we have
        \begin{equation}
        \label{jtwoscalelimit}
         (u_0  )_x(x )+ (u_1  )_y(x,y )=\frac{j}{m (x,y )}-P.
        \end{equation}
Note that \(m>0\) in \(\Tt\times\mathcal{Y}\) by Proposition~\ref{OneDmConv}.
Let \(\varphi \in C^\infty(\Tt; C^\infty_\#( \mathcal{Y}))\), and set \(\varphi_\epsilon(x) = \varphi(x, \frac x\epsilon) \) for \(x\in\Tt\). By Propositions~\ref{CaratheodoryFuncConv} and ~\ref{umepsilonexplicit} and by  \eqref{DefCurrent}, we have
\begin{equation}
\label{eq:jone}
\begin{aligned}
\int_0^1\int_0^1 j\varphi(x,y) dxdy &=\lim_{\epsilon\to0} \int_0^1\int_0^1 j_\epsi \varphi_\epsilon(x) dx=\lim_{\epsilon\to0} \int_0^1\int_0^1
m_\epsi(x) \big(P+ (u_\epsilon )_x (x)\big)  \varphi_\epsilon(x)dx.
\end{aligned}
\end{equation}
We claim that
\begin{equation}
\label{eq:jtwo}
\begin{aligned}
&\lim_{\epsilon\to0} \int_0^1\int_0^1
m_\epsi(x)\big(P+ (u_\epsilon )_x (x)\big)
 \varphi_\epsilon(x)dx\\
&\quad = \int_0^1\int_0^1 m(x,y)\big(P+(u_0  )_x(x )+ (u_1  )_y(x,y )\big)  \varphi(x,y)dxdy,
\end{aligned}
\end{equation}
which, together with \eqref{eq:jone}, yields \eqref{jtwoscalelimit}. To prove \eqref{eq:jtwo}, we first observe that  \((m_\epsilon\varphi_\epsilon)_\epsilon\) is a bounded  sequence in \(L^\infty(\Tt)\), and thus in \(L^p(\Tt)\), by Proposition~\ref{OneDmConv}; next, we observe that  \((P+ (u_\epsilon )_x)_\epsilon \) is a bounded sequence
in \(L^{p'}(\Tt)\) that weakly two-scale converges to \(P+(u_0  )_x+ (u_1
 )_y\) in \(L^{p'}(\Tt\times \mathcal{Y})\) by Proposition~\ref{UniformlyConvergence}.
  Hence, if we show that  \( m_\epsilon\varphi_\epsilon\overset{2}{\rightarrow}m\varphi\)
in \(L^p(\Tt\times \mathcal{Y})\),  then 
\eqref{eq:jtwo}
follows by 
 Proposition~\ref{StrongWeakProduct}. To prove this last convergence,
 we first note that if  $ (\phi_\epsilon )_\epsilon\subset
{L}^{p'}
(\mathbb{T}^d )$ is a bounded sequence such that  $ \phi_\epsilon
\overset{2}{\rightharpoonup}  \phi$ in $
{L}^{p'} (\mathbb{T}\times\mathcal{Y} )$ for some  $\phi\in
L^{p'} (\mathbb{T}\times\mathcal{Y}
) \), then \(\tilde\phi_\epsilon=\phi_\epsilon \varphi_\epsilon\)
defines a bounded sequence in \( {L}^{p'} (\mathbb{T}^d )\) such
that   $\tilde \phi_\epsilon \overset{2}{\rightharpoonup}
\phi\varphi$ in $
{L}^{p'} (\mathbb{T}\times\mathcal{Y} )$ by Proposition~\ref{CaratheodoryTestFunc}
and Definition~\ref{TwoScaleConvDef}. Then,    Proposition~\ref{StrongWeakProduct}  and the convergence   $m_\epsilon\overset{2}{\rightarrow}m$ in \(L^p(\Tt\times \mathcal{Y})\), established in Proposition~\ref{OneDmConv}, yield \( m_\epsilon\varphi_\epsilon\overset{2}{\rightarrow}m\varphi\)
in \(L^p(\Tt\times \mathcal{Y})\).
Hence, \eqref{eq:jtwo}, and consequently  \eqref{jtwoscalelimit}, holds.

        Integrating \eqref{jtwoscalelimit} over $\mathcal{Y}$ and  using the periodicity of \(u_1(x,\cdot)\),   we get
        \begin{equation}
        \label{du0}
         (u_0  )_x=\int_{0}^{1}\frac{j}{m (\cdot,y )}dy-P.
        \end{equation}
        Integrating \eqref{du0} over $\mathbb{T}$ and using the periodicity of $u_0$, we conclude that  \eqref{JFormula} holds.
On the other hand, integrating \eqref{du0} on $ [0, x ]$ and using the condition  $\int_{0}^{1}u_0
(x )dx=0$, we obtain \eqref{eq:u0formula}. Finally, we observe that  
        from \eqref{jtwoscalelimit} and \eqref{du0}, we get
        \begin{equation*}
         (u_1 )_y (x,y )=\frac{j}{m (x,y )}-\int_0^1\frac{j}{m (x,y )}dy,
        \end{equation*}
        from which we deduce \eqref{eq:u1formula}  by integration over \([0,y]\).
\end{proof}

\begin{remark}
Note that if \(P=0\), then \(j=0\) (see \eqref{JFormula}), and
the formulas for   $u_0$, $u_1$, $\overline{H}$, and $m$  in Propositions~\ref{OneDmConv} and  \ref{OneDConvergence} reduce
to those in Proposition~\ref{prop:P0}. Moreover, the smoothness of \(V\) combined with the smoothness on \(F^{-1}\) on compact
sets yield \(m\) smooth. Hence, so is \(u_0\); also,  choosing an appropriate
representative, we may  assume that \(u_1\) is smooth.
Moreover, one can check that \(P+\nabla u_0(x) + \nabla_y u_1(x,y) =
\frac{j}{m(x,y)} \); from this identity, \eqref{DefF}, and \eqref{eq:m1d},
we conclude that  $ (u_0,u_1,m, \overline{H})$ solves Problem~\ref{TwoScaleHomogenized}.

\end{remark}

\section{The homogenized problem}
\label{TheHomogenizedProb}
To obtain the two-scale homogenization of Problems~\ref{POCMFG}
and \ref{VariationalProblem} in higher dimensions, we need to
examine in detail the existence, uniqueness, and regularity of
the solution to the two-scale homogenized problem, 
Problem~\ref{TwoScaleMinimization}.

To do that, we study two subproblems: the cell problem,  Problem~\ref{TheCellProblem}, and the homogenized problem,  Problem~\ref{TheLimitProblem}. The two preceding
problems are analyzed separately in Sections \ref{CellProblemSection}
and \ref{HomogenizedProblemSection}  below. 
\subsection{The cell problem}
\label{CellProblemSection}
Here, we study Problem~\ref{TheCellProblem}. We stress that  Problems~\ref{TheCellProblem}
and \ref{CellProbEulerLagrangeProb} are equivalent (see Remark \ref{rmk:p7p4qui}). 
Thus, 
if we prove existence and  uniqueness of the solution to Problem~\ref{CellProbEulerLagrangeProb},
 Problem~\ref{TheCellProblem} admits a unique minimizer.
 
\subsubsection{Uniqueness} Here, we prove uniqueness of the solution Problem~\ref{CellProbEulerLagrangeProb}.
\begin{pro}
\label{CellProblemNonUniqueness} 
For each $x\in \mathbb{T}^d$ and $\Lambda\in \mathbb{R}^d$, 
Problem~\ref{CellProbEulerLagrangeProb} admits at most one 
solution. 
\end{pro}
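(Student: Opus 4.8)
The plan is to run the Lasry--Lions monotonicity argument directly on the system \eqref{CellProbEulerLagrangeEq}. Fix $x\in\Tt^d$ and $\Lambda\in\Rr^d$, suppress them from the notation, and let $(\widetilde w_1,\widetilde m_1,\widetilde H_1)$ and $(\widetilde w_2,\widetilde m_2,\widetilde H_2)$ be two solutions; write $q_i=\Lambda+\nabla_y\widetilde w_i$. First I would record that $\widetilde m_i=e^{|q_i|^2/2+V-\widetilde H_i}>0$ on $\Yd$ by the first equation, and that, since $\widetilde w_i\in C^{2,\alpha}_\#(\Yd)$ and $\widetilde m_i\in C^{1,\alpha}_\#(\Yd)$, all the integrations by parts below are classical and, by $\Yd$-periodicity, produce no boundary terms.

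The first step is to subtract the two first equations of \eqref{CellProbEulerLagrangeEq}, multiply by $\widetilde m_1-\widetilde m_2$, and integrate; since $\int_{\Yd}(\widetilde m_1-\widetilde m_2)\,dy=0$ by the normalization constraint, the constant $\widetilde H_1-\widetilde H_2$ drops out and
\begin{equation*}
\int_{\Yd}(\widetilde m_1-\widetilde m_2)\Big(\tfrac{|q_1|^2}{2}-\tfrac{|q_2|^2}{2}\Big)\,dy=\int_{\Yd}(\widetilde m_1-\widetilde m_2)(\ln\widetilde m_1-\ln\widetilde m_2)\,dy.
\end{equation*}
The second step is to test the second equation of \eqref{CellProbEulerLagrangeEq} for $i=1$ with $\widetilde w_2-\widetilde w_1$ and the one for $i=2$ with $\widetilde w_1-\widetilde w_2$; integrating by parts gives $\int_{\Yd}\widetilde m_1\,q_1\cdot\nabla_y(\widetilde w_2-\widetilde w_1)\,dy=0$ and $\int_{\Yd}\widetilde m_2\,q_2\cdot\nabla_y(\widetilde w_1-\widetilde w_2)\,dy=0$.

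The third step combines these with the convexity of the quadratic Hamiltonian. Using the identity $\tfrac12|q_2|^2-\tfrac12|q_1|^2-q_1\cdot(q_2-q_1)=\tfrac12|q_1-q_2|^2$ and its analogue with the roles of $q_1,q_2$ exchanged, set
\begin{equation*}
\mathcal E:=\tfrac12\int_{\Yd}(\widetilde m_1+\widetilde m_2)\,|q_1-q_2|^2\,dy\ \geq\ 0 .
\end{equation*}
Expanding $\mathcal E$, rewriting $q_i-q_j=\nabla_y(\widetilde w_i-\widetilde w_j)$, and using the two identities of the second step to cancel the cross terms, one obtains $\mathcal E=-\int_{\Yd}(\widetilde m_1-\widetilde m_2)\big(\tfrac12|q_1|^2-\tfrac12|q_2|^2\big)\,dy$, which by the first step equals $-\int_{\Yd}(\widetilde m_1-\widetilde m_2)(\ln\widetilde m_1-\ln\widetilde m_2)\,dy\leq0$ because $t\mapsto\ln t$ is strictly increasing. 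Hence $\mathcal E=0$ and $\int_{\Yd}(\widetilde m_1-\widetilde m_2)(\ln\widetilde m_1-\ln\widetilde m_2)\,dy=0$. Since $\widetilde m_1,\widetilde m_2>0$, the first equality forces $q_1=q_2$, i.e.\ $\nabla_y\widetilde w_1=\nabla_y\widetilde w_2$, so $\widetilde w_1=\widetilde w_2$ in $C^{2,\alpha}_\#(\Yd)/\Rr$; strict monotonicity of $\ln$ gives $\widetilde m_1=\widetilde m_2$; and finally the first equation of \eqref{CellProbEulerLagrangeEq} gives $\widetilde H_1=\widetilde H_2$.

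Everything here is routine once the algebra is arranged; the only points needing care are the sign bookkeeping in the third step (matching each test function to the right equation) and confirming that monotonicity of the coupling $m\mapsto\ln m$ and positivity of $\widetilde m_i$ are used exactly where the conclusion is forced. A shorter, essentially equivalent alternative would be to invoke the equivalence with Problem~\ref{TheCellProblem} (Remark~\ref{rmk:p7p4qui}) and note that $p\mapsto e^{|p|^2/2+V(x,y)}$ is strictly convex in $p$ --- along any line $p_0+tv$ its second derivative equals $e^{|p_0+tv|^2/2+V}\big(((p_0+tv)\cdot v)^2+|v|^2\big)>0$ for $v\neq0$ --- so $\widetilde I[x,\Lambda;\cdot]$ is strictly convex on $W^{1,p}_\#(\Yd)/\Rr$ and therefore admits at most one critical point; I would keep the monotonicity computation as the primary argument since it is self-contained and does not rely on the variational reformulation.
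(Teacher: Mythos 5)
Your argument is the Lasry--Lions monotonicity computation, which is exactly what the paper uses: after cancellations it yields the paper's identity $\tfrac12\int_{\Yd}(\widetilde m_1+\widetilde m_2)|\nabla_y\widetilde w_1-\nabla_y\widetilde w_2|^2\,dy+\int_{\Yd}(\ln\widetilde m_1-\ln\widetilde m_2)(\widetilde m_1-\widetilde m_2)\,dy=0$, from which the conclusion follows as you describe. Your three-step bookkeeping and the aside on strict convexity of $\widetilde I[x,\Lambda;\cdot]$ are fine, but substantively this is the same proof.
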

\begin{proof}
Here, we use the Lasry-Lions monotonicity argument. For each $x$ and $\Lambda$, we assume that $ (\widetilde{w}_1,\widetilde{m}_1, \widetilde{H}_1
)$ and $ (\widetilde{w}_2,\widetilde{m}_2, \widetilde{H}_2 )$
are two solutions of Problem \ref{CellProbEulerLagrangeProb} as in the statement.
Then, we have
\begin{equation}
\label{PropCellDiffEq}
        \begin{cases}
        \frac{ |\Lambda+\nabla_y\widetilde{w}_1 |^2}{2}-\frac{
|\Lambda+\nabla_y\widetilde{w}_2 |^2}{2}=\ln\widetilde{m}_1-\ln\widetilde{m}_2+\widetilde{H}_1 (x,\Lambda )-\widetilde{H}_2
(x,\Lambda ),\\
        -\div_y \big(\widetilde{m}_1  (\Lambda+\nabla_y\widetilde{w}_1
 )\big )+\div_y \big(\widetilde{m}_2  (\Lambda+\nabla_y\widetilde{w}_2
 )\big )=0.
        \end{cases}
\end{equation}
        Multiplying the first equation by $ (\widetilde{m}_1-\widetilde{m}_2
)$, subtracting it  from the second equation multiplied by $
(\widetilde{w}_1-\widetilde{w}_2 )$, integrating by parts, and using $\int_{\mathcal{Y}^d}\widetilde{m}_1 dy=\int_{\mathcal{Y}^d}\widetilde{m}_2dy=1$,
we get
        \begin{equation*}
        \frac{1}{2}\int_{\mathcal{Y}^d} (\widetilde{m}_1+\widetilde{m}_2
) |\nabla_y\widetilde{w}_1-\nabla_y\widetilde{w}_2 |^2dy+\int_{\mathcal{Y}^d}
(\ln \widetilde{m}_1-\ln \widetilde{m}_2 ) (\widetilde{m}_1-\widetilde{m}_2
)dy=0,
        \end{equation*}
        which implies $\widetilde{m}_1=\widetilde{m}_2$ and $\nabla_y\widetilde{w}_1=\nabla_y\widetilde{w}_2$.
Thus, using \eqref{PropCellDiffEq}, we see that  $\widetilde{H}_1=\widetilde{H}_2$.
Meanwhile, since  $\widetilde{w}_1, \widetilde{w}_2\in C^{2,\alpha}_{\#}(\mathcal{Y}^d)/\mathbb{R}$, $\nabla_y\widetilde{w}_1=\nabla_y\widetilde{w}_2$ 
implies that $\widetilde{w}_1=\widetilde{w}_2$. Therefore, we conclude that there exists at most one solution to Problem \ref{CellProbEulerLagrangeProb}.
\end{proof}

\subsubsection{A priori estimates} We say that solutions to
a PDE are classical if they have enough smoothness to solve the
PDE. To prove existence of the
solution to Problem~\ref{CellProbEulerLagrangeProb}, we use the continuation
method, which is similar to the argument in \cite{evans2003some}.
For that, we begin by assuming that Problem~\ref{CellProbEulerLagrangeProb}
 admits a classical solution, $ (\widetilde{w}, \widetilde{m},
\widetilde{H} )$. Then, we establish various uniform bounds for
$\widetilde{w}$, $\widetilde{m}$, and $\widetilde{H}$.

\begin{pro}
        \label{CoercivityWidetideH}
        Let $(\widetilde{w}, \widetilde{m}, \widetilde{H})$ solve
Problem~\ref{CellProbEulerLagrangeProb}. Then, 
        for
        any $x\in\mathbb{T}^d$ and  $\Lambda\in\mathbb{R}^d$, $\widetilde{H}$ is coercive in $\Lambda$; that is, 
        \begin{equation}
        \label{Coercivity}
        \frac{ |\Lambda |^2}{2}+\inf_{x,y}V (x,y )\leq \widetilde{H}
(x,\Lambda )\leq \sup_{x,y}V (x,y )+\frac{ |\Lambda |^2}{2},
        \end{equation}
        and
        \begin{equation}
        \label{BoundIntegralGradw}
        \int_{\mathcal{Y}^d} |\nabla_y\widetilde{w}(x,\Lambda,y) |^2 dy\leq
2\bigg(\sup_{x,y}V(x,y)-\inf_{x,y}V(x,y)\bigg). 
        \end{equation}
\end{pro}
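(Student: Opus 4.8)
The plan is to argue exactly as in the proof of Proposition~\ref{HigherDimBoundednessofHepandUep}, observing that, for each fixed $x$, the cell problem, Problem~\ref{CellProbEulerLagrangeProb}, has the same structure as Problem~\ref{POCMFG} with $\Lambda$ in place of $P$, with the unit-measure cell $\mathcal{Y}^d$ in place of $\mathbb{T}^d$, and with the frozen potential $V(x,\cdot)$. By the equivalence recorded in Remark~\ref{rmk:p7p4qui}, $\widetilde{w}(x,\Lambda,\cdot)$ is the minimizer of $\widetilde{I}[x,\Lambda;\cdot]$ over $W^{1,p}_\#(\mathcal{Y}^d)/\mathbb{R}$ and $\widetilde{H}(x,\Lambda)=\ln\widetilde{I}[x,\Lambda;\widetilde{w}]$. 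Testing the minimization with $w\equiv 0$ gives $\widetilde{I}[x,\Lambda;\widetilde{w}]\leq\int_{\mathcal{Y}^d}e^{|\Lambda|^2/2+V(x,y)}\,dy\leq e^{|\Lambda|^2/2+\sup_{x,y}V(x,y)}$, and taking logarithms yields the upper bound in \eqref{Coercivity}.

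For the lower bound, I would apply Jensen's inequality to the convex function $\exp$ on the probability space $(\mathcal{Y}^d,dy)$ --- here the normalization $|\mathcal{Y}^d|=1$ is what makes this clean --- to obtain
\[
\widetilde{H}(x,\Lambda)=\ln\int_{\mathcal{Y}^d}e^{\frac{|\Lambda+\nabla_y\widetilde{w}|^2}{2}+V(x,y)}\,dy\geq\int_{\mathcal{Y}^d}\Big(\frac{|\Lambda+\nabla_y\widetilde{w}|^2}{2}+V(x,y)\Big)\,dy.
\]
Since $\widetilde{w}(x,\Lambda,\cdot)$ is $\mathcal{Y}^d$-periodic, $\int_{\mathcal{Y}^d}\nabla_y\widetilde{w}\,dy=0$, so the cross term vanishes and $\int_{\mathcal{Y}^d}|\Lambda+\nabla_y\widetilde{w}|^2\,dy=|\Lambda|^2+\int_{\mathcal{Y}^d}|\nabla_y\widetilde{w}|^2\,dy$; discarding the nonnegative gradient term then gives $\widetilde{H}(x,\Lambda)\geq|\Lambda|^2/2+\inf_{x,y}V(x,y)$, which completes \eqref{Coercivity}.

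For \eqref{BoundIntegralGradw}, I would combine the two estimates: the Jensen bound reads $|\Lambda|^2/2+\tfrac12\int_{\mathcal{Y}^d}|\nabla_y\widetilde{w}|^2\,dy+\inf_{x,y}V(x,y)\leq\widetilde{H}(x,\Lambda)$, while the $w\equiv0$ comparison gives $\widetilde{H}(x,\Lambda)\leq|\Lambda|^2/2+\sup_{x,y}V(x,y)$; cancelling $|\Lambda|^2/2$ and rearranging yields $\int_{\mathcal{Y}^d}|\nabla_y\widetilde{w}|^2\,dy\leq 2\big(\sup_{x,y}V(x,y)-\inf_{x,y}V(x,y)\big)$. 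There is no serious obstacle here; the only points requiring care are invoking Remark~\ref{rmk:p7p4qui} to legitimize comparing the minimizer with $w\equiv0$, and using $|\mathcal{Y}^d|=1$ so that Jensen's inequality applies without an averaging constant and the zero-mean property of $\nabla_y\widetilde{w}$ eliminates the cross term. Alternatively, one could bypass the variational formulation and derive the same bounds directly from the PDE system \eqref{CellProbEulerLagrangeEq} by testing the first equation with $\widetilde{m}$ and the second with $\widetilde{w}$ and integrating by parts over $\mathcal{Y}^d$, but the variational route is shorter.
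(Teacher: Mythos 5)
Your proposal is correct and matches the paper's proof essentially step for step: test the variational formulation with $w\equiv0$ for the upper bound in \eqref{Coercivity}, apply Jensen's inequality plus the periodicity of $\widetilde{w}$ to kill the cross term for the lower bound, and combine the two to isolate $\int_{\mathcal{Y}^d}|\nabla_y\widetilde{w}|^2\,dy$ for \eqref{BoundIntegralGradw}. The appeal to Remark~\ref{rmk:p7p4qui} and the observation that $|\mathcal{Y}^d|=1$ makes Jensen clean are precisely the points the paper uses as well.
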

\begin{proof}
As stated in Remark \ref{rmk:p7p4qui}, $\widetilde{H}$ in \eqref{WidetildeH} is the same as $\widetilde{H}$ in the solution to \eqref{CellProbEulerLagrangeProb}. Thus,
for each $x\in \mathbb{T}^d$ and $\Lambda\in \mathbb{R}^d$,        choosing ${w}=0$ in \eqref{MinCellProblem} and using
the formulation of $\widetilde{H}$ in \eqref{WidetildeH}, we
get 
        \begin{align}
        \label{HtildeUpperBounds}
        \widetilde{H} (x,\Lambda )\leq \ln\int_{\mathcal{Y}^d}e^{\frac{
|\Lambda |^2}{2}+V (x,y )}dy\leq \frac{ |\Lambda |^2}{2}+ \sup_{x,y}V
(x,y ).
        \end{align}
        Using Jensen's inequality and the periodicity of $\widetilde{w}$,
we obtain 
        \begin{align*}
        \widetilde{H} (x,\Lambda )&=\ln \int_{\mathcal{Y}^d}e^{\frac{
|\Lambda +\nabla_y \widetilde{w} (x,\Lambda, y ) |^2}{2}+V (x,y )}dy
        \geq \int_{\mathcal{Y}^d} \bigg(\frac{ |\Lambda+\nabla_y\widetilde{w}
(x,\Lambda, y ) |^2}{2}+V (x,y )\bigg )dy\\
        &\geq\frac{ |\Lambda |^2}{2}+\int_{\mathcal{Y}^d}\Lambda^T\nabla_y\widetilde{w}
(x,\Lambda, y )dy+\int_{\mathcal{Y}^d}\frac{ |\nabla_y\widetilde{w} (x,\Lambda, y
) |^2}{2}dy+\inf_{x,y}V (x,y )\\
        &=\frac{ |\Lambda |^2}{2}+\int_{\mathcal{Y}^d}\frac{
|\nabla_y\widetilde{w} (x,\Lambda, y ) |^2}{2}dy+\inf_{x,y}V (x,y )
        \geq  \frac{ |\Lambda |^2}{2}+\inf_{x,y} V (x,y ).
        \end{align*}
        Thus, the preceding estimate and \eqref{HtildeUpperBounds}
yield \eqref{Coercivity}. Furthermore, combining the second to
last equality in the preceding estimate with \eqref{HtildeUpperBounds},
we get
        \begin{equation*}
        \int_{\mathcal{Y}^d}\frac{ |\nabla_y\widetilde{w} |^2}{2}dy\leq
\widetilde{H}-\frac{ |\Lambda |^2}{2}-\inf_{x,y}V\leq  \sup_{x,y}V-\inf_{x,y}V.
        \end{equation*}
        Therefore, we conclude \eqref{BoundIntegralGradw}.
\end{proof}

The above estimates combined with the first equation of \eqref{CellProbEulerLagrangeEq}
immediately gives us a lower bound for $\widetilde{m}$. 
\begin{corollary}
        \label{Lowerboundmtilde}
        Let $(\widetilde{w}, \widetilde{m}, \widetilde{H})$ solve
Problem~\ref{CellProbEulerLagrangeProb}. Then, for any $x\in \mathbb{T}^d$ and $\Lambda\in \mathbb{R}^d$ and for any  $y
\in \mathcal{Y}^d$, 
        \begin{equation*}
        \widetilde{m} (x,\Lambda, y )\geq e^{\inf\limits_{x,y}V (x,y )-\sup\limits_{x,y}V
(x,y )-\frac{ |\Lambda |^2}{2}}.
        \end{equation*}
\end{corollary}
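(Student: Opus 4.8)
The plan is to read off the lower bound directly from the first equation of the cell system \eqref{CellProbEulerLagrangeEq} combined with the upper bound for $\widetilde{H}$ already obtained in Proposition~\ref{CoercivityWidetideH}. Since $(\widetilde{w},\widetilde{m},\widetilde{H})$ solves Problem~\ref{CellProbEulerLagrangeProb}, for every $x\in\mathbb{T}^d$, $\Lambda\in\mathbb{R}^d$, and $y\in\mathcal{Y}^d$ we may solve the first equation of \eqref{CellProbEulerLagrangeEq} for $\ln\widetilde{m}$, writing
\[
\ln\widetilde{m}(x,\Lambda,y)=\frac{|\Lambda+\nabla_y\widetilde{w}(x,\Lambda,y)|^2}{2}+V(x,y)-\widetilde{H}(x,\Lambda).
\]

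First I would discard the nonnegative term $\tfrac12|\Lambda+\nabla_y\widetilde{w}(x,\Lambda,y)|^2\geq 0$ to obtain the pointwise inequality $\ln\widetilde{m}(x,\Lambda,y)\geq V(x,y)-\widetilde{H}(x,\Lambda)$. Then I would insert the upper bound $\widetilde{H}(x,\Lambda)\leq \sup_{x,y}V(x,y)+\tfrac{|\Lambda|^2}{2}$ from \eqref{Coercivity} and bound $V(x,y)\geq \inf_{x,y}V(x,y)$, which yields
\[
\ln\widetilde{m}(x,\Lambda,y)\geq \inf_{x,y}V(x,y)-\sup_{x,y}V(x,y)-\frac{|\Lambda|^2}{2}.
\]
Exponentiating gives the claimed estimate. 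There is no real obstacle here: the argument is an immediate consequence of the sign of the kinetic term together with the already-established coercivity bounds, so the only care needed is to invoke \eqref{Coercivity} from Proposition~\ref{CoercivityWidetideH} (and the smoothness of $V$, which makes $\sup_{x,y}V$ and $\inf_{x,y}V$ finite) rather than to reprove anything.
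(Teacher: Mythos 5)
Your argument is correct and is essentially identical to the paper's proof: both solve the first equation of \eqref{CellProbEulerLagrangeEq} for $\widetilde m$, drop the nonnegative kinetic term $\tfrac12|\Lambda+\nabla_y\widetilde w|^2$, and then apply the upper bound on $\widetilde H$ from \eqref{Coercivity} together with the boundedness of $V$. The only cosmetic difference is that you work with $\ln\widetilde m$ before exponentiating, whereas the paper writes the exponential form directly.
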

\begin{proof}
Using the first equation of \eqref{CellProbEulerLagrangeEq},
we get for any $x\in \mathbb{T}^d$ and  $\Lambda\in \mathbb{R}^d$ and for any  $y
\in \mathcal{Y}^d$,

\begin{equation*}
\widetilde{m} (x,\Lambda, y)=e^{\frac{ |\Lambda+\nabla_y\widetilde{w}
(x,\Lambda, y) |^2}{2}+V (x,y )-\widetilde{H} (x,\Lambda )}.
\end{equation*}
Using \eqref{Coercivity} and the boundedness of $V$, we get
\begin{equation*}
\widetilde{m} (x,\Lambda, y)\geq e^{V (x,y )-\widetilde{H} (x,\Lambda
)}\geq e^{\inf\limits_{x,y}V (x,y )-\sup\limits_{x,y}V (x,y )-\frac{
|\Lambda |^2}{2}}.\qedhere
\end{equation*}
\end{proof}
Next, we obtain an upper bound for $\widetilde{m}$. To do that,
we get an upper bound on the norm of $\widetilde{m}$ in $L^{\theta}$
for some $0<\theta<+\infty$. Then, we use Moser's argument to
bound $\widetilde{m}$ in $L^{p_1}$ by the norm of $\widetilde{m}$
in $L^{\theta}$ for all $p_1$ satisfying $\theta<p_1<+\infty$.
Finally, we consider the limit $p_1\rightarrow+\infty$ and conclude
that $\widetilde{m}$ is bounded. 

\begin{pro}
        \label{mbetabounds}
        Let $(\widetilde{w}, \widetilde{m}, \widetilde{H})$ solve
Problem~\ref{CellProbEulerLagrangeProb}. 
        Define $\theta=\frac{2}{d-2}$ if $d\geq 3$ and any positive
number if $d=2$ or $d=1$. Then, there exists a constant, $C$, independent of $x$ and $\Lambda$ such that 
        \begin{align}
        \label{Probmboundseq}
         \bigg(\int_{\mathcal{Y}^d}\widetilde{m}^{\theta+1}dy\bigg
)^{\frac{1}{\theta+1}}\leq C.
        \end{align}
\end{pro}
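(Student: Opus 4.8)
The plan is to reduce the claim to a uniform $H^1(\Yd)$-bound for $\sqrt{\widetilde m}$ and then to apply Sobolev's embedding. Fix $x\in\Tt^d$ and $\Lambda\in\Rr^d$, write $\widetilde w=\widetilde w(x,\Lambda,\cdot)$, $\widetilde m=\widetilde m(x,\Lambda,\cdot)$ and $b:=\Lambda+\nabla_y\widetilde w$ (all $\Yd$-periodic), and recall that $\widetilde m>0$ by Corollary~\ref{Lowerboundmtilde}. Since $\int_{\Yd}\widetilde m\,dy=1$, one has $\|\sqrt{\widetilde m}\|_{L^2(\Yd)}=1$ and $\int_{\Yd}|\nabla_y\sqrt{\widetilde m}|^2\,dy=\tfrac14\int_{\Yd}\tfrac{|\nabla_y\widetilde m|^2}{\widetilde m}\,dy$. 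For $d\geq 3$, the embedding $H^1(\Yd)\hookrightarrow L^{2^{*}}(\Yd)$ with $2^{*}=\tfrac{2d}{d-2}$ gives
\[
\Big(\int_{\Yd}\widetilde m^{\theta+1}\,dy\Big)^{1/(\theta+1)}=\|\sqrt{\widetilde m}\|_{L^{2^{*}}(\Yd)}^{2}\leq C\,\|\sqrt{\widetilde m}\|_{H^1(\Yd)}^{2},
\]
since $2(\theta+1)=2^{*}$ when $\theta=\tfrac{2}{d-2}$; for $d=2$ (resp.\ $d=1$) one uses instead $H^1(\Yd)\hookrightarrow L^{q}(\Yd)$ for every $q<\infty$ (resp.\ $H^1(\Yd)\hookrightarrow L^{\infty}(\Yd)$), which covers every $\theta>0$. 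Hence it suffices to bound $\int_{\Yd}\tfrac{|\nabla_y\widetilde m|^2}{\widetilde m}\,dy$ by a constant independent of $x$ and $\Lambda$.

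To obtain this bound, I would first note that the classical solution is in fact smooth in $y$: inserting $\ln\widetilde m=\tfrac12|b|^2+V-\widetilde H$ (the first equation of \eqref{CellProbEulerLagrangeEq}) into the second equation of \eqref{CellProbEulerLagrangeEq} and dividing by $\widetilde m>0$ yields the uniformly elliptic quasilinear equation $\Delta_y\widetilde w+b^{T}D_y^2\widetilde w\,b=-b\cdot\nabla_y V$, so standard Schauder estimates and bootstrapping (using $V\in C^{\infty}$) upgrade $\widetilde w\in C^{2,\alpha}_\#(\Yd)$ to $\widetilde w,\widetilde m\in C^{\infty}_\#(\Yd)$, and the manipulations below are legitimate. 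Now apply $\Delta_y$ to $\ln\widetilde m=\tfrac12|b|^2+V-\widetilde H$. Using $\Delta_y\ln\widetilde m=\tfrac{\Delta_y\widetilde m}{\widetilde m}-\tfrac{|\nabla_y\widetilde m|^2}{\widetilde m^2}$ and the pointwise identity $\Delta_y\big(\tfrac12|b|^2\big)=|D_y^2\widetilde w|^2+b\cdot\nabla_y(\Delta_y\widetilde w)$ — valid because $b_k=\Lambda_k+\partial_{y_k}\widetilde w$, so $\partial_{y_i}b_k=\partial^2_{y_iy_k}\widetilde w$ is symmetric in $i,k$ — one gets
\[
\frac{\Delta_y\widetilde m}{\widetilde m}-\frac{|\nabla_y\widetilde m|^2}{\widetilde m^2}=|D_y^2\widetilde w|^2+b\cdot\nabla_y(\Delta_y\widetilde w)+\Delta_y V(x,y)\quad\text{in }\Yd.
\]
Multiplying by $\widetilde m$ and integrating over $\Yd$: the term $\int_{\Yd}\Delta_y\widetilde m\,dy$ vanishes by $\Yd$-periodicity, and — the crux of the argument — the third-order term is annihilated by the Fokker--Planck equation, since integration by parts and the second equation of \eqref{CellProbEulerLagrangeEq} give $\int_{\Yd}\widetilde m\,b\cdot\nabla_y(\Delta_y\widetilde w)\,dy=-\int_{\Yd}\Delta_y\widetilde w\,\div_y(\widetilde m\,b)\,dy=0$. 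Therefore
\[
\int_{\Yd}\frac{|\nabla_y\widetilde m|^2}{\widetilde m}\,dy+\int_{\Yd}\widetilde m\,|D_y^2\widetilde w|^2\,dy=-\int_{\Yd}\widetilde m\,\Delta_y V(x,y)\,dy\leq\|\Delta_y V\|_{L^{\infty}(\Tt^d\times\Yd)},
\]
using $\int_{\Yd}\widetilde m\,dy=1$; dropping the nonnegative Hessian term yields $\int_{\Yd}\tfrac{|\nabla_y\widetilde m|^2}{\widetilde m}\,dy\leq\|\Delta_y V\|_{L^{\infty}(\Tt^d\times\Yd)}$, a bound independent of $x$ and $\Lambda$.

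Combining the two steps, $\|\sqrt{\widetilde m}\|_{H^1(\Yd)}^{2}\leq 1+\tfrac14\|\Delta_y V\|_{L^{\infty}(\Tt^d\times\Yd)}$, and the Sobolev embedding gives \eqref{Probmboundseq} with $C$ depending only on $d$, on $\theta$ (when $d\leq 2$), and on $\|\Delta_y V\|_{L^{\infty}(\Tt^d\times\Yd)}$ — in particular, independent of $x$ and $\Lambda$; this is precisely the base integrability needed for the Moser iteration announced before the statement. The only genuine obstacle is the algebraic observation that differentiating the Hamilton--Jacobi equation twice produces exactly the term $b\cdot\nabla_y(\Delta_y\widetilde w)$, which, after multiplication by $\widetilde m$, is killed by $\div_y(\widetilde m\,b)=0$; the regularity needed to make this rigorous is automatic for classical solutions (or could be bypassed by working with mollifications).
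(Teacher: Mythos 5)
Your proof is correct and lands on exactly the same key a priori bound as the paper, namely $\int_{\Yd}\bigl|\nabla_y\sqrt{\widetilde m}\bigr|^2\,dy\leq C$ followed by Sobolev embedding, but you obtain it by a dual organization of the algebra. The paper starts from the Fokker--Planck equation: it multiplies $-\div_y(\widetilde m K)=0$ by $\Delta_y\widetilde w$, uses $\widetilde m_{y_j}=\widetilde m h_{y_j}$ (from the Hamilton--Jacobi equation) and $h_{y_j}=K^i\widetilde w_{y_iy_j}+V_{y_j}$, and arrives at $\int_{\Yd}\widetilde m|\nabla_y h|^2\,dy+\int_{\Yd}\widetilde m|\nabla^2_y\widetilde w|^2\,dy=\int_{\Yd}\widetilde m\,\nabla_y h\cdot\nabla_y V\,dy$, then absorbs the right-hand side by a weighted Cauchy inequality. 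You instead apply $\Delta_y$ to $\ln\widetilde m=\tfrac12|b|^2+V-\widetilde H$ and then invoke Fokker--Planck once, to kill the third-order term $\int_{\Yd}\widetilde m\,b\cdot\nabla_y(\Delta_y\widetilde w)\,dy$ after multiplying by $\widetilde m$ and integrating; the same two left-hand terms appear (recall $\widetilde m|\nabla_y h|^2=|\nabla_y\widetilde m|^2/\widetilde m$), but the $V$-contribution emerges directly as $-\int_{\Yd}\widetilde m\,\Delta_y V\,dy$, which is bounded by $\|\Delta_y V\|_{L^\infty}$ without any Cauchy--Schwarz step. The two identities are equivalent after a single further integration by parts ($-\int\widetilde m\,\Delta_y V=\int\nabla_y\widetilde m\cdot\nabla_y V=\int\widetilde m\,\nabla_y h\cdot\nabla_y V$), so this is the same Bernstein-type estimate seen from the Hamilton--Jacobi side rather than the Fokker--Planck side. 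The one cost of your route is that applying $\Delta_y$ to $\ln\widetilde m$ needs $\widetilde m\in C^2$, $\widetilde w\in C^3$ pointwise, slightly more than the $C^{1,\alpha}$/$C^{2,\alpha}$ regularity assumed in Problem~\ref{CellProbEulerLagrangeProb}; you correctly flag this and note it is recovered by elliptic bootstrapping, whereas the paper's derivation works with only first derivatives of $\widetilde m$ and sidesteps the issue.
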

\begin{proof}
        We define
        \begin{equation}
        \label{DefK}
        K=\Lambda+\nabla_y\widetilde{w}
        \end{equation}
        and denote each component of $K$ by $K^i$, where $1\leq
i\leq d$. Let 
        \begin{equation*}
        \label{Defh1}
        h=\frac{ |\Lambda+\nabla_y\widetilde{w} |^2}{2}+V. 
        \end{equation*}
        Then, the second equation in \eqref{CellProbEulerLagrangeEq}
becomes $-\div_y (\widetilde{m}K )=0$. Next, we use Einstein's notation.  Multiplying both sides of $-\div_y (\widetilde{m}K )=0$ by $\Delta_y\widetilde{w}$ and integrating, we get
        \begin{align}
        \label{mbetaboundsEq1}
        \begin{split}
        0=&\int_{\mathcal{Y}^d} \div_y (\widetilde{m}K )\Delta_y\widetilde{w}
dy=\int_{\mathcal{Y}^d}  (\widetilde{m}K^i )_{y_i}\widetilde{w}_{y_jy_j}
dy=\int_{\mathcal{Y}^d}  (\widetilde{m}K^i )_{y_j}\widetilde{w}_{y_iy_j}
dy\\
        =&\int_{\mathcal{Y}^d}  (\widetilde{m}_{y_j}K^i+\widetilde{m}K^i_{y_j}
)\widetilde{w}_{y_iy_j} dy.
        \end{split}
        \end{align}
        From the first equation in  \eqref{CellProbEulerLagrangeEq},
we get  $\widetilde{m}_{y_j}=\widetilde{m}h_{y_j}$. Using this
identity in the last equality in \eqref{mbetaboundsEq1}, we obtain
        \begin{align*}
        0=\int_{\mathcal{Y}^d}  (\widetilde{m}h_{y_j}K^i+\widetilde{m}K^i_{y_j}
)\widetilde{w}_{y_iy_j} dy.
        \end{align*}
        Because  $h_{y_j}=K^i\widetilde{w}_{y_iy_j}+V_{y_j}$
and $K^i_{y_j}=\widetilde{w}_{y_iy_j}$, we have
        \begin{align*}
        \int_{\mathcal{Y}^d} \widetilde{m}h_{y_j}h_{y_j} dy+\int_{\mathcal{Y}^d}
\widetilde{m}\widetilde{w}_{y_iy_j}\widetilde{w}_{y_iy_j} dy=\int_{\mathcal{Y}^d}
\widetilde{m}h_{y_j}V_{y_j} dy.
        \end{align*}
        Using a weighted Cauchy's inequality and the smoothness
of $V$, we conclude that there exists a constant, $C$, independent of $x$ and $\Lambda$ such that
 
        \begin{align*}
        \int_{\mathcal{Y}^d}\widetilde{m} |\nabla_y h |^2dy\leq
C.
        \end{align*}
        Since $\nabla_y \widetilde{m}=\widetilde{m}\nabla_yh$
and $\int_{\mathcal{Y}^d}\widetilde{m}dy=1$, 
        \begin{align*}
        \int_{\mathcal{Y}^d}  \Big(\big |\nabla_y \widetilde{m}^{\frac{1}{2}}
\big|^2+ \big(\widetilde{m}^{\frac{1}{2}} \big)^2 \Big)dy=&\int_{\mathcal{Y}^d}
\Big(\frac{1}{4}\widetilde{m} |\nabla_yh |^2+\widetilde{m}\Big
)dy\leq C. 
        \end{align*}
        Using Sobolev's inequality, we obtain
        \begin{align}
        \label{uniformboundsmthetanorm}
         \bigg(\int_{\mathcal{Y}^d}\widetilde{m}^{\theta+1}dy\bigg
)^{\frac{1}{\theta+1}}\leq C\int_{\mathcal{Y}^d} \Big(\big |\nabla_y\widetilde{m}^{\frac{1}{2}}
\big|^2+ \big(\widetilde{m}^{\frac{1}{2}} \big)^2 \Big)dy\leq
C,
        \end{align}
        where $\theta=\frac{2}{d-2}$ for $d\geq 3$ and any positive
real number for $d=2$. If $d=1$, Morrey's inequality gives 
        \begin{equation}
        \label{UpperBoundofmtilde}
        \sup_y\widetilde{m}^\frac{1}{2}\leq C\int_{\mathcal{Y}}
\Big(\big |\nabla_y\widetilde{m}^{\frac{1}{2}}
\big|^2+ \big(\widetilde{m}^{\frac{1}{2}}
\big)^2 \Big)dy\leq C.
        \end{equation} 
        Thus, for $d=1$ and any $\theta>0$, we also have \eqref{uniformboundsmthetanorm}. Therefore, we conclude that \eqref{Probmboundseq} holds. 
\end{proof}
Next, we use Moser's iteration method  to obtain an upper bound of $\widetilde{m}$.

\begin{pro}
        \label{UpperBoundw}
        Let $(\widetilde{w}, \widetilde{m}, \widetilde{H})$ solve
Problem~\ref{CellProbEulerLagrangeProb}. Then, 
        there exists a constant, $C$, such that for any $x$ and
$\Lambda$, 
\begin{equation}
\label{UpperBoundwIneq}
\sup_y\widetilde{m} (x, \Lambda, y) \leq C
\end{equation}
and
\begin{equation}
\label{UpperBoundLambdaPlusGradw}
\sup_{y}{ |\Lambda+\nabla_y\widetilde{w} (x,\Lambda, y)|}\leq C+{ |\Lambda| }. 
\end{equation}
\end{pro}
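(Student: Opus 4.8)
The plan is to run a Moser iteration on the continuity equation in \eqref{CellProbEulerLagrangeEq}, upgrading the $L^{\theta+1}$-bound of Proposition~\ref{mbetabounds} to an $L^\infty$-bound for $\widetilde m$, and then to read off \eqref{UpperBoundLambdaPlusGradw} from the Hamilton--Jacobi equation. Throughout, set $K=\Lambda+\nabla_y\widetilde w$ and $h=\frac{|K|^2}{2}+V$, so that the first equation in \eqref{CellProbEulerLagrangeEq} reads $\ln\widetilde m=h-\widetilde H$; hence $\widetilde m_{y_j}=\widetilde m\,h_{y_j}$, and dividing the second equation in \eqref{CellProbEulerLagrangeEq} by $\widetilde m>0$ gives $\nabla_yh\cdot K=-\Delta_y\widetilde w$. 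When $d=1$ there is nothing to prove: estimate \eqref{UpperBoundofmtilde} already yields $\sup_y\widetilde m\le C$ with $C$ independent of $x$ and $\Lambda$. So assume $d\ge2$.

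\emph{Step 1: a weighted Caccioppoli inequality.} Fix $\beta\ge0$ and repeat the computation in the proof of Proposition~\ref{mbetabounds}, now testing $-\div_y(\widetilde mK)=0$ against $\widetilde m^{\beta}\Delta_y\widetilde w$ in place of $\Delta_y\widetilde w$. Integrating by parts and systematically using $\widetilde m_{y_j}=\widetilde m\,h_{y_j}$, $K^i_{y_j}=\widetilde w_{y_iy_j}$, $K^i\widetilde w_{y_iy_j}=h_{y_j}-V_{y_j}$, and $\nabla_yh\cdot K=-\Delta_y\widetilde w$, all the terms carrying $|D^2\widetilde w|^2$ and $(\Delta_y\widetilde w)^2$ come with a nonnegative sign; discarding them and applying a weighted Cauchy inequality to $\int_{\mathcal{Y}^d}\widetilde m^{1+\beta}\nabla_yh\cdot\nabla_yV\,dy$ leaves
\[
\int_{\mathcal{Y}^d}\widetilde m^{1+\beta}|\nabla_yh|^2\,dy\le\|\nabla_yV\|_{L^\infty}^2\int_{\mathcal{Y}^d}\widetilde m^{1+\beta}\,dy ,
\]
which for $\beta=0$ is exactly the inequality used in Proposition~\ref{mbetabounds}. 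Since $\nabla_y(\widetilde m^{p})=p\,\widetilde m^{p}\nabla_yh$, choosing $\beta=2p-1\ge0$ turns this into
\[
\|\nabla_y\widetilde m^{p}\|_{L^2(\mathcal{Y}^d)}^2\le p^2\,\|\nabla_yV\|_{L^\infty}^2\,\|\widetilde m^{p}\|_{L^2(\mathcal{Y}^d)}^2\qquad\text{for every }p\ge\tfrac12 ,
\]
with a constant independent of $x$ and $\Lambda$; in particular $\Lambda$ disappears, because it enters only through $K$ and is removed by the identity $K^i\widetilde w_{y_iy_j}=h_{y_j}-V_{y_j}$.

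\emph{Step 2: iteration.} Adding $\|\widetilde m^{p}\|_{L^2(\mathcal{Y}^d)}^2$ to both sides and invoking the Sobolev embedding on $\mathcal{Y}^d$ (with exponent $\frac{2d}{d-2}$ if $d\ge3$, and any finite exponent if $d=2$), we obtain some $\chi>1$ and some $\tilde C=\tilde C(d,V)>0$ such that, for all $q\ge1$,
\[
\|\widetilde m\|_{L^{\chi q}(\mathcal{Y}^d)}\le(\tilde C q^2)^{1/q}\,\|\widetilde m\|_{L^{q}(\mathcal{Y}^d)} .
\]
Starting from $q_0=\theta+1\ge1$, for which $\|\widetilde m\|_{L^{q_0}(\mathcal{Y}^d)}\le C$ by \eqref{Probmboundseq}, and iterating along $q_k=\chi^{k}q_0$, the product $\prod_{k\ge0}(\tilde C q_k^2)^{1/q_k}$ converges, since its logarithm equals $\sum_{k\ge0}\chi^{-k}q_0^{-1}\big(\ln(\tilde C q_0^2)+2k\ln\chi\big)<\infty$. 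Letting $k\to\infty$ gives $\sup_y\widetilde m=\|\widetilde m\|_{L^\infty(\mathcal{Y}^d)}\le C$, with $C$ depending only on $d$, $V$, and the constant in \eqref{Probmboundseq}, hence independent of $x$ and $\Lambda$; this proves \eqref{UpperBoundwIneq}.

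\emph{Step 3, and the main difficulty.} From the first equation in \eqref{CellProbEulerLagrangeEq} and \eqref{UpperBoundwIneq}, $\frac{|\Lambda+\nabla_y\widetilde w|^2}{2}=\ln\widetilde m-V+\widetilde H\le\ln C-\inf_{x,y}V+\widetilde H$; combining this with the upper bound for $\widetilde H$ in \eqref{Coercivity} yields $|\Lambda+\nabla_y\widetilde w|^2\le|\Lambda|^2+C'$ and therefore $\sup_y|\Lambda+\nabla_y\widetilde w|\le|\Lambda|+\sqrt{C'}$, which is \eqref{UpperBoundLambdaPlusGradw}. The only delicate point is Step~1: one must check that in the weighted test against $\widetilde m^{\beta}\Delta_y\widetilde w$ every term generated by the integrations by parts either cancels, carries the correct sign, or is absorbable, and — crucially — that all constants are independent of $x$ and $\Lambda$. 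The remainder (tracking the polynomial-in-$q$ dependence so that the Moser product converges, and choosing $\chi$ according to the dimension) is routine bookkeeping.
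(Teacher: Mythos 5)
Your proof is correct and takes essentially the same route as the paper: a weighted Caccioppoli inequality from the continuity equation, Sobolev embedding, Moser iteration starting from the $L^{\theta+1}$-bound of Proposition~\ref{mbetabounds}, and then reading \eqref{UpperBoundLambdaPlusGradw} from the Hamilton--Jacobi equation together with \eqref{Coercivity}. The only cosmetic difference is the test function: the paper uses $-\div_y(\widetilde m^q K)$, whereas you use $\widetilde m^{\beta}\Delta_y\widetilde w$; but since the continuity equation gives $\nabla_y\widetilde m\cdot K=-\widetilde m\,\Delta_y\widetilde w$, one has $-\div_y(\widetilde m^q K)=(q-1)\widetilde m^q\Delta_y\widetilde w$, so the two test functions are proportional and the resulting Caccioppoli estimate is the same.
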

\begin{proof}
First, we show that \eqref{UpperBoundwIneq} implies \eqref{UpperBoundLambdaPlusGradw}. Assume that there exists a constant, $C$, such that for any $x$ and $\Lambda$, \eqref{UpperBoundwIneq} holds. 
Using the first equation in \eqref{CellProbEulerLagrangeEq}, we have $\widetilde{m}=e^{\frac{ |\Lambda+\nabla_y\widetilde{w}
                |^2}{2}+V-\widetilde{H}}$, we obtain
\begin{equation*}
\frac{ |\Lambda+\nabla_y\widetilde{w} |^2}{2}+V-\widetilde{H}=\ln
\widetilde{m}\leq \ln C. 
\end{equation*}
Then, using Proposition~\ref{CoercivityWidetideH}, we
have 
\begin{equation*}
\sup_{y}{ |\Lambda+\nabla_y\widetilde{w} |}\leq C+{ |\Lambda
        |}.
\end{equation*}
Thus, we conclude that \eqref{UpperBoundLambdaPlusGradw} holds. 
Next, we prove \eqref{UpperBoundwIneq}. If $d=1$, \eqref{UpperBoundwIneq} follows by \eqref{UpperBoundofmtilde}
in the proof of Proposition~\ref{mbetabounds}. Thus, in what follows, we suppose
that $d\geq 2$. 

As before, we define $K$ as in \eqref{DefK} and use Einstein's notation.   Multiplying
the second  equation in \eqref{CellProbEulerLagrangeEq} by $-\div_y
(\widetilde{m}^qK )$ for $q\geq 0$ and integrating, we get 
        \begin{align}
        \label{TwoDivProduct}
        \begin{split}
        0=&\int_{\mathcal{Y}^d} \div_y (\widetilde{m}K )\div_y
(\widetilde{m}^qK )dy
=\int_{\mathcal{Y}^d}  (\widetilde{m}K^i )_{y_i} (\widetilde{m}^qK^j
)_{y_j} dy=\int_{\mathcal{Y}^d}  (\widetilde{m}K^i )_{y_j} (\widetilde{m}^qK^j
)_{y_i} dy\\
        =&\int_{\mathcal{Y}^d}  (\widetilde{m}_{y_j}K^i+\widetilde{m}K^i_{y_j}
) (q\widetilde{m}^{q-1}\widetilde{m}_{y_i}K^j+\widetilde{m}^qK^j_{y_i}
)dy\\
        =&\int_{\mathcal{Y}^d} q\widetilde{m}^{q-1}\widetilde{m}_{y_j}\widetilde{m}_{y_i}K^iK^j
dy+\int_{\mathcal{Y}^d} \widetilde{m}^{q+1}K^i_{y_j}K^{j}_{y_i}
dy\\
        &+\int_{\mathcal{Y}^d} (\widetilde{m}_{y_j}\widetilde{m}^qK^iK^j_{y_i}+q\widetilde{m}^{q}\widetilde{m}_{y_i}K^i_{y_j}K^j
)dy=:\int_{\mathcal{Y}^d} A_1 dy + \int_{\mathcal{Y}^d}
A_2 dy + \int_{\mathcal{Y}^d} A_3 dy, 
        \end{split}
        \end{align}
        where using \eqref{DefK}, 
        \begin{equation}
        \label{EstimateA}
        A_1=q\widetilde{m}^{q-1}\widetilde{m}_{y_j}\widetilde{m}_{y_i}K^iK^j=q\widetilde{m}^{q-1}
\big| (\nabla_y\widetilde{m} )^TK \big|^2,
        \end{equation}
        \begin{equation}
        \label{EstimateB}
        A_2=\widetilde{m}^{q+1}K^i_{y_j}K^j_{y_i}=\widetilde{m}^{q+1}\widetilde{w}_{y_iy_j}\widetilde{w}_{y_jy_i}=\widetilde{m}^{q+1}
|\nabla_y^2\widetilde{w} |^2,
        \end{equation}
        and 
        \begin{align}
        \label{EstimateC}
        \begin{split}
        A_3&=\widetilde{m}_{y_j}\widetilde{m}^qK^iK^j_{y_i}+
        q\widetilde{m}^{q}\widetilde{m}_{y_i}K^i_{y_j}K^j= (q+1 )\widetilde{m}^q\widetilde{m}_{y_j}K^iK^j_{y_i}\\
        &= (q+1 )\widetilde{m}^q (\widetilde{m}_{y_j}K^i\widetilde{w}_{y_jy_i}
)= (q+1 )\widetilde{m}^q  (\nabla_y \widetilde{m} )^T\nabla_y^2\widetilde{w}
(\Lambda + \nabla_y \widetilde{w} ).
        \end{split}
        \end{align}
        Combining \eqref{TwoDivProduct}--\eqref{EstimateC}, we
have  
        \begin{align}
        \label{IneqABC}
        \int_{\mathcal{Y}^d} \big(q\widetilde{m}^{q-1} | (\nabla_y\widetilde{m}
)^TK |^2+\widetilde{m}^{q+1} |\nabla_y^2\widetilde{w} |^2+ (q+1
)\widetilde{m}^q  (\nabla_y \widetilde{m} )^T\nabla_y^2\widetilde{w}
(\Lambda+ \nabla_y \widetilde{w} )\big )dy= 0.
        \end{align}
        Differentiating the first equation in  \eqref{CellProbEulerLagrangeEq}
with respect to $y$, we get
        \begin{equation*}
        \nabla_y^2\widetilde{w} (\Lambda+\nabla_y\widetilde{w}
)+\nabla_y V=\frac{\nabla_y \widetilde{m}}{\widetilde{m}}\cdot
        \end{equation*}
        Then, multiplying both sides in the prior equation by
$ (q+1 )\widetilde{m}^q\nabla_y\widetilde{m}$, we obtain 
        \begin{align*}
        & (q+1 )\widetilde{m}^q  (\nabla_y \widetilde{m} )^T\nabla_y^2\widetilde{w}
(\Lambda + \nabla_y \widetilde{w} )+ (q+1 )\widetilde{m}^q (\nabla_y\widetilde{m}
)^T\nabla_yV= (q+1 )\widetilde{m}^{q-1} |\nabla_y\widetilde{m} |^2.
        \end{align*}
        Combining the previous identity with  \eqref{IneqABC},
we get 
        \begin{align*}
        \begin{split}
        &\int_{\mathcal{Y}^d} \big(q\widetilde{m}^{q-1} | (\nabla_y\widetilde{m}
)^TK |^2+\widetilde{m}^{q+1} |\nabla_y^2\widetilde{w} |^2+ (q+1
)\widetilde{m}^{q-1} |\nabla_y\widetilde{m} |^2 \big)dy\\
       &\quad \leq   \int_{\mathcal{Y}^d} \big| (q+1 )\widetilde{m}^q
(\nabla_y\widetilde{m} )^T\nabla_yV\big |dy\\
        &\quad \leq \frac{q+1}{2}\int_{\mathcal{Y}^d} \widetilde{m}^{q-1}
|\nabla_y\widetilde{m} |^2dy+\frac{q+1}{2}\int_{\mathcal{Y}^d}
\widetilde{m}^{q+1} |\nabla_yV |^2 dy,
        \end{split}
        \end{align*}
        where we use Cauchy's inequality in the last inequality. Since the first two terms on the most left-hand side of the preceding inequalities are positive, we have
        \begin{align}
        \label{ABCIneq2}
        \begin{split}
        \int_{\mathcal{Y}^d} \widetilde{m}^{q-1} |\nabla_y\widetilde{m}
|^2 dy
        \leq \int_{\mathcal{Y}^d} \widetilde{m}^{q+1} |\nabla_yV
|^2dy\leq \sup_{x,y} |\nabla_yV |^2\int_{\mathcal{Y}^d} \widetilde{m}^{q+1}dy.
        \end{split}
        \end{align}
        Let $\theta=\frac{2}{d-2}$ if $d\geq 3$ and $\theta$
any positive number if $d=2$.  By Sobolev's inequality and \eqref{ABCIneq2},
there exists a constant, $C$, independent of $x$ and $\Lambda$ such that 
        \begin{align*}
         \bigg(\int_{\mathcal{Y}^d}\widetilde{m}^{ (q+1 ) (1+\theta
)}dy\bigg )^{\frac{1}{1+\theta}}\leq& C\int_{\mathcal{Y}^d} \Big(
\big|\nabla_y \big(\widetilde{m}^{\frac{q+1}{2}} \big) \big|^2+\widetilde{m}^{q+1}
\Big)dy\\
        \leq& C (q+1 )^2\int_{\mathcal{Y}^d} \widetilde{m}^{q-1}
|\nabla_y\widetilde{m} |^2 dy+C\int_{\mathcal{Y}^d}\widetilde{m}^{q+1}dy\\
        \leq &C (q+1 )^2\int_{\mathcal{Y}^d}\widetilde{m}^{q+1}dy.
        \end{align*}
        Let  $\beta= (1+\theta )^{\frac{1}{2}}>1$. Using H\"{o}lder's
inequality, we obtain
        \begin{equation*}
         \bigg(\int_{\mathcal{Y}^d}\widetilde{m}^{ (q+1 )\beta^2}dy\bigg
)^{\frac{1}{\beta^2}}\leq C (q+1 )^2\int_{\mathcal{Y}^d}\widetilde{m}^{q+1}dy\leq
C (q+1 )^2 \bigg(\int_{\mathcal{Y}^d}\widetilde{m}^{ (q+1 )\beta}dy
\bigg)^{\frac{1}{\beta}}.
        \end{equation*}
        Thus, letting
        \begin{equation}
        \label{DefMoserGamma}
        \gamma= (q+1 )\beta\geq \beta,
        \end{equation} we get
        \begin{equation}
        \label{MoserIterationBase}
         \bigg(\int_{\mathcal{Y}^d}\widetilde{m}^{\gamma\beta}dy\bigg
)^{\frac{1}{\beta}}\leq C \gamma^{2\beta}\int_{\mathcal{Y}^d}\widetilde{m}^\gamma
dy.
        \end{equation}
        Then, we use Moser's method. We choose a sequence $(q_s)_s$
such that $q_1=0$ and $q_{s+1}= (q_s+1 )\beta-1$ for $s\in\mathbb{N}$
and $s\geq 1$ and let $\gamma_s=(q_s+1)\beta$. Then, by \eqref{DefMoserGamma},
 $\gamma_{s+1}=\beta (q_{s+1}+1 )=\beta (q_s+1 )\beta=\gamma_s\beta$.
Hence, $\gamma_s=\beta^s$.  Using \eqref{MoserIterationBase},
we have
        \begin{equation*}
         \bigg(\int_{\mathcal{Y}^d}\widetilde{m}^{\gamma_{s+1}}dy\bigg
)^{\frac{1}{\gamma_{s+1}}}\leq C^{\frac{1}{\gamma_s}}\gamma_s^{\frac{2\beta}{\gamma_s}}
\bigg(\int_{\mathcal{Y}^d}\widetilde{m}^{\gamma_s}dy\bigg )^{\frac{1}{\gamma_s}}.
        \end{equation*} 
        Iterating the preceding inequality, we get
        \begin{equation*}
        \|\widetilde{m}\|_{L^{\gamma_{s+1}}(\mathcal{Y}^d)} \leq C^{\sum_{t=2}^{s}\frac{1}{\beta^t}}\beta^{\sum_{t=2}^{s}\frac{2t}{\beta^{t-1}}}
        \bigg(\int_{\mathcal{Y}^d}\widetilde{m}^{\beta^2} dx\bigg )^{\frac{1}{{\beta^2}}}. 
        \end{equation*}
         Letting $s\rightarrow\infty$ and using \(\beta=(1+\theta)^{\frac{1}{2}}\) and Proposition
\ref{mbetabounds}, we get
        \begin{equation*}
         \|\widetilde{m} \|_{L^\infty (\mathcal{Y}^d )}\leq C^{\sum_2^{\infty}\frac{1}{\beta^s}}\beta^{\sum_2^{\infty}\frac{2s}{\beta^{s-1}}}
\bigg(\int_{\mathcal{Y}^d}\widetilde{m}^{\beta^2} dx\bigg )^{\frac{1}{\beta^2}}\leq
C.
        \end{equation*}
Therefore, we conclude that \eqref{UpperBoundwIneq} holds. 
\end{proof}
Next, we examine the H\"{o}lder continuity of $\widetilde{w}$.
To do that, we consider the regularity of $\nabla_y\widetilde{w}$.
First, we rewrite \eqref{CellProbEulerLagrangeEq} as 
\begin{align}
\label{DivForm}
-\div_y \Big(e^{\frac{ |\Lambda+\nabla_y\widetilde{w} |^2}{2}+V}
(\Lambda+\nabla_y\widetilde{w} )\Big )=0.
\end{align} 
We stress that \eqref{CellProbEulerLagrangeEq} and \eqref{DivForm} are equivalent. More precisely, 
suppose that $\widetilde{w}$ satisfies \eqref{DivForm}, we define for any $x\in \mathbb{T}^d$ and  $\Lambda\in \mathbb{R}^d$ 
\begin{equation}
\label{EllipticWidetideH}
\widetilde{H}(x,\Lambda)=\ln \int_{\mathcal{Y}^d} e^{\frac{\left|\Lambda+\nabla_y(x,y)\widetilde{w}\right|^2}{2}+V(x,y)}dy
\end{equation}
and for any $y\in \mathcal{Y}^d$, 
\begin{equation}
\label{EllipticWidetidem}
\widetilde{m}(x,\Lambda, y)=e^{\frac{\left|\Lambda+\nabla_yw(x,\Lambda, y)\right|^2}{2}+V(x,y)}.
\end{equation}
Then, $(\widetilde{w}, \widetilde{m}, \widetilde{H})$ solves
Problem~\ref{CellProbEulerLagrangeProb}.

Denote $v=\widetilde{w}_{y_l}$, where $l=1,\dots,d$. Differentiating
\eqref{DivForm} with respect to $y_l$, we get
\begin{align}
\label{EllipticPDEofV}
\begin{split}
&-\div_y \Big(e^{\frac{ |\Lambda+\nabla_y\widetilde{w} |^2}{2}+V}
\big( (\Lambda+\nabla_y\widetilde{w} ) (\Lambda+\nabla_y\widetilde{w}
)^T+I \big)\nabla_y v \Big)\\
&\quad=\div_y \Big(e^{\frac{ |\Lambda+\nabla_y\widetilde{w} |^2}{2}+V}V_{y_l}
(\Lambda+\nabla_y\widetilde{w} )\Big ),
\end{split}
\end{align}
where $I$ is the Identity matrix. 
For simplicity, we denote $$A=e^{\frac{ |\Lambda+\nabla_y\widetilde{w}
|^2}{2}+V} \big( (\Lambda+\nabla_y\widetilde{w} ) (\Lambda+\nabla_y\widetilde{w}
)^T+I\big )$$ and  $\phi=e^{\frac{ |\Lambda+\nabla_y\widetilde{w}
|^2}{2}+V}V_{y_l} (\Lambda+\nabla_y\widetilde{w} )$. Then, \eqref{EllipticPDEofV}
becomes
\begin{equation}
\label{SimplifiedEllipticPDEofV}
-\div_y (A\nabla_yv )=\div_y\phi.
\end{equation}
By the definition of $A$, for any $\xi\in\mathbb{R}^d$, we have

\begin{align*}
\xi^TA\xi&=e^{\frac{ |\Lambda+\nabla_y\widetilde{w} |^2}{2}+V}\xi^T\big
( (\Lambda+\nabla_y\widetilde{w} ) (\Lambda+\nabla_y\widetilde{w}
)^T+I\big )\xi\\
&=e^{\frac{ |\Lambda+\nabla_y\widetilde{w} |^2}{2}+V} \big| (\Lambda+\nabla_y\widetilde{w}
)^T\xi\big |^2+e^{\frac{ |\Lambda+\nabla_y\widetilde{w} |^2}{2}+V}
|\xi |^2\geq e^{\inf V} |\xi |^2.
\end{align*}
Let $\eta=e^{\inf V}$. The preceding expressions yield 
\begin{align}
\label{EllipticityofA}
\eta |\xi |^2\leq \xi^TA\xi, \ \forall \xi\in\mathbb{R}^d.
\end{align} 
From Proposition~\ref{UpperBoundw}, we know that there exists
a positive constant $c_\Lambda$ independent on $x$ such that
\begin{equation}
\label{BoundednessPhi}
 \|\phi \|_{L^\infty (\mathcal{Y}^d )}\leq c_{\Lambda}
\end{equation}
and 
\begin{align}
\label{BoundsofA}
 \|A \|_{L^\infty (\mathcal{Y}^d )}\leq c_{\Lambda}.
\end{align} 
In the rest of this section,  $C_{\Lambda}$ denotes any positive
real number depending on $\Lambda$ and $c_\Lambda$ and independent on $x$, whose value
may change from one expression to another and is  uniformly bounded
in $\Lambda$ on compact sets; that is, if $(\Lambda_n)_n$ is
a bounded sequence, so is $ (C_{\Lambda_n} )_n$.

To get the regularity of $v$ in \eqref{SimplifiedEllipticPDEofV}, we consider  \eqref{SimplifiedEllipticPDEofV}
restricted to a small ball, $B (y_0, R )$, with a radius $R>0$
around the point, $y_0\in\mathcal{Y}^d$. After obtaining estimates
on $B (y_0, R )$, the compactness of $\mathcal{Y}^d$ implies bounds on the whole $\mathcal{Y}^d$. First, we split $v$
into two parts.  
Let $v=\psi+z$, where $\psi$ solves
\begin{equation}
\label{EqPsi}
-\div_y (A\nabla_y\psi )=\div_y\phi, \ \forall y\in B (y_0,2R
)
\end{equation}
and $\psi=0$ on the boundary of $B (y_0,2R )$, denoted by $\partial B (y_0,2R )$, and $z$ solves
\begin{equation}
\label{Eqz}
-\div_y (A\nabla_yz )=0, \ \forall y\in B (y_0,2R )
\end{equation}
and $z |_{\partial B (y_0, 2R )}=v  |_{\partial B (y_0, 2R )}$.
To get the boundedness of $\psi$ and the oscillation of $z$ and
$v$,  we proceed as in the proof of Theorem 8.13 in \cite{giaquinta2013introduction}.

\begin{pro}
        \label{BoundPsiLinfty}
        Let $\psi$ be a weak solution to \eqref{EqPsi}. Suppose that $A$ satisfies
\eqref{EllipticityofA} and \eqref{BoundsofA} and that $\phi$
satisfies \eqref{BoundednessPhi}. Then,
        there exist a constant, $C_\Lambda$, and a number, $\delta>0$,
such that
        \begin{equation*}
         \|\psi \|_{L^\infty (B (y_0,2R ) )}\leq C_\Lambda R^\delta,
        \end{equation*}
        where $\delta=1$ if $d>2$ and $0<\delta<1$ if $d=2$ or
$d=1$. 
\end{pro}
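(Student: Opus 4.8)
The plan is to obtain this as a quantitative (in the radius $R$) form of the classical global boundedness estimate for weak solutions of divergence-form elliptic equations with a divergence-type right-hand side, following the De Giorgi truncation argument used in the proof of Theorem~8.13 of \cite{giaquinta2013introduction}. The power $R^{\delta}$ will come out of tracking the scaling of the Sobolev/Poincar\'e constants on the ball $B(y_0,2R)$ (or, equivalently, by first rescaling $\psi$ to the unit ball).

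First I would prove a Caccioppoli-type inequality on superlevel sets of $\psi$. For $k\geq0$, put $A(k)=\{y\in B(y_0,2R):\,\psi(y)>k\}$. Since $\psi\in W^{1,2}_0(B(y_0,2R))$ --- this is precisely the homogeneous Dirichlet condition that separates $\psi$ from the part $z$ in the decomposition $v=\psi+z$ --- the function $(\psi-k)_+$ is an admissible test function in $W^{1,2}_0(B(y_0,2R))$. Testing the weak formulation of \eqref{EqPsi} with $(\psi-k)_+$ and using the ellipticity \eqref{EllipticityofA}, the bound \eqref{BoundednessPhi}, and the Cauchy--Schwarz and Young inequalities, I would arrive at
\begin{equation*}
\int_{A(k)}|\nabla_y\psi|^2\,dy\leq C_\Lambda^{2}\,|A(k)|,
\end{equation*}
and the same holds with $-\psi$ in place of $\psi$ (which solves \eqref{EqPsi} with $\phi$ replaced by $-\phi$).

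Next I would combine this with a Sobolev inequality to get a decay relation for $a(k):=|A(k)|$. For $d\geq3$ one uses the scale-invariant embedding $W^{1,2}_0(B(y_0,2R))\hookrightarrow L^{2^{*}}$, $2^{*}=\frac{2d}{d-2}$; for $d=2$ one replaces $2^{*}$ by a fixed finite exponent $q>2$ (at the price of a positive power of $R$ in the constant); and for $d=1$ one may argue directly via $W^{1,2}_0\hookrightarrow C^{0,1/2}$ together with the energy bound on the whole ball. In every case, for $h>k\geq0$, since $(\psi-k)_+\geq h-k$ on $A(h)\subset A(k)$, one gets an inequality of the form
\begin{equation*}
a(h)\leq \Big(\frac{C_\Lambda\,R^{\sigma}}{h-k}\Big)^{\!\gamma} a(k)^{\gamma/2},\qquad h>k\geq0,
\end{equation*}
with $\gamma>2$ and $\sigma\geq0$ depending only on $d$. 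As $\gamma/2>1$, Stampacchia's iteration lemma applied with $a(0)\leq |B(y_0,2R)|\leq CR^{d}$ gives $\esssup_{B(y_0,2R)}\psi\leq C_\Lambda R^{\delta}$; carrying out the computation yields $\delta=1$ when $d>2$ and some $\delta\in(0,1)$ when $d\leq2$. Repeating the argument for $-\psi$ gives $\|\psi\|_{L^\infty(B(y_0,2R))}\leq C_\Lambda R^{\delta}$, as claimed.

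The only point that needs care is the bookkeeping of the $R$-dependence: the constant produced by Stampacchia's lemma carries the factor $a(0)^{\gamma/2-1}$, and one must check that the powers of $R$ stemming from $|B(y_0,2R)|\sim R^{d}$, from the Sobolev constant on $B(y_0,2R)$, and from the exponents $\gamma$ and $\gamma/2$ combine into $R^{\delta}$ with the stated $\delta$; in the borderline dimension $d=2$ this is exactly why one settles for a non-optimal $\delta\in(0,1)$. Everything else is routine, and the precise value of $\delta$ plays no role later: any \emph{positive} power of $R$ suffices for the Campanato/De Giorgi-type estimates that will subsequently be used to upgrade the regularity of $\nabla_y\widetilde{w}$.
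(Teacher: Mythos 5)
Your proposal is correct and takes essentially the same route as the paper: a De Giorgi truncation argument. You test \eqref{EqPsi} with $(\psi-k)^+$, use \eqref{EllipticityofA}, \eqref{BoundednessPhi}, and Cauchy's inequality to get the Caccioppoli-type bound $\int_{A(k)}|\nabla_y\psi|^2\,dy\leq C_\Lambda^2|A(k)|$, then feed this into the Sobolev inequality on superlevel sets and iterate. The only packaging difference is that you invoke the abstract Stampacchia iteration lemma, whereas the paper carries out the induction explicitly over the levels $k_n=M(1-2^{-n})$ with $M\sim|E(0)|^{1/2-1/\theta}\lesssim R^{d(1/2-1/\theta)}$; these are the same argument, and the resulting exponent $\delta=d(1/2-1/\theta)$ agrees with your bookkeeping ($\delta=1$ for $d\geq3$ with $\theta=2^*$, and $\delta\in(0,1)$ for $d\leq2$ with $\theta$ large but finite). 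You are slightly more scrupulous in two spots: you explicitly note that the boundary condition $\psi\in W^{1,2}_0(B(y_0,2R))$ is what makes $(\psi-k)_+$ admissible (the paper leaves this to the surrounding text), and you run the argument for $-\psi$ as well to convert $\operatorname{esssup}\psi\leq M$ into the full $L^\infty$ bound, which the paper leaves implicit. No gaps.
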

\begin{proof}
        For a function $\varphi$, we denote by $\varphi^+$ the
nonnegative part of $\varphi$. 
        Multiplying by  \eqref{EqPsi} $ (\psi-k )^+$, where $k\in
\mathbb{N}$, and integrating the resulting equation over $B (y_0,2R
)$, we get
        \begin{equation*}
        \int_{B (y_0,2R )}  (\nabla_y\psi )^TA^T\nabla_y ( (\psi-k
)^+ )dy=\int_{B (y_0,2R )}-\phi^T\nabla_y ( (\psi-k )^+ )dy,
        \end{equation*}
taking into account that $(\psi-k)^+=0$ on $\partial B(y_0, 2R)$. 
        Thus, 
        \begin{equation}
        \label{BoundPsiLinfityEq1}
        \int_{B (y_0,2R )\cap \{\psi>k \}}  (\nabla_y\psi )^TA^T\nabla_y\psi
dy=\int_{B (y_0,2R )\cap \{\psi>k \}}-\phi^T\nabla_y\psi dy.
        \end{equation}
        For simplicity, we denote $$E (k )=B (y_0,2R )\cap \{\psi>k
\}.$$
        Using \eqref{EllipticityofA}, \eqref{BoundednessPhi},
and \eqref{BoundPsiLinfityEq1}, we get
        \begin{align*}
        \eta\int_{E (k )} |\nabla_y\psi |^2dy\leq& \int_{E (k
)}  (\nabla_y\psi )^TA^T\nabla_y\psi dy=\int_{E (k )}-\phi^T\nabla_y\psi
        \leq c_\Lambda\int_{E (k )} |\nabla_y\psi |dy.
        \end{align*}
        Then, by Cauchy's inequality, we obtain 
        \begin{align}
        \label{BoundDPsiBySmallBallArea}
        \int_{E (k )} |\nabla_y\psi |^2dy \leq  \frac{c_\Lambda^2}{\eta^2}
 |E (k ) |.
        \end{align}
        Let $\theta=\frac{1}{\frac{1}{2}-\frac{1}{d}}$ if $d>2$
and $\theta$ any positive number greater than $2$ if $d=2$ or
$d=1$.  For any $a>k$, there exists a constant $C_1>0$, independent on $R$, $x$, and $\Lambda$, such that

        \begin{align}
        \label{BoundSmallBallAreaByDPsi}
        \begin{split}
         (a-k )^2 |E (a ) |^{2/\theta}=&  \bigg(\int_{E (a
)} (a-k )^{\theta}dy\bigg )^{2/\theta}\leq  \bigg(\int_{E (a
)} ( (\psi-k )^+ )^{\theta}dy\bigg )^{2/\theta}\\
        \leq &  \bigg(\int_{B (y_0,2R )} ( (\psi-k )^+ )^{\theta}dy\bigg
)^{2/\theta} \leq C_1\int_{B (y_0,2R )} \big|\nabla_y (\psi-k
)^+ \big|^2dy\\
        = & C_1\int_{E (k )} |\nabla_y\psi |^2dy.
        \end{split}
        \end{align}
        The second to last inequality in the prior expressions
follows by Sobolev's inequality.
        Combining  \eqref{BoundDPsiBySmallBallArea} and \eqref{BoundSmallBallAreaByDPsi},
we get
        \begin{equation*}
         |E (a ) |\leq \frac{C_1^\frac{\theta}{2} c_\Lambda^\theta
|E (k ) |^{\frac{\theta}{2}}}{\eta^\theta(a-k )^{\theta}}.
        \end{equation*}
        We denote $\widetilde{C}_\Lambda=C_1^\frac{\theta}{2}c_\Lambda^\theta/\eta^\theta$
and define 
        \begin{equation}
        \label{DefM}
        M= \Big(\widetilde{C}_\Lambda |E (0 ) |^{\frac{\theta}{2}-1}2^{\frac{\theta^2}{\theta-2}}
\Big)^{\frac{1}{\theta}}.
        \end{equation}
Furthermore, let $ (k_n )_n$ be a sequence such that
\begin{equation}
\label{Defkn}
k_n=M (1-\frac{1}{2^n} ).
\end{equation}
        Then, we obtain 
        \begin{align}
        \label{BoundEn1byEkn}
         |E (k_{n+1} ) |\leq \frac{\widetilde{C}_\Lambda}{ (k_{n+1}-k_{n}
)^\theta} |E (k_n ) |^{\frac{\theta}{2}}\leq \widetilde{C}_\Lambda\frac{2^{\theta
(n+1 )}}{M^{\theta}} |E (k_n ) |^{\frac{\theta}{2}}.
        \end{align}
        We claim that
        \begin{equation}
        \label{BoundEknbyE0}
         |E (k_n ) |\leq \frac{ |E (0 ) |}{2^{n\nu}},
        \end{equation}
        where $\nu=\frac{\theta}{\frac{\theta}{2}-1}$. We  prove
this claim by induction. For $n=0$, $k_0=0$. Hence, \eqref{BoundEknbyE0}
is trival. Suppose that \eqref{BoundEknbyE0} holds for some $n\in\mathbb{N}$,
$n>0$. Then, using \eqref{DefM}, \eqref{BoundEn1byEkn}, and \eqref{BoundEknbyE0},
we get          
\begin{equation*}
\begin{aligned}
|E (k_{n+1} ) |&\leq  \widetilde{C}_\Lambda\frac{2^{\theta
(n+1 )}}{M^{\theta}} |E (k_n ) |^{\frac{\theta}{2}}
        \leq   \widetilde{C}_\Lambda\frac{2^{\theta (n+1 )}}{M^{\theta}}
\frac{ |E (0 ) |^\frac{\theta}{2}}{2^{n\nu\theta/2}}\\ &=\widetilde{C}_\Lambda2^{\theta
(n+1 )-n\nu\theta/2}  |E (0 ) |^\frac{\theta}{2}\frac{1}{\widetilde{C}_\Lambda
|E (0 ) |^{\frac{\theta}{2}-1}2^{\frac{\theta^2}{\theta-2}}}=    |E (0 ) |2^{- (n+1 )\nu}.
\end{aligned}
\end{equation*}

        Thus, we conclude \eqref{BoundEknbyE0}. As $n\rightarrow
+\infty$, we see from \eqref{Defkn} that  $k_n\rightarrow M$. Hence, letting $n\rightarrow
+\infty$ in \eqref{BoundEknbyE0}, we get    $ |E (M ) |=0$. From
\eqref{DefM}, there exists a constant $C_\Lambda$ such that 
        $$M= {C}_\Lambda |E (0 ) |^{\frac{1}{2}-\frac{1}{\theta}}\leq
{C}_\Lambda |B (y_0,2R ) |^{\frac{1}{2}-\frac{1}{\theta}}\leq
{C}_\Lambda R^{d (\frac{1}{2}-\frac{1}{\theta} )}.$$
        Therefore,      letting $\delta=d (\frac{1}{2}-\frac{1}{\theta}
)$, we have 
\begin{equation*}
\begin{aligned}
 \|\psi \|_{L^\infty (B (y_0,2R ) )}\leq C_\Lambda R^\delta.
\end{aligned}\qedhere
\end{equation*}
\end{proof}
Next, we bound the oscillation of the solution, $z$,  satisfying
\eqref{Eqz}.  
\begin{pro}
\label{Estimatezosc}
        Let $z$ solve \eqref{Eqz}. Then, 
        there exists a constant, $0<\rho<1$, such that $$\operatorname{osc}
  \Big(y_0,\frac{R}{2},z \Big)\leq \rho \operatorname{osc} (y_0,R,z
),$$
        where 
        $$\operatorname{osc} (y_0,R,z )=\sup_{B (y_0,R )}z-\inf_{B
(y_0,R )}z.$$
\end{pro}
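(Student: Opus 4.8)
The plan is to recognize that $z$ is a weak solution of a uniformly elliptic equation in divergence form with bounded measurable coefficients, so that the claim is nothing but the classical De Giorgi--Nash--Moser oscillation-decay estimate. By \eqref{EllipticityofA} the matrix $A$ is uniformly elliptic in $B(y_0,2R)$ with ellipticity constant $\eta=e^{\inf V}>0$, and by \eqref{BoundsofA} it satisfies $\|A\|_{L^\infty(B(y_0,2R))}\le c_\Lambda$; moreover, since $z$ solves \eqref{Eqz} in $B(y_0,2R)$, it is simultaneously a weak sub- and a weak supersolution of $-\div_y(A\nabla_y\cdot)=0$ there. By the local boundedness of subsolutions (the Moser/De Giorgi estimate), the quantities $M(r):=\sup_{B(y_0,r)}z$ and $m(r):=\inf_{B(y_0,r)}z$ are finite for every $r<2R$, so $\operatorname{osc}(y_0,r,z)=M(r)-m(r)$ is well defined.

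First I would set $w_1:=M(R)-z$ and $w_2:=z-m(R)$, which are nonnegative weak supersolutions of $-\div_y(A\nabla_y\cdot)=0$ in $B(y_0,R)$ and satisfy $w_1+w_2\equiv\operatorname{osc}(y_0,R,z)$ on $B(y_0,R)$. Next I invoke the weak Harnack inequality (as in the proof of Theorem~8.13 in \cite{giaquinta2013introduction}, or in any standard treatment of De Giorgi--Nash--Moser theory): there exist $p_0\in(0,1)$ and $C_0\ge1$, depending only on $d$ and the ratio $c_\Lambda/\eta$, such that
\begin{equation*}
\Big(\tfrac{1}{|B(y_0,R/2)|}\int_{B(y_0,R/2)}w_i^{p_0}\,dy\Big)^{1/p_0}\le C_0\,\inf_{B(y_0,R/2)}w_i,\qquad i=1,2.
\end{equation*}
Writing $a:=M(R)-M(R/2)\ge0$ and $b:=m(R/2)-m(R)\ge0$, one has $\inf_{B(y_0,R/2)}w_1=a$, $\inf_{B(y_0,R/2)}w_2=b$, and $a+b=\operatorname{osc}(y_0,R,z)-\operatorname{osc}(y_0,R/2,z)$.

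Then I would use the elementary inequality $(s+t)^{p_0}\le s^{p_0}+t^{p_0}$ for $s,t\ge0$ (valid since $p_0\le1$) together with the fact that $w_1+w_2$ is constant, obtaining
\begin{equation*}
\operatorname{osc}(y_0,R,z)^{p_0}=\tfrac{1}{|B(y_0,R/2)|}\int_{B(y_0,R/2)}(w_1+w_2)^{p_0}\,dy\le C_0^{p_0}\big(a^{p_0}+b^{p_0}\big)\le 2C_0^{p_0}(a+b)^{p_0},
\end{equation*}
whence $\operatorname{osc}(y_0,R,z)\le C_1(a+b)=C_1\big(\operatorname{osc}(y_0,R,z)-\operatorname{osc}(y_0,R/2,z)\big)$ with $C_1:=2^{1/p_0}C_0>1$. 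Rearranging yields $\operatorname{osc}(y_0,R/2,z)\le(1-C_1^{-1})\operatorname{osc}(y_0,R,z)$, so the proposition holds with $\rho:=1-C_1^{-1}\in(0,1)$; note that $\rho$ depends only on $d$ and $c_\Lambda/\eta$, hence is independent of $x$ and, by the convention on $C_\Lambda$, stays bounded away from $1$ for $\Lambda$ in compact sets.

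The main obstacle is the weak Harnack inequality itself --- equivalently, the full De Giorgi--Nash--Moser machinery for equations with merely bounded measurable coefficients; once this is granted, the remaining argument is the short algebraic manipulation above. If one prefers to stay within the self-contained, De Giorgi-style framework of the preceding propositions, the alternative is to argue directly: first establish local boundedness of $z$, then prove the density (isoperimetric) lemma showing that, unless $z$ is already close to a constant on $B(y_0,R)$, the sub-level set or the super-level set of $z$ at the midpoint value $\tfrac12(M(R)+m(R))$ occupies a definite fraction of $B(y_0,R/2)$, and finally iterate Caccioppoli estimates on truncations of $z$ (exactly in the spirit of Proposition~\ref{BoundPsiLinfty}) to convert this measure information into a pointwise drop of the supremum or rise of the infimum on $B(y_0,R/2)$. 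This route again produces $\rho\in(0,1)$ depending only on $d$, $\eta$, and $c_\Lambda$.
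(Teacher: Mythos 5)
Your proof is correct and takes essentially the same approach as the paper, which simply invokes the De Giorgi--Nash--Moser oscillation estimate by citing Theorem~8.22 of Gilbarg--Trudinger \cite{GilTru}. You unpack the standard argument behind that citation (weak Harnack inequality applied to $M(R)-z$ and $z-m(R)$, then the elementary $p_0$-inequality), and correctly note that the resulting $\rho$ depends only on $d$ and the ellipticity ratio $c_\Lambda/\eta$, so it is uniform in $x$.
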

\begin{proof}
The claim follows directly from the  DeGiorgi--Nash--Moser estimate
\cite[Theorem~8.22]{GilTru}.
\end{proof}
\begin{corollary}
        \label{Boundoscv}
        Let $v$ solve \eqref{SimplifiedEllipticPDEofV} and $z$
solve \eqref{Eqz}. Then, there exists a constant $C_\Lambda>0$
such that 
        \begin{equation}
        \label{Boundoscveq}
        \operatorname{osc} \Big(y_0,\frac{R}{4},v\Big )\leq C_\Lambda
R^\delta+\operatorname{osc} \Big(y_0,\frac{R}{4},z\Big )\leq
C_\Lambda R^\delta+\rho \operatorname{osc} (y_0,R,v ),
        \end{equation}
        where $\delta$ is given in Proposition~\ref{BoundPsiLinfty}.

\end{corollary}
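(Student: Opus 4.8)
The plan is to read off the result from the decomposition $v=\psi+z$ on $B(y_0,2R)$ furnished by \eqref{EqPsi}--\eqref{Eqz}, combined with the two immediately preceding results: Proposition~\ref{BoundPsiLinfty} (the sup-bound $\|\psi\|_{L^\infty(B(y_0,2R))}\le C_\Lambda R^\delta$) and Proposition~\ref{Estimatezosc} (the oscillation decay $\operatorname{osc}(y_0,R/2,z)\le\rho\,\operatorname{osc}(y_0,R,z)$). No new PDE estimate is needed; the argument is the triangle inequality for oscillations plus monotonicity of the oscillation in the radius, and the only point requiring a little care is the bookkeeping of the constant $C_\Lambda$, which we treat as a floating quantity in accordance with the convention fixed earlier in this section.

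First I would record that $v=\psi+z$ on $B(y_0,2R)$: indeed $-\div_y(A\nabla_y v)=\div_y\phi$, $-\div_y(A\nabla_y\psi)=\div_y\phi$, and $-\div_y(A\nabla_y z)=0$ on $B(y_0,2R)$, while $\psi+z$ and $v$ coincide on $\partial B(y_0,2R)$, so uniqueness for the uniformly elliptic operator $-\div_y(A\nabla_y\,\cdot\,)$ (recall \eqref{EllipticityofA} and \eqref{BoundsofA}) forces $v=\psi+z$ throughout $B(y_0,2R)$. Consequently, for every $r\le 2R$ one has the elementary inequalities $\operatorname{osc}(y_0,r,v)\le\operatorname{osc}(y_0,r,\psi)+\operatorname{osc}(y_0,r,z)$ and $\operatorname{osc}(y_0,r,z)\le\operatorname{osc}(y_0,r,v)+\operatorname{osc}(y_0,r,\psi)$, and moreover $\operatorname{osc}(y_0,r,\psi)\le 2\|\psi\|_{L^\infty(B(y_0,2R))}\le C_\Lambda R^\delta$ by Proposition~\ref{BoundPsiLinfty}.

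Applying the first of these inequalities with $r=R/4$ gives at once the first inequality in \eqref{Boundoscveq}. For the second, I would use monotonicity, $\operatorname{osc}(y_0,R/4,z)\le\operatorname{osc}(y_0,R/2,z)$, then Proposition~\ref{Estimatezosc} to obtain $\operatorname{osc}(y_0,R/2,z)\le\rho\,\operatorname{osc}(y_0,R,z)$, and finally the second elementary inequality with $r=R$ to bound $\operatorname{osc}(y_0,R,z)\le\operatorname{osc}(y_0,R,v)+C_\Lambda R^\delta$. Chaining these three steps yields $\operatorname{osc}(y_0,R/4,z)\le\rho\,\operatorname{osc}(y_0,R,v)+\rho\,C_\Lambda R^\delta$; since $0<\rho<1$, the term $\rho\,C_\Lambda R^\delta$ is absorbed into $C_\Lambda R^\delta$, and adding $C_\Lambda R^\delta$ to both sides produces exactly the middle and right-hand expressions in \eqref{Boundoscveq}.

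The only ``obstacle'' is conceptual rather than technical: one must make sure that the $C_\Lambda$ appearing in the middle term of \eqref{Boundoscveq} is permitted to be a (fixed multiple, namely $1+\rho$, of the) constant coming out of Proposition~\ref{BoundPsiLinfty}, which is precisely what the floating-constant convention for $C_\Lambda$ allows; everything else reduces to the triangle inequality and to $\operatorname{osc}(y_0,R/4,\cdot)\le\operatorname{osc}(y_0,R/2,\cdot)$.
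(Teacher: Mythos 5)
Your proof is correct and follows the same decomposition-plus-triangle-inequality route as the paper: use $v=\psi+z$, bound $\operatorname{osc}(y_0,R/4,\psi)$ via Proposition~\ref{BoundPsiLinfty}, and invoke Proposition~\ref{Estimatezosc} for $z$. In fact you make explicit two small steps the paper leaves implicit: the monotonicity needed to pass from radius $R/4$ to $R/2$ (Proposition~\ref{Estimatezosc} is stated for $R/2$), and the conversion of $\rho\operatorname{osc}(y_0,R,z)$ into $\rho\operatorname{osc}(y_0,R,v)$ via $z=v-\psi$ with the harmless extra term $\rho C_\Lambda R^\delta$ absorbed into $C_\Lambda R^\delta$.
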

\begin{proof}
Since $v=\psi+z$, we have
\begin{align*}
osc \Big(y_0,\frac{R}{4}, v\Big )=&\sup_{B (y_0,\frac{R}{4} )}
(\psi+z )-\inf_{B (y_0,\frac{R}{4} )} (\psi+z )\\
\leq& \sup_{B (y_0,\frac{R}{4} )}\psi + \sup_{B (y_0,\frac{R}{4}
)} z - \inf_{B (y_0,\frac{R}{4} )} \psi - \inf_{B (y_0,\frac{R}{4}
)}z.\\
\end{align*}
By Proposition~\ref{BoundPsiLinfty}, there exists a positive
constant $C_\Lambda$ such that 
\begin{align*}
\sup_{B (y_0,\frac{R}{4} )}\psi - \inf_{B (y_0,\frac{R}{4} )}
\psi \leq 2 \|\psi \|_{L^\infty (B (y_0,2R ) )}\leq C_\Lambda
R^\delta.
\end{align*}
Using Proposition~\ref{Estimatezosc}, we obtain 
$$\operatorname{osc} \Big(y_0,\frac{R}{4},z\Big )\leq \rho \operatorname{osc}
(y_0,R,z ).$$
Therefore, the above estimates give  \eqref{Boundoscveq}. 
\end{proof}
\begin{pro}
        \label{BoundOSCv}
        Let $v$ solve \eqref{SimplifiedEllipticPDEofV}. Then,   there exist constants $C_\Lambda>0$ and $0<\beta<1$ such
that for any $0<R<1$,
        \begin{equation*}
        \operatorname{osc} (y_0,R,v )\leq C_\Lambda R^\beta.
        \end{equation*}
\end{pro}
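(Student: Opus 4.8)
The plan is to obtain the Hölder modulus of continuity for $v$ by iterating the one-step oscillation decay supplied by Corollary~\ref{Boundoscv}; the real content of the estimate is already packaged in that corollary (which itself relies on Proposition~\ref{BoundPsiLinfty} and on the De Giorgi--Nash--Moser bound in Proposition~\ref{Estimatezosc}), so what remains is a routine dyadic iteration together with some bookkeeping of the constants. Concretely, I would fix $y_0\in\mathcal{Y}^d$ and set $\omega(R):=\operatorname{osc}(y_0,R,v)$ for $R>0$. By \eqref{UpperBoundLambdaPlusGradw} in Proposition~\ref{UpperBoundw}, $v=\widetilde{w}_{y_l}$ satisfies $\sup_{\mathcal{Y}^d}|v|\le C_\Lambda$, so $\omega$ is nonnegative, nondecreasing and bounded by $2C_\Lambda$, with $C_\Lambda$ independent of $x$ and bounded on compact subsets of $\Rr^d$ in the variable $\Lambda$. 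Let $R_1\in(0,1)$ be small enough that the balls $B(y_0,2R)$ used to construct $\psi$ and $z$ in \eqref{EqPsi} and \eqref{Eqz} are well-defined in $\mathcal{Y}^d$ (identified with the torus) for all $R\le R_1$. Corollary~\ref{Boundoscv} then provides $\rho\in(0,1)$, $\delta\in(0,1]$ and $C_\Lambda>0$ (with the same uniformity) such that
\begin{equation*}
\omega(R/4)\le \rho\,\omega(R)+C_\Lambda R^{\delta}\qquad\text{for all }0<R\le R_1.
\end{equation*}

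Next I would run the iteration. Choose $\beta\in(0,1)$ with $\beta\le\delta$ and $\rho<4^{-\beta}$, which is possible since $4^{-\beta}\to1$ as $\beta\to0^+$ (and note $\beta<1$; when $d\le2$ one already has $\delta<1$). Setting $M:=\max\{2C_\Lambda,\ C_\Lambda R_1^{\delta}/(4^{-\beta}-\rho)\}$, an induction on $k$ shows $\omega(4^{-k}R_1)\le M\,4^{-k\beta}$ for every $k\ge0$: the base case is $\omega(R_1)\le 2C_\Lambda\le M$, and the inductive step uses the one-step estimate together with $4^{-k(\delta-\beta)}\le1$ and the choice of $M$. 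For an arbitrary $R\le R_1$, taking $k$ with $4^{-(k+1)}R_1<R\le 4^{-k}R_1$ and using monotonicity of $\omega$ yields $\omega(R)\le M\,4^{-k\beta}\le C'_\Lambda R^{\beta}$ for a constant $C'_\Lambda$ depending only on $C_\Lambda$, $R_1$, $\rho$ and $\beta$. Finally, for $R_1\le R<1$ I would use the trivial bound $\omega(R)\le 2C_\Lambda\le 2C_\Lambda R_1^{-\beta}R^{\beta}$; combining the two ranges and relabelling the constant gives $\operatorname{osc}(y_0,R,v)\le C_\Lambda R^{\beta}$ for all $0<R<1$ and all $y_0$, with $C_\Lambda$ independent of $x$ and locally bounded in $\Lambda$, which is the claim. (Alternatively, one may invoke a standard oscillation-iteration lemma, e.g.\ \cite[Lemma~8.23]{GilTru}, in place of the explicit induction.)

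I do not expect any genuine difficulty here once Corollary~\ref{Boundoscv} is available; the only points that require a bit of care are that the one-step decay holds only at the small scales $R\le R_1$ where the interior balls make sense, so the range $R\in[R_1,1)$ must be treated separately via the uniform $L^\infty$ bound on $v$, and that every constant must be tracked so as to remain independent of the macroscopic variable $x$ and bounded on compact sets of $\Lambda$ — this uniformity being essential because the resulting $C^{0,\beta}$ bound on $\nabla_y\widetilde{w}$ will feed the a priori estimates for the continuation method used to solve Problem~\ref{CellProbEulerLagrangeProb}.
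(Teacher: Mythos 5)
Your proposal is correct and follows essentially the same route as the paper: both iterate the one-step oscillation decay from Corollary~\ref{Boundoscv} over dyadic scales $4^{-k}$, close the induction by choosing $\beta$ small enough that $4^\beta\rho<1$ and $\beta\le\delta$, and then interpolate to arbitrary $R\in(0,1)$. The paper's bookkeeping tracks the annulus supremum $T_n=\sup_{4^{-(n+1)}\le r\le 4^{-n}}\operatorname{osc}(y_0,r,v)/r^\beta$ and shows $T_n\le T\gamma^{-n}$, whereas you work directly with $\omega(4^{-k}R_1)$ and interpolate at the end, and you explicitly handle the scale cutoff $R_1$ and the range $[R_1,1)$ separately — a minor point of rigor the paper leaves implicit — but the substance is the same.
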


\begin{proof}
        Let $0<\beta<\min \big\{-\frac{\ln\rho}{\ln 4}, \delta
 \big\}$, where $\delta$ is given in Proposition~\ref{BoundPsiLinfty}
and $\rho$ in Proposition~\ref{Estimatezosc}. For $n\in \mathbb{N}$,
we set  
        \begin{align}
        \label{DefTn}
        T_n=\sup_{\frac{1}{4^{n+1}}\leq r\leq \frac{1}{4^n}}\frac{\operatorname{osc}
(y_0,r,v )}{r^\beta}.
        \end{align}
Let
\begin{equation*}
1<\gamma<\min \Big\{4^{\delta-\beta},\frac{1}{4^\beta\rho}
\Big\}
\end{equation*}
and $\widetilde{C}_\Lambda$ be the constant given in Corollary \ref{Boundoscv}; that is,
\begin{equation}
\label{Boundoscveqcp}
\operatorname{osc} \Big(y_0,\frac{R}{4},v\Big )\leq
\widetilde{C}_\Lambda R^\delta+\rho \operatorname{osc} (y_0,R,v ).
\end{equation}
Then, we claim that there exists a large number $T>0$ such that
        \begin{equation}
        \label{BoundTn}
        T_n\leq T\gamma^{-n}.
        \end{equation}
        We prove \eqref{BoundTn} by induction. 

        Because the prior choice of $\gamma$ implies  $4^\beta\rho\gamma<1$,
there exists a real number $T$ satisfying $$\max \big\{4^{\frac{1}{2}+\beta}
\|v \|_{L^\infty (\mathcal{Y}^d )}, \widetilde{C}_\Lambda 4^\delta+\rho 4^\beta
T\gamma \big\}<T.$$
        Thus, for any $n\in \mathbb{N}$, we have  
        $$\widetilde{C}_\Lambda 4^\delta\Big (\frac{\gamma}{4^{\delta-\beta}}
\Big)^{n+1}+\rho 4^\beta T\gamma<T.$$
        When $n=0$, we have
        \begin{equation*}
        T_0=\sup_{\frac{1}{4}\leq r\leq 1}\frac{\operatorname{osc}
(y_0,r,v )}{r^\beta}\leq 4^\beta \operatorname{osc}  (y_0,1,v
)\leq 4^{\frac{1}{2}+\beta} \|v \|_{L^\infty (\mathcal{Y}^d )}<
T.
        \end{equation*}
        Next, we assume that \eqref{BoundTn} holds for some $n>0$.
Then, using \ref{Boundoscveqcp}, we get
        \begin{align*}
        T_{n+1}&=\sup_{\frac{1}{4^{n+2}}\leq r\leq \frac{1}{4^{n+1}}}\frac{\operatorname{osc}
(y_0,r,v )}{r^\beta}
        \leq \sup_{\frac{1}{4^{n+1}}
                \leq r\leq \frac{1}{4^{n}}}4^\beta\frac{\widetilde{C}_\Lambda
r^{\delta}+\rho  \operatorname{osc} (y_0,r,v )}{r^\beta}\\
        &\leq  \widetilde{C}_\Lambda 4^{\beta} \Big(\frac{1}{4^n} \Big)^{\delta-\beta}+4^\beta
T_n\rho \leq \widetilde{C}_\Lambda 4^{\beta} \Big(\frac{1}{4^n} \Big)^{\delta-\beta}+4^\beta
T\gamma^{-n}\rho\\
        &=  \big(\widetilde{C}_\Lambda 4^\delta \Big(\frac{\gamma}{4^{\delta-\beta}}
\Big)^{n+1}+\rho 4^\beta T \gamma \Big)\gamma^{- (n+1 )} \leq
T\gamma^{- (n+1 )}.
        \end{align*}
        Thus, we conclude that \eqref{BoundTn} holds. For any
$0<R<1$, we can find  $n\in\mathbb{N}$ such that $\frac{1}{4^{n+1}}\leq
R\leq \frac{1}{4^n}$. Hence, using the definition of $T_n$ in
\eqref{DefTn}, we obtain
        \begin{equation*}
        \operatorname{osc} (y_0,R,v )\leq R^\beta T_n\leq R^\beta
T \gamma^{-n}\leq TR^{\beta}.\qedhere
        \end{equation*}
\end{proof}
\begin{pro}
        \label{Holderwtilde}
        Let $\widetilde{w}$ solve \eqref{DivForm}. Then, 
        $\widetilde{w}$ is H\"{o}lder continuous. More precisely,
there exists a positive constant, $C_\Lambda$, such that  
        \begin{align*}
         \|\widetilde{w} \|_{C^{2,\beta}_\# (\mathcal{Y}^d )}\leq
C_\Lambda, 
        \end{align*}
        where $\beta$ is given in Proposition~\ref{BoundOSCv}.
\end{pro}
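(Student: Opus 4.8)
The plan is to convert the oscillation decay of $\nabla_y\widetilde{w}$ obtained in Proposition~\ref{BoundOSCv} into a uniform $C^{0,\beta}$ bound on $\nabla_y\widetilde{w}$, and then to perform one bootstrap step using divergence-form Schauder estimates on \eqref{SimplifiedEllipticPDEofV} to gain the missing derivative. The heavy analytic work has already been carried out in the a priori estimates leading to Proposition~\ref{BoundOSCv}, so what remains is essentially organizing the regularity gain while keeping all constants uniform in $x$ and locally uniform in $\Lambda$.

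First, fix $l\in\{1,\dots,d\}$ and set $v=\widetilde{w}_{y_l}$, which solves \eqref{SimplifiedEllipticPDEofV}. By Proposition~\ref{BoundOSCv} there are $C_\Lambda>0$ and $\beta\in(0,1)$ such that $\operatorname{osc}(y_0,R,v)\leq C_\Lambda R^\beta$ for all $y_0\in\mathcal{Y}^d$ and all $0<R<1$. Given $y,y'\in\mathcal{Y}^d$ with $r:=|y-y'|<1$, applying this with center $y_0=y$ and radius $r$ yields $|v(y)-v(y')|\leq C_\Lambda r^\beta$; for $r\geq 1$ the bound is immediate since distances on $\mathcal{Y}^d$ are bounded and $v$ is bounded, as $\sup_y|\Lambda+\nabla_y\widetilde{w}|\leq C+|\Lambda|$ by \eqref{UpperBoundLambdaPlusGradw}. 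Hence $v\in C^{0,\beta}_\#(\mathcal{Y}^d)$ with $\|v\|_{C^{0,\beta}_\#(\mathcal{Y}^d)}\leq C_\Lambda$. Running this for every $l$ shows $\nabla_y\widetilde{w}\in C^{0,\beta}_\#(\mathcal{Y}^d;\mathbb{R}^d)$ with norm bounded by $C_\Lambda$; taking the zero-mean representative of $\widetilde{w}$ and bounding $\|\widetilde{w}\|_{L^\infty}$ by $\|\nabla_y\widetilde{w}\|_{L^\infty}$ (valid for zero-mean functions on $\mathcal{Y}^d$), we obtain $\|\widetilde{w}\|_{C^{1,\beta}_\#(\mathcal{Y}^d)}\leq C_\Lambda$.

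Next, since $\nabla_y\widetilde{w}\in C^{0,\beta}_\#(\mathcal{Y}^d)$ and $V$ is smooth, the matrix $A=e^{\frac{|\Lambda+\nabla_y\widetilde{w}|^2}{2}+V}\big((\Lambda+\nabla_y\widetilde{w})(\Lambda+\nabla_y\widetilde{w})^T+I\big)$ and the vector field $\phi=e^{\frac{|\Lambda+\nabla_y\widetilde{w}|^2}{2}+V}V_{y_l}(\Lambda+\nabla_y\widetilde{w})$ belong to $C^{0,\beta}_\#(\mathcal{Y}^d)$ with norms bounded by $C_\Lambda$, and $A$ is uniformly elliptic with constant $\eta=e^{\inf V}>0$ by \eqref{EllipticityofA}. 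Because $v$ solves the divergence-form equation $-\div_y(A\nabla_y v)=\div_y\phi$, the interior Schauder estimates for divergence-structure elliptic equations with H\"older coefficients (see, e.g., \cite[Theorem~8.32]{GilTru}), combined with a covering of $\mathcal{Y}^d$ by finitely many balls and periodicity, give $v\in C^{1,\beta}_\#(\mathcal{Y}^d)$ with $\|v\|_{C^{1,\beta}_\#(\mathcal{Y}^d)}\leq C_\Lambda$, the Schauder constant depending only on $d$, $\beta$, $\eta$, and $\|A\|_{C^{0,\beta}_\#(\mathcal{Y}^d)}$. As this holds for each $l$, we conclude $\widetilde{w}\in C^{2,\beta}_\#(\mathcal{Y}^d)$ with $\|\widetilde{w}\|_{C^{2,\beta}_\#(\mathcal{Y}^d)}\leq C_\Lambda$.

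The steps themselves are routine; the main point requiring attention is the uniformity of the constants. Uniformity in $x$ is automatic because $V$ is smooth on the compact set $\mathbb{T}^d\times\mathcal{Y}^d$, so all $C^k$-norms of $V(x,\cdot)$ — and hence the bounds on $A$, $\phi$, and the ellipticity constant $\eta$ — are controlled independently of $x$; uniformity of $C_\Lambda$ on compact sets of $\Lambda$ is the convention adopted after \eqref{BoundsofA} and is preserved at each step. A secondary technical point is that the divergence-form Schauder estimate applies precisely because the right-hand side appears in the form $\div_y\phi$ with $\phi\in C^{0,\beta}$, which is exactly what the first part produces; the only role of the a priori $C^{2,\alpha}$ regularity of $\widetilde{w}$ is to justify differentiating \eqref{DivForm} to obtain \eqref{SimplifiedEllipticPDEofV}.
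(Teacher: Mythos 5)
Your proof is correct and follows essentially the same route as the paper: convert the oscillation decay from Proposition~\ref{BoundOSCv} into a $C^{0,\beta}$ bound on $\nabla_y\widetilde{w}$, then apply divergence-form Schauder estimates to \eqref{SimplifiedEllipticPDEofV} to gain one more derivative. You are somewhat more explicit than the paper about the $L^\infty$ control on $\widetilde{w}$, the form of the Schauder estimate used, and the uniformity of the constants in $x$ and locally in $\Lambda$, but these are elaborations of the same argument rather than a different approach.
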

\begin{proof}
        By Proposition~\ref{BoundOSCv}, we conclude that for
any compact subset  $\mathcal{Y}^d_1$ of $\mathcal{Y}^d$, there
exists a constant $C_\Lambda$ such that  $ \|v \|_{C^{0,\beta}_\#
({\mathcal{Y}^d_1} )}\leq C_\Lambda$, where $v=\widetilde{w}_{y_l}$
for $l=1,\dots,d$.  Then, $ \|\nabla_y \widetilde{w} \|_{C^{0,\beta}_\#
(\mathcal{Y}^d )}\leq C_\Lambda$. Besides, since \eqref{SimplifiedEllipticPDEofV}
is  uniformly elliptic, Schauder's estimate gives that $ \|v
\|_{C^{1,\beta}_\# (\mathcal{Y}^d )}\leq C_\Lambda$. Then, $
\|\widetilde{w} \|_{C^{2,\beta}_\# (\mathcal{Y}^d )}\leq C_\Lambda$.
\end{proof}
\subsubsection{The existence of the solution to the cell problem}
Here, we use a continuation argument similar to the one in Chapter 11.3 of
\cite{GPV} to prove existence of the solution  to \eqref{DivForm}.
The key difference is that we work in H\"{o}lder spaces instead
of Sobolev spaces. Let $0<\alpha<\beta<1$, where $\beta$ is as
in Proposition~\ref{Holderwtilde}. Thus, $C^{2,\beta}_\# (\mathcal{Y}^d
)$ is compactly embedded in $C^{2,\alpha}_\# (\mathcal{Y}^d )$,
denoted by $C^{2,\beta}_\# (\mathcal{Y}^d )\subset\subset C^{2,\alpha}_\#
(\mathcal{Y}^d )$. We define $F: C^{2,\alpha}_\# (\mathcal{Y}^d
)\times[0,1]\times\mathbb{T}^d\times\mathbb{R}^d\rightarrow C^{0,\alpha}_\#
(\mathcal{Y}^d )$ by
\begin{equation}
\label{DefFFunc}
F (\widetilde{w}, \lambda, x,  \Lambda )=\div \Big(e^{\frac{
|\Lambda+\nabla_y\widetilde{w} |^2}{2}+\lambda V (x,y )} (\Lambda+\nabla_y\widetilde{w}
)\Big ). 
\end{equation} 
For fixed $x\in\mathbb{T}^d$ and $\Lambda\in\mathbb{R}^d$, we
define 
\begin{equation}
\label{DefS}
\mathcal{S}= \{\lambda\in [0,1 ] :  \exists \widetilde{w}\in
 C^{2,\alpha}_\# (\mathcal{Y}^d ) \ \text{such that}\  F (\widetilde{w},
 \lambda, x, \Lambda )=0  \}.
\end{equation}
If $\mathcal{S}$ is not empty and both open and closed in $ [0,1
]$,
we have solutions for all  $\lambda\in [0,1 ]$. Thus, $F (\widetilde{w},
1, x, \Lambda )=0$ admits a solution in $C^{2,\alpha}_\# (\mathcal{Y}^d
)$, which in return prove the existence of a solution to Problem \ref{CellProbEulerLagrangeProb}.

Clearly, when $\lambda=0$, for any $x\in\mathbb{T}^d$ and $\Lambda\in\mathbb{R}^d$,
$\widetilde{w}\equiv 0$ solves $F=0$. Thus, $\mathcal{S}$ is
not empty. Next, we prove $\mathcal{S}$ is open. 

Let $\mathcal{L}:C^{2,\alpha}_\# (\mathcal{Y}^d )\rightarrow
C^{0,\alpha}_\# (\mathcal{Y}^d )$ be the linearized operator
of $F$ with respect to $\widetilde{w}$. For $v\in C^{2,\alpha}_\#
(\mathcal{Y}^d )$, we have
\begin{equation}
\label{DefL}
\mathcal{L} (v )=\div \Big(e^{\frac{ |\Lambda+\nabla_y\widetilde{w}
|^2}{2}+\lambda V (x,y )} \big( (\Lambda+\nabla_y\widetilde{w}
)^T\nabla_yv (\Lambda+\nabla_y\widetilde{w} )+\nabla_y v\big
) \Big).
\end{equation}
It is sufficient to prove that $\mathcal{L}$ is invertible. We
define 
\begin{equation*}
\mathcal{H}= \bigg\{v\in W^{1,2}_\# (\mathcal{Y}^d ): \int_{\mathcal{Y}^d}
v dy = 0\bigg  \}
\end{equation*}
and endow $\mathcal{H}$ with the norm
\begin{equation*}
 \|v \|_{\mathcal{H}}^2= \|v \|^2_{W^{1,2}_\# (\mathcal{Y}^d
)}.
\end{equation*}
For $v, \varpi\in \mathcal{H}$, and $\widetilde{m}\in\mathcal{S}$,
 we define $B: \mathcal{H}\times\mathcal{H}\rightarrow\mathbb{R}$
as 
\begin{equation}
\label{DefB}
B[v,\varpi]=\int_{\mathcal{Y}^d}  e^{\frac{ |\Lambda+\nabla_y\widetilde{w}
|^2}{2}+\lambda V} \big( (\Lambda+\nabla_y\widetilde{w}
)^T\nabla_yv (\Lambda+\nabla_y\widetilde{w} )+\nabla_y v )\big
)^T\nabla_y \varpi dy.
\end{equation}
\begin{pro}
        Let $B$ be the bilinear form in \eqref{DefB}. Then, 
        $B$ is bounded; that is, for all $v, \varpi\in \mathcal{H}$,
there exists a positive number $C_\Lambda$ such that 
        \begin{equation*}
         |B[v,\varpi] |\leq C_\Lambda \|v \|_{\mathcal{H}} \|\varpi
\|_{\mathcal{H}}. 
        \end{equation*}
\end{pro}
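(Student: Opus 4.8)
The plan is to show that $B$ is, up to replacing $V$ by $\lambda V$, the Dirichlet form of the uniformly elliptic operator $\mathcal{L}$ in \eqref{DefL}, and then to exploit an $L^\infty$ bound on its coefficient matrix. First I would record the pointwise identity (valid because $(\Lambda+\nabla_y\widetilde{w})^T\nabla_y v$ is a scalar)
\[
(\Lambda+\nabla_y\widetilde{w})^T\nabla_y v\,(\Lambda+\nabla_y\widetilde{w})+\nabla_y v=\big((\Lambda+\nabla_y\widetilde{w})(\Lambda+\nabla_y\widetilde{w})^T+I\big)\nabla_y v,
\]
so that, setting $A_\lambda=e^{\frac{|\Lambda+\nabla_y\widetilde{w}|^2}{2}+\lambda V}\big((\Lambda+\nabla_y\widetilde{w})(\Lambda+\nabla_y\widetilde{w})^T+I\big)$, i.e.\ the matrix $A$ from \eqref{SimplifiedEllipticPDEofV} with $\lambda V$ in place of $V$, the definition \eqref{DefB} reads $B[v,\varpi]=\int_{\mathcal{Y}^d}(A_\lambda\nabla_y v)^T\nabla_y\varpi\,dy$.

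Next I would establish the uniform bound $\|A_\lambda\|_{L^\infty(\mathcal{Y}^d)}\le c_\Lambda$. Since $\widetilde{w}\in\mathcal{S}$ solves $F(\widetilde{w},\lambda,x,\Lambda)=0$, i.e.\ equation \eqref{DivForm} with $\lambda V$ in place of $V$, and since $\lambda V$ is smooth with $C^1$-bounds uniform over $\lambda\in[0,1]$, the a priori estimates of Propositions~\ref{CoercivityWidetideH}--\ref{UpperBoundw} apply with constants independent of $\lambda$; in particular $\sup_y|\Lambda+\nabla_y\widetilde{w}|\le C+|\Lambda|$, whence $e^{\frac{|\Lambda+\nabla_y\widetilde{w}|^2}{2}+\lambda V}$ is bounded above and $\|A_\lambda\|_{L^\infty(\mathcal{Y}^d)}\le c_\Lambda$ exactly as in \eqref{BoundsofA}. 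Then the Cauchy--Schwarz inequality gives
\[
|B[v,\varpi]|\le\int_{\mathcal{Y}^d}|A_\lambda\nabla_y v|\,|\nabla_y\varpi|\,dy\le\|A_\lambda\|_{L^\infty(\mathcal{Y}^d)}\,\|\nabla_y v\|_{L^2(\mathcal{Y}^d)}\,\|\nabla_y\varpi\|_{L^2(\mathcal{Y}^d)}\le c_\Lambda\|v\|_{\mathcal{H}}\|\varpi\|_{\mathcal{H}},
\]
using $\|\nabla_y v\|_{L^2(\mathcal{Y}^d)}\le\|v\|_{\mathcal{H}}$ and $\|\nabla_y\varpi\|_{L^2(\mathcal{Y}^d)}\le\|\varpi\|_{\mathcal{H}}$ from the definition of the $\mathcal{H}$-norm; the resulting constant $C_\Lambda=c_\Lambda$ is uniformly bounded for $\Lambda$ in compact sets by the way $c_\Lambda$ was constructed.

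This is a routine continuity estimate and there is no serious obstacle. \emph{The only point requiring care} is to ensure that the bound on the coefficient holds uniformly in the continuation parameter $\lambda$, which is immediate because replacing $V$ by $\lambda V$ with $\lambda\in[0,1]$ preserves all the bounds used in the a priori estimates with $\lambda$-independent constants. Alternatively one can bypass the identification with $A_\lambda$ altogether and bound the two summands of the integrand of \eqref{DefB} separately — the exponential factor and $|\Lambda+\nabla_y\widetilde{w}|$ being in $L^\infty(\mathcal{Y}^d)$ — followed by Cauchy--Schwarz; the computation is the same.
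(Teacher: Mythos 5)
Your proposal is correct and takes essentially the same approach as the paper's own proof: both invoke Proposition~\ref{UpperBoundw} to obtain $L^\infty$-control of the coefficient (equivalently, of the matrix $A_\lambda$ you introduce) and then apply Cauchy--Schwarz/H\"older to conclude $|B[v,\varpi]|\leq C_\Lambda\|v\|_{\mathcal{H}}\|\varpi\|_{\mathcal{H}}$. The one place you are more explicit than the paper is the remark that the a priori estimates must hold uniformly in the continuation parameter $\lambda\in[0,1]$ when $V$ is replaced by $\lambda V$; the paper leaves this tacit, but your observation that the constants in Propositions~\ref{CoercivityWidetideH}--\ref{UpperBoundw} are $\lambda$-independent is the correct justification.
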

\begin{proof}
        Using Proposition  \ref{UpperBoundw} and  H\"{o}lder's
inequality, there exists a constant $C_\Lambda$ such that  
        \begin{equation*}
         |B [v,\varpi ] |\leq C_\Lambda\int_{\mathcal{Y}^d} 
|\nabla_yv | |\nabla_y\varpi |dy\leq C_\Lambda \|v \|_\mathcal{H}
\|\varpi \|_\mathcal{H}.\qedhere
        \end{equation*} 
\end{proof}
Thus, by the Riesz Representation theorem, there exists a linear
continuous injective mapping, $A: \mathcal{H}\rightarrow\mathcal{H}$,
such that, for all $v,\varpi\in\mathcal{H}$,
\begin{equation}
\label{DefA}
B[v, \varpi]= \langle Av, \varpi \rangle_\mathcal{H}.
\end{equation}
\begin{pro}
\label{LowerBoundA}
        Let $A$ be the operator defined in \eqref{DefA}. Then,
        there exists a constant $C_\Lambda>0$, such that 
        \begin{equation}
        \label{LowerBoundAEq}
        \|A\varpi \|_\mathcal{H}\geq C_\Lambda \|\varpi \|_\mathcal{H}
        \end{equation}
        for all $\varpi\in\mathcal{H}$. 
\end{pro}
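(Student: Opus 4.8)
The plan is to prove \eqref{LowerBoundAEq} by showing that the bilinear form $B$ in \eqref{DefB} is coercive on $\mathcal{H}$, with a coercivity constant that is in fact independent of $x$, $\lambda$, and $\Lambda$, and then to conclude by a Cauchy--Schwarz argument in $\mathcal{H}$. First I would set $v=\varpi$ in \eqref{DefB} and observe that the resulting expression collapses to a sum of two nonnegative terms against the strictly positive weight $e^{\frac{|\Lambda+\nabla_y\widetilde{w}|^2}{2}+\lambda V}$; namely,
\[
B[\varpi,\varpi]=\int_{\mathcal{Y}^d}e^{\frac{|\Lambda+\nabla_y\widetilde{w}|^2}{2}+\lambda V}\Big(\big|(\Lambda+\nabla_y\widetilde{w})^T\nabla_y\varpi\big|^2+|\nabla_y\varpi|^2\Big)dy.
\]

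Next, since $\lambda\in[0,1]$ and $V$ is smooth (hence bounded on $\mathbb{T}^d\times\mathcal{Y}^d$), I would bound the exponential weight from below uniformly: $e^{\frac{|\Lambda+\nabla_y\widetilde{w}|^2}{2}+\lambda V}\geq e^{\lambda V}\geq e^{-\sup_{x,y}|V(x,y)|}$, a positive constant independent of $x$, $\lambda$, and $\Lambda$. Dropping the first, nonnegative term and using this bound gives $B[\varpi,\varpi]\geq e^{-\sup_{x,y}|V|}\int_{\mathcal{Y}^d}|\nabla_y\varpi|^2dy$. Because every $\varpi\in\mathcal{H}$ has zero mean on $\mathcal{Y}^d$, the Poincar\'e--Wirtinger inequality yields $\int_{\mathcal{Y}^d}|\nabla_y\varpi|^2dy\geq c_P\|\varpi\|_{W^{1,2}_\#(\mathcal{Y}^d)}^2=c_P\|\varpi\|_{\mathcal{H}}^2$ for some $c_P>0$; combining these, I obtain $B[\varpi,\varpi]\geq C_\Lambda\|\varpi\|_{\mathcal{H}}^2$ with $C_\Lambda=c_P\,e^{-\sup_{x,y}|V|}$.

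Finally, I would use the defining relation \eqref{DefA} of $A$ together with the Cauchy--Schwarz inequality in $\mathcal{H}$: for any $\varpi\in\mathcal{H}$,
\[
\|A\varpi\|_{\mathcal{H}}\,\|\varpi\|_{\mathcal{H}}\geq\langle A\varpi,\varpi\rangle_{\mathcal{H}}=B[\varpi,\varpi]\geq C_\Lambda\|\varpi\|_{\mathcal{H}}^2,
\]
and dividing by $\|\varpi\|_{\mathcal{H}}$ (the estimate being trivial when $\varpi=0$) gives \eqref{LowerBoundAEq}. The argument is essentially routine once the sum-of-squares structure of $B[\varpi,\varpi]$ is exposed; the only point requiring a little care --- and the one that matters for the continuation argument that follows --- is to keep the lower bound on the exponential weight, hence $C_\Lambda$, uniform in $x$ and $\lambda\in[0,1]$, which here is immediate since that bound depends only on $\sup_{x,y}|V|$. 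I would also remark that \eqref{LowerBoundAEq} simultaneously shows that $A$ is injective with closed range, so that, together with the symmetry of $B$, $A$ is invertible; this is precisely the property needed to prove that $\mathcal{S}$ in \eqref{DefS} is open.
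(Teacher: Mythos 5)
Your proof is correct, and it takes a genuinely different route from the paper's. The paper argues by contradiction: it posits a sequence $\varpi_n$ with $\|\varpi_n\|_{\mathcal H}=1$ and $A\varpi_n\to 0$, deduces $B[\varpi_n,\varpi_n]\to 0$, then uses the strict positivity of the exponential weight plus Poincar\'e's inequality (and the zero-mean condition) to conclude $\varpi_n\to 0$, a contradiction. You instead prove coercivity directly: you write out $B[\varpi,\varpi]$ as the sum of two nonnegative terms against the weight $e^{\frac{|\Lambda+\nabla_y\widetilde w|^2}{2}+\lambda V}$, bound the weight below by $e^{-\sup|V|}$ (dropping the nonnegative first term), apply Poincar\'e--Wirtinger for zero-mean functions, and then convert the coercivity bound $B[\varpi,\varpi]\geq C\|\varpi\|_{\mathcal H}^2$ into \eqref{LowerBoundAEq} via Cauchy--Schwarz in $\mathcal H$. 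The two arguments rest on the same two ingredients (positivity of the weight and the Poincar\'e inequality for zero-mean functions), but your direct version is more quantitative: it produces an explicit constant and makes manifest that the constant is uniform in $x$, $\lambda\in[0,1]$, and in fact in $\Lambda$ as well, a fact the paper's contradiction argument does not expose and which its notation $C_\Lambda$ even obscures. Incidentally, your version also sidesteps a small imprecision in the paper, which writes $\|A\varpi_n\|_{\mathcal H}=B[\varpi_n,\varpi_n]$ where what is really meant is $\langle A\varpi_n,\varpi_n\rangle_{\mathcal H}=B[\varpi_n,\varpi_n]\leq \|A\varpi_n\|_{\mathcal H}\|\varpi_n\|_{\mathcal H}$.
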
 
\begin{proof}
        We prove the result by contradiction. Suppose that \eqref{LowerBoundAEq}
does not hold. Then, there exists a sequence, $ (\varpi_n )_n$,
in $\mathcal{H}$ such that $ \|\varpi_n \|_\mathcal{H}=1$ and
$A\varpi_n\rightarrow 0$. Then, 
        \begin{align*}
         \| A\varpi_n \|_\mathcal{H}&=B[\varpi_n,\varpi_n]\\
        &=\int_{\mathcal{Y}^d}  e^{\frac{ |\Lambda+\nabla_y\widetilde{w}
|^2}{2}+\lambda V} \big( (\Lambda+\nabla_y\widetilde{w}
)^T\nabla_y\varpi_n (\Lambda+\nabla_y\widetilde{w} )+\nabla_y
\varpi_n )\big )^T\nabla_y \varpi_n dy\\
        &=\int_{\mathcal{Y}^d}e^{\frac{ |\Lambda+\nabla_y\widetilde{w}
|^2}{2}+\lambda V} \big( \big( (\Lambda+\nabla_y\widetilde{w}
)^T\nabla_y\varpi_n \big)^2+ |\nabla_y\varpi_n |^2 \big)dy\rightarrow
0. 
        \end{align*}
        Since $V$ is smooth, $e^{\frac{ |\Lambda+\nabla_y\widetilde{w}
|^2}{2}+\lambda V (x,y )}$ does not vanish. Thus, $\int_{\mathcal{Y}^d}
|\nabla_y\varpi_n |^2 dy\rightarrow 0$. Since $\int_{\mathcal{Y}^d}{\varpi}_ndy=0$, by Poincar\'{e}'s inequality,
we conclude that  $\varpi_n\rightarrow 0$ in $\mathcal{H}$, which
contradicts with $ \|\varpi_n \|_\mathcal{H}=1.$
\end{proof}
\begin{corollary}
\label{ClosedRA}
        Let $A$ be as in \eqref{DefA}. Then, the range of $A$,
        $R (A )$, is closed in $\mathcal{H}$.
\end{corollary}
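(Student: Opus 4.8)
The plan is to deduce closedness of $R(A)$ directly from the coercivity-type estimate \eqref{LowerBoundAEq} of Proposition~\ref{LowerBoundA}, exploiting the completeness of $\mathcal{H}$; this is the standard argument showing that an injective bounded linear operator on a Banach space which is bounded below has closed range. First I would record that $\mathcal{H}$ is complete: it is the kernel of the continuous linear functional $v\mapsto\int_{\mathcal{Y}^d}v\,dy$ on the Hilbert space $W^{1,2}_\#(\mathcal{Y}^d)$, hence a closed subspace, and therefore itself a Hilbert space.

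Next, I would take an arbitrary sequence $(g_n)_n\subset R(A)$ with $g_n\to g$ in $\mathcal{H}$ and write $g_n=A\varpi_n$ for suitable $\varpi_n\in\mathcal{H}$. Since $(g_n)_n$ converges, it is Cauchy; applying \eqref{LowerBoundAEq} to $\varpi_n-\varpi_m\in\mathcal{H}$ and using the linearity of $A$ gives
\[
\|\varpi_n-\varpi_m\|_{\mathcal{H}}\leq \tfrac{1}{C_\Lambda}\,\|A\varpi_n-A\varpi_m\|_{\mathcal{H}}=\tfrac{1}{C_\Lambda}\,\|g_n-g_m\|_{\mathcal{H}},
\]
so $(\varpi_n)_n$ is Cauchy in $\mathcal{H}$ and thus converges to some $\varpi\in\mathcal{H}$ by completeness. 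Finally, since $A$ is bounded (it is the operator associated through \eqref{DefA} with the bounded bilinear form $B$, whose boundedness was established just above), continuity yields $g_n=A\varpi_n\to A\varpi$ in $\mathcal{H}$; by uniqueness of limits $g=A\varpi\in R(A)$, which proves that $R(A)$ is closed.

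There is no genuine obstacle here: the only facts that must be stated rather than computed are the completeness of $\mathcal{H}$ and the continuity of $A$, both immediate from the definitions, while the substantive ingredient — the lower bound $\|A\varpi\|_{\mathcal{H}}\geq C_\Lambda\|\varpi\|_{\mathcal{H}}$ — is exactly the content of Proposition~\ref{LowerBoundA}. This closed-range property is what will subsequently allow us to invoke the Fredholm alternative (or, combined with the injectivity of $A$ and density of $R(A)$, conclude surjectivity) to obtain invertibility of the linearized operator $\mathcal{L}$ and hence the openness of $\mathcal{S}$.
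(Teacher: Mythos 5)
Your argument is correct and follows essentially the same route as the paper: take a convergent (hence Cauchy) sequence in $R(A)$, use the lower bound of Proposition~\ref{LowerBoundA} to show the corresponding preimages are Cauchy, pass to the limit in $\mathcal{H}$, and invoke the continuity of $A$ to conclude the limit lies in $R(A)$. You spell out a couple of points the paper leaves implicit — the completeness of $\mathcal{H}$ as a closed subspace of $W^{1,2}_\#(\mathcal{Y}^d)$, and the application of \eqref{LowerBoundAEq} to differences via linearity — but these are the same ingredients in the same order.
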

\begin{proof}
        Let $ (v_n )_n\subset R (A )$ be a Cauchy sequence and
$ (\varpi_n )_n\subset A$ such that $A\varpi_n=v_n$. By Proposition
\ref{LowerBoundA}, we know that $\varpi_n$ is Cauchy. Thus, there
exists $\varpi$ in $\mathcal{H}$, such that $\varpi_n\rightarrow
\varpi$. By the continuity of $A$, we have $A\varpi_n\rightarrow
A\varpi$. Since $A\varpi\in R (A )$, $v_n$ converges to an element
in $R(A)$. Therefore, $R (A )$ is closed.  
\end{proof}

\begin{corollary}
\label{SurjectiveA}
        Let $A$ be as in \eqref{DefA}. Then, 
        $R (A )=\mathcal{H}$.
\end{corollary}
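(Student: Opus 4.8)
The plan is to exploit the Hilbert-space structure of $\mathcal{H}$. Since $R(A)$ is closed by Corollary~\ref{ClosedRA}, we have the orthogonal decomposition $\mathcal{H} = R(A)\oplus R(A)^\perp$, so it suffices to show that $R(A)^\perp = \{0\}$; this is where the injectivity of $A$, quantified in Proposition~\ref{LowerBoundA}, will enter.

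To prove $R(A)^\perp = \{0\}$, take $v\in R(A)^\perp$, so that $\langle A\varpi, v\rangle_\mathcal{H} = 0$ for all $\varpi\in\mathcal{H}$, and test this relation against $\varpi = v$ itself. By \eqref{DefA}, this gives $B[v,v] = 0$. The point is that the bilinear form $B$ in \eqref{DefB} is symmetric --- the scalar factor $(\Lambda+\nabla_y\widetilde{w})^T\nabla_y v$ makes $B[v,\varpi]$ symmetric in $v$ and $\varpi$ --- and, as in the proof of Proposition~\ref{LowerBoundA},
\begin{equation*}
B[v,v] = \int_{\mathcal{Y}^d} e^{\frac{|\Lambda+\nabla_y\widetilde{w}|^2}{2}+\lambda V}\Big(\big((\Lambda+\nabla_y\widetilde{w})^T\nabla_y v\big)^2 + |\nabla_y v|^2\Big)dy \geq 0.
\end{equation*}
Since $V$ is smooth, the weight $e^{\frac{|\Lambda+\nabla_y\widetilde{w}|^2}{2}+\lambda V}$ is bounded below by a positive constant on $\mathcal{Y}^d$, so $B[v,v] = 0$ forces $\nabla_y v = 0$ a.e.\ in $\mathcal{Y}^d$. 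Combining this with $\int_{\mathcal{Y}^d} v\,dy = 0$ and Poincar\'{e}'s inequality, exactly as in the final lines of the proof of Proposition~\ref{LowerBoundA}, we obtain $v = 0$ in $\mathcal{H}$.

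Hence $R(A)^\perp = \{0\}$, and the decomposition $\mathcal{H} = R(A)\oplus R(A)^\perp$ yields $R(A) = \mathcal{H}$. I do not expect any genuine obstacle here: the argument is a short functional-analytic consolidation of the preceding results, and the only step needing (minor) care is the symmetry of $B$, which is what legitimizes testing the orthogonality relation against $v$ and reducing everything to the already-established positivity estimate and Poincar\'{e} inequality.
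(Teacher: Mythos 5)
Your proof is correct and takes essentially the same route as the paper: pick an element orthogonal to the closed range $R(A)$, use the definition of $A$ to reduce to $B[v,v]=0$, extract $\nabla_y v=0$ a.e.\ from the positivity of the weight, and finish with $\int_{\Yd}v\,dy=0$ and Poincar\'{e}'s inequality. One small remark: the symmetry of $B$ is not actually needed here, since $v\in R(A)^\perp$ gives $\langle Av,v\rangle_{\mathcal H}=0$ directly, and \eqref{DefA} with $\varpi=v$ already yields $B[v,v]=\langle Av,v\rangle_{\mathcal H}=0$ without ever swapping arguments.
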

\begin{proof}
        Suppose that $R (A )\not=\mathcal{H}$. Since $R (A )$
is closed in $\mathcal{H}$, there exists $\varpi\in\mathcal{H}$
such that $\varpi\perp R (A )$. Then, we have
        \begin{align*}
        0&= \langle A\varpi, \varpi \rangle=B [\varpi, \varpi
]=\int_{\mathcal{Y}^d}e^{\frac{ |\Lambda+\nabla_y\widetilde{w}
|^2}{2}+\lambda V} \big( \big( (\Lambda+\nabla_y\widetilde{w}
)^T\nabla_y\varpi\big )^2+ |\nabla_y\varpi |^2 \big)dy.
        \end{align*}
        Due to the smoothness of $V$, $e^{\frac{ |\Lambda+\nabla_y\widetilde{w}
|^2}{2}+\lambda V (x,y )}$ is strictly positive. Thus, $\int_{\mathcal{Y}^d}
|\nabla_y\varpi |^2 dy= 0$. Since  $\int_{\mathcal{Y}^d}{\varpi}_ndy=0$, by Poincar\'{e}'s inequality, we
conclude that  $\varpi= 0$. Hence, $R (A )=\mathcal{H}$.
\end{proof}
\begin{pro}
\label{LBijectiveH2}
        Let $B$ be the bilinear form given in \eqref{DefB} and
$\mathcal{L}$ as in \eqref{DefL}. 
        For any $v_0\in  C^{0,\alpha}_\# (\mathcal{Y}^d )$, there
exists a unique $v\in \mathcal{H}$ such that  $B [v,\varpi ]=
\langle v_0,\varpi \rangle_{\mathcal{H}^0}$ for all $\varpi\in
\mathcal{H}$. Moreover, $v\in C^{2,\alpha}_\# (\mathcal{Y}^d
)$ and solves  $\mathcal{L} (v )=v_0$.
\end{pro}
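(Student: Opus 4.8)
The plan is to reduce the variational statement to the invertibility of the operator $A$ from \eqref{DefA}, which is essentially already in hand, and then to bootstrap the resulting weak solution to a classical one by standard elliptic regularity.

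\emph{Solvability in $\mathcal{H}$.} First I would check that $\varpi\mapsto\langle v_0,\varpi\rangle_{\mathcal{H}^0}=\int_{\mathcal{Y}^d}v_0\,\varpi\,dy$ is a bounded linear functional on $\mathcal{H}$: by the Cauchy--Schwarz inequality and the definition of $\|\cdot\|_{\mathcal{H}}$, $|\int_{\mathcal{Y}^d}v_0\varpi\,dy|\leq\|v_0\|_{L^2(\mathcal{Y}^d)}\|\varpi\|_{L^2(\mathcal{Y}^d)}\leq\|v_0\|_{L^2(\mathcal{Y}^d)}\|\varpi\|_{\mathcal{H}}$, and $\|v_0\|_{L^2(\mathcal{Y}^d)}<\infty$ since $v_0\in C^{0,\alpha}_\#(\mathcal{Y}^d)$. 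By the Riesz representation theorem there is a unique $\widehat v_0\in\mathcal{H}$ with $\langle\widehat v_0,\varpi\rangle_{\mathcal{H}}=\langle v_0,\varpi\rangle_{\mathcal{H}^0}$ for all $\varpi\in\mathcal{H}$. In view of \eqref{DefA}, the requirement $B[v,\varpi]=\langle v_0,\varpi\rangle_{\mathcal{H}^0}$ for all $\varpi\in\mathcal{H}$ is then equivalent to $\langle Av,\varpi\rangle_{\mathcal{H}}=\langle\widehat v_0,\varpi\rangle_{\mathcal{H}}$ for all $\varpi\in\mathcal{H}$, that is, to $Av=\widehat v_0$. By Corollary~\ref{SurjectiveA}, $A$ is surjective, so such a $v$ exists; by Proposition~\ref{LowerBoundA}, $A$ is injective (indeed it has a bounded inverse), so $v$ is unique. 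This proves the first assertion. (Uniqueness is also immediate from Proposition~\ref{LowerBoundA}: if $B[v_1,\varpi]=B[v_2,\varpi]$ for every $\varpi$, then $A(v_1-v_2)=0$, hence $v_1=v_2$.)

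\emph{Regularity.} Next I would observe that the identity $B[v,\varpi]=\langle v_0,\varpi\rangle_{\mathcal{H}^0}$ extends to all test functions $\varphi\in W^{1,2}_\#(\mathcal{Y}^d)$: writing $\varphi=\varpi+c$ with $\varpi=\varphi-\int_{\mathcal{Y}^d}\varphi\,dz\in\mathcal{H}$, the integrand in \eqref{DefB} depends on $\varphi$ only through $\nabla_y\varphi=\nabla_y\varpi$, so $B[v,\varphi]=\langle v_0,\varpi\rangle_{\mathcal{H}^0}=\int_{\mathcal{Y}^d}\big(v_0-\int_{\mathcal{Y}^d}v_0\,dz\big)\varphi\,dy$. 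Hence $v\in W^{1,2}_\#(\mathcal{Y}^d)$ is a weak $\mathcal{Y}^d$-periodic solution of $-\div_y(\mathfrak{A}\nabla_y v)=v_0-\int_{\mathcal{Y}^d}v_0\,dz$, where $\mathfrak{A}=e^{\frac{|\Lambda+\nabla_y\widetilde{w}|^2}{2}+\lambda V}\big((\Lambda+\nabla_y\widetilde{w})(\Lambda+\nabla_y\widetilde{w})^T+I\big)$. The matrix $\mathfrak{A}$ is uniformly elliptic by the computation leading to \eqref{EllipticityofA}, and, since $\widetilde{w}\in C^{2,\alpha}_\#(\mathcal{Y}^d)$ (in fact $\widetilde{w}\in C^{2,\beta}_\#(\mathcal{Y}^d)$ by Proposition~\ref{Holderwtilde}) and $V$ is smooth, the entries of $\mathfrak{A}$ belong to $C^{1,\alpha}_\#(\mathcal{Y}^d)$; moreover $v_0-\int_{\mathcal{Y}^d}v_0\,dz\in C^{0,\alpha}_\#(\mathcal{Y}^d)$. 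By the De Giorgi--Nash--Moser estimate, $v$ is Hölder continuous; then, rewriting the equation in non-divergence form $-\mathfrak{A}_{ij}\partial_{y_iy_j}v-(\partial_{y_i}\mathfrak{A}_{ij})\partial_{y_j}v=v_0-\int_{\mathcal{Y}^d}v_0\,dz$ with $C^{0,\alpha}_\#$ coefficients and $C^{0,\alpha}_\#$ right-hand side, Schauder estimates give $v\in C^{2,\alpha}_\#(\mathcal{Y}^d)$. Since $v$ is a $C^2$ function satisfying the equation in the weak sense, it satisfies it classically, i.e., $\mathcal{L}(v)=v_0$, where the identity is read through the weak formulation \eqref{DefL}--\eqref{DefB} (so that $\mathcal{L}(v)$, which necessarily has zero $\mathcal{Y}^d$-average, is paired against test functions consistently with the functional $\langle v_0,\cdot\rangle_{\mathcal{H}^0}$).

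I do not expect a genuine obstacle: the real analytic content, the coercivity and surjectivity of $A$, is already available (Proposition~\ref{LowerBoundA}, Corollary~\ref{SurjectiveA}), and the regularity is routine once one notes that the coefficient matrix inherits $C^{1,\alpha}$ regularity from $\widetilde{w}\in C^{2,\alpha}_\#(\mathcal{Y}^d)$. The only points demanding care are bookkeeping ones: the extension of the class of test functions from $\mathcal{H}$ to $W^{1,2}_\#(\mathcal{Y}^d)$ together with the attendant replacement of $v_0$ by its zero-average part, and checking that the coefficients of $\mathcal{L}$ are Hölder continuous so that the De Giorgi--Nash--Moser and Schauder theories apply — which is precisely where the a priori regularity of $\widetilde{w}$ recorded in Proposition~\ref{Holderwtilde} enters.
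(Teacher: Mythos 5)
Your argument follows the same route as the paper: Riesz representation plus the bijectivity of $A$ (injectivity from Proposition~\ref{LowerBoundA}, surjectivity from Corollary~\ref{SurjectiveA}) for existence and uniqueness in $\mathcal{H}$, followed by elliptic regularity to upgrade the weak solution to $C^{2,\alpha}_\#(\mathcal{Y}^d)$. You are, if anything, more careful than the paper on two points where it is terse: first, you correctly note that testing only against zero-average $\varpi\in\mathcal{H}$ determines $\mathcal{L}(v)$ only up to a constant, and you account for this by extending the test class to $W^{1,2}_\#(\mathcal{Y}^d)$ with $v_0$ replaced by its zero-average part; second, you correctly point out that the coefficients of $\mathcal{L}$ inherit $C^{1,\alpha}_\#$ regularity from $\widetilde{w}\in C^{2,\alpha}_\#(\mathcal{Y}^d)$ (the paper writes $C^{0,\alpha}_\#$, which by itself would only give $C^{1,\alpha}$ solutions, not $C^{2,\alpha}$), and you supply the intermediate De~Giorgi--Nash--Moser step before applying Schauder. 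These are genuine refinements of the paper's sketch, not a different approach.
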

\begin{proof}
        Let $v_0\in C^{0,\alpha}_\# (\mathcal{Y}^d )$. By the
Riesz representation theorem, there exists a unique $\sigma\in\mathcal{H}$
such that for all $\varpi\in\mathcal{H}$, 
\begin{equation}
\label{RiezRepre}
\langle \sigma, \varpi \rangle_{\mathcal{H}}=  \langle
v_0, \varpi \rangle_{\mathcal{H}^0}. 
\end{equation}
By Corollary \ref{SurjectiveA} and the injectivity of $A$, $A$
is invertible. Thus, defining $v=A^{-1}\sigma$ and using \eqref{DefA} and \eqref{RiezRepre}, we obtain
        \begin{equation*}
        B [v,\varpi ]= \langle Av,\varpi \rangle_{\mathcal{H}}=
\langle \sigma, \varpi \rangle_{\mathcal{H}}= \langle v_0, \varpi
\rangle_{\mathcal{H}^0}.
        \end{equation*}
        Therefore, $v$ is a weak solution of 
        \begin{equation*}
        \div\Big (e^{\frac{ |\Lambda+\nabla_y\widetilde{w} |^2}{2}+\lambda
V (x,y )} \big( (\Lambda+\nabla_y\widetilde{w} )^T\nabla_yv (\Lambda+\nabla_y\widetilde{w}
)+\nabla_y v\big )\Big )=v_0.
        \end{equation*}
        By Schauder's estimate, since all coefficients are in
$C^{0,\alpha}_\# (\mathcal{Y}^d )$, we see that $v\in C^{2,\alpha}_\#
(\mathcal{Y}^d )$.  
\end{proof}

\begin{pro}
\label{SOpen} 
        $\mathcal{S}$ defined in \eqref{DefS} is open. 
\end{pro}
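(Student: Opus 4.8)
The plan is to deduce the openness of $\mathcal{S}$ from the implicit function theorem in Banach spaces, with the linearized operator $\mathcal{L}$ of \eqref{DefL} playing the role of the invertible partial derivative $D_{\widetilde{w}}F$.

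Fix $x\in\mathbb{T}^d$ and $\Lambda\in\mathbb{R}^d$. Since only $\nabla_y\widetilde{w}$ appears in \eqref{DefFFunc}, the map $F(\cdot,\cdot,x,\Lambda)$ is unchanged when a constant is added to $\widetilde{w}$, and its values, being divergences of $\mathcal{Y}^d$-periodic fields, have zero mean over $\mathcal{Y}^d$. Accordingly, I would regard $F$ as a map from $\bigl(C^{2,\alpha}_\#(\mathcal{Y}^d)/\mathbb{R}\bigr)\times(-\delta,1+\delta)$, with $\delta>0$ arbitrary, into the closed subspace $\mathring{C}^{0,\alpha}_\#(\mathcal{Y}^d):=\{g\in C^{0,\alpha}_\#(\mathcal{Y}^d):\int_{\mathcal{Y}^d}g\,dy=0\}$, working with zero-mean representatives for the quotient and using that \eqref{DefFFunc} makes sense for every real $\lambda$; this harmless extension past the endpoints of $[0,1]$ is what allows the relative openness of $\mathcal{S}$ at $\lambda=0$ and $\lambda=1$ to be handled on the same footing as at interior points. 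The first (routine) step is to verify that this map is of class $C^1$, indeed $C^\infty$: it factors through the bounded linear operator $\widetilde{w}\mapsto\nabla_y\widetilde{w}$ with values in $[C^{1,\alpha}_\#(\mathcal{Y}^d)]^d$, the smooth superposition operator $(q,\lambda)\mapsto e^{\frac{|\Lambda+q|^2}{2}+\lambda V(x,\cdot)}(\Lambda+q)$ on $[C^{1,\alpha}_\#(\mathcal{Y}^d)]^d$, and the bounded linear divergence $\div\colon[C^{1,\alpha}_\#(\mathcal{Y}^d)]^d\to\mathring{C}^{0,\alpha}_\#(\mathcal{Y}^d)$; the only point deserving a sentence is the standard smoothness of Nemytskii operators between H\"older spaces. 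By construction, differentiating in $\widetilde{w}$ recovers precisely the operator $\mathcal{L}$ in \eqref{DefL} evaluated at the relevant $(\widetilde{w},\lambda)$.

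Now take $\lambda_0\in\mathcal{S}$ and let $\widetilde{w}_0\in C^{2,\alpha}_\#(\mathcal{Y}^d)$, normalized to zero average, solve $F(\widetilde{w}_0,\lambda_0,x,\Lambda)=0$. The crux is that $\mathcal{L}=D_{\widetilde{w}}F(\widetilde{w}_0,\lambda_0,x,\Lambda)\colon\mathring{C}^{2,\alpha}_\#(\mathcal{Y}^d)\to\mathring{C}^{0,\alpha}_\#(\mathcal{Y}^d)$ is a linear homeomorphism, and this has essentially been established already. Surjectivity onto $\mathring{C}^{0,\alpha}_\#(\mathcal{Y}^d)$, together with the $C^{2,\alpha}_\#$-regularity of the preimage, is exactly Proposition~\ref{LBijectiveH2} (the constant part of the datum being immaterial, since the weak formulation pairs only against zero-mean test functions); injectivity on the zero-mean subspace follows from the coercivity bound $B[v,v]\geq\eta\int_{\mathcal{Y}^d}|\nabla_y v|^2\,dy$ exploited in Corollary~\ref{SurjectiveA} together with Poincar\'e's inequality on $\mathcal{H}$, which forces $v\equiv0$ whenever $\mathcal{L}(v)=0$ and $\int_{\mathcal{Y}^d}v\,dy=0$; boundedness of $\mathcal{L}$ is immediate from \eqref{DefL} and the $L^\infty$ estimate of Proposition~\ref{UpperBoundw}, and then the bounded inverse theorem yields continuity of $\mathcal{L}^{-1}$.

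Having $F$ of class $C^1$ near $(\widetilde{w}_0,\lambda_0)$, $F(\widetilde{w}_0,\lambda_0,x,\Lambda)=0$, and $D_{\widetilde{w}}F(\widetilde{w}_0,\lambda_0,x,\Lambda)$ invertible, the implicit function theorem provides an open neighborhood $U$ of $\lambda_0$ in $(-\delta,1+\delta)$ and a $C^1$ curve $\lambda\mapsto\widetilde{w}(\lambda)\in\mathring{C}^{2,\alpha}_\#(\mathcal{Y}^d)$ with $\widetilde{w}(\lambda_0)=\widetilde{w}_0$ and $F(\widetilde{w}(\lambda),\lambda,x,\Lambda)=0$ for all $\lambda\in U$. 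Hence $U\cap[0,1]\subset\mathcal{S}$, and since $\lambda_0\in\mathcal{S}$ was arbitrary, $\mathcal{S}$ is open in $[0,1]$. I expect the genuine obstacle to have been the invertibility of the linearization $\mathcal{L}$ on the correct pair of spaces --- set up through the chain of results from Proposition~\ref{LowerBoundA} to Proposition~\ref{LBijectiveH2} --- the remaining delicacy being only the bookkeeping that fixes the quotient/zero-mean conventions so that $\mathcal{L}$ is a true isomorphism rather than merely surjective with a one-dimensional kernel.
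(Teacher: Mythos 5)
Your proof is correct and follows essentially the same route as the paper's: cast $F$ as a map between Banach spaces, show its $\widetilde{w}$-linearization $\mathcal{L}$ is a Banach-space isomorphism (using Proposition~\ref{LBijectiveH2}), and invoke the implicit function theorem. The differences are that you are more careful on two points the paper's two-sentence proof glosses over. First, the paper asserts that $\mathcal{L}\colon C^{2,\alpha}_\#(\mathcal{Y}^d)\to C^{0,\alpha}_\#(\mathcal{Y}^d)$ is an isomorphism, but taken literally this cannot be right: $\mathcal{L}$ annihilates constants, and its image, being a divergence of a periodic field, is forced to have zero mean. You repair this by passing to the quotient (equivalently zero-mean representatives) on the domain and to the zero-mean subspace $\mathring{C}^{0,\alpha}_\#$ on the codomain, which is exactly the reading under which Proposition~\ref{LBijectiveH2} and the coercivity/Poincar\'e argument do give a genuine isomorphism. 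Second, you explicitly verify the $C^1$ smoothness of $F$ through the Nemytskii-operator factorization, a hypothesis of the implicit function theorem the paper leaves tacit, and you handle relative openness at the endpoints $\lambda=0,1$ by extending the $\lambda$-interval slightly, which the paper does not mention but implicitly needs. None of this changes the architecture of the argument; it is the same proof, stated with the bookkeeping made precise.
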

\begin{proof}
         According to Proposition~\ref{LBijectiveH2},  $\mathcal{L}:C^{2,\alpha}_\#
(\mathcal{Y}^d )\rightarrow C^{0,\alpha}_\# (\mathcal{Y}^d )$
is an isomorphism. Let $\lambda\in\mathcal{S}$.  By the implicit
function theorem, given $x\in \mathbb{T}^d$ and $\Lambda\in \mathbb{R}^d$, there exist a neighborhood $U$ of $\lambda$ and
a unique solution $\widetilde{w}\in C^{2,\alpha}_\# (\mathcal{Y}^d
)$ to $F (\widetilde{w}, \widetilde{\lambda},x, \Lambda )=0$
for any $\widetilde{\lambda}\in U$. Thus, $\mathcal{S}$ is open.
\end{proof}
\begin{remark}
\label{Smoothnessofwonxlambda}
In the proof of Proposition~\ref{SOpen}, the implicit function
theorem also gives  that $\widetilde{w}$ is smooth in $\lambda$,
$x$, and $\Lambda$.
\end{remark}
\begin{pro}
\label{SClosed} 
        $\mathcal{S}$ given in \eqref{DefS} is closed. 
\end{pro}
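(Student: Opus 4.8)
The plan is to show that $\mathcal{S}$ contains the limit of every convergent sequence drawn from it. First I would fix $x\in\mathbb{T}^d$ and $\Lambda\in\mathbb{R}^d$, take a sequence $(\lambda_n)_n\subset\mathcal{S}$ with $\lambda_n\to\lambda_\infty\in[0,1]$, and for each $n$ choose $\widetilde{w}_n\in C^{2,\alpha}_\#(\mathcal{Y}^d)$ with $F(\widetilde{w}_n,\lambda_n,x,\Lambda)=0$. Since $F$ in \eqref{DefFFunc} depends on $\widetilde{w}_n$ only through $\nabla_y\widetilde{w}_n$, I may normalize $\int_{\mathcal{Y}^d}\widetilde{w}_n\,dy=0$. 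Setting $\widetilde{m}_n=e^{\frac{|\Lambda+\nabla_y\widetilde{w}_n|^2}{2}+\lambda_nV(x,\cdot)}$ and $\widetilde{H}_n=\ln\int_{\mathcal{Y}^d}\widetilde{m}_n\,dy$ exactly as in \eqref{EllipticWidetideH}--\eqref{EllipticWidetidem}, the triple $(\widetilde{w}_n,\widetilde{m}_n,\widetilde{H}_n)$ solves the version of Problem~\ref{CellProbEulerLagrangeProb} obtained by replacing the potential $V$ with $\lambda_nV$.

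The crucial observation is that the whole chain of a priori estimates of Section~\ref{CellProblemSection}---Propositions~\ref{CoercivityWidetideH}, \ref{mbetabounds}, \ref{UpperBoundw}, \ref{BoundPsiLinfty}, \ref{BoundOSCv}, and \ref{Holderwtilde}---holds \emph{uniformly} along this family. Each of their proofs uses only that the potential is smooth and the bounds $\inf_{x,y}V\le(\text{potential})\le\sup_{x,y}V$, $\|\nabla_y(\text{potential})\|_{L^\infty}\le\|\nabla_yV\|_{L^\infty}$, and $e^{\inf(\text{potential})}\ge e^{-\|V\|_{L^\infty}}$, all of which are satisfied by $\lambda_nV$ uniformly in $n$ because $\lambda_n\in[0,1]$. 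Hence there is a constant $C_\Lambda$, independent of $n$, with $\|\nabla_y\widetilde{w}_n\|_{C^{1,\beta}_\#(\mathcal{Y}^d)}\le C_\Lambda$, where $\beta\in(\alpha,1)$ is as in Proposition~\ref{Holderwtilde}; combined with the normalization $\int_{\mathcal{Y}^d}\widetilde{w}_n\,dy=0$ and Poincar\'e's inequality, this gives $\|\widetilde{w}_n\|_{C^{2,\beta}_\#(\mathcal{Y}^d)}\le C_\Lambda$. Since $C^{2,\beta}_\#(\mathcal{Y}^d)\subset\subset C^{2,\alpha}_\#(\mathcal{Y}^d)$, I extract a subsequence (not relabeled) with $\widetilde{w}_n\to\widetilde{w}_\infty$ in $C^{2,\alpha}_\#(\mathcal{Y}^d)$. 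Finally I would pass to the limit in $F(\widetilde{w}_n,\lambda_n,x,\Lambda)=0$: expanding the divergence in \eqref{DefFFunc}, $F$ is a polynomial expression in $\nabla_y\widetilde{w}$ and $\nabla^2_y\widetilde{w}$ multiplied by $e^{\frac{|\Lambda+\nabla_y\widetilde{w}|^2}{2}+\lambda V(x,y)}$, so $F$ is a continuous map $C^{2,\alpha}_\#(\mathcal{Y}^d)\times[0,1]\to C^{0,\alpha}_\#(\mathcal{Y}^d)$ (composition with the exponential and products preserve $C^{0,\alpha}$ convergence). Therefore $F(\widetilde{w}_n,\lambda_n,x,\Lambda)\to F(\widetilde{w}_\infty,\lambda_\infty,x,\Lambda)$ in $C^{0,\alpha}_\#(\mathcal{Y}^d)$, and the limit must vanish, so $\lambda_\infty\in\mathcal{S}$.

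The main point requiring care---rather than a genuine difficulty---is verifying that the a priori estimates are uniform in $\lambda$, since they were stated and proved for a fixed potential $V$; I would either re-inspect each proof to confirm the constants depend on the potential only through $\|V\|_{C^1}$, $\inf_{x,y}V$, and $\sup_{x,y}V$, or simply restate them from the outset for the family $\{\lambda V:\lambda\in[0,1]\}$. Once $\mathcal{S}$ is shown to be closed, together with Proposition~\ref{SOpen} (openness of $\mathcal{S}$) and the fact that $\lambda=0\in\mathcal{S}$, the connectedness of $[0,1]$ forces $\mathcal{S}=[0,1]$; in particular $1\in\mathcal{S}$, which yields a solution $\widetilde{w}\in C^{2,\alpha}_\#(\mathcal{Y}^d)$ of $F(\widetilde{w},1,x,\Lambda)=0$, i.e.\ of \eqref{DivForm}, and hence a solution to Problem~\ref{CellProbEulerLagrangeProb}.
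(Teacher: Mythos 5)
Your proof is correct and follows essentially the same route as the paper's: establish a uniform bound on $\widetilde{w}_n$ in $C^{2,\beta}_\#(\mathcal{Y}^d)$ via Proposition~\ref{Holderwtilde}, use the compact embedding $C^{2,\beta}_\#(\mathcal{Y}^d)\subset\subset C^{2,\alpha}_\#(\mathcal{Y}^d)$ to extract a convergent subsequence, and pass to the limit in $F=0$. You are more explicit than the paper in two respects that the paper leaves implicit but needed---checking that the a priori estimates are uniform over the family of potentials $\{\lambda V:\lambda\in[0,1]\}$, and justifying the continuity of $F$ as a map $C^{2,\alpha}_\#(\mathcal{Y}^d)\times[0,1]\to C^{0,\alpha}_\#(\mathcal{Y}^d)$.
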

\begin{proof}
        Let $ (\lambda_n )_n$ be a Cauchy sequence in $\mathcal{S}$
converging to $\lambda\in \mathbb{R}$. Moreover, we take   $
(\widetilde{w}_n )_n\subset C^{2,\alpha}_\# (\mathcal{Y}^d )$
such that 
\begin{equation}
\label{Fseq}
F (\widetilde{w}_n, \lambda_n, x, \Lambda )=0.
\end{equation}
According to Proposition~\ref{Holderwtilde}, 
        $\widetilde{w}_n$ is uniformly bounded in  $C^{2,\beta}_\#
(\mathcal{Y}^d )$. Since $C^{2,\beta}_\# (\mathcal{Y}^d )\subset\subset
C^{2,\alpha}_\# (\mathcal{Y}^d )$, there exists a function $\widetilde{w}\in
C^{2,\alpha}_\# (\mathcal{Y}^d )$ such that, up to a subsequence,
$\widetilde{w}_n$ converges to $\widetilde{w}$ in $C^{2,\alpha}_\#
(\mathcal{Y}^d )$. Thus, considering the limit of \eqref{Fseq}, as $n\rightarrow+\infty$, we conclude that
$F (\widetilde{w}, \lambda,x,\lambda )=0$. Therefore, $\mathcal{S}$
is closed.
\end{proof}
Then, we have existence of the solution to \eqref{CellProbEulerLagrangeEq}.
\begin{pro}
\label{ExistenceUniqueness} 
There exists a unique function $\widetilde{w}\in C^\infty (\mathbb{R}^d\times\mathbb{T}^d;C^{2,\alpha}_\#
(\mathcal{Y}^d )/\mathbb{R} )$ such that for $\Lambda\in\mathbb{R}^d$
and $x\in\mathbb{T}^d$, $\widetilde{w} (x, \Lambda,\cdot )$ solves
\eqref{DivForm}.  Moreover, let $\widetilde{m}$  be defined in
\eqref{EllipticWidetidem} and $\widetilde{H}$ be given in \eqref{EllipticWidetideH},
then $(\widetilde{w}, \widetilde{m}, \widetilde{H})$ solves 
Problem~\ref{CellProbEulerLagrangeProb}. Accordingly, $\widetilde{w}
(x, \Lambda,\cdot )$ is the unique minimizer to Problem~\ref{TheCellProblem}.

\end{pro}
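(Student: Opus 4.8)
The plan is to combine the three structural facts about $\mathcal{S}$ proved above — it is nonempty, open (Proposition~\ref{SOpen}), and closed (Proposition~\ref{SClosed}) in $[0,1]$ — with the connectedness of $[0,1]$ to conclude $\mathcal{S}=[0,1]$. In particular $1\in\mathcal{S}$, so for each fixed $x\in\mathbb{T}^d$ and $\Lambda\in\mathbb{R}^d$ there is $\widetilde{w}(x,\Lambda,\cdot)\in C^{2,\alpha}_\#(\mathcal{Y}^d)$ with $F(\widetilde{w}(x,\Lambda,\cdot),1,x,\Lambda)=0$; by \eqref{DefFFunc} this is exactly \eqref{DivForm}. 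Since \eqref{DivForm} depends on $\widetilde{w}$ only through $\nabla_y\widetilde{w}$, the problem descends to $C^{2,\alpha}_\#(\mathcal{Y}^d)/\mathbb{R}$, which removes the ambiguity by an additive constant; one then fixes, once and for all, the zero-mean representative. Next I would pass to the Euler--Lagrange system: defining $\widetilde{m}$ and $\widetilde{H}$ by \eqref{EllipticWidetidem} and \eqref{EllipticWidetideH} and using the equivalence between \eqref{DivForm} and \eqref{CellProbEulerLagrangeEq} recorded right after \eqref{EllipticWidetidem}, the triple $(\widetilde{w},\widetilde{m},\widetilde{H})$ solves Problem~\ref{CellProbEulerLagrangeProb}.

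Uniqueness in $C^{2,\alpha}_\#(\mathcal{Y}^d)/\mathbb{R}$ follows from Proposition~\ref{CellProblemNonUniqueness}: any two solutions of \eqref{DivForm} produce, via \eqref{EllipticWidetidem}--\eqref{EllipticWidetideH}, solutions of Problem~\ref{CellProbEulerLagrangeProb}, which must coincide, forcing $\widetilde{m}$ and $\nabla_y\widetilde{w}$ to agree and hence (in the quotient) $\widetilde{w}$ to agree. For the smoothness of $(x,\Lambda)\mapsto\widetilde{w}(x,\Lambda,\cdot)$ as a map into $C^{2,\alpha}_\#(\mathcal{Y}^d)/\mathbb{R}$, I would invoke Remark~\ref{Smoothnessofwonxlambda}: the value $\lambda=1$ lies in the open set $\mathcal{S}$, where, by Proposition~\ref{LBijectiveH2}, the linearized operator $\mathcal{L}$ is an isomorphism; the implicit function theorem used in the proof of Proposition~\ref{SOpen} therefore yields a solution depending smoothly on $(\lambda,x,\Lambda)$ in a neighborhood of $\{\lambda=1\}\times\mathbb{T}^d\times\mathbb{R}^d$, and by the uniqueness just established this local branch coincides with $\widetilde{w}$. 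Hence $\widetilde{w}\in C^\infty(\mathbb{R}^d\times\mathbb{T}^d;C^{2,\alpha}_\#(\mathcal{Y}^d)/\mathbb{R})$. Finally, the minimizer statement is immediate from Remark~\ref{rmk:p7p4qui}: Problems~\ref{CellProbEulerLagrangeProb} and \ref{TheCellProblem} are equivalent, so the unique solution $\widetilde{w}$ of the former is the unique minimizer of $\widetilde{I}[x,\Lambda;\cdot]$ over $W^{1,p}_\#(\mathcal{Y}^d)/\mathbb{R}$.

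The only genuinely delicate point, I expect, is the bookkeeping in the smoothness argument: one must verify that the implicit-function-theorem branch passing through $\lambda=1$ is indeed the globally-defined object $\widetilde{w}$ (this is precisely where uniqueness enters), and that the a priori Hölder estimate of Proposition~\ref{Holderwtilde} — uniform for $\Lambda$ in compact sets — is what guarantees the local branches patch into a single $C^\infty$ map on all of $\mathbb{R}^d\times\mathbb{T}^d$ rather than merely a locally-defined one. Everything else is a direct assembly of the preceding propositions and remarks.
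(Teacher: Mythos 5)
Your proposal is correct and follows essentially the same route as the paper: continuation method ($\mathcal{S}$ nonempty, open, closed $\Rightarrow$ $\mathcal{S}=[0,1]$) to get existence at $\lambda=1$, Proposition~\ref{CellProblemNonUniqueness} for uniqueness, Remark~\ref{Smoothnessofwonxlambda} (implicit function theorem) for smoothness in $(x,\Lambda)$, and Remark~\ref{rmk:p7p4qui} for the minimizer statement. Your explicit discussion of how uniqueness globalizes the local implicit-function-theorem branches is a useful clarification that the paper's proof leaves implicit, but it is the same argument.
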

\begin{proof}
By      Propositions~\ref{SOpen} and \ref{SClosed},  $\mathcal{S}$
in \eqref{DefS} is open and closed. Thus, for $F$ as in \eqref{DefFFunc},
$F=0$ has a solution in $\widetilde{w}\in C^{2,\alpha}_\# (\mathcal{Y}^d )$ when
$\lambda=1$. Thus, let $\widetilde{m}$  be as in
\eqref{EllipticWidetidem} and $\widetilde{H}$ be as in \eqref{EllipticWidetideH},
then $(\widetilde{w}, \widetilde{m}, \widetilde{H})$ solves 
Problem~\ref{CellProbEulerLagrangeProb}. 
By the uniqueness in Proposition~\ref{CellProblemNonUniqueness},
we  conclude that \eqref{CellProbEulerLagrangeEq} admits a unique
 solution.
 According to Remark~\ref{Smoothnessofwonxlambda}, $\widetilde{w}$
depends smoothly on $\Lambda$ and $x$. Thus, $\widetilde{w}\in
C^\infty (\mathbb{R}^d\times\mathbb{T}^d;C^{2,\alpha}_\# (\mathcal{Y}^d
)/\mathbb{R} )$.
\end{proof}

\subsubsection{Lower bounds for $\widetilde{m}$}
Next, we prove a uniform lower bound for $\widetilde{m}$, which
is used to prove the existence of solutions to Problem~\ref{TheLimitProblem}.
According to Corollary \ref{Lowerboundmtilde}, it is sufficient
to prove a lower bound for $\widetilde{m}$ as $ |\Lambda |\rightarrow
\infty$. 

\begin{pro}
        \label{UniformlyLowerboundnessofwidetildem}
        Let $(\widetilde{w}, \widetilde{m}, \widetilde{H})$ solve
Problem~\ref{CellProbEulerLagrangeProb}. If $d>1$, we assume
further that $V$ satisfies \eqref{SeparableV}. Then, there exists
a constant $C>0$ such that $\widetilde{m}\geq C$ for all $x\in\mathbb{T}^d,
y\in\mathcal{Y}^d$, and $\Lambda\in\mathbb{R}^d$.
\end{pro}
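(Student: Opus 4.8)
The plan is to obtain a uniform (in $x$ and $\Lambda$) \emph{lower} bound for $\widetilde m$ by bounding $\widetilde H(x,\Lambda)$ \emph{from above} by $\tfrac{|\Lambda|^2}{2}+C$ for a constant $C$ independent of $x$ and $\Lambda$; once this is done, the first equation of \eqref{CellProbEulerLagrangeEq} together with the boundedness of $V$ immediately yields $\widetilde m \geq e^{V(x,y)-\widetilde H(x,\Lambda)} \geq e^{\inf V - C - |\Lambda|^2/2}$, which is still not uniform, so in fact the key is to show that the \emph{deficit} $\widetilde H(x,\Lambda)-\tfrac{|\Lambda|^2}{2}$ stays bounded and, more importantly, that $|\Lambda+\nabla_y\widetilde w|$ cannot be much smaller than $|\Lambda|$ on a large portion of $\mathcal Y^d$ when $|\Lambda|$ is large. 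By Corollary~\ref{Lowerboundmtilde} the bound is already known for $\Lambda$ in any bounded set, so I only need to treat $|\Lambda|\to\infty$.

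First I would use the variational characterization: since $\widetilde w$ minimizes $\widetilde I[x,\Lambda;\cdot]$ and $\int_{\mathcal Y^d}\nabla_y\widetilde w\,dy=0$, the map $y\mapsto \Lambda+\nabla_y\widetilde w(x,\Lambda,y)$ has mean $\Lambda$, so it cannot be identically small; combined with the constraint $\int_{\mathcal Y^d}\widetilde m\,dy=1$ and the representation $\widetilde m=e^{\frac{|\Lambda+\nabla_y\widetilde w|^2}{2}+V-\widetilde H}$, one gets $e^{\widetilde H}=\int_{\mathcal Y^d} e^{\frac{|\Lambda+\nabla_y\widetilde w|^2}{2}+V}\,dy$, hence by Jensen and the mean-zero property of $\nabla_y\widetilde w$,
\[
\widetilde H(x,\Lambda)\ \geq\ \frac{|\Lambda|^2}{2}+\frac12\int_{\mathcal Y^d}|\nabla_y\widetilde w|^2\,dy+\inf_{x,y}V(x,y),
\]
which is just \eqref{Coercivity}. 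The real work is the matching \emph{upper} estimate on $\widetilde H$ with the right constant. Here I would exploit the separability hypothesis \eqref{SeparableV}: when $V(x,y)=\sum_i V_i(x,y_i)$, the cell problem \emph{decouples} coordinatewise. Writing $\Lambda=(\Lambda_1,\dots,\Lambda_d)$, I expect $\widetilde w$ to split as $\widetilde w(x,\Lambda,y)=\sum_i \widetilde w_i(x,\Lambda_i,y_i)$ with each $\widetilde w_i$ solving the one-dimensional cell problem with potential $V_i$ and drift $\Lambda_i$, and correspondingly $\widetilde m(x,\Lambda,y)=\prod_i \widetilde m_i(x,\Lambda_i,y_i)$ and $\widetilde H(x,\Lambda)=\sum_i \widetilde H_i(x,\Lambda_i)$. (This product structure should be verified by checking that the product ansatz solves \eqref{CellProbEulerLagrangeEq} and invoking the uniqueness from Proposition~\ref{CellProblemNonUniqueness}.) Thus it suffices to prove the uniform lower bound for $\widetilde m_i$ in dimension one, uniformly in $\Lambda_i\in\Rr$, and then take the product.

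For the one-dimensional problem I would use the explicit structure already developed in Section~\ref{TwoScaleHomogeInOneDim} (and Proposition~\ref{umepsilonexplicit} / the current formulation): with $\widetilde K = \Lambda_i + (\widetilde w_i)_{y_i}$, the equation $-(\widetilde m_i \widetilde K)_{y_i}=0$ forces $\widetilde m_i \widetilde K \equiv \widetilde j$, a constant, and the first equation becomes $\tfrac{\widetilde j^2}{2\widetilde m_i^2}+V_i = \ln \widetilde m_i + \widetilde H_i$; using $\int \widetilde m_i\,dy_i = 1$ and $\widetilde j = \big(\int \frac{1}{\widetilde m_i}\big)^{-1}\Lambda_i$ (from periodicity of $\widetilde w_i$), one can bound $\widetilde j^2$ above by $C(1+|\Lambda_i|^2)$ and then control $\widetilde H_i - \tfrac{|\Lambda_i|^2}{2}$, yielding $\widetilde m_i = F_{i}^{-1}(\widetilde H_i - V_i)$ bounded below by a constant independent of $\Lambda_i$. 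The heart of the matter is the monotonicity/convexity of $t\mapsto \tfrac{\widetilde j^2}{2t^2}-\ln t$ (cf. \eqref{DefFEpsilon}–\eqref{DefF} and Lemma~\ref{InvFLipschitz}), which gives a lower bound on $\widetilde m_i$ in terms of an upper bound on $\widetilde H_i - V_i$ that is uniform once $\widetilde j$ is controlled; the point is that as $|\Lambda_i|\to\infty$ one has $\widetilde j\to\infty$ but $\widetilde m_i \approx \widetilde j/\widetilde K$ stays bounded away from zero because $\widetilde K$ also grows like $|\Lambda_i|$.

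The main obstacle I anticipate is rigorously establishing the product decomposition and, within the one-dimensional analysis, showing that the lower bound on $\widetilde m_i$ does \emph{not} degenerate as $|\Lambda_i|\to\infty$ — i.e. ruling out the scenario where $\widetilde m_i$ develops a thin region where $|\Lambda_i+(\widetilde w_i)_{y_i}|$ is small (so $\widetilde m_i$ large there) compensated by $\widetilde m_i$ small elsewhere. This is exactly where convexity of $F_i$ and the Jensen-type inequality $\tfrac{\widetilde j^2}{2} = F_i(1) = F_i(\int \widetilde m_i) \leq \int F_i(\widetilde m_i) = \int(\widetilde H_i - V_i)$ are decisive: they pin down $\widetilde H_i$ within $O(1)$ of $\tfrac{|\Lambda_i|^2}{2}$, and then the decay of $F_i^{-1}$ at $+\infty$ gives a uniform floor for $\widetilde m_i$. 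Assembling $\widetilde m = \prod_i \widetilde m_i \geq \prod_i c_i = C > 0$ completes the proof; in dimension one the separability assumption is vacuous and the argument applies directly.
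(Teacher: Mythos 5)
Your overall plan is a genuine alternative to the paper's argument and, once one gap is filled, it would give a shorter and more elementary proof. For $d>1$ you reduce, exactly as the paper does, to the one-dimensional case via the separable ansatz $\widetilde m=\prod_i\widetilde m_i$, $\widetilde w=\sum_i\widetilde w_i$, $\widetilde H=\sum_i\widetilde H_i$, justified by uniqueness (Proposition~\ref{CellProblemNonUniqueness}); this part matches. In $d=1$, however, you and the paper diverge. The paper writes $\widetilde H=\frac{j^2}{2}+\widetilde H_1$, passes to the small parameter $\epsilon=1/j^2$, sets up the operator $G(x,\epsilon,\overline m)=\frac{1}{2(\overline m+1)^2}+\epsilon V-\epsilon\ln(\overline m+1)-\frac12$ on a Banach space, and applies the implicit function theorem near $\epsilon=0$ to get a solution bounded away from $-\frac12$, then invokes the uniqueness from Proposition~\ref{ExistenceUniqueness} to identify it with $\widetilde m-1$. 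You instead propose a direct comparison: $\widetilde m_i=F_i^{-1}(\widetilde H_i-V_i)$ with $F_i(t)=\frac{\widetilde j^2}{2t^2}-\ln t$, and one bounds $\widetilde m_i\ge c_0$ by checking $\widetilde H_i-\inf V_i\le F_i(c_0)$. With $\widetilde H_i\le\frac{\Lambda_i^2}{2}+\sup V_i$ (Proposition~\ref{CoercivityWidetideH}) and $F_i(c_0)=\frac{\widetilde j^2}{2c_0^2}-\ln c_0$ for any $c_0<1$, this reduces to showing $\widetilde j^2\ge\Lambda_i^2-O(|\Lambda_i|)$, after which the mismatch $\frac{1}{2c_0^2}>\frac12$ absorbs the lower-order terms for $|\Lambda_i|$ large, and Corollary~\ref{Lowerboundmtilde} covers bounded $\Lambda_i$. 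This avoids the implicit-function-theorem machinery entirely.

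The gap is precisely the lower bound $|\widetilde j|\ge|\Lambda_i|-C$. You write $\widetilde j=\Lambda_i/\int(1/\widetilde m_i)\,dy_i$, which by Jensen ($\int 1/\widetilde m_i\ge 1$) only gives $|\widetilde j|\le|\Lambda_i|$, an upper bound pointing the wrong way; bounding $\int 1/\widetilde m_i$ from above would require the very lower bound on $\widetilde m_i$ you are trying to prove. Your assertion that "$\widetilde j\to\infty$ as $|\Lambda_i|\to\infty$ because $\widetilde K$ also grows like $|\Lambda_i|$" is exactly the claim to be established, not an argument for it, and your Jensen step
\begin{equation*}
\frac{\widetilde j^2}{2}=F_i(1)\le\int_{\mathcal Y}F_i(\widetilde m_i)\,dy_i=\widetilde H_i-\int_{\mathcal Y}V_i\,dy_i
\end{equation*}
is again an upper bound on $\widetilde j^2$, not a lower bound, so it does not pin $\widetilde H_i$ within $O(1)$ of $\widetilde j^2/2$ from above. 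The fix, which the paper uses, is to integrate $\widetilde m_i(\Lambda_i+(\widetilde w_i)_{y_i})=\widetilde j$ over $\mathcal Y$ (rather than $(\widetilde w_i)_{y_i}=\widetilde j/\widetilde m_i-\Lambda_i$), using $\int\widetilde m_i=1$ to get $\widetilde j-\Lambda_i=\int\widetilde m_i(\widetilde w_i)_{y_i}\,dy_i$; then the uniform upper bound $\|\widetilde m_i\|_\infty\le C$ from Proposition~\ref{UpperBoundw} and the uniform $L^2$ bound on $(\widetilde w_i)_{y_i}$ from Proposition~\ref{CoercivityWidetideH} give $|\widetilde j-\Lambda_i|\le C$. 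With this inserted at your "step 4" the rest of your outline closes, and the resulting proof is cleaner than the paper's.
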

\begin{proof}
For $d=1$, we use the \textit{current method} introduced in \cite{Gomes2016b} to  get an explicit formula for the solution of
 \eqref{CellProbEulerLagrangeEq} as in Section \ref{TwoScaleHomogeInOneDim}.
According to the second equation in \eqref{CellProbEulerLagrangeEq},
 
\begin{equation}
\label{CurrentEq}
j=\widetilde{m} (\Lambda+\nabla_y\widetilde{w} )
\end{equation}
depends only on $x$. 
Integrating the prior equation and 
using $\int_{\mathcal{Y}^d}\widetilde{m}dy=1$, we get
\begin{equation*}
j=\Lambda+\int_{\mathcal{Y}^d}\widetilde{m}\nabla_y\widetilde{w}dy.
\end{equation*}
By Propositions~\ref{CoercivityWidetideH} and  \ref{UpperBoundw},
there exists a constant $C>0$, independent on $x$, such that
\begin{equation*}
\int_{\mathcal{Y}^d}\widetilde{m}\nabla_y\widetilde{w}dy \leq
C \int_{\mathcal{Y}^d} |\nabla_y\widetilde{w}|dy \leq C.
\end{equation*} 
Thus, 
\begin{equation}
\label{BoundjbyLambda}
 |\Lambda |-C\leq  |j |\leq  |\Lambda |+C. 
\end{equation}
Using \eqref{CurrentEq}, the first equation of \eqref{CellProbEulerLagrangeEq}
becomes
\begin{equation*}
\frac{j^2}{2\widetilde{m}^2}+V=\ln\widetilde{m}+\widetilde{H}.
\end{equation*}
According to \eqref{BoundjbyLambda} and Proposition~\ref{CoercivityWidetideH},
we rewrite $\widetilde{H}=\frac{j^2}{2}+\widetilde{H}_1$, where
$\widetilde{H}_1\in\mathbb{R}$  and $\widetilde{H}_1$ is bounded
as $ |j |\rightarrow\infty$. Thus, 
when $j\not=0$, we 
divide both sides of the prior equation by $j^2$ and get 
\begin{equation}
\label{CurrentFormulationofCellProbEulerLagrangeEq}
\frac{1}{2\widetilde{m}^2}+\frac{1}{j^2}V=\frac{1}{j^2}\ln \widetilde{m}+\frac{1}{2}+\frac{1}{j^2}\widetilde{H}_1.
\end{equation}
We consider the Banach space
\begin{equation*}
\mathcal{B}= \bigg\{\overline{m}\in L^2_\# (\mathcal{Y}^d ) :
\int_{\mathcal{Y}^d}\overline{m}dy=0\bigg\}
\end{equation*}
and its subset
\begin{equation*}
\mathcal{B}_1= \bigg\{\overline{m}\in L^2_\# (\mathcal{Y}^d ):\overline{m}>-\frac{1}{2},
 \int_{\mathcal{Y}^d}\overline{m}dy=0\bigg\}.
\end{equation*}
Then,   \eqref{CurrentFormulationofCellProbEulerLagrangeEq} inspires
us to define $G: \mathbb{T}^d\times  [0,+\infty )\times\mathcal{B}_1\rightarrow
L^2(\mathbb{T}^d;L^2_\#(\mathcal{Y}^d))$ as  
\begin{equation*}
G (x, \epsilon, \overline{m} )=\frac{1}{2(\overline{m}+1)^2}+\epsilon
V-\epsilon\ln (\overline{m}+1)-\frac{1}{2}.
\end{equation*}
For given $x\in\mathbb{T}^d$ and $\epsilon=0$, we see that $\overline{m}=0$
solves $G (x, 0, \overline{m} )=\frac{1}{2(\overline{m}+1)^2}-\frac{1}{2}=0.$
Differentiating $G$ with respect to $\overline{m}$, we obtain
\begin{align*}
\frac{\partial G}{\partial \overline{m}}=-\frac{1}{(\overline{m}+1)^3}-\epsilon\frac{1}{\overline{m}+1}.
\end{align*}
Then, $\frac{\partial G}{\partial \overline{m}}\not=0$ when $\epsilon=0$.
By the implicit function theorem, there exists a neighborhood, $U\subset\mathcal{B}$,
of $x$ and a positive number, $\epsilon_0$, such that for any
$\widehat{x}\in U$ and $0<\widehat{\epsilon}<\epsilon_0$, there
exists a unique function $\widehat{m}\in\mathcal{B}$ such that
$G (\widehat{x}, \widehat{\epsilon}, \widehat{m})=0$. Moreover,
for  $\widehat{\epsilon}$ small enough, $\widehat{m}$ is bounded
uniformly below by $-\frac{1}{2}$. Thus, given $x$ and when $j$ in \eqref{CurrentFormulationofCellProbEulerLagrangeEq}
is large enough, by the uniqueness of the solution to \eqref{CurrentFormulationofCellProbEulerLagrangeEq}
given in Proposition~\ref{ExistenceUniqueness}, $\widetilde{H}_1=0$
and $\widetilde{m}$ is uniformly bounded by below in $j$. In
particular, using the compactness of $\mathbb{T}^d$, it is possible
to choose $\epsilon_0$ that is valid for all $x$. Thus, combining
the previous arguments with Corollary \ref{Lowerboundmtilde},
which gives a uniform bound of $\widetilde{m}$ when $j$ is small,
we see that $\widetilde{m}$ is uniformly bounded by below for
all $x\in\mathbb{T}, y\in\mathcal{Y}$, and $\Lambda\in\mathbb{R}$.

For $d>1$. We assume that $V$ satisfies \eqref{SeparableV}. 
In this case, the solution $ (\widetilde{m},\widetilde{w},\widetilde{H}
)$ of \eqref{CellProbEulerLagrangeEq} is separable in $y$ and
can be written as
\begin{equation}
\label{SeparableSols}
\widetilde{m} (x,y )=\prod_{i=1}^{d}\widetilde{m}_i (x,y_i ),
\ \widetilde{w} (x,y )=\sum_{i=1}^{d}\widetilde{w}_i (x,y_i ),
\  \widetilde{H} (x,\Lambda )=\sum_{i=1}^{d}\widetilde{H}_i (x,\Lambda_{i}
),
\end{equation}
where $\widetilde{m}_i: \mathbb{T}^d\times\mathcal{Y}\rightarrow
\mathbb{R}$, $\widetilde{w}_i: \mathbb{T}^d\times\mathcal{Y}\rightarrow
\mathbb{R}$, and $\widetilde{H}_i: \mathbb{T}^d\times\mathbb{R}\rightarrow
\mathbb{R}$ are defined for all $i=1,\dots,d$.
Accordingly, \eqref{CellProbEulerLagrangeEq} can be split into
one-dimensional systems; that is, for each $i=1,\dots,d$, we
have 
\begin{align*}
\begin{split}
\begin{cases}
\frac{ |\Lambda_i+  (\widetilde{w}_i (x,y_i) )_{y_i} |^2}{2}+V_i
(x,y_i )=\ln \widetilde{m}_i (x,y_i )+\widetilde{H}_i (x,\Lambda_i
),\\
 \big(\widetilde{m}_i (x,y_i) (\Lambda_i+  (\widetilde{w}_i (x,y_i)
)_{y_i} )\big )_{y_i}=0, \\
\int_0^1 \widetilde{m}_i (x,y_i)dy=1.
\end{cases}
\end{split}
\end{align*}
By the above arguments, $\widetilde{m}_i$ is bounded below as
$ |\Lambda_{i} |\rightarrow \infty$ for each $i=1,\dots,d$. Therefore,
$\widetilde{m}$ satisfying \eqref{SeparableSols} is uniformly
bounded by below.
\end{proof}

\begin{remark}
        When $d>1$ and $V$ is non-separable, the lower boundedness
of $\widetilde{m}$ is still unknown. Here, we present the difficulty
we faced. Let $\Lambda=\lambda \Gamma$, where $\lambda\in \mathbb{R_+}$,
$\Gamma\in\mathbb{R}^d$,  and $ |\Gamma |=1$. Besides, let $\epsilon=\frac{1}{\lambda}$
and  $\widetilde{H}=\frac{\lambda^2}{2}+\widetilde{H}_1$. Then,
multiplying both sides of the first equation in   \eqref{CellProbEulerLagrangeEq}
by $\epsilon$ and rearranging, we get
        \begin{align*} \Gamma^T\nabla_y\widetilde{w}+\epsilon\bigg
(\frac{ |\nabla_y\widetilde{w} |^2}{2}+ V-\ln \widetilde{m}-\widetilde{H}_1
\bigg)=0.
        \end{align*}
        Similarly, the transport equation of \eqref{CellProbEulerLagrangeEq}
becomes
        \begin{align*}
        \Gamma^T\nabla_y\widetilde{m}+\epsilon\big (\nabla_y\widetilde{m}^T\nabla_y\widetilde{w}+\widetilde{m}\Delta_y\widetilde{w}
\big)=0.
        \end{align*}
        Thus, we can define an operator $F$ such that
        \begin{align*}
        F (\epsilon,\widetilde{w}, \widetilde{m} )=\begin{pmatrix}
        \Gamma^T\nabla_y\widetilde{w}+\epsilon\Big (\frac{ |\nabla_y\widetilde{w}
|^2}{2}+ V-\ln \widetilde{m}-\widetilde{H}_1 \Big)\\
        \Gamma^T\nabla_y\widetilde{m}+\epsilon\big (\nabla_y\widetilde{m}^T\nabla_y\widetilde{w}+\widetilde{m}\Delta_y\widetilde{w}
\big) 
        \end{pmatrix}. 
        \end{align*}
        The linearized operator of $F$ with respect to $ (\widetilde{w},\widetilde{m}
)$ when $\epsilon=0$ is given by
        \begin{equation*}
        \mathcal{L}_{\widetilde{w},\widetilde{m}} (v,\mu )= (\Gamma^T\nabla_yv,
 \Gamma^T\nabla_y\mu ),
        \end{equation*}
        which fails to be an isomorphism. Therefore, in this
case, we cannot use the implicit function theorem as  we did
for the one-dimensional case.  
\end{remark}

\subsection{The homogenized problem} 
\label{HomogenizedProblemSection}
Here, we study existence of minimizers for Problem~\ref{TheLimitProblem}.
 Since \eqref{MinLimitProblem} is considered in \cite{evans2003some},
we only need to check that the Hamiltonian $\widetilde{H}$ satisfies
the assumptions in \cite{evans2003some} and apply the results
there directly. First, we give uniform bounds for derivatives
of $\widetilde{H}$ with respect to $\Lambda$ and $x$. For simplicity, we use Einstein's notation and remark that all the constants denoted by $C$ are independent on $x$ and $\Lambda$. 

\begin{pro}
        \label{BoundHx}
        Let $(\widetilde{w}, \widetilde{m}, \widetilde{H})$ solve
Problem~\ref{CellProbEulerLagrangeProb}.  Then, there exists
a positive constant $C$ such that 
        \begin{align*}
         |\widetilde{H}_{x_j} |\leq C.  
        \end{align*} 
\end{pro}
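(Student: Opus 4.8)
The plan is to differentiate the variational characterization \eqref{WidetildeH} of $\widetilde H$ with respect to the macroscopic variable $x_j$ and then use the a~priori estimates already established for $\widetilde m$ and $\nabla_y\widetilde w$. Concretely, since $\widetilde H(x,\Lambda)=\ln\int_{\mathcal Y^d}e^{\frac{|\Lambda+\nabla_y\widetilde w(x,\Lambda,y)|^2}{2}+V(x,y)}dy$ and, by Remark~\ref{Smoothnessofwonxlambda}, $\widetilde w$ is smooth in $x$, we may differentiate under the integral sign. Writing $\widetilde m(x,\Lambda,y)=e^{\frac{|\Lambda+\nabla_y\widetilde w|^2}{2}+V-\widetilde H}$ (the formula \eqref{EllipticWidetidem}, consistent with the constraint $\int_{\mathcal Y^d}\widetilde m\,dy=1$), the derivative collapses to
\begin{equation*}
\widetilde H_{x_j}(x,\Lambda)=\int_{\mathcal Y^d}\widetilde m(x,\Lambda,y)\Big((\Lambda+\nabla_y\widetilde w)^T\nabla_y\big(\widetilde w_{x_j}\big)+V_{x_j}(x,y)\Big)dy.
\end{equation*}
The $V_{x_j}$ term is immediately controlled by $\|V_{x_j}\|_{L^\infty}$ together with $\int_{\mathcal Y^d}\widetilde m\,dy=1$, uniformly in $x$ and $\Lambda$ by smoothness of $V$.

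The first term is the one that needs work. The key observation is that it vanishes: differentiating the second (transport) equation of \eqref{CellProbEulerLagrangeEq}, $-\div_y(\widetilde m(\Lambda+\nabla_y\widetilde w))=0$, with respect to $x_j$ gives $-\div_y(\widetilde m_{x_j}(\Lambda+\nabla_y\widetilde w)+\widetilde m\,\nabla_y\widetilde w_{x_j})=0$; however the cleaner route is to integrate by parts directly in the displayed expression. Indeed,
\begin{equation*}
\int_{\mathcal Y^d}\widetilde m\,(\Lambda+\nabla_y\widetilde w)^T\nabla_y(\widetilde w_{x_j})\,dy=-\int_{\mathcal Y^d}\div_y\big(\widetilde m(\Lambda+\nabla_y\widetilde w)\big)\,\widetilde w_{x_j}\,dy=0,
\end{equation*}
using the periodicity of $\widetilde w_{x_j}$ in $y$ and the transport equation. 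This is exactly the analogue of the computation \eqref{HeuristicGradHhatItn1} in the Asymptotic Expansions section; there the differentiation was in $\Lambda$, here it is in $x$, but the mechanism is identical. Hence $\widetilde H_{x_j}(x,\Lambda)=\int_{\mathcal Y^d}\widetilde m(x,\Lambda,y)V_{x_j}(x,y)\,dy$, and $|\widetilde H_{x_j}|\le \sup_{x,y}|V_{x_j}(x,y)|=:C<+\infty$, a constant independent of $x$ and $\Lambda$ since $V$ is smooth and $\Yd$-periodic in $y$.

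The main obstacle is a regularity/justification issue rather than an estimation one: one must be sure that $\widetilde w$ is differentiable in $x_j$ with $\widetilde w_{x_j}(x,\Lambda,\cdot)$ lying in a space (say $C^{1}_\#(\mathcal Y^d)$, or at least $W^{1,2}_\#(\mathcal Y^d)$) for which the differentiation under the integral and the integration by parts above are legitimate. This is supplied by Remark~\ref{Smoothnessofwonxlambda}, which states that the implicit function theorem in the proof of Proposition~\ref{SOpen} yields $\widetilde w$ smooth jointly in $\lambda$, $x$, and $\Lambda$ with values in $C^{2,\alpha}_\#(\mathcal Y^d)/\mathbb R$; in particular $\widetilde w_{x_j}(x,\Lambda,\cdot)\in C^{2,\alpha}_\#(\mathcal Y^d)/\mathbb R$, which more than suffices. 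One small care point: $\widetilde w$ is only defined modulo constants, but $\nabla_y\widetilde w$, $\widetilde m$, and $\widetilde H$ are well defined, and the integration-by-parts identity only involves $\nabla_y\widetilde w_{x_j}$ paired against a divergence-free (in $y$) field, so the ambiguity is harmless. Thus the whole argument reduces to the one-line computation above once the smoothness from Remark~\ref{Smoothnessofwonxlambda} is invoked.
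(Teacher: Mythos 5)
Your proof is correct and is essentially the paper's argument: both obtain the identity $\widetilde{H}_{x_j}=\int_{\mathcal{Y}^d}\widetilde{m}\,V_{x_j}\,dy$ and bound it by $\sup|V_{x_j}|$, with the problematic term killed by integrating by parts against the transport equation $\div_y(\widetilde m(\Lambda+\nabla_y\widetilde w))=0$. The only difference is cosmetic: you differentiate the variational formula $\widetilde H=\ln\widetilde I$ directly (which makes the $\int\widetilde m_{x_j}\,dy$ term drop out automatically), whereas the paper differentiates the cell PDE system and then multiplies by $\widetilde m$ and integrates, invoking $\int_{\mathcal Y^d}\widetilde m_{x_j}\,dy=0$ separately.
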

\begin{proof}
        Differentiating the equations in \eqref{CellProbEulerLagrangeEq}
with respect to $x_j$, we obtain
        \begin{align}
        \label{CellProbFirstDerivative}
        \begin{cases*}
         (\Lambda_i+\widetilde{w}_{y_i} )\widetilde{w}_{y_ix_j}+V_{x_j}=\frac{1}{\widetilde{m}}\widetilde{m}_{x_j}+\widetilde{H}_{x_j},\\
        -\big (\widetilde{m}_{x_j} (\Lambda_i+\widetilde{w}_{y_i}
)+\widetilde{m}\widetilde{w}_{y_ix_j} \big)_{y_i}=0,\\
        \int_{\mathcal{Y}^d}\widetilde{m}_{x_j}dy=0.
        \end{cases*}
        \end{align}
        Multiplying the first equation in the prior system by
$\widetilde{m}$, integrating the resulting terms, and using 
$\int_{\mathcal{Y}^d}\widetilde{m}dy=1$, we get
        \begin{align}   \label{HxIntegralForm0}
        \begin{split}
        \widetilde{H}_{x_j}&=\int_{\mathcal{Y}^d} (\Lambda_i+\widetilde{w}_{y_i}
)\widetilde{w}_{y_ix_j}\widetilde{m}dy+\int_{\mathcal{Y}^d}V_{x_j}\widetilde{m}dy-\int_{\mathcal{Y}^d}\widetilde{m}_{x_j}dy\\
        &=-\int_{\mathcal{Y}^d} \big( (\Lambda_i+\widetilde{w}_{y_i}
)\widetilde{m} \big)_{y_i}\widetilde{w}_{x_j}dy+\int_{\mathcal{Y}^d}V_{x_j}\widetilde{m}dy-\int_{\mathcal{Y}^d}\widetilde{m}_{x_j}dy.
        \end{split}
        \end{align}
        From the second equation of \eqref{CellProbEulerLagrangeEq},
we know that
        \begin{align}
        \label{HxIntegralFormTerm1}
        \int_{\mathcal{Y}^d} \big( (\Lambda_i+\widetilde{w}_{y_i}
)\widetilde{m} \big)_{y_i}\widetilde{w}_{x_j}dy=0.
        \end{align}
        The third equation of \eqref{CellProbFirstDerivative}
gives that
        \begin{equation}
        \label{HxIntegralFormTerm2}
        \int_{\mathcal{Y}^d}\widetilde{m}_{x_j}dy=0.
        \end{equation} 
        By the smoothness of $V$, the positivity of $\widetilde{m}$,
and $\int_{\mathcal{Y}^d}\widetilde{m}dy=1$, there exists a constant
$C$ such that 
        \begin{equation}
        \label{HxIntegralFormTerm3}
         \bigg|\int_{\mathcal{Y}^d}V_{x_j}\widetilde{m}dy \bigg|\leq
C. 
        \end{equation}
        Therefore, \eqref{HxIntegralForm0}--\eqref{HxIntegralFormTerm3}
yield
        \begin{equation*}
         |\widetilde{H}_{x_j} |\leq C.\qedhere
        \end{equation*}
\end{proof}

\begin{pro}
        Let $(\widetilde{w}, \widetilde{m}, \widetilde{H})$ solve
Problem~\ref{CellProbEulerLagrangeProb}.  Then, there exists
a positive constant $C$ such that 
        \begin{align}
        \label{HMedBoundResult}
        \int_{\mathcal{Y}^d}\frac{1}{\widetilde{m}}\widetilde{m}^2_{x_j}dy+\int_{\mathcal{Y}^d}\widetilde{m}\widetilde{w}_{y_ix_j}^2dy\leq
C. 
        \end{align}
\end{pro}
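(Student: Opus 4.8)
The plan is to derive \eqref{HMedBoundResult} by testing the linearized system \eqref{CellProbFirstDerivative} — obtained in the previous proposition by differentiating \eqref{CellProbEulerLagrangeEq} with respect to $x_j$ — against a suitable pair of test functions, in the spirit of the Lasry--Lions-type energy identity already used in the proof of Proposition~\ref{CellProblemNonUniqueness}. Writing $K^i=\Lambda_i+\widetilde{w}_{y_i}$ for brevity, the first equation of \eqref{CellProbFirstDerivative} reads $K^i\widetilde{w}_{y_ix_j}+V_{x_j}=\widetilde{m}_{x_j}/\widetilde{m}+\widetilde{H}_{x_j}$ and the second reads $-(\widetilde{m}_{x_j}K^i+\widetilde{m}\,\widetilde{w}_{y_ix_j})_{y_i}=0$.

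First I would multiply the first equation by $\widetilde{m}_{x_j}$ and integrate over $\mathcal{Y}^d$; since $\widetilde{H}_{x_j}$ is constant in $y$ and $\int_{\mathcal{Y}^d}\widetilde{m}_{x_j}\,dy=0$ by the third equation of \eqref{CellProbFirstDerivative}, the $\widetilde{H}_{x_j}$-term vanishes, leaving
\[
\int_{\mathcal{Y}^d}\widetilde{m}_{x_j}K^i\widetilde{w}_{y_ix_j}\,dy+\int_{\mathcal{Y}^d}\widetilde{m}_{x_j}V_{x_j}\,dy=\int_{\mathcal{Y}^d}\frac{\widetilde{m}_{x_j}^2}{\widetilde{m}}\,dy.
\]
Next I would multiply the second equation of \eqref{CellProbFirstDerivative} by $\widetilde{w}_{x_j}$, integrate over $\mathcal{Y}^d$, and integrate by parts (no boundary terms, by $\mathcal{Y}^d$-periodicity), obtaining, after using the symmetry $\widetilde{w}_{y_ix_j}=\widetilde{w}_{x_jy_i}$,
\[
\int_{\mathcal{Y}^d}\widetilde{m}_{x_j}K^i\widetilde{w}_{y_ix_j}\,dy+\int_{\mathcal{Y}^d}\widetilde{m}\,\widetilde{w}_{y_ix_j}^2\,dy=0.
\]
Subtracting this identity from the previous one cancels the cross term $\int_{\mathcal{Y}^d}\widetilde{m}_{x_j}K^i\widetilde{w}_{y_ix_j}\,dy$ and yields
\[
\int_{\mathcal{Y}^d}\frac{\widetilde{m}_{x_j}^2}{\widetilde{m}}\,dy+\int_{\mathcal{Y}^d}\widetilde{m}\,\widetilde{w}_{y_ix_j}^2\,dy=\int_{\mathcal{Y}^d}\widetilde{m}_{x_j}V_{x_j}\,dy.
\]

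To close the estimate, I would bound the right-hand side by a weighted Cauchy--Schwarz inequality, writing $\widetilde{m}_{x_j}V_{x_j}=(\widetilde{m}_{x_j}/\sqrt{\widetilde{m}})(\sqrt{\widetilde{m}}\,V_{x_j})$, so that
\[
\Bigl|\int_{\mathcal{Y}^d}\widetilde{m}_{x_j}V_{x_j}\,dy\Bigr|\leq\Bigl(\int_{\mathcal{Y}^d}\frac{\widetilde{m}_{x_j}^2}{\widetilde{m}}\,dy\Bigr)^{1/2}\Bigl(\int_{\mathcal{Y}^d}\widetilde{m}\,V_{x_j}^2\,dy\Bigr)^{1/2}\leq\sup_{x,y}|V_{x_j}|\Bigl(\int_{\mathcal{Y}^d}\frac{\widetilde{m}_{x_j}^2}{\widetilde{m}}\,dy\Bigr)^{1/2},
\]
where I used $\int_{\mathcal{Y}^d}\widetilde{m}\,dy=1$ and the smoothness of $V$; crucially, the bound on $V_{x_j}$ is independent of $x$ and $\Lambda$. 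A weighted Young inequality then absorbs half of $\int_{\mathcal{Y}^d}\widetilde{m}_{x_j}^2/\widetilde{m}\,dy$ into the left-hand side, giving \eqref{HMedBoundResult} with a constant $C$ depending only on $\sup_{x,y}|V_{x_j}|$, hence independent of $x$ and $\Lambda$.

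The only point needing a little care is the regularity required to justify the multiplications and the integration by parts, but this is covered by Proposition~\ref{ExistenceUniqueness} and Remark~\ref{Smoothnessofwonxlambda}: $\widetilde{w}$ is smooth in $x$ with values in $C^{2,\alpha}_\#(\mathcal{Y}^d)$ and $\widetilde{m}>0$, so $\widetilde{w}_{x_j}\in C^{2,\alpha}_\#(\mathcal{Y}^d)$, $\widetilde{m}_{x_j}\in C^{1,\alpha}_\#(\mathcal{Y}^d)$, all the integrals above are well defined, and the divergence-form identity in the second line of \eqref{CellProbFirstDerivative} holds classically, legitimizing the integration by parts. I do not expect a genuine obstacle here — this is a standard energy estimate of the same flavour as the one in Proposition~\ref{CellProblemNonUniqueness}, the only real subtlety being the bookkeeping that keeps the final constant independent of $x$ and $\Lambda$.
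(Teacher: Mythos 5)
Your proof is correct and follows essentially the same route as the paper: multiply the first equation of \eqref{CellProbFirstDerivative} by $\widetilde{m}_{x_j}$, multiply the second by $\widetilde{w}_{x_j}$ and integrate by parts, subtract to cancel the cross term, and absorb $\int_{\mathcal{Y}^d}\widetilde{m}_{x_j}V_{x_j}\,dy$ via a Cauchy--Young inequality using $\int_{\mathcal{Y}^d}\widetilde{m}\,dy=1$ and the smoothness of $V$. The only cosmetic difference is that you split the final estimate into Cauchy--Schwarz plus Young, while the paper applies $ab\le\tfrac12 a^2+\tfrac12 b^2$ directly with the weights $\widetilde{m}^{\pm 1/2}$; the content is identical.
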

\begin{proof}
        Multiplying the first equation in \eqref{CellProbFirstDerivative}
by $\widetilde{m}_{x_j}$,  integrating the resulting terms in $\mathcal{Y}^d$, and
using $\int_{\mathcal{Y}^d}\widetilde{m}_{x_j}dy=0$, we get
        \begin{equation}
        \label{CellProbFirstDerivativeFirstEq}
        \int_{\mathcal{Y}^d} (\Lambda_i+\widetilde{w}_{y_i} )\widetilde{w}_{y_ix_j}\widetilde{m}_{x_j}dy+\int_{\mathcal{Y}^d}V_{x_j}\widetilde{m}_{x_j}dy=\int_{\mathcal{Y}^d}\frac{1}{\widetilde{m}}\widetilde{m}^2_{x_j}dy.
        \end{equation}
        Multiplying the second equation of \eqref{CellProbFirstDerivative}
by $\widetilde{w}_{x_j}$ and  integrating by parts, we obtain
        \begin{equation}
        \label{CellProbFirstDerivativeSecondEq}
        \int_{\mathcal{Y}^d}\widetilde{m}_{x_j} (\Lambda_i+\widetilde{w}_{y_i}
)\widetilde{w}_{x_jy_i}dy+\int_{\mathcal{Y}^d}\widetilde{m}\widetilde{w}_{y_ix_j}\widetilde{w}_{x_jy_i}dy=0.
        \end{equation}
        Subtracting \eqref{CellProbFirstDerivativeSecondEq} to
\eqref{CellProbFirstDerivativeFirstEq} and using Cauchy's inequality,
we have
        \begin{align*}
        \int_{\mathcal{Y}^d}\frac{1}{\widetilde{m}}\widetilde{m}^2_{x_j}dy+\int_{\mathcal{Y}^d}\widetilde{m}\widetilde{w}_{y_ix_j}^2dy=\int_{\mathcal{Y}^d}V_{x_j}\widetilde{m}_{x_j}dy\leq
\int_{\mathcal{Y}^d}\frac{1}{2\widetilde{m}}\widetilde{m}_{x_j}^2dy+\int_{\mathcal{Y}^d}\frac{V_{x_j}^2}{2}\widetilde{m}dy.
        \end{align*}
        Thus, by the smoothness of $V$ and $\int_{\mathcal{Y}^d}\widetilde{m}dy=1$,
there exists a constant $C$ such that 
        \begin{equation*}
        \int_{\mathcal{Y}^d}\frac{1}{\widetilde{m}}\widetilde{m}^2_{x_j}dy+\int_{\mathcal{Y}^d}\widetilde{m}\widetilde{w}_{y_ix_j}^2dy\leq
C. \qedhere
        \end{equation*}      
\end{proof}

\begin{pro}
        \label{BoundHxx}
        Let $(\widetilde{w}, \widetilde{m}, \widetilde{H})$ solve
Problem~\ref{CellProbEulerLagrangeProb}.  Then, there exists
a positive constant $C$ such that 
        \begin{equation*}
         |\widetilde{H}_{x_jx_l} |\leq C.
        \end{equation*}
\end{pro}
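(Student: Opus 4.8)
The plan is to differentiate the cell system twice in $x$, multiply by $\widetilde{m}$, integrate over $\mathcal{Y}^d$, and exploit integration by parts together with the a priori bounds already in hand. All the differentiations below are justified by the smoothness of $\widetilde{w}$, $\widetilde{m}$, and $\widetilde{H}$ in $x$ and $\Lambda$ established in Proposition~\ref{ExistenceUniqueness} and Remark~\ref{Smoothnessofwonxlambda}. This is the natural second-order analogue of the proof of Proposition~\ref{BoundHx}.

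First I would differentiate the first equation of \eqref{CellProbFirstDerivative}---which is already the $x_j$-derivative of the first equation of \eqref{CellProbEulerLagrangeEq}---with respect to $x_l$. Using Einstein's notation and writing $\frac{1}{\widetilde{m}}\widetilde{m}_{x_j}=(\ln\widetilde{m})_{x_j}$, this gives
\[
\widetilde{w}_{y_ix_l}\widetilde{w}_{y_ix_j}+(\Lambda_i+\widetilde{w}_{y_i})\widetilde{w}_{y_ix_jx_l}+V_{x_jx_l}=\frac{\widetilde{m}_{x_jx_l}}{\widetilde{m}}-\frac{\widetilde{m}_{x_j}\widetilde{m}_{x_l}}{\widetilde{m}^2}+\widetilde{H}_{x_jx_l}.
\]
Multiplying both sides by $\widetilde{m}$ and integrating over $\mathcal{Y}^d$, the term $\int_{\mathcal{Y}^d}\widetilde{m}_{x_jx_l}\,dy$ vanishes because $\int_{\mathcal{Y}^d}\widetilde{m}\,dy=1$ (differentiated twice in $x$), and integrating by parts
\[
\int_{\mathcal{Y}^d}\widetilde{m}(\Lambda_i+\widetilde{w}_{y_i})\widetilde{w}_{y_ix_jx_l}\,dy=-\int_{\mathcal{Y}^d}\big(\widetilde{m}(\Lambda_i+\widetilde{w}_{y_i})\big)_{y_i}\widetilde{w}_{x_jx_l}\,dy=0
\]
by the second equation of \eqref{CellProbEulerLagrangeEq}. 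What remains is the identity
\[
\widetilde{H}_{x_jx_l}=\int_{\mathcal{Y}^d}\widetilde{m}\,\widetilde{w}_{y_ix_l}\widetilde{w}_{y_ix_j}\,dy+\int_{\mathcal{Y}^d}\widetilde{m}\,V_{x_jx_l}\,dy+\int_{\mathcal{Y}^d}\frac{\widetilde{m}_{x_j}\widetilde{m}_{x_l}}{\widetilde{m}}\,dy.
\]

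It then remains to estimate the three terms on the right-hand side. The second is bounded by $\sup_{x,y}|V_{x_jx_l}|$ using the smoothness of $V$ and $\int_{\mathcal{Y}^d}\widetilde{m}\,dy=1$. For the first and the third, I would apply the Cauchy--Schwarz inequality with respect to the weights $\widetilde{m}$, respectively $1/\widetilde{m}$, and invoke the bound \eqref{HMedBoundResult} from the previous proposition, which controls $\int_{\mathcal{Y}^d}\widetilde{m}\,|\nabla_y\widetilde{w}_{x_j}|^2\,dy$ and $\int_{\mathcal{Y}^d}\widetilde{m}^{-1}\widetilde{m}_{x_j}^2\,dy$ uniformly in $x$ and $\Lambda$. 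Combining these estimates yields $|\widetilde{H}_{x_jx_l}|\leq C$ with $C$ independent of $x$ and $\Lambda$.

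I do not expect a genuine obstacle here: the only points requiring care are the bookkeeping of the integration by parts and the cancellation of $\int_{\mathcal{Y}^d}\widetilde{m}_{x_jx_l}\,dy$, both of which follow from the normalization $\int_{\mathcal{Y}^d}\widetilde{m}\,dy=1$ and the transport equation, exactly as in Proposition~\ref{BoundHx}. The substantive work---the weighted $L^2$ control of the first $x$-derivatives of $\widetilde{w}$ and $\widetilde{m}$---has already been carried out in the preceding proposition.
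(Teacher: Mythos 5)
Your argument coincides with the paper's proof: differentiate the once-differentiated cell system \eqref{CellProbFirstDerivative} again in $x_l$, multiply by $\widetilde{m}$, integrate over $\mathcal{Y}^d$, kill the $\int\widetilde{m}_{x_jx_l}\,dy$ term via the normalization and the $(\Lambda_i+\widetilde{w}_{y_i})\widetilde{w}_{y_ix_jx_l}\widetilde{m}$ term via integration by parts against the transport equation, and then bound the three remaining terms with the smoothness of $V$ and Cauchy--Schwarz together with \eqref{HMedBoundResult}. This is exactly what the paper does in \eqref{CellProbSecondDerivative}--\eqref{BoundHxxEq5}.
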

\begin{proof}
        Differentiating \eqref{CellProbFirstDerivative} with
respect to $x_l$, we get
        \begin{align}
        \label{CellProbSecondDerivative}
        \begin{cases}
        \widetilde{w}_{y_ix_l}\widetilde{w}_{y_ix_j}+ (\Lambda_i+\widetilde{w}_{y_i}
)\widetilde{w}_{y_ix_jx_l}+V_{x_jx_l}=-\frac{1}{\widetilde{m}^2}\widetilde{m}_{x_l}\widetilde{m}_{x_j}+\frac{1}{\widetilde{m}}\widetilde{m}_{x_jx_l}+\widetilde{H}_{x_jx_l},\\
        -\big (\widetilde{m}_{x_jx_l} (\Lambda_i+\widetilde{w}_{y_i}
)+\widetilde{m}_{x_j}\widetilde{w}_{y_ix_l}+\widetilde{m}_{x_l}\widetilde{w}_{y_ix_j}+\widetilde{m}\widetilde{w}_{y_ix_jx_l}
\big)_{y_i}=0,\\
        \int_{\mathcal{Y}^d}\widetilde{m}_{x_jx_l}dy=0.
        \end{cases}
        \end{align}
        Multiplying the first equation in \eqref{CellProbSecondDerivative}
by $\widetilde{m}$, integrating the resulting terms, and using
$\int_{\mathcal{Y}^d}\widetilde{m}dy=1$, we have
        \begin{align}
        \label{BoundHxxEq0}
        \begin{split}
        \widetilde{H}_{x_jx_l}=&\int_{\mathcal{Y}^d} \widetilde{m}
\widetilde{w}_{y_ix_l}\widetilde{w}_{y_ix_j}dy+\int_{\mathcal{Y}^d}
  (\Lambda_i+\widetilde{w}_{y_i} )\widetilde{w}_{y_ix_jx_l}\widetilde{m}
dy +\int_{\mathcal{Y}^d} V_{x_jx_l}\widetilde{m}dy\\
        &+\int_{\mathcal{Y}^d}\frac{\widetilde{m}_{x_l}\widetilde{m}_{x_j}}{\widetilde{m}}dy-\int_{\mathcal{Y}^d}\widetilde{m}_{x_jx_l}dy.\\
        \end{split}
        \end{align}
        From \eqref{CellProbSecondDerivative}, we know that 
        \begin{equation}
        \label{BoundHxxEq1}
        \int_{\mathcal{Y}^d}\widetilde{m}_{x_jx_l}dy=0.
        \end{equation}
        By the smoothness of $V$, the positivity of $\widetilde{m}$,
and $\int_{\mathcal{Y}^d}\widetilde{m}dy=1$, there exists a constant
$C$ such that 
        \begin{equation}
        \label{BoundHxxEq2}
         \bigg|\int_{\mathcal{Y}^d}V_{x_jx_l}\widetilde{m}dy\bigg
|\leq C.
        \end{equation}
        Using H\"{o}lder's inequality and \eqref{HMedBoundResult},
we get
        \begin{equation}
        \label{BoundHxxEq3}
         \bigg|\int_{\mathcal{Y}^d}\widetilde{m}\widetilde{w}_{y_ix_l}\widetilde{w}_{y_ix_j}dy\bigg
|\leq  \bigg(\int_{\mathcal{Y}^d}\widetilde{m}\widetilde{w}_{y_ix_l}^2dy
\bigg)^{\frac{1}{2}} \bigg(\int_{\mathcal{Y}^d}\widetilde{m}\widetilde{w}_{y_ix_j}^2dy\bigg
)^{\frac{1}{2}}\leq C
        \end{equation}
        and 
        \begin{equation}
        \label{BoundHxxEq4}
         \bigg|\int_{\mathcal{Y}^d}\frac{\widetilde{m}_{x_l}\widetilde{m}_{x_j}}{\widetilde{m}}dy\bigg
|\leq  \bigg(\int_{\mathcal{Y}^d}\frac{\widetilde{m}_{x_l}^2}{\widetilde{m}}dy\bigg
)^{\frac{1}{2}} \bigg(\int_{\mathcal{Y}^d}\frac{\widetilde{m}_{x_j}^2}{\widetilde{m}}dy\bigg
)^{\frac{1}{2}}\leq C. 
        \end{equation}
        From the second equation of \eqref{CellProbEulerLagrangeEq},
we obtain
        \begin{equation}
        \label{BoundHxxEq5}
        \int_{\mathcal{Y}^d} (\Lambda_i+\widetilde{w}_{y_i} )\widetilde{w}_{y_ix_jx_l}\widetilde{m}dy=-\int_{\mathcal{Y}^d}
\big(\widetilde{m} (\Lambda_i+\widetilde{w}_{y_i} )\big )_{y_i}\widetilde{w}_{x_jx_l}dy=0.
        \end{equation}
        Therefore, \eqref{BoundHxxEq0}--\eqref{BoundHxxEq5} give
that 
        \begin{equation*}
         |\widetilde{H}_{x_jx_l} |\leq C.\qedhere
        \end{equation*}
\end{proof}

\begin{pro}
        Let $(\widetilde{w}, \widetilde{m}, \widetilde{H})$ solve
Problem~\ref{CellProbEulerLagrangeProb}.  Then, there exists
a positive constant $C$ such that 
        \begin{equation}
        \label{HLambdaMedEstimate}
        \int_{\mathcal{Y}^d}\widetilde{m}\widetilde{w}_{y_i\Lambda_j}^2dy+\int_{\mathcal{Y}^d}\frac{\widetilde{m}_{\Lambda_j}^2}{\widetilde{m}}dy\leq
 C.
        \end{equation}. 
\end{pro}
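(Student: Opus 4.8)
The plan is to follow the argument yielding \eqref{HMedBoundResult}, differentiating the cell system \eqref{CellProbEulerLagrangeEq} with respect to $\Lambda_j$ in place of $x_j$. Writing $K_i=\Lambda_i+\widetilde{w}_{y_i}$ for brevity and differentiating, now also through the explicit dependence on $\Lambda_i$ (which was absent in the derivation of \eqref{CellProbFirstDerivative}), one obtains, for each fixed $j$, the system
\begin{equation*}
\begin{cases}
(\Lambda_j+\widetilde{w}_{y_j})+(\Lambda_i+\widetilde{w}_{y_i})\widetilde{w}_{y_i\Lambda_j}=\frac{1}{\widetilde{m}}\widetilde{m}_{\Lambda_j}+\widetilde{H}_{\Lambda_j},\\
-\big(\widetilde{m}_{\Lambda_j}(\Lambda_i+\widetilde{w}_{y_i})+\widetilde{m}\widetilde{w}_{y_i\Lambda_j}\big)_{y_i}=\widetilde{m}_{y_j},\\
\int_{\mathcal{Y}^d}\widetilde{m}_{\Lambda_j}dy=0,
\end{cases}
\end{equation*}
in which the inhomogeneous term $\widetilde{m}_{y_j}$ of the transport equation now plays the role that $V_{x_j}$ played in \eqref{CellProbFirstDerivative}. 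That $\widetilde{w}$ depends smoothly on $\Lambda$ with values in $C^{2,\alpha}_\#(\mathcal{Y}^d)/\mathbb{R}$, so that $\widetilde{w}_{\Lambda_j}$ and $\widetilde{m}_{\Lambda_j}$ are regular enough in $y$ for the manipulations below, follows from Proposition~\ref{ExistenceUniqueness} and Remark~\ref{Smoothnessofwonxlambda}; one fixes the zero-mean representative of $\widetilde{w}$ so that $\widetilde{w}_{\Lambda_j}$ is unambiguous.

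Next I would multiply the first equation by $\widetilde{m}_{\Lambda_j}$ and integrate over $\mathcal{Y}^d$, using $\int_{\mathcal{Y}^d}\widetilde{m}_{\Lambda_j}dy=0$ to discard the $\widetilde{H}_{\Lambda_j}$ contribution, and multiply the transport equation by $\widetilde{w}_{\Lambda_j}$ and integrate by parts, using in addition $\int_{\mathcal{Y}^d}\widetilde{m}_{y_j}\widetilde{w}_{\Lambda_j}dy=-\int_{\mathcal{Y}^d}\widetilde{m}\widetilde{w}_{y_j\Lambda_j}dy$ (a further integration by parts, valid by periodicity). Subtracting the second resulting identity from the first cancels the common mixed terms $\int_{\mathcal{Y}^d}(\Lambda_i+\widetilde{w}_{y_i})\widetilde{w}_{y_i\Lambda_j}\widetilde{m}_{\Lambda_j}dy$ and leaves
\begin{equation*}
\int_{\mathcal{Y}^d}\frac{\widetilde{m}_{\Lambda_j}^2}{\widetilde{m}}dy+\int_{\mathcal{Y}^d}\widetilde{m}|\nabla_y\widetilde{w}_{\Lambda_j}|^2dy=\int_{\mathcal{Y}^d}(\Lambda_j+\widetilde{w}_{y_j})\widetilde{m}_{\Lambda_j}dy-\int_{\mathcal{Y}^d}\widetilde{m}\widetilde{w}_{y_j\Lambda_j}dy.
\end{equation*}

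To conclude I would estimate the right-hand side: a weighted Cauchy inequality handles $\int_{\mathcal{Y}^d}(\Lambda_j+\widetilde{w}_{y_j})\widetilde{m}_{\Lambda_j}dy$, and an ordinary Cauchy inequality together with $\int_{\mathcal{Y}^d}\widetilde{m}dy=1$ and $\widetilde{w}_{y_j\Lambda_j}^2\leq|\nabla_y\widetilde{w}_{\Lambda_j}|^2$ handles $\int_{\mathcal{Y}^d}\widetilde{m}\widetilde{w}_{y_j\Lambda_j}dy$; absorbing the resulting small multiples of the two left-hand side integrals then bounds $\int_{\mathcal{Y}^d}\frac{\widetilde{m}_{\Lambda_j}^2}{\widetilde{m}}dy+\int_{\mathcal{Y}^d}\widetilde{m}|\nabla_y\widetilde{w}_{\Lambda_j}|^2dy$ by a constant times $1+\int_{\mathcal{Y}^d}(\Lambda_j+\widetilde{w}_{y_j})^2\widetilde{m}dy$. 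By \eqref{UpperBoundLambdaPlusGradw} in Proposition~\ref{UpperBoundw} we have $\sup_y|\Lambda+\nabla_y\widetilde{w}|\leq C+|\Lambda|$, so this last integral is at most $(C+|\Lambda|)^2$; summing over $j=1,\dots,d$ then yields \eqref{HLambdaMedEstimate}. The one genuine obstacle, and the feature absent in \eqref{HMedBoundResult}, is precisely the term $\int_{\mathcal{Y}^d}(\Lambda_j+\widetilde{w}_{y_j})\widetilde{m}_{\Lambda_j}dy$, which replaces the bounded quantity $\int_{\mathcal{Y}^d}V_{x_j}\widetilde{m}_{x_j}dy$ of \eqref{HMedBoundResult}: it is not controlled uniformly in $\Lambda$, so the constant $C$ in \eqref{HLambdaMedEstimate} grows quadratically in $|\Lambda|$; it does, however, stay independent of $x$ and uniformly bounded on compact subsets of $\mathbb{R}^d$, which is what the ensuing bounds for the mixed and second $\Lambda$-derivatives of $\widetilde{H}$ require.
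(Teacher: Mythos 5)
Your derivation of the differentiated system and of the energy identity
\begin{equation*}
\int_{\mathcal{Y}^d}\frac{\widetilde{m}_{\Lambda_j}^2}{\widetilde{m}}\,dy+\int_{\mathcal{Y}^d}\widetilde{m}\,|\nabla_y\widetilde{w}_{\Lambda_j}|^2\,dy=\int_{\mathcal{Y}^d}(\Lambda_j+\widetilde{w}_{y_j})\,\widetilde{m}_{\Lambda_j}\,dy-\int_{\mathcal{Y}^d}\widetilde{m}\,\widetilde{w}_{y_j\Lambda_j}\,dy
\end{equation*}
is exactly the paper's \eqref{HLambdaMedEstimateEq0} written in expanded form (your transport equation with inhomogeneous term $\widetilde{m}_{y_j}$ is the same as \eqref{CellProbDLambda} once $\big(\widetilde{m}\widetilde{\delta}_{ij}\big)_{y_i}=\widetilde{m}_{y_j}$ is carried to the right-hand side). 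Up to this point you are on the same road as the paper.

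The gap is in how you handle $\int_{\mathcal{Y}^d}(\Lambda_j+\widetilde{w}_{y_j})\,\widetilde{m}_{\Lambda_j}\,dy$. You treat $\Lambda_j+\widetilde{w}_{y_j}$ as a single quantity and apply Cauchy--Schwarz, which is precisely what forces the $(C+|\Lambda|)^2$ factor you end up with. But $\Lambda_j$ is a constant in $y$, so
\begin{equation*}
\int_{\mathcal{Y}^d}\Lambda_j\,\widetilde{m}_{\Lambda_j}\,dy=\Lambda_j\int_{\mathcal{Y}^d}\widetilde{m}_{\Lambda_j}\,dy=0
\end{equation*}
by the normalization in \eqref{CellProbDLambda}. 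This is the paper's \eqref{HLambdaMedEstimateEq1}, and it is the whole point of the argument: what is left is only $\int_{\mathcal{Y}^d}\widetilde{w}_{y_j}\,\widetilde{m}_{\Lambda_j}\,dy$, which Young's inequality controls by $\tfrac12\int_{\mathcal{Y}^d}\widetilde{m}\,\widetilde{w}_{y_j}^2\,dy+\tfrac12\int_{\mathcal{Y}^d}\widetilde{m}_{\Lambda_j}^2/\widetilde{m}\,dy$, and $\int_{\mathcal{Y}^d}\widetilde{m}\,\widetilde{w}_{y_j}^2\,dy$ is bounded uniformly in $x$ and $\Lambda$ by Propositions~\ref{CoercivityWidetideH} and \ref{UpperBoundw}. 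This yields \eqref{HLambdaMedEstimate} with a constant $C$ independent of $\Lambda$, whereas your version grows quadratically in $|\Lambda|$.

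Your fallback remark that a $\Lambda$-dependent constant ``is what the ensuing bounds require'' does not hold up: the next propositions use \eqref{HLambdaMedEstimate} to prove $|\widetilde{H}_{\Lambda_j\Lambda_l}|\leq C$ and $|\widetilde{H}_{\Lambda_l x_j}|\leq C$ with constants uniform over all $\Lambda\in\Rr^d$, and these uniform bounds are what allow the paper to verify the hypotheses of \cite{evans2003some} for the homogenized problem. If \eqref{HLambdaMedEstimate} only held with a constant that grows like $|\Lambda|^2$, then $\widetilde{H}_{\Lambda_j\Lambda_l}$ would only be bounded on compacts and the downstream argument would break. So the cancellation of the $\Lambda_j$ term is not a cosmetic refinement but the essential step, and it must be stated explicitly.
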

\begin{proof}
        Let $\widetilde{\delta}_{ij}=1$ if $i=j$ and $\widetilde{\delta}_{ij}=0$
if $i\not=j$. 
        Differentiating  \eqref{CellProbEulerLagrangeEq} with
respect to $\Lambda_j$, we obtain 
        \begin{equation}
        \label{CellProbDLambda}
        \begin{cases}
         (\Lambda_i+\widetilde{w}_{y_i} ) (\widetilde{\delta}_{ij}+\widetilde{w}_{y_i\Lambda_j}
)=\frac{1}{\widetilde{m}}\widetilde{m}_{\Lambda_j}+\widetilde{H}_{\Lambda_j},
\\
        -\big (\widetilde{m}_{\Lambda_j} (\Lambda_i+\widetilde{w}_{y_i}
)+\widetilde{m} (\widetilde{\delta}_{ij}+\widetilde{w}_{y_i\Lambda_j}
)\big )_{y_i}=0,\\
        \int_{\mathcal{Y}^d}\widetilde{m}_{\Lambda_j}dy=0.
        \end{cases}
        \end{equation}
        Multiplying the first equation of the prior system by
$\widetilde{m}_{\Lambda_j}$, integrating the resulting terms,
and using $\int_{\mathcal{Y}^d}\widetilde{m}_{\Lambda_j}dy=0$,
we get
        \begin{equation}
        \label{CellProbDLambdaMulFirstMLambda}
        \int_{\mathcal{Y}^d} (\Lambda_i+\widetilde{w}_{y_i} )\widetilde{\delta}_{ij}\widetilde{m}_{\Lambda_j}dy+\int_{\mathcal{Y}^d}
(\Lambda_i+\widetilde{w}_{y_i} )\widetilde{w}_{y_i\Lambda_j}\widetilde{m}_{\Lambda_j}dy=\int_{\mathcal{Y}^d}\frac{\widetilde{m}_{\Lambda_j}^2}{\widetilde{m}}dy.
        \end{equation}
        Multiplying the second equation in \eqref{CellProbDLambda}
by $\widetilde{w}_{\Lambda_j}$ and integrating by parts, we have

        \begin{equation}
        \label{CellProbDLambdaMulSecondMLambda}
        \int_{\mathcal{Y}^d} \widetilde{m}_{\Lambda_j} (\Lambda_i+\widetilde{w}_{y_i}
)\widetilde{w}_{\Lambda_jy_i}dy+\int_{\mathcal{Y}^d}\widetilde{m}
(\widetilde{\delta}_{ij}+\widetilde{w}_{y_i\Lambda_j} )\widetilde{w}_{\Lambda_jy_i}dy=0.
        \end{equation}
        Subtracting \eqref{CellProbDLambdaMulSecondMLambda} to
\eqref{CellProbDLambdaMulFirstMLambda}  and rearranging, we obtain
        \begin{align}
        \label{HLambdaMedEstimateEq0}
        \begin{split}
        \int_{\mathcal{Y}^d}\widetilde{m}\widetilde{w}_{y_i\Lambda_j}^2dy+\int_{\mathcal{Y}^d}\frac{\widetilde{m}_{\Lambda_j}^2}{\widetilde{m}}dy=&\int_{\mathcal{Y}^d}
(\Lambda_i+\widetilde{w}_{y_i} )\widetilde{\delta}_{ij}\widetilde{m}_{\Lambda_j}dy-\int_{\mathcal{Y}^d}\widetilde{m}\widetilde{\delta}_{ij}\widetilde{w}_{\Lambda_jy_i}dy\\
        =&\int_{\mathcal{Y}^d}\Lambda_i\widetilde{\delta}_{ij}\widetilde{m}_{\Lambda_j}dy+\int_{\mathcal{Y}^d}\widetilde{w}_{y_i}\widetilde{\delta}_{ij}\widetilde{m}_{\Lambda_j}dy-\int_{\mathcal{Y}^d}\widetilde{m}\widetilde{\delta}_{ij}\widetilde{w}_{\Lambda_jy_i}dy.
        \end{split}     
        \end{align}
        Since $\int_{\mathcal{Y}^d}\widetilde{m}_{\Lambda_j}dy=0$,
we have
        \begin{align}
        \label{HLambdaMedEstimateEq1}
        \int_{\mathcal{Y}^d}\Lambda_i\widetilde{\delta}_{ij}\widetilde{m}_{\Lambda_j}dy=0.
        \end{align}
        By Young's inequality,  \eqref{HMedBoundResult},  and
Propositions  \ref{CoercivityWidetideH} and \ref{UpperBoundw},
there exists a constant $C$ such that 
        \begin{equation}
        \label{HLambdaMedEstimateEq2}
         \bigg|\int_{\mathcal{Y}^d}\widetilde{w}_{y_i}\widetilde{\delta}_{ij}\widetilde{m}_{\Lambda_j}dy\bigg
|\leq \frac{1}{2}\int_{\mathcal{Y}^d} \widetilde{m}\widetilde{w}_{y_i}^2dy+\frac{1}{2}\int_{\mathcal{Y}^d}\frac{\widetilde{m}_{\Lambda_j}^2}{\widetilde{m}}dy\leq
C+\frac{1}{2}\int_{\mathcal{Y}^d}\frac{\widetilde{m}_{\Lambda_j}^2}{\widetilde{m}}dy.
        \end{equation}
        Using Young's inequality again, we get
        \begin{align}
        \label{HLambdaMedEstimateEq3}
        \int_{\mathcal{Y}^d}\widetilde{m} |\widetilde{w}_{\Lambda_jy_i}
|dy
        \leq \int_{\mathcal{Y}^d}\frac{\widetilde{m}}{2}dy+\int_{\mathcal{Y}^d}\frac{\widetilde{w}^2_{\Lambda_jy_i}\widetilde{m}}{2}dy\leq
C+\int_{\mathcal{Y}^d}\frac{\widetilde{w}^2_{\Lambda_jy_i}\widetilde{m}}{2}dy.
        \end{align}
        Thus, \eqref{HLambdaMedEstimateEq0}--\eqref{HLambdaMedEstimateEq3}
yield
        \begin{equation*}
        \int_{\mathcal{Y}^d}\widetilde{m}\widetilde{w}_{y_i\Lambda_j}^2dy+\int_{\mathcal{Y}^d}\frac{\widetilde{m}_{\Lambda_j}^2}{\widetilde{m}}dy\leq
 C.\qedhere
        \end{equation*} 
\end{proof}

\begin{pro}
        Let $(\widetilde{w}, \widetilde{m}, \widetilde{H})$ solve
Problem~\ref{CellProbEulerLagrangeProb}.  Then, there exists
a positive constant $C$ such that 
        \begin{equation*}
         |\widetilde{H}_{\Lambda_j} |\leq C (1+ |\Lambda | ).
        \end{equation*}
\end{pro}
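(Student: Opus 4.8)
The plan is to derive an explicit integral representation for $\widetilde{H}_{\Lambda_j}$ by testing the $\Lambda_j$-differentiated cell system against $\widetilde{m}$, and then to estimate the resulting integral using bounds already established in this section. By Proposition~\ref{ExistenceUniqueness} and Remark~\ref{Smoothnessofwonxlambda}, $\widetilde{w}$, $\widetilde{m}$, and $\widetilde{H}$ depend smoothly on $\Lambda$, so \eqref{CellProbDLambda} holds in the classical sense and all the manipulations below are legitimate.

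First I would multiply the first equation of \eqref{CellProbDLambda} by $\widetilde{m}$, integrate over $\mathcal{Y}^d$, and use $\int_{\mathcal{Y}^d}\widetilde{m}_{\Lambda_j}dy=0$ together with $\int_{\mathcal{Y}^d}\widetilde{m}dy=1$ to obtain
\begin{equation*}
\widetilde{H}_{\Lambda_j}=\int_{\mathcal{Y}^d}(\Lambda_i+\widetilde{w}_{y_i})(\widetilde{\delta}_{ij}+\widetilde{w}_{y_i\Lambda_j})\widetilde{m}\,dy=\int_{\mathcal{Y}^d}(\Lambda_j+\widetilde{w}_{y_j})\widetilde{m}\,dy+\int_{\mathcal{Y}^d}\widetilde{m}(\Lambda_i+\widetilde{w}_{y_i})(\widetilde{w}_{\Lambda_j})_{y_i}\,dy.
\end{equation*}
Integrating by parts in $y_i$ in the last integral and invoking the transport equation $-\div_y(\widetilde{m}(\Lambda+\nabla_y\widetilde{w}))=0$ from \eqref{CellProbEulerLagrangeEq}, along with the $\mathcal{Y}^d$-periodicity of $\widetilde{w}_{\Lambda_j}$, shows that this term vanishes, so that
\begin{equation*}
\widetilde{H}_{\Lambda_j}=\int_{\mathcal{Y}^d}(\Lambda_j+\widetilde{w}_{y_j})\widetilde{m}\,dy
\end{equation*}
(this is the rigorous counterpart of the heuristic identity $\nabla_{\widetilde{\Lambda}}\widehat{H}=\int_{\mathcal{Y}^d}\widetilde{m}_1(\widetilde{\Lambda}+\nabla_y\widetilde{u}_1)dy$ obtained in Section~\ref{HeuristicMethods}).

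It then remains to estimate this quantity. Using $\int_{\mathcal{Y}^d}\widetilde{m}dy=1$, the triangle inequality, and the Cauchy--Schwarz inequality,
\begin{equation*}
|\widetilde{H}_{\Lambda_j}|\leq|\Lambda|+\Big(\int_{\mathcal{Y}^d}\widetilde{m}\,dy\Big)^{1/2}\Big(\int_{\mathcal{Y}^d}\widetilde{m}\,|\nabla_y\widetilde{w}|^2\,dy\Big)^{1/2}\leq|\Lambda|+\Big(\sup_y\widetilde{m}\Big)^{1/2}\Big(\int_{\mathcal{Y}^d}|\nabla_y\widetilde{w}|^2\,dy\Big)^{1/2}.
\end{equation*}
The bound $\sup_y\widetilde{m}\leq C$ from Proposition~\ref{UpperBoundw} (see \eqref{UpperBoundwIneq}) and the estimate $\int_{\mathcal{Y}^d}|\nabla_y\widetilde{w}|^2dy\leq C$ from Proposition~\ref{CoercivityWidetideH} (see \eqref{BoundIntegralGradw}), both with constants independent of $x$ and $\Lambda$, then give $|\widetilde{H}_{\Lambda_j}|\leq|\Lambda|+C\leq C(1+|\Lambda|)$. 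As an alternative route avoiding the integration by parts, one may estimate the first displayed identity directly: by \eqref{UpperBoundLambdaPlusGradw} one has $|\Lambda+\nabla_y\widetilde{w}|\leq C+|\Lambda|$, and then Cauchy--Schwarz together with $\int_{\mathcal{Y}^d}\widetilde{m}\,|e_j+\nabla_y\widetilde{w}_{\Lambda_j}|^2dy\leq 2\int_{\mathcal{Y}^d}\widetilde{m}\,dy+2\int_{\mathcal{Y}^d}\widetilde{m}\,|\nabla_y\widetilde{w}_{\Lambda_j}|^2dy\leq C$ (using \eqref{HLambdaMedEstimate}), where $e_j$ is the $j$th coordinate vector, yields the same conclusion.

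There is no genuine obstacle in this proof: the statement is immediate once the representation formula for $\widetilde{H}_{\Lambda_j}$ is in hand, and every ingredient needed to estimate it — the $L^\infty$ bound on $\widetilde{m}$, the $H^1$ bound on $\widetilde{w}$, and the energy estimate \eqref{HLambdaMedEstimate} — has already been proved with constants uniform in $x$ and locally uniform in $\Lambda$. The only point deserving attention is the justification of the integration by parts against the transport equation (or, in the alternative route, bookkeeping the uniformity of the constants from \eqref{HLambdaMedEstimate} and \eqref{UpperBoundLambdaPlusGradw}), both routine given the regularity supplied by Proposition~\ref{ExistenceUniqueness}.
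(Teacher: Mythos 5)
Your proposal is correct and follows essentially the same approach as the paper: multiply the first equation of \eqref{CellProbDLambda} by $\widetilde{m}$, integrate over $\mathcal{Y}^d$ using the normalization $\int\widetilde{m}\,dy=1$ and $\int\widetilde{m}_{\Lambda_j}dy=0$, annihilate the cross term $\int(\Lambda_i+\widetilde{w}_{y_i})\widetilde{w}_{y_i\Lambda_j}\widetilde{m}\,dy$ by testing the transport equation against $\widetilde{w}_{\Lambda_j}$, and then estimate $\int(\Lambda_j+\widetilde{w}_{y_j})\widetilde{m}\,dy$. The only cosmetic difference is the last step: the paper quotes Proposition~\ref{UpperBoundw} (which supplies $\sup_y|\Lambda+\nabla_y\widetilde{w}|\leq C+|\Lambda|$) and uses $\int\widetilde{m}\,dy=1$ directly, while you split off $\Lambda_j$ and handle $\int\widetilde{w}_{y_j}\widetilde{m}\,dy$ via Cauchy--Schwarz together with \eqref{UpperBoundwIneq} and \eqref{BoundIntegralGradw} — both are routine and equivalent.
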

\begin{proof}
        Multiply the first equation in \eqref{CellProbDLambda}
by $\widetilde{m}$, we get
        \begin{equation*}
         (\Lambda_i+\widetilde{w}_{y_i} ) (\widetilde{\delta}_{ij}+\widetilde{w}_{y_i\Lambda_j}
)\widetilde{m}=\widetilde{m}_{\Lambda_j}+\widetilde{H}_{\Lambda_j}\widetilde{m}.
        \end{equation*}
        Integrating the preceding identity over $\mathcal{Y}^d$
and taking into account that $\int_{\mathcal{Y}^d}\widetilde{m}dy=1$
and  $\int_{\mathcal{Y}^d}\widetilde{m}_{\Lambda_j}dy=0$, we
obtain
        \begin{equation}
        \label{HLambdaEq1}
        \widetilde{H}_{\Lambda_j}=\int_{\mathcal{Y}^d} (\Lambda_i+\widetilde{w}_{y_i}
)\widetilde{\delta}_{ij}\widetilde{m}dy+\int_{\mathcal{Y}^d}
(\Lambda_i+\widetilde{w}_{y_i} )\widetilde{w}_{y_i\Lambda_j}\widetilde{m}dy.
        \end{equation}
        Multiplying the second equation of \eqref{CellProbEulerLagrangeEq}
by $\widetilde{w}_{\Lambda_j}$ and integrating by parts, we have

        \begin{equation}
        \label{HLambdaEq2}
        \int_{\mathcal{Y}^d} (\Lambda_i+\widetilde{w}_{y_i} )\widetilde{w}_{y_i\Lambda_j}\widetilde{m}dy=0.
        \end{equation}
        By Proposition~\ref{UpperBoundw}, there exists a constant
$C$ such that 
        \begin{equation}
        \label{HLambdaEq3}
         \bigg|\int_{\mathcal{Y}^d} (\Lambda_i+\widetilde{w}_{y_i}
)\widetilde{\delta}_{ij}\widetilde{m}dy\bigg |\leq C (1+ |\Lambda
| ).
        \end{equation}
        Therefore, \eqref{HLambdaEq1}--\eqref{HLambdaEq3} give

        \begin{equation*}
         |\widetilde{H}_{\Lambda_j} |\leq C (1+ |\Lambda | ).\qedhere
        \end{equation*}
\end{proof}

\begin{pro}
        Let $(\widetilde{w}, \widetilde{m}, \widetilde{H})$ solve
Problem~\ref{CellProbEulerLagrangeProb}.  Then, there exists
a positive constant $C$ such that 
        \begin{equation*}
         |\widetilde{H}_{\Lambda_j\Lambda_l} |\leq C.
        \end{equation*}
\end{pro}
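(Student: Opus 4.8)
The plan is to repeat, one order higher, the scheme already used for the bounds on $\widetilde{H}_{x_j}$, $\widetilde{H}_{x_jx_l}$, and $\widetilde{H}_{\Lambda_j}$: differentiate the cell system twice in $\Lambda$, multiply the resulting Hamilton--Jacobi-type equation by $\widetilde{m}$, integrate over $\mathcal{Y}^d$, and use the normalizations $\int_{\mathcal{Y}^d}\widetilde{m}\,dy=1$, $\int_{\mathcal{Y}^d}\widetilde{m}_{\Lambda_j}\,dy=0$, and $\int_{\mathcal{Y}^d}\widetilde{m}_{\Lambda_j\Lambda_l}\,dy=0$, together with the transport equation of \eqref{CellProbEulerLagrangeEq}, to cancel the highest-order term in $\widetilde{w}$.

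First I would differentiate the first two equations of \eqref{CellProbDLambda} with respect to $\Lambda_l$. The first equation gives
\begin{equation*}
(\widetilde{\delta}_{il}+\widetilde{w}_{y_i\Lambda_l})(\widetilde{\delta}_{ij}+\widetilde{w}_{y_i\Lambda_j})+(\Lambda_i+\widetilde{w}_{y_i})\widetilde{w}_{y_i\Lambda_j\Lambda_l}=-\frac{\widetilde{m}_{\Lambda_l}\widetilde{m}_{\Lambda_j}}{\widetilde{m}^2}+\frac{\widetilde{m}_{\Lambda_j\Lambda_l}}{\widetilde{m}}+\widetilde{H}_{\Lambda_j\Lambda_l},
\end{equation*}
while the transport equation becomes a divergence-form identity in $y$. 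Multiplying the displayed equation by $\widetilde{m}$, integrating over $\mathcal{Y}^d$, and using $\int_{\mathcal{Y}^d}\widetilde{m}\,dy=1$ and $\int_{\mathcal{Y}^d}\widetilde{m}_{\Lambda_j\Lambda_l}\,dy=0$ yields
\begin{equation*}
\widetilde{H}_{\Lambda_j\Lambda_l}=\int_{\mathcal{Y}^d}\widetilde{m}\,(\widetilde{\delta}_{il}+\widetilde{w}_{y_i\Lambda_l})(\widetilde{\delta}_{ij}+\widetilde{w}_{y_i\Lambda_j})\,dy+\int_{\mathcal{Y}^d}\frac{\widetilde{m}_{\Lambda_l}\widetilde{m}_{\Lambda_j}}{\widetilde{m}}\,dy+\int_{\mathcal{Y}^d}\widetilde{m}\,(\Lambda_i+\widetilde{w}_{y_i})\widetilde{w}_{y_i\Lambda_j\Lambda_l}\,dy.
\end{equation*}
Since $\widetilde{w}_{y_i\Lambda_j\Lambda_l}=(\widetilde{w}_{\Lambda_j\Lambda_l})_{y_i}$, the last integral equals $-\int_{\mathcal{Y}^d}\bigl(\widetilde{m}\,(\Lambda_i+\widetilde{w}_{y_i})\bigr)_{y_i}\widetilde{w}_{\Lambda_j\Lambda_l}\,dy$, which vanishes by the transport equation in \eqref{CellProbEulerLagrangeEq}, exactly as in \eqref{HLambdaEq2} and \eqref{BoundHxxEq5}; hence only the first two integrals survive.

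It then remains to estimate these two integrals uniformly in $x$ and $\Lambda$. For the first I would apply the Cauchy--Schwarz inequality in $L^2(\mathcal{Y}^d;\widetilde{m}\,dy)$, reducing matters to bounding $\int_{\mathcal{Y}^d}\widetilde{m}\,(\widetilde{\delta}_{ij}+\widetilde{w}_{y_i\Lambda_j})^2\,dy$ (summation in $i$), which is at most $2\int_{\mathcal{Y}^d}\widetilde{m}\,dy+2\int_{\mathcal{Y}^d}\widetilde{m}\,\widetilde{w}_{y_i\Lambda_j}^2\,dy$ and is therefore controlled by the normalization of $\widetilde{m}$ and by \eqref{HLambdaMedEstimate}. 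For the second integral, Cauchy--Schwarz gives $\bigl|\int_{\mathcal{Y}^d}\widetilde{m}_{\Lambda_l}\widetilde{m}_{\Lambda_j}\widetilde{m}^{-1}\,dy\bigr|\leq\bigl(\int_{\mathcal{Y}^d}\widetilde{m}_{\Lambda_l}^2\widetilde{m}^{-1}\,dy\bigr)^{1/2}\bigl(\int_{\mathcal{Y}^d}\widetilde{m}_{\Lambda_j}^2\widetilde{m}^{-1}\,dy\bigr)^{1/2}$, again bounded by \eqref{HLambdaMedEstimate}. Combining these estimates gives $|\widetilde{H}_{\Lambda_j\Lambda_l}|\leq C$ with $C$ independent of $x$ and $\Lambda$. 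The only point that requires care is the index bookkeeping under differentiation and the clean elimination of the third-order term $\widetilde{w}_{y_i\Lambda_j\Lambda_l}$; once that cancellation is made, the argument is a routine repetition of those already used for $\widetilde{H}_{\Lambda_j}$ and $\widetilde{H}_{x_jx_l}$.
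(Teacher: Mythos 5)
Your proposal follows the paper's own proof essentially verbatim: differentiate the first equation of \eqref{CellProbDLambda} in $\Lambda_l$, multiply by $\widetilde{m}$ and integrate, kill the third-order term via the transport equation (the identity $\int_{\mathcal{Y}^d}(\Lambda_i+\widetilde{w}_{y_i})\widetilde{w}_{y_i\Lambda_j\Lambda_l}\widetilde{m}\,dy=0$ is exactly \eqref{HLambdaLambdaMed1}), and bound the two remaining integrals via \eqref{HLambdaMedEstimate}. The only cosmetic difference is that you invoke Cauchy--Schwarz where the paper uses Young's inequality; both give the same uniform bound, so the argument is correct and takes the same route as the paper.
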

\begin{proof}
        Differentiating the first equation in  \eqref{CellProbDLambda}
with respect to $\Lambda_l$, we get
        \begin{equation*}
         (\widetilde{\delta}_{il}+\widetilde{w}_{y_i\Lambda_l}
) (\widetilde{\delta}_{ij}+\widetilde{w}_{y_i\Lambda_j} )+ (\Lambda_i+\widetilde{w}_{y_i}
)\widetilde{w}_{y_i\Lambda_j\Lambda_l}=-\frac{\widetilde{m}_{\Lambda_l}\widetilde{m}_{\Lambda_j}}{\widetilde{m}^2}+\frac{\widetilde{m}_{\Lambda_j\Lambda_l}}{\widetilde{m}}+\widetilde{H}_{\Lambda_j\Lambda_l}.
        \end{equation*}
        Multiplying both sides of the prior equation by $\widetilde{m}$,
integrating, and using $\int_{\mathcal{Y}^d}\widetilde{m}_{\Lambda_j\Lambda_l}dy=0$
and $\int_{\mathcal{Y}^d}\widetilde{m}dy=1$, we obtain
        \begin{align}
        \label{HLambdaLambdaForm}
        \begin{split}
        &\int_{\mathcal{Y}^d} (\widetilde{\delta}_{il}+\widetilde{w}_{y_i\Lambda_l}
) (\widetilde{\delta}_{ij}+\widetilde{w}_{y_i\Lambda_j} )\widetilde{m}dy+\int_{\mathcal{Y}^d}
(\Lambda_i+\widetilde{w}_{y_i} )\widetilde{w}_{y_i\Lambda_j\Lambda_l}\widetilde{m}dy\\
        &\quad=-\int_{\mathcal{Y}^d}\frac{\widetilde{m}_{\Lambda_l}\widetilde{m}_{\Lambda_j}}{\widetilde{m}}dy+\widetilde{H}_{\Lambda_j\Lambda_l}.
        \end{split}
        \end{align}
        We multiply the second equation in \eqref{CellProbEulerLagrangeEq}
by $\widetilde{w}_{\Lambda_j\Lambda_l}$, integrate by parts,
and get
        \begin{equation}
        \label{HLambdaLambdaMed1}
        \int_{\mathcal{Y}^d} (\Lambda_i+\widetilde{w}_{y_i} )\widetilde{w}_{y_i\Lambda_j\Lambda_l}\widetilde{m}dy=0.
        \end{equation}
        By Young's inequality and \eqref{HLambdaMedEstimate},
there exists a constant $C$ such that 
        \begin{equation}
        \label{HLambdaLambdaMed2}
        \int_{\mathcal{Y}^d}\frac{ |\widetilde{m}_{\Lambda_l}\widetilde{m}_{\Lambda_j}
|}{\widetilde{m}}dy\leq \int_{\mathcal{Y}^d}\frac{\widetilde{m}^2_{\Lambda_{l}}}{2\widetilde{m}}dy+\int_{\mathcal{Y}^d}\frac{\widetilde{m}^2_{\Lambda_{j}}}{2\widetilde{m}}dy\leq
C
        \end{equation}
        and 
        \begin{equation}
        \label{HLambdaLambdaMed3}
        \int_{\mathcal{Y}^d}  |\widetilde{w}_{y_i\Lambda_l}\widetilde{w}_{y_i\Lambda_j}
|\widetilde{m}dy\leq \frac{1}{2}\int_{\mathcal{Y}^d}\widetilde{w}_{y_i\Lambda_l}^2\widetilde{m}dy+\frac{1}{2}\int_{\mathcal{Y}^d}\widetilde{w}_{y_i\Lambda_j}^2\widetilde{m}dy\leq
C.
        \end{equation}
        Therefore, combining \eqref{HLambdaLambdaForm}--\eqref{HLambdaLambdaMed3},
we conclude that
        \begin{equation*}
         |\widetilde{H}_{\Lambda_j\Lambda_l} |\leq C.\qedhere
        \end{equation*}
\end{proof}

\begin{pro}
        \label{BoundHxLambda}
        Let $(\widetilde{w}, \widetilde{m}, \widetilde{H})$ solve
Problem~\ref{CellProbEulerLagrangeProb}.  Then, there exists
a positive constant $C$ such that 
        \begin{equation*}
         |\widetilde{H}_{\Lambda_lx_j} |\leq C.
        \end{equation*}
\end{pro}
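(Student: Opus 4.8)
The plan is to follow the same scheme used in the proofs of Propositions~\ref{BoundHx} and \ref{BoundHxx} and in the estimates for $\widetilde{H}_{\Lambda_j}$ and $\widetilde{H}_{\Lambda_j\Lambda_l}$: differentiate the cell system once more, test the resulting first equation against $\widetilde{m}$, integrate over $\mathcal{Y}^d$, discard the third-order term using the transport equation, and control the surviving integrals with the weighted $L^2$-bounds already established.

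First I would differentiate the first equation of \eqref{CellProbDLambda} with respect to $x_l$ — noting that $V$ is independent of $\Lambda$, so no $V$-term is present in that equation — obtaining
\begin{equation*}
\widetilde{w}_{y_ix_l}(\widetilde{\delta}_{ij}+\widetilde{w}_{y_i\Lambda_j})+(\Lambda_i+\widetilde{w}_{y_i})\widetilde{w}_{y_i\Lambda_j x_l}=-\frac{\widetilde{m}_{x_l}\widetilde{m}_{\Lambda_j}}{\widetilde{m}^2}+\frac{\widetilde{m}_{\Lambda_j x_l}}{\widetilde{m}}+\widetilde{H}_{\Lambda_j x_l}.
\end{equation*}
Multiplying by $\widetilde{m}$, integrating over $\mathcal{Y}^d$, and using $\int_{\mathcal{Y}^d}\widetilde{m}\,dy=1$ together with $\int_{\mathcal{Y}^d}\widetilde{m}_{\Lambda_j x_l}\,dy=0$ (which follows by differentiating the normalization once in $\Lambda_j$ and once in $x_l$), I would arrive at
\begin{equation*}
\widetilde{H}_{\Lambda_j x_l}=\int_{\mathcal{Y}^d}\widetilde{m}\,\widetilde{w}_{y_ix_l}(\widetilde{\delta}_{ij}+\widetilde{w}_{y_i\Lambda_j})\,dy+\int_{\mathcal{Y}^d}\frac{\widetilde{m}_{x_l}\widetilde{m}_{\Lambda_j}}{\widetilde{m}}\,dy+\int_{\mathcal{Y}^d}(\Lambda_i+\widetilde{w}_{y_i})\widetilde{w}_{y_i\Lambda_j x_l}\,\widetilde{m}\,dy.
\end{equation*}
The last integral vanishes after integration by parts, exactly as in \eqref{BoundHxxEq5} and \eqref{HLambdaLambdaMed1}, using the transport equation $-\div_y(\widetilde{m}(\Lambda+\nabla_y\widetilde{w}))=0$ from \eqref{CellProbEulerLagrangeEq} and the periodicity of $\widetilde{w}_{\Lambda_j x_l}$; this is the point that lets us avoid any bound on third-order derivatives of $\widetilde{w}$.

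It then remains to estimate the two surviving integrals. Splitting the first via the Kronecker delta, I would bound $\int_{\mathcal{Y}^d}\widetilde{m}\,\widetilde{w}_{y_jx_l}\,dy$ by the Cauchy--Schwarz inequality, $\int_{\mathcal{Y}^d}\widetilde{m}\,dy=1$, and \eqref{HMedBoundResult}, and $\int_{\mathcal{Y}^d}\widetilde{m}\,\widetilde{w}_{y_ix_l}\widetilde{w}_{y_i\Lambda_j}\,dy$ by Cauchy--Schwarz together with \eqref{HMedBoundResult} and \eqref{HLambdaMedEstimate}; likewise $\int_{\mathcal{Y}^d}\frac{\widetilde{m}_{x_l}\widetilde{m}_{\Lambda_j}}{\widetilde{m}}\,dy$ is controlled by Cauchy--Schwarz, \eqref{HMedBoundResult}, and \eqref{HLambdaMedEstimate}. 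This gives $|\widetilde{H}_{\Lambda_j x_l}|\leq C$, and since $\widetilde{H}$ is smooth in $x$ and $\Lambda$ (Proposition~\ref{ExistenceUniqueness} and Remark~\ref{Smoothnessofwonxlambda}), its mixed partials commute, so also $|\widetilde{H}_{\Lambda_l x_j}|\leq C$, with $C$ independent of $x$ and $\Lambda$.

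I do not anticipate a genuine obstacle: every weighted bound needed to close the estimate — namely on $\widetilde{m}^{1/2}\widetilde{w}_{y_ix_j}$, $\widetilde{m}^{1/2}\widetilde{w}_{y_i\Lambda_j}$, $\widetilde{m}_{x_j}/\widetilde{m}^{1/2}$, and $\widetilde{m}_{\Lambda_j}/\widetilde{m}^{1/2}$ — is already available from \eqref{HMedBoundResult} and \eqref{HLambdaMedEstimate}. The only step requiring a little care is to recognize, before attempting any estimate, that the third-order term drops out by the transport equation; this is entirely analogous to what was done for $\widetilde{H}_{x_jx_l}$ and $\widetilde{H}_{\Lambda_j\Lambda_l}$.
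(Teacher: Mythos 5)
Your proposal is correct and follows essentially the same route as the paper: the paper differentiates the already-differentiated-in-$x_j$ system \eqref{CellProbFirstDerivative} once more in $\Lambda_l$, while you differentiate the already-differentiated-in-$\Lambda_j$ system \eqref{CellProbDLambda} once more in $x_l$, but by the smoothness of $\widetilde{w}$ in $(x,\Lambda)$ (Remark~\ref{Smoothnessofwonxlambda}) the two routes yield the same identity up to a relabeling of indices, and the subsequent steps — testing against $\widetilde{m}$, killing the third-order term via the transport equation, and closing with \eqref{HMedBoundResult} and \eqref{HLambdaMedEstimate} — match the paper's \eqref{BoundHxLambdaEq0}--\eqref{BoundHxLambdaEq3} (with Cauchy--Schwarz in place of the paper's Young's inequality, a cosmetic difference).
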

\begin{proof}
        Differentiating the first equation in \eqref{CellProbFirstDerivative}
 with respect to $\Lambda_l$, we get
        \begin{equation*}
         (\widetilde{\delta}_{il}+\widetilde{w}_{y_i\Lambda_l}
)\widetilde{w}_{y_ix_j}+ (\Lambda_i+\widetilde{w}_{y_i} )\widetilde{w}_{y_ix_j\Lambda_l}=-\frac{\widetilde{m}_{x_j}\widetilde{m}_{\Lambda_l}}{\widetilde{m}^2}+\frac{\widetilde{m}_{\Lambda_lx_j}}{\widetilde{m}}+\widetilde{H}_{\Lambda_lx_j}.
        \end{equation*}
        Multiplying both sides by $\widetilde{m}$, integrating,
and taking into account that $\int_{\mathcal{Y}^d}\widetilde{m}dy=1$
and $\int_{\mathcal{Y}^d}\widetilde{m}_{\Lambda_lx_j}dy=0$, we
obtain
        \begin{equation}
        \label{BoundHxLambdaEq0}
        \widetilde{H}_{\Lambda_lx_j}=\int_{\mathcal{Y}^d}\frac{\widetilde{m}_{x_j}\widetilde{m}_{\Lambda_l}}{\widetilde{m}}dy+\int_{\mathcal{Y}^d}
(\widetilde{\delta}_{il}+\widetilde{w}_{y_i\Lambda_l} )\widetilde{w}_{y_ix_j}\widetilde{m}dy+\int_{\mathcal{Y}^d}
(\Lambda_i+\widetilde{w}_{y_i} )\widetilde{w}_{y_ix_j\Lambda_l}\widetilde{m}dy.
        \end{equation}
        Using the second equation of \eqref{CellProbEulerLagrangeEq},
we have
        \begin{equation}
        \label{BoundHxLambdaEq1}
        \int_{\mathcal{Y}^d} (\Lambda_i+\widetilde{w}_{y_i} )\widetilde{w}_{y_ix_j\Lambda_l}\widetilde{m}dy=0.
        \end{equation}
        By Young's inequality \eqref{HMedBoundResult} and  \eqref{HLambdaMedEstimate},
there exists a constant $C$ such that 
        \begin{equation}
        \label{BoundHxLambdaEq2}
        \int_{\mathcal{Y}^d}\frac{ |\widetilde{m}_{x_j}\widetilde{m}_{\Lambda_l}
|}{\widetilde{m}}dy\leq \frac{1}{2}\int_{\mathcal{Y}^d}\frac{\widetilde{m}_{x_j}^2}{\widetilde{m}}dy+\frac{1}{2}\int_{\mathcal{Y}^d}\frac{\widetilde{m}_{\Lambda_l}^2}{\widetilde{m}}dy\leq
C
        \end{equation}
        and 
        \begin{equation}
        \label{BoundHxLambdaEq3}
        \int_{\mathcal{Y}^d} |\widetilde{w}_{y_i\Lambda_l}\widetilde{w}_{y_ix_j}
|\widetilde{m}dy\leq \frac{1}{2}\int_{\mathcal{Y}^d}\widetilde{w}_{y_i\Lambda_l}^2\widetilde{m}dy+\frac{1}{2}\int_{\mathcal{Y}^d}\widetilde{w}_{y_ix_j}^2\widetilde{m}dy\leq
C.
        \end{equation}
        Therefore, \eqref{BoundHxLambdaEq0}--\eqref{BoundHxLambdaEq3}
yield
        \begin{equation*}
         |\widetilde{H}_{\Lambda_lx_j} |\leq C.\qedhere
        \end{equation*}
\end{proof}

\begin{pro}
\label{UniformConvixity}
        Let $(\widetilde{w}, \widetilde{m}, \widetilde{H})$ solve
Problem~\ref{CellProbEulerLagrangeProb}.  Then, under the assumptions
of Proposition~\ref{UniformlyLowerboundnessofwidetildem}, $\widetilde{H}$
is uniformly convex; that is, for any $\xi\in\mathbb{R}^d$, $\xi=
\{\xi_1,\dots,\xi_d \}$, there exists a positive  constant $C$
such that 
        \begin{equation}
        \label{HStronnglyConvIneq}
        \xi_j\widetilde{H}_{\Lambda_j\Lambda_l}\xi_l\geq  C |\xi
|^2. 
        \end{equation}
\end{pro}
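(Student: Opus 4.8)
The plan is to combine a second-derivative identity already available from the previous propositions with the uniform positivity of $\widetilde{m}$. First I would observe that the two displays \eqref{HLambdaLambdaForm} and \eqref{HLambdaLambdaMed1}, obtained in the proof of the bound $|\widetilde{H}_{\Lambda_j\Lambda_l}|\leq C$, combine to give the symmetric representation
\begin{equation*}
\widetilde{H}_{\Lambda_j\Lambda_l}=\int_{\mathcal{Y}^d}\big(\widetilde{\delta}_{ij}+\widetilde{w}_{y_i\Lambda_j}\big)\big(\widetilde{\delta}_{il}+\widetilde{w}_{y_i\Lambda_l}\big)\widetilde{m}\,dy+\int_{\mathcal{Y}^d}\frac{\widetilde{m}_{\Lambda_j}\widetilde{m}_{\Lambda_l}}{\widetilde{m}}\,dy,
\end{equation*}
which is legitimate because Proposition~\ref{ExistenceUniqueness} guarantees $\widetilde{w}\in C^\infty(\mathbb{R}^d\times\mathbb{T}^d;C^{2,\alpha}_\#(\mathcal{Y}^d)/\mathbb{R})$, so the relevant mixed derivatives exist, and because $\widetilde{m}>0$.

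Next, for an arbitrary $\xi\in\mathbb{R}^d$ I would contract this identity with $\xi$. Writing $\widetilde{w}_\xi:=\xi_l\widetilde{w}_{\Lambda_l}$ and $\widetilde{m}_\xi:=\xi_l\widetilde{m}_{\Lambda_l}$, and using $(\widetilde{\delta}_{ij}+\widetilde{w}_{y_i\Lambda_j})\xi_j=\xi_i+\partial_{y_i}\widetilde{w}_\xi$, this yields
\begin{equation*}
\xi_j\widetilde{H}_{\Lambda_j\Lambda_l}\xi_l=\int_{\mathcal{Y}^d}\big|\xi+\nabla_y\widetilde{w}_\xi\big|^2\,\widetilde{m}\,dy+\int_{\mathcal{Y}^d}\frac{\widetilde{m}_\xi^{\,2}}{\widetilde{m}}\,dy\geq\int_{\mathcal{Y}^d}\big|\xi+\nabla_y\widetilde{w}_\xi\big|^2\,\widetilde{m}\,dy,
\end{equation*}
since the last integral is nonnegative. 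Under the hypotheses of Proposition~\ref{UniformlyLowerboundnessofwidetildem} there is a constant $C_0>0$, independent of $x$, $y$, and $\Lambda$, with $\widetilde{m}\geq C_0$; moreover $\widetilde{w}_\xi(x,\Lambda,\cdot)$ is $\mathcal{Y}^d$-periodic, hence $\int_{\mathcal{Y}^d}\nabla_y\widetilde{w}_\xi\,dy=0$. Expanding the square and invoking these two facts,
\begin{equation*}
\xi_j\widetilde{H}_{\Lambda_j\Lambda_l}\xi_l\geq C_0\int_{\mathcal{Y}^d}\big|\xi+\nabla_y\widetilde{w}_\xi\big|^2\,dy=C_0\Big(|\xi|^2+\int_{\mathcal{Y}^d}\big|\nabla_y\widetilde{w}_\xi\big|^2\,dy\Big)\geq C_0|\xi|^2,
\end{equation*}
which is \eqref{HStronnglyConvIneq} with $C=C_0$.

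Essentially all the analytic content is contained in the earlier propositions, so no new estimate is required; the only point deserving care is that the constant in \eqref{HStronnglyConvIneq} must be independent of $x$ and $\Lambda$, and this is precisely what the uniform lower bound in Proposition~\ref{UniformlyLowerboundnessofwidetildem} provides. I expect this uniformity to be the only delicate issue: discarding the term $\int_{\mathcal{Y}^d}\widetilde{m}_\xi^{\,2}/\widetilde{m}\,dy$ and then invoking a lower bound for $\widetilde{m}$ that either depends on $x,\Lambda$ or degenerates as $|\Lambda|\to\infty$ would only yield plain convexity $\xi_j\widetilde{H}_{\Lambda_j\Lambda_l}\xi_l\geq0$, which is why the separability assumption on $V$ for $d>1$ enters here, through Proposition~\ref{UniformlyLowerboundnessofwidetildem}.
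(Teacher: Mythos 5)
Your proof is correct and follows essentially the same route as the paper: both start from the symmetric second-derivative identity obtained by combining \eqref{HLambdaLambdaForm} and \eqref{HLambdaLambdaMed1}, both discard the nonnegative term involving $\widetilde{m}_\Lambda$, and both invoke the uniform lower bound for $\widetilde{m}$ from Proposition~\ref{UniformlyLowerboundnessofwidetildem}. The only cosmetic difference is at the last step, where the paper applies Jensen's inequality to $\int_{\mathcal{Y}^d}|(I+\nabla^2_{y\Lambda}\widetilde{w})^T\xi|^2\,dy$ and then uses periodicity, whereas you expand the square directly and kill the cross term by periodicity; the two are equivalent, and your version makes explicit that the discarded term $\int_{\mathcal{Y}^d}|\nabla_y\widetilde{w}_\xi|^2\,dy$ is nonnegative.
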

\begin{proof}
        Using \eqref{HLambdaLambdaForm} and \eqref{HLambdaLambdaMed1},
we have
        \begin{align*}
        \begin{split}
        \widetilde{H}_{\Lambda_j\Lambda_l}=\int_{\mathcal{Y}^d}
(\widetilde{\delta}_{il}+\widetilde{w}_{y_i\Lambda_l} ) (\widetilde{\delta}_{ij}+\widetilde{w}_{y_i\Lambda_j}
)\widetilde{m}dy+\int_{\mathcal{Y}^d}\frac{\widetilde{m}_{\Lambda_l}\widetilde{m}_{\Lambda_j}}{\widetilde{m}}dy.
        \end{split}
        \end{align*}
        Then, 
        \begin{align*}
        \xi_j\widetilde{H}_{\Lambda_j\Lambda_l}\xi_l=&\int_{\mathcal{Y}^d}\frac{
|\xi^T\nabla_\Lambda \widetilde{m} |^2}{\widetilde{m}}dy+\int_{\mathcal{Y}^d}
\xi^T (I+\nabla^2_{y\Lambda}\widetilde{w} )^T (I+\nabla^2_{y\Lambda}\widetilde{w}
)\xi\widetilde{m}dy\\
        \geq& \int_{\mathcal{Y}^d}  \big| (I+\nabla^2_{y\Lambda}\widetilde{w}
)^T\xi\big |^2\widetilde{m}dy,
        \end{align*}
        where $I$ is the identity matrix.  By Proposition~\ref{UniformlyLowerboundnessofwidetildem}
and Jensen's inequality, there exists a constant $C$ such that
\begin{equation*}
\begin{aligned}      
       \int_{\mathcal{Y}^d}  | (I+\nabla_{y\Lambda}\widetilde{w}
)\xi |^2\widetilde{m}dy \geq  C\int_{\mathcal{Y}^d}  | (I+\nabla_{y\Lambda}\widetilde{w}
)\xi |^2dy
        \geq  C \bigg|\int_{\mathcal{Y}^d} (I+\nabla_{y\Lambda}\widetilde{w}
)\xi dy\bigg |^2 =C |\xi |^2.
\end{aligned}
\end{equation*}
        Therefore, we conclude that \eqref{HStronnglyConvIneq}
holds.
\end{proof}
\begin{remark}
In the proof of Proposition~\ref{UniformConvixity}, the uniform lower bound of $\widetilde{m}$ is given by Proposition~\ref{UniformlyLowerboundnessofwidetildem}, where we assume that the potential $V$ is separable when $d\geq 2$. Proposition~\ref{UniformlyLowerboundnessofwidetildem} is the only point where we use the structure hypothesis given by \eqref{SeparableV} to get the uniform convexity of \(\widetilde{H}\). 
\end{remark}

The next proposition gives a proof for existence and  uniqueness
of the solution to the homogenized problem.
\begin{pro}
\label{ExistenceLimitProblem}
        Suppose that $V$ is smooth. Assume further that when
$d>1$, $V$ satisfies \eqref{SeparableV}. Then, Problem~\ref{TheLimitProblem}
admits a unique smooth minimizer and Problem \ref{Homogenized} has a unique solution.
\end{pro}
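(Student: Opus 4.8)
The plan is to reduce Problem~\ref{TheLimitProblem} to the variational problem studied in \cite{evans2003some} by verifying that the effective Hamiltonian $\widetilde H$ built from the cell problem satisfies every structural hypothesis required there, and then to read off the solvability of Problem~\ref{Homogenized} from the equivalence recorded in Remark~\ref{rmk:p7p4qui}. First I would assemble the properties of $\widetilde H$ already established. By Proposition~\ref{ExistenceUniqueness} and Remark~\ref{Smoothnessofwonxlambda}, the cell minimizer $\widetilde w$ depends smoothly on $(x,\Lambda)$, so $\widetilde H(x,\Lambda)=\ln\int_{\Yd}e^{\frac{|\Lambda+\nabla_y\widetilde w(x,\Lambda,y)|^2}{2}+V(x,y)}\,dy$ is smooth on $\Tt^d\times\Rr^d$. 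Proposition~\ref{CoercivityWidetideH} gives the two-sided quadratic bound $\frac{|\Lambda|^2}{2}+\inf V\le\widetilde H(x,\Lambda)\le\frac{|\Lambda|^2}{2}+\sup V$, which yields coercivity and an upper bound for $\widehat I$. The derivative estimates proved above — Propositions~\ref{BoundHx}, \ref{BoundHxx}, and~\ref{BoundHxLambda}, together with the bounds $|\widetilde H_{\Lambda_j}|\le C(1+|\Lambda|)$ and $|\widetilde H_{\Lambda_j\Lambda_l}|\le C$ — control $\widetilde H$ in $C^2$ locally uniformly in $\Lambda$, and Proposition~\ref{UniformConvixity} gives the uniform convexity $\xi_j\widetilde H_{\Lambda_j\Lambda_l}\xi_l\ge C|\xi|^2$ under the separability assumption~\eqref{SeparableV} when $d>1$. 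These are exactly the properties of the Hamiltonian used in \cite{evans2003some} to prove existence, uniqueness, and $C^\infty$ regularity for minimizers of functionals of the type $u\mapsto\int_{\Tt^d}e^{\widetilde H(x,P+\nabla u)}\,dx$ over the zero-mean class, and for the associated logarithmically coupled mean-field game system; applying those results produces a unique smooth $u_0\in C^\infty(\Tt^d)$ with $\int_{\Tt^d}u_0\,dx=0$ solving~\eqref{MinLimitProblem}.

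Next I would transfer this to Problem~\ref{Homogenized}. Writing the Euler--Lagrange equation of $\widehat I$ at $u_0$ gives $-\div\big(e^{\widetilde H(x,P+\nabla u_0)}D_\Lambda\widetilde H(x,P+\nabla u_0)\big)=0$; setting $\overline H(P)=\ln\widehat I[u_0]$ and $m_0=e^{\widetilde H(x,P+\nabla u_0)-\overline H(P)}$, one checks $m_0>0$, $\int_{\Tt^d}m_0\,dx=1$, and that $(u_0,m_0,\overline H)$ solves~\eqref{HomogenizedProblemEq}, which is the content of the equivalence in Remark~\ref{rmk:p7p4qui}. Conversely, any solution of~\eqref{HomogenizedProblemEq} yields, after eliminating $m_0$ and $\overline H$, a critical point of $\widehat I$, and this critical point is the minimizer because $\widehat I$ is strictly convex in the gradient, a consequence of the uniform convexity of $\widetilde H$. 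Uniqueness for Problem~\ref{TheLimitProblem} therefore forces uniqueness for Problem~\ref{Homogenized}, and the smoothness of $u_0$ and $\widetilde H$ makes $m_0$ smooth.

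The main obstacle is not in this final assembly but upstream: the entire argument hinges on the uniform (in $x$ and $\Lambda$) convexity of $\widetilde H$, and the proof of Proposition~\ref{UniformConvixity} requires the uniform lower bound $\widetilde m\ge C>0$ furnished by Proposition~\ref{UniformlyLowerboundnessofwidetildem}, which at present we can establish only under the separability hypothesis~\eqref{SeparableV} when $d>1$. Thus~\eqref{SeparableV} enters precisely at this point and nowhere else; all the remaining verifications are the routine a priori derivative bounds already carried out in the preceding propositions.
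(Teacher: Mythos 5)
Your proposal is correct and follows essentially the same route as the paper: verify that the a priori estimates on $\widetilde H$ (smoothness, coercivity, the $x$- and $\Lambda$-derivative bounds, and the uniform convexity, the latter requiring the separability hypothesis via Propositions~\ref{UniformlyLowerboundnessofwidetildem} and~\ref{UniformConvixity}) meet the hypotheses in \cite{evans2003some}, invoke that paper for existence, uniqueness, and smoothness of the minimizer of $\widehat I$, and then pass to Problem~\ref{Homogenized} by the equivalence of Remark~\ref{rmk:p7p4qui}. The paper's own proof is just a condensed version of this check; your write-up makes explicit exactly what is being cited.
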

\begin{proof}
By Propositions~\ref{BoundHx}--\ref{UniformConvixity},  $\widetilde{H}$
satisfies the assumptions required in \cite{evans2003some}. Therefore,
Problem~\ref{TheLimitProblem} has a unique smooth minimizer. Accordingly, Problem \ref{Homogenized} admits a unique solution.

\end{proof}

Next, we prove that Problem~\ref{TwoScaleMinimization} has unique smooth minimizer.
\begin{pro}
\label{ExistenceofTwoScaleHomogenization}
Problem~\ref{TwoScaleMinimization} admits a unique minimizer,
$ (\widehat{u}_0,\widehat{u}_1 )$, where $\widehat{u}_0\in C^\infty
(\mathbb{T}^d )$ is the solution to Problem
\ref{TheLimitProblem} and $\widehat{u}_1\in C^{\infty} (\mathbb{T}^d;C^{2,\alpha}_\#
(\mathcal{Y}^d )/\mathbb{R} )$. Moreover, let $\widehat{I}$ be given in Problem \ref{TheLimitProblem} and $\overline{I}$ be as in Problem \ref{TwoScaleMinimization}. Then, \(\widehat I[\widehat u_0]
= \overline{I}
[\widehat{u}_0,\widehat{u}_1 ]\). 
\end{pro}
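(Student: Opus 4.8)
The plan is to carry out the minimization of $\overline{I}$ in two stages: first over $w$ with $u$ fixed --- which, fiberwise in $x$, is exactly the cell problem --- and then over $u$, which is Problem~\ref{TheLimitProblem}. First I would set $\widehat{u}_0\in C^\infty(\mathbb{T}^d)$ to be the unique solution of Problem~\ref{TheLimitProblem}, whose existence is guaranteed by Proposition~\ref{ExistenceLimitProblem}, let $\widetilde{w}$ be the function provided by Proposition~\ref{ExistenceUniqueness}, and define $\widehat{u}_1(x,y):=\widetilde{w}\big(x,P+\nabla\widehat{u}_0(x),y\big)$. Because $x\mapsto P+\nabla\widehat{u}_0(x)$ lies in $C^\infty(\mathbb{T}^d;\mathbb{R}^d)$ and, by Proposition~\ref{ExistenceUniqueness} together with Remark~\ref{Smoothnessofwonxlambda}, $(x,\Lambda)\mapsto\widetilde{w}(x,\Lambda,\cdot)$ lies in $C^\infty(\mathbb{R}^d\times\mathbb{T}^d;C^{2,\alpha}_\#(\mathcal{Y}^d)/\mathbb{R})$, the composition gives $\widehat{u}_1\in C^\infty(\mathbb{T}^d;C^{2,\alpha}_\#(\mathcal{Y}^d)/\mathbb{R})$; in particular $(\widehat{u}_0,\widehat{u}_1)$ is admissible for Problem~\ref{TwoScaleMinimization}.

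Next, by Fubini's theorem and the definition \eqref{DefwidetildeI} of $\widetilde{I}$, I would record that for every admissible pair $(u,w)$,
\[
\overline{I}[u,w]=\int_{\mathbb{T}^d}\widetilde{I}\big[x,P+\nabla u(x);w(x,\cdot)\big]\,dx.
\]
Evaluating this at $(\widehat{u}_0,\widehat{u}_1)$ and using that $\widetilde{w}(x,\Lambda,\cdot)$ is the minimizer in Problem~\ref{TheCellProblem}, so that $\widetilde{I}[x,\Lambda;\widetilde{w}]=e^{\widetilde{H}(x,\Lambda)}$ by \eqref{WidetildeH}, yields $\overline{I}[\widehat{u}_0,\widehat{u}_1]=\int_{\mathbb{T}^d}e^{\widetilde{H}(x,P+\nabla\widehat{u}_0(x))}\,dx=\widehat{I}[\widehat{u}_0]$, which is the asserted identity. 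For minimality, given any admissible $(u,w)$, I note that $w(x,\cdot)\in W^{1,p}_\#(\mathcal{Y}^d)/\mathbb{R}$ for a.e.\ $x$, hence $\widetilde{I}[x,P+\nabla u(x);w(x,\cdot)]\geq\widetilde{I}[x,P+\nabla u(x);\widetilde{w}]=e^{\widetilde{H}(x,P+\nabla u(x))}$ for a.e.\ $x$; integrating and using that $\widehat{u}_0$ minimizes $\widehat{I}$ (Problem~\ref{TheLimitProblem}),
\[
\overline{I}[u,w]=\int_{\mathbb{T}^d}\widetilde{I}[x,P+\nabla u(x);w(x,\cdot)]\,dx\geq\widehat{I}[u]\geq\widehat{I}[\widehat{u}_0]=\overline{I}[\widehat{u}_0,\widehat{u}_1].
\]

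For uniqueness, suppose $(u_0,u_1)$ is another minimizer of $\overline{I}$. Then every inequality in the displayed chain is an equality; the equality $\widehat{I}[u_0]=\widehat{I}[\widehat{u}_0]$ forces $u_0=\widehat{u}_0$ by uniqueness of the minimizer in Problem~\ref{TheLimitProblem} (Proposition~\ref{ExistenceLimitProblem}), and $\int_{\mathbb{T}^d}\big(\widetilde{I}[x,P+\nabla u_0(x);u_1(x,\cdot)]-e^{\widetilde{H}(x,P+\nabla u_0(x))}\big)\,dx=0$ with nonnegative integrand forces $u_1(x,\cdot)$ to be a minimizer of the cell problem for a.e.\ $x$; uniqueness in Proposition~\ref{CellProblemNonUniqueness} (equivalently Proposition~\ref{ExistenceUniqueness}) then gives $u_1(x,\cdot)=\widetilde{w}(x,P+\nabla u_0(x),\cdot)=\widehat{u}_1(x,\cdot)$ for a.e.\ $x$ in $W^{1,p}_\#(\mathcal{Y}^d)/\mathbb{R}$, i.e.\ $u_1=\widehat{u}_1$. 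I expect the only delicate points to be the regularity of the composition defining $\widehat{u}_1$ and the joint measurability underlying the fiberwise inequality and Fubini's theorem; both are routine consequences of Proposition~\ref{ExistenceUniqueness}, the smoothness of $\widehat{u}_0$ and $V$, and the measurable dependence on $x$ of the integrand $e^{\frac{|\Lambda+\nabla w|^2}{2}+V}$.
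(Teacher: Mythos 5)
Your proposal is correct, and the existence part together with the identity $\widehat I[\widehat u_0]=\overline I[\widehat u_0,\widehat u_1]$ follows essentially the same two-stage minimization that the paper uses (compare \eqref{MinimizingOneSide}--\eqref{MinimizingOtherSideEq2}): define $\widehat u_1(x,\cdot)=\widetilde w\bigl(x,P+\nabla\widehat u_0(x),\cdot\bigr)$, integrate the pointwise cell optimality, and observe $\overline I[\widehat u_0,\widehat u_1]=\widehat I[\widehat u_0]\le\overline I[u,w]$ for all admissible $(u,w)$.

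Where you genuinely diverge from the paper is in the uniqueness argument. The paper proves uniqueness by passing to the equivalent PDE system (Problem~\ref{TwoScaleHomogenized}) and running the Lasry--Lions monotonicity argument: multiplying the differences of the two systems by $m-\overline m$ and $u_i-\overline u_i$, integrating by parts, and reading off $m=\overline m$, $\nabla u_0+\nabla_y u_1=\nabla\overline u_0+\nabla_y\overline u_1$, etc. You instead argue purely variationally: since for any competitor $(u_0,u_1)$ the chain $\overline I[u_0,u_1]\ge\widehat I[u_0]\ge\widehat I[\widehat u_0]=\overline I[\widehat u_0,\widehat u_1]$ collapses to equalities, uniqueness of the outer minimizer (Proposition~\ref{ExistenceLimitProblem}) forces $u_0=\widehat u_0$, and the vanishing of the nonnegative excess $\widetilde I[x,\cdot;u_1(x,\cdot)]-e^{\widetilde H(x,\cdot)}$ together with uniqueness of the cell minimizer (Proposition~\ref{ExistenceUniqueness}) forces $u_1(x,\cdot)=\widehat u_1(x,\cdot)$ a.e. Your route stays entirely within the variational framework and does not need the regularity or the PDE/variational equivalence of Remark~\ref{rmk:p7p4qui}; it also makes the uniqueness depend transparently on the two sub-problems. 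The paper's monotonicity argument, by contrast, is self-contained at the level of the coupled MFG system and does not route through the cell-problem uniqueness; it is the argument one would reach for if the variational decoupling were not available. Both are valid; the only small caveat in yours is that Proposition~\ref{ExistenceUniqueness}'s uniqueness of the cell minimizer in $W^{1,p}_\#(\mathcal{Y}^d)/\mathbb{R}$ (or, equivalently, strict convexity of $\xi\mapsto e^{|\Lambda+\xi|^2/2}$) must indeed be read as uniqueness in that Sobolev class and not merely in $C^{2,\alpha}_\#$, but that is how the paper states it, so the reference is justified.
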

\begin{proof}
As pointed out in Remark \ref{rmk:p7p4qui}, Problem \ref{TwoScaleHomogenized} and Problem \ref{TwoScaleMinimization} are equivalent. Thus, if we prove the uniqueness of the solution to Problem \ref{TwoScaleHomogenized}, Problem \ref{TwoScaleMinimization} has a unique minimizer. To do that, we use a similar argument as what we did in the proof of Proposition \ref{CellProblemNonUniqueness}. Assume that $(u_0, u_1, m, \overline{H}_1)$ and $(\overline{u}_0, \overline{u}_1, \overline{m}, \overline{H}_2)$ are two solutions to Problem \ref{TwoScaleHomogenized} such that $u_0, \overline{u}_0\in C^\infty(\mathbb{T}^d)$, $u_1, \overline{u}_1\in C^\infty(\mathbb{T}^d;C^{2,\alpha}_{\#}(\mathcal{Y}^d)/\mathbb{R})$, $m, \overline{m}\in C^\infty(\mathbb{T}^d;C^{1,\alpha}(\mathcal{Y}^d))$, $\overline{H}_1, \overline{H}_2\in \mathbb{R}$,  $\int_{\mathbb{T}^d}u_0dx=\int_{\mathbb{T}^d}\overline{u}_0dx=0$, and $\int_{\mathbb{T}^d}\int_{\mathcal{Y}^d}mdydx=\int_{\mathbb{T}^d}\int_{\mathcal{Y}^d}\overline{m}dydx=1$. Then, we have
\begin{equation}
\label{uniqdiffeq}
\begin{cases}
\frac{\left|P+\nabla u_0+\nabla_y u_1\right|^2}{2}-\frac{\left|P+\nabla \overline{u}_0+\nabla_y \overline{u}_1\right|^2}{2}=\ln m - \ln \overline{m}+\overline{H}_1-\overline{H}_2,\\
-\div_x(\int_{\mathcal{Y}^d}m(P+\nabla u_0+\nabla_yu_1))+\div_x(\int_{\mathcal{Y}^d}\overline{m}(P+\nabla \overline{u}_0+\nabla_y\overline{u}_1))=0,\\
-\div_y(m(P+\nabla u_0+\nabla u_1))+\div_y(\overline{m}(P+\nabla \overline{u}_0+\nabla \overline{u}_1))=0.
\end{cases}
\end{equation}
Multiplying the first equation by $m-\overline{m}$, subtracting it from the sum of the second equation multiplied by $u_0-\overline{u}_1$ and the third equation multiplied by $u_1-\overline{u}_1$, integrating by parts, and using $\int_{\mathbb{T}^d}\int_{\mathcal{Y}^d}mdydx=\int_{\mathbb{T}^d}\int_{\mathcal{Y}^d}\overline{m}dydx=1$, we get
\begin{equation*}
\frac{1}{2}\int_{\mathbb{T}^d}\int_{\mathcal{Y}^d}(m+\overline{m})\left|\nabla u_0+\nabla_y u_1-\nabla \overline{u}_0-\nabla \overline{u}_1\right|^2dydx+\int_{\mathbb{T}^d}\int_{\mathcal{Y}^d}(\ln m-\ln \overline{m})(m-\overline{m})dydx=0.
\end{equation*} 
Thus, $$m=\overline{m}$$ and 
\begin{align}
\label{u0u1equbar0ubar1}
\nabla u_0 + \nabla_y u_1=\nabla \overline{u}_0+\nabla_y \overline{u}_1. 
\end{align} 
We integrate \eqref{u0u1equbar0ubar1} over $\mathcal{Y}^d$ and get $\nabla u_0=\nabla \overline{u}_0$. Since $\int_{\mathbb{T}^d}u_0dx=\int_{\mathbb{T}^d}\overline{u}_0dx=0$, we have $u_0=\overline{u}_0$. Thus, according to \eqref{u0u1equbar0ubar1}, $\nabla_y u_1=\nabla_y \overline{u}_1$. Because  $u_1, \overline{u}_1\in C^\infty(\mathbb{T}^d;C^{2,\alpha}_{\#}(\mathcal{Y}^d)/\mathbb{R})$, we have $u_1=\overline{u}_1$. Then, by \eqref{uniqdiffeq}, $\overline{H}_1=\overline{H}_2$. Therefore, Problem \ref{TwoScaleHomogenized} admits at most one solution. Accordingly, Problem \ref{TwoScaleMinimization} has at most one minimizer. 
        
Next, we prove the existence of the minimizer to Problem \ref{TwoScaleMinimization}.         
According to Propositions~\ref{ExistenceUniqueness} and \ref{ExistenceLimitProblem},
we let $\widetilde{w}\in C^\infty (\mathbb{R}^d\times\mathbb{T}^d;C^{2,\alpha}_\#
(\mathcal{Y}^d )/\mathbb{R} )$ be as in Proposition~\ref{ExistenceUniqueness} and $\widehat{u}_0\in C^\infty (\mathbb{T}^d )$ minimize Problem
\ref{TheLimitProblem}.
For $x\in\mathbb{T}^d$, $y\in\mathcal{Y}^d$, we define $\Lambda=P+\nabla
\widehat{u}_0 (x )$ and $\widehat{u}_1=\widetilde{w} (P+\nabla
\widehat{u}_0 (x ), x, y )$. Then, $\widehat{u}_1\in C^\infty
(\mathbb{T}^d;C^{2,\alpha}_\# (\mathcal{Y}^d )/\mathbb{R} )$.
Recalling the definition of $\overline{I}$ in Problem~\ref{TwoScaleMinimization},
we see that
\begin{equation}
\label{MinimizingOneSide}
\inf_{\overset{u\in W^{1,p} (\mathbb{T}^d )}{w\in L^{p} (\mathbb{T}^d;W^{1,p}_\#
(\mathcal{Y}^d )/\mathbb{R} )}}\overline{I} [u,w ]\leq \overline{I}
[\widehat{u}_0,\widehat{u}_1 ].
\end{equation}
Let $\widehat{I}$ be given in Problem \ref{TheLimitProblem}. Then, by the definition of $\widehat{u}_0$ and $\widehat{u}_1$, we
obtain
\[\widehat I[\widehat u_0]
= \overline{I}
[\widehat{u}_0,\widehat{u}_1 ],\]
and
\begin{align}
\label{MinimizingOtherSideEq1}
\begin{split}
\overline{I}
[\widehat{u}_0,\widehat{u}_1 ]&=\int_{\mathbb{T}^d}\int_{\mathcal{Y}^d}e^{ \frac{ |P+\nabla
\widehat{u}_0 (x )+\nabla_y \widehat{u}_1 (x,y ) |^2}{2}+V (x,y
)}dydx\\
&= \inf_{{u}\in {W}^{1,p}  (\mathbb{T}^d )} \int_{\mathbb{T}^d}
\inf_{{w}\in {L}^{p} (\mathbb{T}^d;{W}^{1,p}_\# (\mathcal{Y}^d
)/\mathbb{R} ) } \int_{\mathcal{Y}^d}e^{\frac{ |P+\nabla u (x
) +\nabla_y{w} (x,y ) |^2}{2}+V (x,y )}dy dx.
\end{split}
\end{align}
For any $u\in W^{1,p} (\mathbb{T}^d )$ and $\widetilde{w}\in
{L}^{p} (\mathbb{T}^d;{W}^{1,p}_\# (\mathcal{Y}^d )/\mathbb{R}
)$, we have 
\begin{align*}
\inf_{{w}\in {L}^{p} (\mathbb{T}^d;{W}^{1,p}_\# (\mathcal{Y}^d
)/\mathbb{R} ) } \int_{\mathcal{Y}^d}e^{\frac{ |P+\nabla u (x
) +\nabla_y{w} (x,y ) |^2}{2}+V (x,y )}dy \leq \int_{\mathcal{Y}^d}e^{\frac{
|P+\nabla u (x ) +\nabla_y \widetilde{w} (x,y ) |^2}{2}+V (x,y
)}dy. 
\end{align*}
Thus, 
\begin{align}
\label{MinimizingOtherSideEq2}
\begin{split}
&\inf_{{u}\in {W}^{1,p}  (\mathbb{T}^d )} \int_{\mathbb{T}^d}
\inf_{{w}\in {L}^{p} (\mathbb{T}^d;{W}^{1,p}_\# (\mathcal{Y}^d
)/\mathbb{R} ) } \int_{\mathcal{Y}^d}e^{\frac{ |P+\nabla u (x
) +\nabla_y{w} (x,y ) |^2}{2}+V (x,y )}dy dx\\
&\quad
\leq  \inf_{\overset{u\in W^{1,p} (\mathbb{T}^d )}{\widetilde{w}\in
L^{p} (\mathbb{T}^d;W^{1,p}_\# (\mathcal{Y}^d )/\mathbb{R} )}}
\int_{\mathbb{T}^d}\int_{\mathcal{Y}^d}e^{ (\frac{ |P+\nabla
u (x )+\nabla_y \widetilde{w} (x,y ) |^2}{2}+V (x,y ) )}dydx.
\end{split}
\end{align}
By \eqref{MinimizingOtherSideEq1}, \eqref{MinimizingOtherSideEq2},
and the definition of $\overline{I}$, we get
\begin{equation*}
\overline{I} [\widehat{u}_0,\widehat{u}_1 ]=\int_{\mathbb{T}^d}\int_{\mathcal{Y}^d}e^{
\frac{ |P+\nabla \widehat{u}_0 (x )+\nabla_y \widehat{u}_1 (x,y
) |^2}{2}+V (x,y  )}dydx\leq \inf_{\overset{u\in W^{1,p} (\mathbb{T}^d
)}{w\in L^{p} (\mathbb{T}^d;W^{1,p}_\# (\mathcal{Y}^d )/\mathbb{R}
)}}\overline{I} [u,w ].
\end{equation*}
Combining the preceding equation with \eqref{MinimizingOneSide},
we conclude that 
\begin{equation*}
\overline{I} [\widehat{u}_0,\widehat{u}_1 ]=\inf_{\overset{u\in
W^{1,p} (\mathbb{T}^d )}{w\in L^{p} (\mathbb{T}^d;W^{1,p}_\#
(\mathcal{Y}^d )/\mathbb{R} )}}\overline{I} [u,w ].
\end{equation*}
Therefore, $(\widehat{u}_0, \widehat{u}_1)$ solves Problem~\ref{TwoScaleMinimization}.
\end{proof}

\section{Two-scale homogenization in Higher dimensions}
\label{TwoScaleHomogeInHigherDim}
Here, we establish the  asymptotic behavior of \eqref{POCMFGEq} in any dimension by proving Theorem~\ref{MainTheorem}.

\begin{proof}[Proof of Theorem~\ref{MainTheorem}] 
Let \((u_\epsilon, m_\epsilon, \overline{H}_\epsilon)\) solve Problem \ref{POCMFG} as in the statement. Meanwhile, let  \(\alpha\in(0,1)\) and $q\in [1,\infty)$, let  $u_0\in C^{0,\alpha}\cap{W}^{1,q} (\mathbb{T}^d
)$ with \(\int_{\Tt^d} u_0dx
=0\), $m\in {L}^1
(\mathbb{T}^d\times\mathcal{Y}^d )$
with \(\int_{\mathbb{T}^d}\int_{\mathcal{Y}^d}
m(x,y) dxdy=1\), $u_1\in {L}^q (\mathbb{T}^d;{W}^{1,q}_\# (\mathcal{Y}^d
)/\mathbb{R } )$, 
and \(\overline H(P)\in\Rr\) be given by Proposition~\ref{UniformlyConvergence}.
Let  
$(\widehat{u}_0, \widehat{u}_1 ) \in  C^\infty (\mathbb{T}^d
)\times C^\infty (\mathbb{T}^d;C^{2,\alpha}_\#
(\mathcal{Y}^d )/\mathbb{R} ) $, with \(\int_{\Tt^d}
\hat u_0dx=0\), be the unique solution to Problem~\ref{TwoScaleMinimization}
provided by Proposition~\ref{ExistenceofTwoScaleHomogenization}.

Using Proposition~\ref{TwoScaleConvLipshitz}, Remark~\ref{rmk:onlscwtsc},
the two-scale convergence \eqref{eq:conv2uepsi},  and the smoothness
of \(V\), we obtain
\begin{equation}
\label{eq:mainthmlb1}
\begin{aligned}
\liminf_{\epsilon\rightarrow 0}I_\epsilon [u_\epsilon ]&=\liminf_{\epsilon\rightarrow
0} \int_{\mathbb{T}^d}\int_{\mathcal{Y}^d}e^{\frac{ |P+\nabla
u_\epsilon (x)|^2}{2}} e^{V (x,\frac{x}{\epsilon} )}dydx\\
& \geq \int_{\mathbb{T}^d}\int_{\mathcal{Y}^d}e^{\frac{
|P+\nabla u_0 (x )+\nabla_yu_1 (x,y ) |^2}{2}+V (x,y )}dydx=\overline{I}
[u_0,u_1 ]\geq\overline{I} [\widehat{u}_0,\widehat{u}_1
], 
\end{aligned}
\end{equation}
where in the last inequality, we used that $(\widehat{u}_0, \widehat{u}_1)$ is the minimizer of $\overline{I}$.
 
Let $\psi_0 \in {C}^\infty (\mathbb{T}^d )$, with \(\int_{\Tt^d}
\psi_0 dx =0\), and $\psi_1\in
{C}^\infty (\mathbb{T}^d;C^{2,\alpha}_\# (\mathcal{Y}^d )/\mathbb{R}
)$. For \(x\in\Tt^d\), set \(\psi_\epsilon (x) = \psi_0(x) +
\epsilon \psi_1(x,\frac x\epsilon)\), \(c_\epsilon= \int_{\Tt^d}
\psi_ \epsilon dx\), and \(\tilde \psi_\epsilon = \psi_\epsilon
-c_\epsilon\). Because $u_\epsilon$  minimizes  $I_\epsilon[\cdot]
$ (see \eqref{VariationalFormula}), we have
\begin{equation*}
\begin{aligned}
\limsup_{\epsilon\to0} I_\epsilon [u_\epsilon  ]&\leq \limsup_{\epsilon\to0} I_\epsilon [\tilde \psi_\epsilon ] = \limsup_{\epsilon\to0} 
\int_{\mathbb{T}^d} e^{\frac{
|P+\nabla \psi_0 (x )+\epsilon \nabla_x \psi_1 (x,\frac x\epsilon)
+\nabla_y \psi_1 (x,\frac x\epsilon) |^2}{2}+V (x,\frac x\epsilon)}dx\\
&= \int_{\mathbb{T}^d}\int_{\mathcal{Y}^d}e^{\frac{ |P+\nabla
\psi_0 (x )+\nabla_y\psi_1 (x,y ) |^2}{2}+V (x,y )}dydx=\overline{I}
[\psi_0,\psi_1 ], 
\end{aligned}
\end{equation*}
where we used Propositions~\ref{CaratheodoryFuncConv} and \ref{LipschitzStronglyTwoScaleConv} and Remark~\ref{rmk:onconttwosc}. Hence,
\begin{align}\label{eq:mainthmlb2}
        \limsup_{\epsilon\rightarrow 0}I_\epsilon [u_\epsilon  ] &\leq \inf_{\overset{\psi_0\in C^\infty  (\mathbb{T}^d ),
\int_{\Tt^d}
\psi_0 dx=0}{\psi_1\in
{C}^\infty (\mathbb{T}^d;C^{2,\alpha}_\# (\mathcal{Y}^d )/\mathbb{R}
) }} \overline{I} [\psi_0,\psi_1 ]\leq\overline{I} [\widehat{u}_0,\widehat{u}_1
].
        \end{align}
By \eqref{eq:mainthmlb1} and \eqref{eq:mainthmlb2}, we conclude
that
\begin{equation}\label{eq:limIespitomin}
\begin{aligned}
\lim_{\epsilon\rightarrow
0}I_\epsilon [u_\epsilon  ] = \overline{I} [\widehat{u}_0,\widehat{u}_1
]= \overline{I} [u_0,u_1 ] =\min_{\overset{u\in
W^{1,p} (\mathbb{T}^d ), \int_{\Tt^d}
 udx=0}{w\in L^{p} (\mathbb{T}^d;W^{1,p}_\# (\mathcal{Y}^d )/\mathbb{R}
)}}\overline{I} [u,w ].
\end{aligned}
\end{equation}
Using the strict convexity of the map \(\xi\in\Rr^d\mapsto e^{\frac{|P+\xi|^2}{2}}\),
we conclude that the previous identities yield  $u_0=\widehat{u}_0$ and $u_1=\widehat{u}_1$.
Hence, invoking Proposition~\ref{ExistenceofTwoScaleHomogenization} once more, we conclude that \eqref{OneHomoToTwoHomo} holds. Moreover, recalling  \eqref{DefHEpsilon}
and \eqref{eq:convH}, we have
\begin{equation}
        \label{DefHOverline}
        \overline{H}(P)=\ln \overline{I} [u_0,u_1 ].
        \end{equation}
For  $ (x,y )\in\mathbb{T}^d\times\mathcal{Y}^d$,
 define 
        \begin{align}
        \label{widetildemformula}
        \widetilde{m} (x,y )=e^{\frac{ |P+\nabla u_0 (x )+\nabla_y
u_1 (x,y ) |}{2}+V (x,y )-\overline{H}(P)}.
        \end{align}        
Let  $\phi\in {C}^\infty (\mathbb{T}^d; C^\infty_\#( \mathcal{Y}^d)
)$ be such that $\phi\geq0$. By \eqref{eq:defmepsi}, \eqref{eq:conv2mepsi},
\eqref{eq:conv2uepsi}, \eqref{eq:convH}, 
Proposition~\ref{TwoScaleConvLipshitz}, Remark~\ref{rmk:onlscwtsc},  
the
smoothness of $\phi$ and $V$, and \eqref{widetildemformula}, we obtain
\begin{equation*}
\begin{aligned}
\int_{\mathbb{T}^d}\int_{\mathcal{Y}^d}m
(x,y )\phi (x,y )dydx &= \lim_{\epsilon\rightarrow 0}\int_{\mathbb{T}^d}m_\epsilon
(x ) \phi\Big (x,\frac{x}{\epsilon} \Big)dx \\ &=\lim_{\epsilon\rightarrow
0}\int_{\mathbb{T}^d}e^{\frac{ |P+\nabla u_\epsilon (x ) |^2}{2}} e^{V
(x,\frac{x}{\epsilon} )-\overline{H}_\epsilon(P)}\phi\Big (x,\frac{x}{\epsilon}
\Big)dx\\
 &\geq \int_{\mathbb{T}^d}\int_{\mathcal{Y}^d}e^{\frac{ |P+\nabla
u_0 (x )+\nabla_y u_1 (x,y ) |^2}{2}} e^{V (x,y )-\overline{H}(P)}\phi
(x,y )dydx\\
&=  \int_{\mathbb{T}^d}\int_{\mathcal{Y}^d}\widetilde{m}
(x,y )\phi (x,y )dydx. 
\end{aligned}
\end{equation*}
Thus,       
         \begin{align*}
        \int_{\mathbb{T}^d}\int_{\mathcal{Y}^d}m (x,y )\phi (x,y )dydx\geq \int_{\mathbb{T}^d}\int_{\mathcal{Y}^d}\widetilde{m} (x,y )\phi (x,y )dydx.
        \end{align*}
        Because $\phi$ is an arbitrary nonnegative,  smooth function, we have $m\geq \widetilde{m}$ almost everywhere. By Proposition~\ref{UniformlyConvergence},
  $\int_{\mathbb{T}^d}\int_{\mathcal{Y}^d}m (x,y )dydx=1$. Meanwhile, by \eqref{DefHOverline} and \eqref{widetildemformula}, also $\int_{\mathbb{T}^d}\int_{\mathcal{Y}^d}\widetilde{m} (x,y )dydx=1$.   Hence,  %
\begin{equation*}
\begin{aligned}
\int_{\mathbb{T}^d}\int_{\mathcal{Y}^d} (m (x,y )-\widetilde{m} (x,y ) )dydx=0,
\end{aligned}
\end{equation*}
        which, together with $m-\widetilde{m}\geq 0$, yields $\widetilde{m}=m$ almost everywhere. 
Finally, in view of \eqref{eq:limIespitomin},\eqref{DefHOverline},
and \eqref{widetildemformula},  we conclude that $ (u_0, u_1, m, \overline{H} )$ solves \eqref{TheoremTwoScaleSystemEq}. 

We conclude by observing that the uniqueness of solution to Problems~\ref{TwoScaleHomogenized},
\ref{TwoScaleMinimization}, \ref{Homogenized}, and \ref{TheLimitProblem}  guarantees that
the convergences in Proposition~\ref{UniformlyConvergence} hold
for the whole sequence (and not just for a subsequence). 
\end{proof}

%

\end{document}